\documentclass[11pt, notitlepage,reqno]{amsart}

\usepackage{graphicx}
\usepackage{amsmath,amsthm}
\usepackage{amsfonts}
\usepackage{amssymb}
\usepackage{tikz}
\usepackage{subfig}
\usetikzlibrary{matrix,arrows}
\usepackage{bbm}

\title{Quantum Matrices by paths}

\newtheorem{thm}{Theorem}[subsection]
\newtheorem{lem}[thm]{Lemma}

\newtheorem{prop}[thm]{Proposition}
\newtheorem{cor}[thm]{Corollary}

\newtheorem{claim2}{Claim}

\theoremstyle{definition}
\newtheorem{defn}[thm]{Definition}

\newtheorem{rem}[thm]{Remark}

\newtheorem{ex}[thm]{Example}
\newtheorem{note}[thm]{Note}
\newtheorem{notn}[thm]{Notation}

\newcommand{\qmatrix}{
\C{O}_q(\C{M}_{m,n}(\B{K}))
}

\newcommand{\qtorus}{
\mathcal{O}_q((\B{K}^{\times})^{m\times n})
}

\newcommand{\fgraph}{
G_B^{m\times n}
}

\newcommand{\ImidJ}{I\,|\,J}

\newcommand{\lt}{
\ell t}

\newcommand{\qaffine}{
\mathcal{O}_q(\B{K}^{m\times n})
}

\newcommand{\C}{\mathcal}
\newcommand{\B}{\mathbb}

\newcommand{\spec}{\textnormal{Spec}}
\newcommand{\hspec}{\C{H}\textnormal{-Spec}}

\newcommand{\vect}{\boldsymbol}
\newcommand{\mat}{\C{M}_{m,n}(\B{Z})}
\newcommand{\matnonneg}{\C{M}_{m,n}(\B{Z}_{\geq 0})}

\author{Karel Casteels}
\address{
School of Mathematics, Statistics and Actuarial Science \\
University of Kent \\
Canterbury, United Kingdom\\
CT2 7NF
}

\begin{document}

\begin{abstract}

We study, from a combinatorial viewpoint, the \emph{quantized coordinate ring of $m\times n$ matrices} over an infinite field $\B{K}$, $\qmatrix$ (often simply called \emph{quantum matrices}).The first part of this paper shows that $\qmatrix$, which is traditionally defined by generators and relations, can be seen as a subalgebra of a quantum torus by using paths in a certain directed graph. Roughly speaking, we view each generator of $\qmatrix$ as a sum over paths in the graph, each path being assigned an element of the quantum torus. The $\qmatrix$ relations then arise naturally by considering intersecting paths. This viewpoint is closely related to Cauchon's deleting derivations algorithm.

The second part of this paper applies the above to the theory of torus-invariant prime ideals of $\qmatrix$. We prove a conjecture of Goodearl and Lenagan that all such prime ideals, when the quantum parameter $q$ is a non-root of unity, have generating sets consisting of quantum minors. Previously, this result was known to hold only when $\textnormal{char}(\B{K})=0$ and with $q$ transcendental over $\B{Q}$. Our strategy is to prove the stronger result that the quantum minors in a given torus-invariant ideal form a Gr\"obner basis. 

\end{abstract}
\maketitle
\section{Introduction}

The purpose of this paper is to introduce a ``combinatorial model'' of $\qmatrix$, the quantized coordinate ring of $m\times n$ matrices over a field $\B{K}$ (simply called \emph{quantum matrices}). We demonstrate the utility of this model by using it to study the prime spectrum of $\qmatrix$. 

Quantum matrices have generated a good deal of interest since their discovery during the initial development of quantum group theory in the 1980's. This is because not only do quantum matrices underlie many of the traditional quantum groups such as the quantum special and general linear groups, but there are also interesting connections with topics such as braided tensor categories and knot theory. See~\cite{take} for a brief survey. More recently, it has been observed~\cite{gll2, gll, gll3} that the prime spectrum of quantum matrices is deeply related to the theory of totally nonnegative matrices and the \emph{totally nonnegative grassmannian} in the sense of Postnikov~\cite{postnikov}. 

Since the late 1990's, much effort has been expended toward understanding the structure of the prime and primitive spectra of various quantum algebras. Quantum matrices have received particular attention since, while this algebra has a seemingly simple structure (for example, it is an iterated Ore extension over the field $\B{K}$), many problems have proven difficult to resolve. In particular, the machineries employed to analyze $\spec(\qmatrix)$ have tended to use fairly sophisticated viewpoints from noncommutative ring theory and representation theory and even then often require extra restrictions on the base field $\B{K}$ and choice of quantum parameter $q$.

The $\C{H}$-stratification theory of Goodearl and Letzter~\cite{GL} (see also ~\cite{bg}) is an important advancement toward understanding the prime and primitive spectra of some quantum algebras. Briefly, many noncommutative rings support a rational action of a torus $\C{H}$ which allows one to partition the prime spectrum of the ring into finitely many \emph{$\C{H}$-strata}, each $\C{H}$-stratum homeomorphic (with respect to the usual Zariski topology) to the prime spectrum of a Laurent polynomial ring in finitely many commuting indeterminates, and each containing a unique $\C{H}$-invariant prime ideal. Moreover, the primitive ideals of the algebra are precisely those that are maximal within their $\C{H}$-stratum. For these reasons, an important first step towards understanding the prime and primitive spectra is to first study the $\C{H}$-invariant prime ideals called \emph{$\C{H}$-primes.} 

The deleting derivations algorithm of Cauchon~\cite{cauchon1,cauchon2} has also proven quite useful. Roughly speaking, this procedure shows that when the $\C{H}$-stratification theory applies to a given quantum algebra, one can often embed the set of $\C{H}$-primes into the set of $\C{H}$-primes of a \emph{quantum affine space}. This is convenient since quantum affine spaces are typically easy to handle thanks to results of Goodearl and Letzter~\cite{gl6}. The strategy then is to reverse the deleting derivations procedure in order to transfer (more easily obtained) information about the quantum affine space back to information about the quantum algebra.

The $\C{H}$-stratification and the deleting derivations theories both apply to quantum matrices in the generic case, i.e., when the parameter $q$ is a non-root of unity, and so a natural problem is to find generating sets for the $\C{H}$-primes. For $2\times 2$ quantum matrices, this problem is fairly straightforward, yet even the $3\times 3$ case required a significant amount of work by Goodearl and Lenagan~\cite{gl3,gl4}. However, in all cases their generating sets consisted of \emph{quantum minors} and so it was conjectured that this held true in general. Launois~\cite{launois2,launois3} was the first to prove this conjecture under the constraints $\B{K}=\B{C}$ and $q$ transcendental over $\B{Q}$. This was later extended to any $\B{K}$ of characteristic zero~\cite{gll2}. 


An important part of Cauchon's results is a parametrization of the $\C{H}$-primes of quantum matrices using what are now known in the quantum algebra community as \emph{Cauchon diagrams}. It turns out that a Cauchon diagram encodes fundamental information about the corresponding $\C{H}$-stratum. For example, the Krull dimension can be easily calculated from the Cauchon diagram using the main result of~\cite{bcl}. Launois also described an algorithm to find the generators of a given $\C{H}$-prime from its Cauchon diagram, but the calculations involved very quickly become unwieldy. A graph theoretic interpretation of Launois' algorithm provided in~\cite{casteels} forms the starting point for some of the results presented below. In fact, much of Section~\ref{pathsection} may be seen as a combinatorial interpretation of the deleting derivations algorithm.

It is notable that Cauchon diagrams arose independently in work of Postnikov~\cite{postnikov} in his investigations of the totally nonnegative Grassmannian. In this context, Cauchon diagrams are called \reflectbox{L}-diagrams (also Le-diagrams) and have been investigated by several authors (see Lam and Williams~\cite{LW} and Talaska~\cite{tal} in particular). The connections between these two areas and Poisson geometry have been explored by Goodearl, Launois and Lenagan~\cite{gll,gll2}.

Finally, let us also mention that Yakimov~\cite{yakimov, yakimov3} has developed representation theoretic methods with great success. In particular, he has independently verified (and generalized) Goodearl and Lenagan's conjecture, but again, only under the constraint that $\textnormal{char}(\B{K})=0$ and $q$ transcendental over $\B{Q}$. Furthermore, the generating sets obtained are actually smaller than Launois' in general. It is unclear how Yakimov's work relates to the viewpoint presented in this paper, however, recent work of Geiger and Yakimov~\cite{gy} explore the connections between Yakimov's work and Cauchon's, and so there is quite possibly a close relationship. 

As will be reviewed in Section~\ref{basicssection}, the usual description of $\qmatrix$ is by generators and relations. Our approach to $\qmatrix$ is the focus of Section~\ref{pathsection0} where we begin by giving a directed graph and assign elements (``weights'') of a quantum torus to directed paths. We then discuss various subalgebras of the quantum torus generated by sums over path weights. In particular, Corollary~\ref{isocor} shows that quantum matrices can be so obtained. One nice aspect of this is that the quantum matrix relations naturally arise by considering intersecting paths (see the proofs of Theorem~\ref{pathcommutation} and Theorem~\ref{generatorrelations}).

While at first it may appear that the description of quantum matrices ``by paths'' is a mere curiosity, it is in fact an indispensable tool in the bulk of this paper, Section~\ref{gensection}. Here, the Goodearl-Lenagan conjecture is an immediate corollary to a stronger result, Theorem~\ref{genthm}, which states that for \emph{any} infinite field $\B{K}$ and  non-root of unity $q\in \B{K}^*$, the quantum minors in a given $\C{H}$-prime form a Gr\"obner basis with respect to a certain term ordering. The difficulty with this approach is that for a given $\C{H}$-prime of $\qmatrix$, \emph{a priori} we do not know any generating sets at all to which we can apply Buchberger's algorithm, so we must check that the minors form a Gr\"obner basis by direct verification of the definition. The way we do this is by using the strategy noted above for the deleting derivations algorithm. That is, we transfer an (easily obtained) Gr\"obner basis for an $\C{H}$-prime in a quantum affine space to a Gr\"obner basis for an $\C{H}$-prime in quantum matrices.

Finally, many nonstandard terms and notation have been invented for use in this paper. An combined index and glossary is provided in an appendix to assist the reader in more easily locating the definitions should the need arise.

\section{Quantum Matrices} \label{basicssection}

 
Let us first set some data, notation and conventions that are to be used throughout this paper. 
\begin{itemize}
\item Fix: an infinite field $\B{K}$, integers $m,n\geq 2$, and a nonzero, non-root of unity $q\in \B{K}$. 
\item For a positive integer $k$, we set $[k]=\{1,2,\ldots, k\}.$
\item The set of $m\times n$ matrices with integer entries is denoted by $\mat$. The set of $m\times n$ matrices with non-negative integer entries is denoted by $\matnonneg$. 
\item The $(i,j)$-entry of $N\in\mat$ is denoted by $(N)_{i,j}$, and $(i,j)$ is called the \emph{coordinate} of this entry. In view of this, the elements of $[m]\times [n]$ are called coordinates.
\item We often describe relative positions of coordinates using the usual meaning of terms such as north, northwest etc. For example, $(i,j)$ is \emph{northwest} of $(r,s)$ if $i<r$ and $j<s$, and \emph{north} if $i<r$ and $j=s$.
\end{itemize}

The restriction $m,n\geq 2$ is made simply to avoid some inconveniences in various definitions that would occur if $m=1$ or $n=1$. Fortunately, it is already known that all results presented in this paper hold when $m=1$ or $n=1$ since in these cases, all algebras in this paper reduce to quantum affine spaces, and such algebras can be dealt with using results of~\cite{gl6}.

\subsection{The Algebras $R^{(t)}$} \label{Rtsection}

\begin{defn} \label{lexorderdef}
The \emph{lexicographic order} on $[m]\times [n]$ is the total order $<$ obtained by setting 
\begin{align*} (i,j)<(k,\ell) & \Leftrightarrow \textnormal{$i<k$, or, $i=k$ and $j<\ell$.}\end{align*}
If $(i,j)\in [m]\times [n]$, then $(i,j)^-$ denotes the largest element less than $(i,j)$ with respect to the lexicographic order.
\end{defn}


\begin{note} Any reference in this paper relating to an ordering of the coordinates $[m]\times[n]$ is with respect to the lexicographic order.
\end{note}



The algebras in the next definition each have a set of generators indexed by $[m]\times [n]$. It is natural to place these generators as the entries of an $m\times n$ matrix that we call the \emph{matrix of generators}.
\begin{defn} \label{Rtdef}
Let $t\in [mn]$ and set $(r,s)$ to be the $t^{\textnormal{th}}$ smallest coordinate. Define $R^{(t)}$ to be the $\B{K}$-algebra with the $m\times n$ matrix of generators $X=[x_{i,j}]$ subject to the following relations. If $$\begin{bmatrix} a&b\\c&d\end{bmatrix}$$ is any $2\times 2$ submatrix of $X$, then:

\begin{enumerate}
\item $ab=qba$, $cd=qdc$;
\item $ac=qca$, $bd=qdb$;
\item $bc=cb$;
\item $ad=\begin{cases} da, &\textnormal{if $d=x_{k,\ell}$ and $(k,\ell)>(r,s)$;}\\
da+(q-q^{-1})bc, & \textnormal{if $d=x_{k,\ell}$ and $(k,\ell) \leq(r,s)$.}\end{cases}$

\end{enumerate}

\end{defn}

\begin{ex} If $m=2$, $n=3$ and $t=5$, then $(r,s)=(2,2)$ and $R^{(5)}$ has matrix of generators $$\begin{bmatrix} x_{1,1} & x_{1,2} & x_{1,3} \\ x_{2,1} & x_{2,2} & x_{2,3}\end{bmatrix}.$$ The relations corresponding to Part 4 of Definition~\ref{Rtdef} are \begin{align*} x_{1,1}x_{2,2} &= x_{2,2}x_{1,1} + (q-q^{-1})x_{1,2}x_{2,1} \\ x_{1,1}x_{2,3} &= x_{2,3}x_{1,1} \\ x_{1,2}x_{2,3}&=x_{2,3}x_{1,2}.\end{align*}
\end{ex}

The two extremities in the collection of $R^{(t)}$ are of the most interest to us.

\begin{notn}
With respect to the notation in Definition~\ref{Rtdef}:

\begin{enumerate}
\item If $t=1$, then in Part 4 of Definition~\ref{Rtdef} we always have $$ad=da.$$ We call this algebra \emph{$m\times n$ quantum affine space}, denoted $\qaffine$. The entries of the matrix of generators of $\qaffine$ will often be labeled by $t_{i,j}$ for $(i,j)\in [m]\times [n]$. \\

\item If $t=mn$, then in Part 4 of Definition~\ref{Rtdef} we always have $$ad=da+(q-q^{-1})bc.$$ This algebra is the \emph{quantized coordinate ring of $m\times n$ matrices over $\B{K}$}, denoted by $\qmatrix$ and simply referred to as  ($m\times n$) \emph{quantum matrices.}\\

\item The localization of $R^{(1)}=\qaffine$ with respect to the multiplicative set generated by the standard generators $t_{i,j}$ is called the ($m\times n$)\emph{ quantum torus} $\qtorus$. 

\item Two elements $y,z\in R^{(t)}$ will be said to \emph{$q^*$-commute} if there is an integer $r$ such that $yz=q^rzy$. Note that commuting elements $q^*$-commute.

\end{enumerate}
\end{notn}

In later sections, we work intimately with monomials in the generators of $R^{(t)}$, so we here set some notation in this respect. For the remainder of this section, fix $t\in [mn]$ and let $[x_{i,j}]$ be the matrix of generators for $R^{(t)}$.

\begin{notn} \label{monomialnotation}
If $N\in\matnonneg$, then we write $$\vect{x}^N = x_{1,1}^{(N)_{1,1}}x_{1,2}^{(N)_{1,2}}\cdots x_{m,n}^{(N)_{m,n}} \in R^{(t)},$$ written so that the indices obey the lexicographic order from smallest to largest as one goes from left to right. We call such a monomial a \emph{lexicographic term}. Similar notation will be used both for the quantum torus (where $N\in\mat$), and, if $(r,s)$ is the $t^\textnormal{th}$ smallest coordinate, for $R^{(t)}[x_{r,s}^{-1}]$ (where all entries of $N$ are non-negative except possibly the $(r,s)$-entry).


\end{notn}

It is not difficult to check that each $R^{(t)}$ may be written as an iterated Ore extension which immediately yields the following.

\begin{thm} \label{Rtproperties}
The following properties hold for every $t\in[mn]$.
\begin{enumerate}
\item $R^{(t)}$ is a Noetherian domain. 
\item As a $\B{K}$-vector space, $R^{(t)}$ has a basis consisting of the lexicographic terms $\vect{x}^N$ with $N\in \matnonneg$. The same properties also hold for the $m\times n$ quantum torus (but with $N\in \mat$).\qed
\end{enumerate}
\end{thm}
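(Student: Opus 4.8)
The plan is to realize each $R^{(t)}$ as an iterated Ore extension of $\B{K}$ and then read both statements off from the standard structure theory of such extensions. Order the coordinates lexicographically and, for $(k,\ell)\in[m]\times[n]$, let $A_{(k,\ell)}$ be the subalgebra of $R^{(t)}$ generated by $\{x_{i,j}:(i,j)\leq(k,\ell)\}$, with $A_0:=\B{K}$; the claim to establish is that for every $(k,\ell)$,
$$A_{(k,\ell)}\;=\;A_{(k,\ell)^-}\bigl[x_{k,\ell};\sigma_{k,\ell},\delta_{k,\ell}\bigr],$$
so that $R^{(t)}=A_{(m,n)}$ is an iterated Ore extension over $\B{K}$. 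The Ore data is dictated by the defining relations: against a generator strictly to its northeast, $x_{k,\ell}$ commutes (relation (3)); against a generator in its own row or column that precedes it, $x_{k,\ell}$ skew-commutes with factor $q^{-1}$ (relations (1), (2)); and against a generator $x_{i,j}$ strictly to its northwest one has $x_{k,\ell}x_{i,j}=x_{i,j}x_{k,\ell}$ or $x_{k,\ell}x_{i,j}=x_{i,j}x_{k,\ell}-(q-q^{-1})x_{i,\ell}x_{k,j}$ according to relation (4), where crucially $x_{i,\ell}$ and $x_{k,j}$ both precede $x_{k,\ell}$ and so already lie in $A_{(k,\ell)^-}$. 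Thus $\sigma_{k,\ell}$ is the diagonal automorphism of $A_{(k,\ell)^-}$ rescaling generators by the indicated powers of $q$, and $\delta_{k,\ell}$ is the $\sigma_{k,\ell}$-derivation determined by these values on generators. (The hypothesis $m,n\geq2$ enters only to guarantee that any two generators sharing a row or column actually sit inside a common $2\times2$ submatrix, so that relations (1)--(3) are available.)

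What must be checked is that this data is well defined --- that $\sigma_{k,\ell}$ and $\delta_{k,\ell}$ are compatible with the relations of $A_{(k,\ell)^-}$, and that building the tower $A_0\subset A_{(1,1)}\subset\cdots$ loses none of the relations of $R^{(t)}$. Compatibility of $\sigma_{k,\ell}$ is immediate, since a diagonal rescaling of generators preserves every monomial relation automatically, and for the one inhomogeneous relation (4) among earlier generators a short check shows the $q$-exponents on the two sides match. Compatibility of $\delta_{k,\ell}$ is the only genuinely computational point, but it is a finite, routine verification that is standard in the quantum-matrix literature. The cleanest packaging: form the abstract iterated Ore extension $S$ step by step --- each step legitimate once the corresponding $(\sigma,\delta)$ is verified --- observe that $S$ satisfies every defining relation of $R^{(t)}$ and hence receives a surjection $R^{(t)}\twoheadrightarrow S$; since the relations let one rewrite any word in the $x_{i,j}$ as a $\B{K}$-linear combination of lexicographic terms (the usual rewriting argument, made rigorous by the Diamond Lemma), $R^{(t)}$ is spanned by the $\vect{x}^N$ with $N\in\matnonneg$; and the PBW theorem for Ore extensions says these very terms form a $\B{K}$-basis of $S$. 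Hence the surjection is an isomorphism, which is statement (2) for $R^{(t)}$.

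Statement (1) then follows by induction up the tower from $A_0=\B{K}$, using the fact that if $A$ is a Noetherian domain and $\sigma$ an automorphism then $A[x;\sigma,\delta]$ is again a Noetherian domain. Finally, for the quantum torus: each standard generator $t_{i,j}$ of $\qaffine=R^{(1)}$ is normal (it $q^*$-commutes with every generator), so the monomials in the $t_{i,j}$ form a (left and right) denominator set, the localization $\qtorus$ is flat over $\qaffine$ and inherits the domain property, and a $\B{K}$-basis is obtained from $\{\vect{x}^N:N\in\matnonneg\}$ simply by allowing negative exponents, i.e.\ the lexicographic terms indexed by $N\in\mat$. The main obstacle in all of this is the single bookkeeping step --- confirming that the $\delta_{k,\ell}$ read off from the $2\times2$ relations are consistent at every stage of the tower; once that is granted, everything else is a direct appeal to standard Ore-extension machinery.
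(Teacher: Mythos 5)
Your proposal is correct and takes essentially the same route as the paper, which simply remarks that each $R^{(t)}$ is an iterated Ore extension over $\B{K}$ (adding generators in lexicographic order) and lets the standard Ore-extension machinery deliver the Noetherian domain property and the PBW basis. Your identification of the $\sigma_{k,\ell}$ and $\delta_{k,\ell}$ data from the $2\times 2$ relations, and the localization argument for the quantum torus, are exactly the details the paper leaves to the reader.
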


\begin{defn} \label{lexexpressiondef}
The \emph{lexicographic expression} of $a\in R^{(t)}$ is the unique linear combination $a=\sum_{N\in\matnonneg}\alpha_N\vect{x}^N$ of distinct lexicographic terms with $\alpha_N\neq 0$. A lexicographic term in this expression will be called a \emph{lex term} of $a$. 
\end{defn}

For $R^{(1)}=\qaffine$, we will require a slight extension of Theorem~\ref{Rtproperties}. Observe that any monomial $\vect{t}=t_{i_1,j_1}t_{i_2,j_2}\cdots t_{i_\ell,j_\ell}$ in the standard generators of $R^{(1)}$ may be written as $\vect{t}=q^\ell\vect{t}^{M^\textnormal{lex}}$ for some integer $\ell$ and lexicographic term $\vect{t}^{M^\textnormal{lex}}$. Since $q^\ell\neq 0$, the next result follows easily.

\begin{prop} \label{LID} 

For any coordinate $(r,s)$, the set of lexicographic monomials of $\qaffine$ involving only $t_{i,j}$ with $(i,j)>(r,s)$ is linearly independent over the subalgebra generated by the $t_{i,j}$ with $(i,j)\leq (r,s)$. 
Moreover, for a set $\{\vect{t}_1,\vect{t}_2,\ldots,\vect{t}_\ell\}$ of monomials in the standard generators of $\qaffine$, the following are equivalent.
\begin{enumerate}
\item The set $\{\vect{t}_1,\vect{t}_2,\ldots,\vect{t}_\ell\}$ is linearly independent over $\B{K}$.
\item The set $\{\vect{t}_1^{M_1^\textnormal{lex}},\vect{t}_2^{M_2^\textnormal{lex}},\ldots,\vect{t}_\ell^{M_\ell^\textnormal{lex}}\}$ is linearly independent over $\B{K}$.
\item The matrices $M_1^\textnormal{lex},\ldots, M_\ell^\textnormal{lex}$ are distinct.
\end{enumerate}
A similar set of statements hold for the $m\times n$ quantum torus. \qed
\end{prop}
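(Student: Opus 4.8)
The plan is to deduce everything from the lexicographic-term basis of Theorem~\ref{Rtproperties}(2), combined with the normalization remark recorded just above: every monomial $\vect{t}$ in the standard generators of $\qaffine$ equals $q^\ell\vect{t}^{M^\textnormal{lex}}$ for some $\ell\in\B{Z}$ and some $M^\textnormal{lex}\in\matnonneg$, since in $\qaffine$ the generators pairwise $q^*$-commute and so reordering a word only produces a power of $q$.

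First I would handle the opening assertion. Suppose $\sum_k a_k\vect{t}^{N_k}=0$, where the $N_k\in\matnonneg$ are distinct and supported only on coordinates strictly greater than $(r,s)$, and each $a_k$ lies in the subalgebra generated by the $t_{i,j}$ with $(i,j)\leq(r,s)$. Write $a_k$ in its lexicographic expression $a_k=\sum_M\alpha_{k,M}\vect{t}^M$; here each $M$ is supported only on coordinates $\leq(r,s)$. The crucial point is that every index appearing in $\vect{t}^M$ precedes every index appearing in $\vect{t}^{N_k}$, so the concatenation $\vect{t}^M\vect{t}^{N_k}$ is already in lexicographic order and no correcting power of $q$ appears: $\vect{t}^M\vect{t}^{N_k}=\vect{t}^{M+N_k}$. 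Substituting gives $0=\sum_{k,M}\alpha_{k,M}\vect{t}^{M+N_k}$, and the exponent matrices $M+N_k$ are pairwise distinct, since restricting such a matrix to the coordinates $\leq(r,s)$ recovers $M$ while restricting to the coordinates $>(r,s)$ recovers $N_k$ (and the $N_k$ are distinct). Theorem~\ref{Rtproperties}(2) then forces every $\alpha_{k,M}=0$, hence every $a_k=0$.

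Next I would treat the equivalences. The implication (2)$\Leftrightarrow$(3) is immediate from Theorem~\ref{Rtproperties}(2): each $\vect{t}_i^{M_i^\textnormal{lex}}$ is a lexicographic term, and a family of lexicographic terms is linearly independent over $\B{K}$ precisely when its members are pairwise distinct, i.e.\ when the matrices $M_i^\textnormal{lex}$ are distinct. For (1)$\Leftrightarrow$(2), use $\vect{t}_i=q^{\ell_i}\vect{t}_i^{M_i^\textnormal{lex}}$ with $q^{\ell_i}\neq0$: passing between the two families merely rescales each vector by a nonzero scalar, which preserves linear (in)dependence. Finally, the quantum torus version is obtained verbatim, now invoking the part of Theorem~\ref{Rtproperties}(2) that gives a lexicographic-term basis of $\qtorus$ indexed by $\mat$ and the obvious analogue of the normalization remark for words in the $t_{i,j}^{\pm1}$; the splitting argument above used only that a matrix is recovered from its restrictions to the two coordinate ranges, so it carries over unchanged.

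I do not anticipate a real obstacle here: the content reduces to the basis statement of Theorem~\ref{Rtproperties}, and the only delicate bookkeeping is verifying that concatenating a ``small'' monomial with a ``large'' one introduces no power of $q$ and yields pairwise distinct exponent matrices — which is precisely the point at which the split of $[m]\times[n]$ at $(r,s)$ enters.
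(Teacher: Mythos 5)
Your argument is correct and is exactly the route the paper intends: Proposition~\ref{LID} is stated with no written proof beyond the preceding remark that any monomial equals $q^\ell\vect{t}^{M^{\textnormal{lex}}}$, and your elaboration via the lexicographic-term basis of Theorem~\ref{Rtproperties}(2) — including the key observation that a ``small'' monomial concatenated with a ``large'' one is already in lexicographic order and that the exponent matrix splits uniquely at $(r,s)$ — is precisely the ``follows easily'' the author has in mind.
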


We conclude this section by noting that $R^{(t)}$ has a natural $\B{Z}_{\geq 0}^{m+n}$-grading that will be very much exploited in the proof of Theorem~\ref{genthm}.  If $$\vect{s}=(r_1,r_2,\ldots,r_m,c_1,c_2,\ldots,c_n)\in (\B{Z}_{\geq 0})^{m+n},$$ then the homogeneous component of degree $\vect{s}$ is the subspace of $R^{(t)}$ spanned by the lexicographic monomials of the form $\vect{x}^N$, where $N$ satisfies \begin{align*} \sum_{j=1}^n (N)_{i,j} & = r_i, \textnormal{ for all $i\in [m]$, and} \\
\sum_{i=1}^m (N)_{i,j} & = c_j, \textnormal{ for all $j\in [n]$.} \\
\end{align*}
In other words, the sum of all entries in row $i$ of $N$ equals $r_i$, and the sum of all entries in column $j$ of $N$ equals $c_j$. All references in this paper to a grading on $R^{(t)}$ will be with respect to this grading.

\subsection{The Deleting Derivations Algorithm} \label{DDsection}

The relationship between $R^{(t)}$ and $R^{(t-1)}$ has been studied by Cauchon~\cite{cauchon2} as a special case of the more general theory developed in~\cite{cauchon1}. Here, we review his results as they apply to these algebras. For each result in this section, we fix $t\in[mn]$ with $t\neq1$, let $(r,s)$ denote the $t^\textnormal{th}$ smallest coordinate, and let $[x_{i,j}]$ be the matrix of generators of $R^{(t)}$ and $[y_{i,j}]$ the matrix of generators for $R^{(t-1)}.$

\begin{thm}[Cauchon~\cite{cauchon1}, Lemme 2.1 and Th\'eor\`eme 3.2.1] \label{ddtheorem}
\hspace{5cm}
\begin{enumerate}
\item The multiplicative set generated by $x_{r,s}$ is a left and right Ore set for $R^{(t)}$, and the multiplicative set generated by $y_{r,s}$ is a left and right Ore set for $R^{(t-1)}$. 

\item There is an injective homomorphism $$\overrightarrow{\cdot}: R^{(t-1)}\to R^{(t)}\left[x_{r,s}^{-1}\right]$$ defined on the standard generators by $$\overrightarrow{y_{i,j}} = \begin{cases} x_{i,j}- x_{i,s}x_{r,s}^{-1}x_{r,j}, & \textnormal{ if $i<r$ and $j<s$;} \\ 
x_{i,j} & \textnormal{ otherwise.}
\end{cases}$$

\item There is an injective homomorphism $$\overleftarrow{\cdot}: R^{(t)}\to R^{(t-1)}\left[y_{r,s}^{-1}\right]$$ defined on the standard generators by $$\overleftarrow{x_{i,j}} = \begin{cases} y_{i,j} + y_{i,s}y_{r,s}^{-1}y_{r,j}, & \textnormal{ if $i<r$ and $j<s$;} \\ 
y_{i,j} & \textnormal{ otherwise.}
\end{cases}$$

\item $R^{(t)}\left[x_{r,s}^{-1}\right]= R^{(t-1)}\left[y_{r,s}^{-1}\right]$.\qed
\end{enumerate}
\end{thm}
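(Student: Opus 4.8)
The statement is attributed to Cauchon, so strictly speaking I would simply cite \cite{cauchon1}; but a self-contained proof proceeds as follows. Parts (1)–(3) should be established essentially by brute force from the defining relations of Definition~\ref{Rtdef}, and then Part (4) follows formally.

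For Part (1), I would verify the Ore condition directly. Since $R^{(t)}$ is a domain (Theorem~\ref{Rtproperties}), it suffices to check that $x_{r,s}$ (and $y_{r,s}$) is a normal element up to a commutation twist — more precisely, that for each generator $x_{i,j}$ one has $x_{r,s}x_{i,j} = q^{\pm 1}x_{i,j}x_{r,s}$ or, in the one bad case, $x_{i,j}x_{r,s}$ differs from a multiple of $x_{r,s}x_{i,j}$ by a term still divisible by $x_{r,s}$ on the appropriate side. The point is that $(r,s)$ being the $t^{\text{th}}$ coordinate means the relation of type (4) with $d = x_{r,s}$ never fires (it requires $(k,\ell)\le (r,s)$ with $d$ strictly southeast of $a$, but then $a$ would be out of range or the relation is of the ``$ad=da$'' kind for this particular $t$ — this is exactly the asymmetry between $R^{(t)}$ and $R^{(t-1)}$). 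So $x_{r,s}$ $q^*$-commutes with every generator in $R^{(t)}$, hence is normal, and the powers of a normal element always form an Ore set in a Noetherian domain. The same argument applies to $y_{r,s}$ in $R^{(t-1)}$, where now the relevant relation is of the ``$ad=da+(q-q^{-1})bc$'' type but still does not obstruct normality because of where $(r,s)$ sits.

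For Parts (2) and (3), the real work is checking that the proposed assignments on generators respect all the defining relations of Definition~\ref{Rtdef}; since both maps are defined on generators of a presented algebra, well-definedness is exactly this relation-checking, and injectivity then follows from Part (4) (or independently from a PBW/leading-term argument using Theorem~\ref{Rtproperties}). Concretely, for $\overrightarrow{\cdot}$ I would take an arbitrary $2\times 2$ submatrix of $[y_{i,j}]$, substitute $\overrightarrow{y_{i,j}} = x_{i,j} - x_{i,s}x_{r,s}^{-1}x_{r,j}$ when $i<r$, $j<s$ and $\overrightarrow{y_{i,j}} = x_{i,j}$ otherwise, and verify each of relations (1)–(4) in $R^{(t)}[x_{r,s}^{-1}]$. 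The subtle cases are those where one or more of the four entries lies strictly northwest of $(r,s)$, so that the correction term $-x_{i,s}x_{r,s}^{-1}x_{r,j}$ is present; here one uses the $R^{(t)}$-relations among $x_{i,s}, x_{r,s}, x_{r,j}, x_{i,j}$ and the entries of the submatrix, and the identity $(q-q^{-1})$ appears precisely to make relation~(4) balance — this is the computation that ``explains'' the deleting-derivations formula. The map $\overleftarrow{\cdot}$ is handled the same way with the sign of the correction term flipped, working in $R^{(t-1)}[y_{r,s}^{-1}]$.

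Finally, for Part (4): from Part (2), $\overrightarrow{\cdot}$ maps $R^{(t-1)}$ into $R^{(t)}[x_{r,s}^{-1}]$, and since $\overrightarrow{y_{r,s}} = x_{r,s}$ (as $(r,s)$ is not strictly northwest of itself), the map extends to $R^{(t-1)}[y_{r,s}^{-1}] \to R^{(t)}[x_{r,s}^{-1}]$ sending $y_{r,s}^{-1}\mapsto x_{r,s}^{-1}$. Symmetrically $\overleftarrow{\cdot}$ extends to a map in the other direction. One then checks these are mutually inverse on generators: $\overleftarrow{\overrightarrow{y_{i,j}}} = y_{i,j}$ and $\overrightarrow{\overleftarrow{x_{i,j}}} = x_{i,j}$, which for $i<r$, $j<s$ amounts to the telescoping identity $(x_{i,j} - x_{i,s}x_{r,s}^{-1}x_{r,j}) + x_{i,s}x_{r,s}^{-1}x_{r,j} = x_{i,j}$ after translating through the other map, and is trivial otherwise. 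Hence both are isomorphisms and the two localizations coincide; this also retroactively gives injectivity in (2) and (3). \emph{The main obstacle} is the relation-verification in Parts (2)–(3): it is not deep, but it is the one genuinely computational step, and getting the placement conditions (which entries are northwest of $(r,s)$, and for which of them relation~(4) is of which type in $R^{(t)}$ versus $R^{(t-1)}$) exactly right is where all the care is needed.
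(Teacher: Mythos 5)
The paper offers no proof of this theorem: it is quoted directly from Cauchon (Lemme 2.1 and Th\'eor\`eme 3.2.1 of the cited reference) and stamped with a \emph{qed}, so your proposal is being measured against the literature rather than against an argument in the text. Your outline of Parts (2)--(4) --- verify the defining relations on generators to get well-defined homomorphisms, extend both maps to the localizations via $\overrightarrow{y_{r,s}}=x_{r,s}$, check the composites are the identity on generators, and deduce injectivity and the equality of the two localizations retroactively --- is the standard and correct route, including your identification of the relation-checking as the genuinely computational step.

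Part (1), however, contains a concrete error. You assert that the type-(4) relation with $d=x_{r,s}$ ``never fires'' in $R^{(t)}$, so that $x_{r,s}$ $q^*$-commutes with every generator and is therefore normal, and that the nontrivial relation instead appears for $y_{r,s}$ in $R^{(t-1)}$. This is exactly backwards. In $R^{(t)}$ the quantum-matrix relation applies whenever the southeast entry $x_{k,\ell}$ satisfies $(k,\ell)\leq(r,s)$, and $(r,s)\leq(r,s)$; hence for every $(i,j)$ northwest of $(r,s)$ one has $x_{i,j}x_{r,s}=x_{r,s}x_{i,j}+(q-q^{-1})x_{i,s}x_{r,j}$, and the correction term $x_{i,s}x_{r,j}$ is a basis monomial not divisible by $x_{r,s}$, so $x_{r,s}$ is \emph{not} normal in $R^{(t)}$. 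It is $y_{r,s}$ in $R^{(t-1)}$ that $q^*$-commutes with all generators, since there the threshold coordinate is $(r,s)^-<(r,s)$. The Ore property for $x_{r,s}$ in $R^{(t)}$ is precisely the nontrivial half of Part (1) and needs a real argument: either Cauchon's general theory of $q$-skew derivations, or the direct computation (which the paper itself invokes later for $A_B^{(t)}$) that $x_{i,s}$ and $x_{r,j}$ $q^*$-commute with $x_{r,s}$, whence by induction $x_{i,j}x_{r,s}^{m+1}\in x_{r,s}^{m}R^{(t)}$ for all $m$; this condition, checked on generators and propagated to arbitrary elements, yields the Ore condition in the Noetherian domain $R^{(t)}$. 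With that repair the rest of your plan goes through.
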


The homomorphism in Theorem~\ref{ddtheorem}~(2) is called the \emph{deleting derivations map}. We call the homomorphism in Theorem~\ref{ddtheorem}~(3) the  \emph{adding derivations map}. (This map is called the ``reverse deleting derivations map'' in~\cite{launois2}, and a step of the ``restoration'' algorithm in~\cite{gll}.) 

The strategy of Cauchon's theory is to use these maps to iteratively transfer information between $R^{(1)}=\qaffine$ and $R^{(mn)}=\qmatrix$. For example, to embed the prime spectrum of the latter algebra into the prime spectrum of the former. 

As usual, for an algebra $A$, denote by $\spec(A)$ the set of prime ideals, equipped with the Zariski topology. We may partition $\spec(R^{(t)})$ as $$\spec(R^{(t)})= \spec^{\not\in}(R^{(t)})\cup \spec^\in(R^{(t)}),$$ where $$\spec^{\not\in}(R^{(t)})=\{P\in\spec(R^{(t)})\mid x_{r,s}\not\in P\},$$ and $$\spec^\in(R^{(t)})=\{P\in\spec(R^{(t)})\mid x_{r,s}\in P\}.$$

\begin{thm}[Cauchon~\cite{cauchon2}, Section 3.1] \label{Cauchonmap}
There exists an injective map $$\phi_t: \spec(R^{(t)})\to\spec(R^{(t-1)})$$ satisfying the following properties.
\begin{enumerate}
\item Restricted to $\spec^{\not\in}(R^{(t)})$, $\phi_t$ is bijective, sending $P\in\spec^{\not\in}(R^{(t)})$ to $$\phi_t(P)=\overleftarrow{P}[y_{r,s}^{-1}]\cap R^{(t-1)}.$$ If $Q\in\spec^{\not\in}(R^{(t-1)}),$ then $$\phi_t^{-1}(Q)=\overrightarrow{Q}[x_{r,s}^{-1}]\cap R^{(t)}.$$
\item Restricted to $\spec^{\in}(R^{(t)})$, $\phi_t$ is injective, sending $P\in\spec^\in(R^{(t)})$ to $$\phi_t(P) = g^{-1}(P/\langle x_{r,s}\rangle),$$ where $g:R^{(t-1)}\to R^{(t)}/\langle x_{r,s}\rangle$ is the unique homomorphism that maps the standard generators as $y_{i,j}\mapsto x_{i,j}+\langle x_{r,s}\rangle.$\qed
\end{enumerate}
\end{thm}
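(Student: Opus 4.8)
The statement is due to Cauchon; the plan is to reconstruct it from Theorem~\ref{ddtheorem}, building $\phi_t$ separately on the two pieces $\spec^{\not\in}(R^{(t)})$ and $\spec^{\in}(R^{(t)})$ of the domain and checking at the end that the two constructions have disjoint images in $\spec(R^{(t-1)})$, so that the resulting function is globally injective.

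On the piece $\spec^{\not\in}(R^{(t)})$ I would use the standard prime correspondence for Ore localizations (see, e.g., \cite{bg}): since $R^{(t)}$ is a Noetherian domain (Theorem~\ref{Rtproperties}) and the powers of $x_{r,s}$ form a left and right Ore set (Theorem~\ref{ddtheorem}(1)), extension $P\mapsto P[x_{r,s}^{-1}]$ is an inclusion-preserving bijection from $\spec^{\not\in}(R^{(t)})$ onto $\spec(R^{(t)}[x_{r,s}^{-1}])$, with inverse the contraction $\mathfrak{q}\mapsto\mathfrak{q}\cap R^{(t)}$; the analogous statement holds for $R^{(t-1)}$ and $y_{r,s}$. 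By Theorem~\ref{ddtheorem}(4) these two localizations are literally the same ring, inside which $R^{(t)}$ sits via $\overleftarrow{\cdot}$ and $R^{(t-1)}$ sits via $\overrightarrow{\cdot}$. Composing the two bijections and unravelling through this identification sends $P\in\spec^{\not\in}(R^{(t)})$ to the contraction to $R^{(t-1)}$ of the extension of $P$ to the common localization, i.e. to $\overleftarrow{P}[y_{r,s}^{-1}]\cap R^{(t-1)}$, and the inverse sends $Q$ to $\overrightarrow{Q}[x_{r,s}^{-1}]\cap R^{(t)}$; this is part (1).

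On the piece $\spec^{\in}(R^{(t)})$ the first step is to verify that $y_{i,j}\mapsto x_{i,j}+\langle x_{r,s}\rangle$ really defines an algebra map $g\colon R^{(t-1)}\to R^{(t)}/\langle x_{r,s}\rangle$, i.e. that the defining relations of $R^{(t-1)}$ hold among these cosets. Relations (1)--(3) of Definition~\ref{Rtdef} are identical for $R^{(t-1)}$ and $R^{(t)}$, so nothing is needed there; for relation (4) the only place the two algebras disagree is the $2\times 2$ submatrix whose southeast corner lies at $(r,s)$, and there both the $R^{(t-1)}$-relation and the $R^{(t)}$-relation collapse to $0=0$ modulo $\langle x_{r,s}\rangle$. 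Thus $g$ exists, and it is surjective because the cosets $x_{i,j}+\langle x_{r,s}\rangle$ generate $R^{(t)}/\langle x_{r,s}\rangle$. For $P\in\spec^{\in}(R^{(t)})$ we have $x_{r,s}\in P$, so $P/\langle x_{r,s}\rangle$ is prime in $R^{(t)}/\langle x_{r,s}\rangle$ and $\phi_t(P):=g^{-1}(P/\langle x_{r,s}\rangle)$ is prime in $R^{(t-1)}$; since $g$ is surjective, the assignment $P/\langle x_{r,s}\rangle\mapsto g^{-1}(P/\langle x_{r,s}\rangle)$ is injective, and composing with the correspondence theorem for $R^{(t)}\to R^{(t)}/\langle x_{r,s}\rangle$ shows $\phi_t$ is injective on $\spec^{\in}(R^{(t)})$.

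Finally, for global injectivity it is enough to note that the images of the two pieces are disjoint subsets of $\spec(R^{(t-1)})$: part (1) gives $y_{r,s}\notin\phi_t(P)$ whenever $x_{r,s}\notin P$, whereas if $x_{r,s}\in P$ then $g(y_{r,s})=x_{r,s}+\langle x_{r,s}\rangle=0\in P/\langle x_{r,s}\rangle$, so $y_{r,s}\in\phi_t(P)$. I do not expect a serious obstacle here; the result is essentially a repackaging of the Ore-localization prime correspondence (applied on both sides and glued along Theorem~\ref{ddtheorem}(4)) together with the elementary quotient bookkeeping in part (2). The one point that most warrants care is confirming, under the identification of the two localizations, that extension-then-contraction in part (1) does land in $\spec^{\not\in}(R^{(t-1)})$ and that the two contraction maps are genuinely mutually inverse — which is exactly what the prime correspondence yields once Theorem~\ref{ddtheorem}(1) guarantees the Ore condition.
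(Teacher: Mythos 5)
The paper states this theorem as a cited result of Cauchon and gives no proof, and your reconstruction --- the Ore-localization prime correspondence glued along $R^{(t)}[x_{r,s}^{-1}]=R^{(t-1)}[y_{r,s}^{-1}]$ on $\spec^{\not\in}(R^{(t)})$, the quotient correspondence through $g$ on $\spec^{\in}(R^{(t)})$, and disjointness of images via membership of $y_{r,s}$ --- is exactly the standard argument and is correct. One harmless slip: modulo $\langle x_{r,s}\rangle$ the $R^{(t)}$-relation at the submatrix with southeast corner $(r,s)$ does not collapse to $0=0$ but to $(q-q^{-1})\bar{b}\bar{c}=0$ (so $\bar{x}_{i,s}\bar{x}_{r,j}=0$ in the quotient); what well-definedness of $g$ actually requires is only that the $R^{(t-1)}$-relation $ad=da$ holds among the images, which it does since $d\mapsto 0$.
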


\subsection{$\C{H}$-Stratification} \label{Hstratsection}

For many quantum algebras, including the $R^{(t)}$, the structure of the prime spectrum may be understood by first understanding the prime ideals that are invariant under a rational action of an algebraic torus $\C{H}$. For $R^{(t)}$ with matrix of generators $[x_{i,j}]$, let $\C{H}=(\B{K}^*)^{m+n}$ and note that every $h=(\rho_1,\ldots,\rho_m,\gamma_1,\ldots,\gamma_n)\in \C{H}$ induces an automorphism of $R^{(t)}$ by $$h\cdot x_{i,j}= \rho_i\gamma_jx_{i,j}.$$ 

\begin{defn}
An \emph{$\C{H}$-prime} is a prime ideal $K\in\spec(R^{(t)})$ such that $h\cdot K=K$ for all $h\in \C{H}$. The set of all $\C{H}$-primes of $R^{(t)}$ is denoted $\hspec(R^{(t)})$. The \emph{$\C{H}$-stratum} associated to an $\C{H}$-prime $K$ is the set $$\spec_K(R^{(t)}) = \{P\in\spec(R^{(t)})\mid \bigcap_{h\in \C{H}} h\cdot P = K\}.$$
\end{defn}

\begin{thm}[Goodearl-Letzter~\cite{GL} (or see~\cite{bg}, Part II)] \label{Hstrattheorem}
For every $t\in[mn]$, there are finitely many $\C{H}$-primes in $\hspec(R^{(t)})$, and 
$$\spec(R^{(t)}) = \bigsqcup_{K\in \hspec(R^{(t)})} \spec_K(R^{(t)}).$$\qed
\end{thm}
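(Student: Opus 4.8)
This statement is quoted directly from Goodearl--Letzter, so the "proof" I would give is really a verification that the hypotheses of the general $\C{H}$-stratification machinery apply to each $R^{(t)}$; I would not reprove the general theorem. The plan is as follows. First I would observe, as already noted in the text preceding Theorem~\ref{Rtproperties}, that each $R^{(t)}$ is an iterated Ore extension of $\B{K}$ of the form $\B{K}[x_{1,1}][x_{1,2};\sigma,\delta]\cdots$, with the generators adjoined in lexicographic order; the skew-derivations are exactly those dictated by the relations in Definition~\ref{Rtdef}. In particular each $R^{(t)}$ is a Noetherian $\B{K}$-algebra (Theorem~\ref{Rtproperties}(1)).

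Next I would check that the action of $\C{H}=(\B{K}^*)^{m+n}$ described above is \emph{rational} in the sense of~\cite{GL} (equivalently, \cite{bg}, Part~II): the torus acts by $\B{K}$-algebra automorphisms, each $x_{i,j}$ spans an $\C{H}$-eigenspace with weight $(\vect{e}_i,\vect{e}_j)$ under the natural identification of the character group of $\C{H}$ with $\B{Z}^{m+n}$, these weights generate a free abelian group, and $R^{(t)}$ is spanned by $\C{H}$-eigenvectors (indeed, by the lexicographic terms $\vect{x}^N$, each of which is an eigenvector of weight equal to the degree $\vect{s}$ in the $\B{Z}_{\geq 0}^{m+n}$-grading introduced at the end of Section~\ref{Rtsection}). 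Thus the $\C{H}$-action is rational and in fact the $\B{Z}_{\geq 0}^{m+n}$-grading \emph{is} the eigenspace decomposition. I would then invoke the Goodearl--Letzter theorem: for a Noetherian algebra carrying a rational action of a torus $\C{H}$ with only finitely many $\C{H}$-invariant primes, the prime spectrum is the disjoint union of the $\C{H}$-strata indexed by $\hspec$. The finiteness of $\hspec(R^{(t)})$ for these particular algebras is itself part of Cauchon's analysis (\cite{cauchon2}); alternatively it follows from the Goodearl--Letzter criterion that an iterated Ore extension in which each $\sigma$ is a torus eigenautomorphism has finitely many $\C{H}$-primes provided the relevant "$\C{H}$-eigenvalue" conditions hold, which is where the hypothesis that $q$ is a non-root of unity enters.

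The only genuine point requiring care—the main obstacle—is verifying that the iterated Ore extension structure is compatible with the torus action in the precise way the Goodearl--Letzter hypotheses demand (each $\sigma_k$ is given by an element of $\C{H}$, or at least acts diagonally in a way that makes the normal eigenvectors behave correctly), and that the non-root-of-unity assumption on $q$ is exactly what forces finiteness of $\hspec$. Once these bookkeeping checks are in place the theorem is immediate from~\cite{GL}; since all of this is standard and due to Goodearl--Letzter and Cauchon, the statement is simply recorded here with a reference and no further argument is needed.
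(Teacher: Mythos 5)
The paper states this result as a direct citation of Goodearl--Letzter (with no proof given), and your proposal correctly identifies this and supplies the standard verification that the hypotheses of the $\C{H}$-stratification machinery (Noetherian iterated Ore extension structure, rational torus action with the generators as eigenvectors, non-root-of-unity hypothesis for finiteness) apply to each $R^{(t)}$. This matches the paper's approach, so nothing further is needed.
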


\begin{rem}
Theorem~\ref{ddtheorem} and Theorem~\ref{Hstrattheorem} are where it is necessary to require $q$ to be a nonzero, non-root of unity. We also note here that the $\C{H}$-primes are well-known to be homogeneous ideals.
\end{rem}

The $\C{H}$-primes of $R^{(1)}=\qaffine$ have generating sets of a simple form.

\begin{thm}[Goodearl-Letzter~\cite{gl6}, Section 2.1(ii)] \label{grobner1}
A prime ideal $K\in\spec(R^{(1)})$ is an $\C{H}$-prime if and only if there exists a $B\subseteq [m]\times [n]$ such that $$K=\langle t_{i,j}\mid (i,j)\in B\rangle.$$\qed
\end{thm}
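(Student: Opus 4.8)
The plan is to prove the two implications separately. The reverse implication is routine: for $B\subseteq[m]\times[n]$, the ideal $J=\langle t_{i,j}\mid(i,j)\in B\rangle$ is visibly $\C{H}$-stable since each $t_{i,j}$ is an $\C{H}$-eigenvector, and because every $t_{i,j}$ is a normal element of $\qaffine$ the quotient $\qaffine/J$ is again a quantum affine space, namely the one on the generators $\{t_{i,j}\mid(i,j)\notin B\}$ with the inherited $q$-commutation relations (here $m,n\geq 2$ guarantees that every pair of surviving generators lies in a common $2\times2$ submatrix, so no relation is lost or created). Being an iterated Ore extension, this quotient is a domain, so $J$ is completely prime, hence an $\C{H}$-prime.

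For the forward implication, let $K\in\hspec(\qaffine)$ and set $B=\{(i,j)\mid t_{i,j}\in K\}$, so that $J:=\langle t_{i,j}\mid(i,j)\in B\rangle\subseteq K$. Writing $\bar A:=\qaffine/J$ (a quantum affine space, as above) and $\bar K:=K/J$, the statement reduces to: a homogeneous prime $\bar K$ of a quantum affine space that contains none of the standard generators must be zero. Here I use that $\C{H}$-primes are homogeneous for the $\B{Z}_{\geq0}^{m+n}$-grading, so $K$, and hence $\bar K$, are homogeneous ideals. Suppose $\bar K\neq0$; then it contains a nonzero homogeneous element, namely an element $c=\sum_{i=1}^{\ell}\alpha_i\vect{t}^{N_i}$ with $\alpha_i\neq0$ whose lex terms all share the same row sums and the same column sums. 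Choose such a $c$ with $\ell$ minimal. If $\ell=1$ then $c$ is a scalar times a monomial in the surviving generators, and primeness of $\bar K$ forces one of those generators into $\bar K$, a contradiction; so $\ell\geq2$.

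The crux is then a commutator reduction contradicting minimality of $\ell$. For a surviving coordinate $(p,r)$ one has $\vect{t}^{N}\,t_{p,r}=q^{\lambda_{p,r}(N)}\,t_{p,r}\,\vect{t}^{N}$, where $\lambda_{p,r}(N)$ is additive in $N$ and, directly from the defining relations of $\qaffine$, equals $\sum_{b<r}N_{p,b}-\sum_{b>r}N_{p,b}+\sum_{a<p}N_{a,r}-\sum_{a>p}N_{a,r}$. Then $c\,t_{p,r}-q^{\lambda_{p,r}(N_1)}\,t_{p,r}\,c\in\bar K$, and after rewriting each $t_{p,r}\vect{t}^{N_i}$ as a scalar times the lex term $\vect{t}^{N_i+e_{p,r}}$ (with $e_{p,r}$ the matrix unit at $(p,r)$) this element equals $\sum_{i\geq2}\beta_i\bigl(q^{\lambda_{p,r}(N_i)}-q^{\lambda_{p,r}(N_1)}\bigr)\vect{t}^{N_i+e_{p,r}}$ for nonzero scalars $\beta_i$, so it has at most $\ell-1$ lex terms. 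It is nonzero as soon as we can choose the surviving $(p,r)$ with $\lambda_{p,r}(N_2)\neq\lambda_{p,r}(N_1)$ — and this is exactly where $q$ being a non-root of unity is used, to turn distinct integer exponents into distinct powers of $q$. Such an element is a nonzero homogeneous element of $\bar K$ with fewer lex terms than $c$, contradicting minimality.

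Thus everything hinges on the combinatorial lemma: if $D\neq0$ is an integer matrix supported on the surviving coordinates with zero row sums and zero column sums (which one applies to $D=N_2-N_1$), then $\lambda_{p,r}(D)\neq0$ for some surviving $(p,r)$. I would prove it by taking $p$ maximal among the rows where $D$ is nonzero and then $r$ maximal with $D_{p,r}\neq0$; then $D_{p,b}=0$ for $b>r$ and $D_{a,r}=0$ for all $a>p$, and the conditions that $D$ has zero row sums and zero column sums collapse the formula for $\lambda_{p,r}(D)$ to $-2D_{p,r}$, a nonzero integer. I expect this lemma, together with the (mostly bookkeeping) task of verifying that the commutator reduction really lands a strictly shorter \emph{nonzero} element of $\bar K$, to be the only delicate point; the rest is standard quantum-affine-space manipulation. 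Conceptually, the same computation shows that the relevant quantum torus has trivial center, which is why the reduction can never stall.
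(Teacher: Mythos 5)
Your argument is correct. The paper itself offers no proof of this statement --- it is quoted from Goodearl--Letzter --- so there is nothing internal to compare against; what you have written is a sound, self-contained proof and is essentially the standard argument for this fact: reduce modulo the generators lying in $K$, use homogeneity of $\C{H}$-primes to select a homogeneous element of $\bar{K}$ with a minimal number of lex terms, dispose of the one-term case via normality of the generators plus primeness, and shorten any longer element by a commutator whose non-vanishing amounts to the non-degeneracy of the $q$-commutation pairing on the lattice of integer matrices with zero row and column sums supported on the surviving coordinates. Your maximal-$(p,r)$ computation $\lambda_{p,r}(D)=-2D_{p,r}\neq 0$ correctly establishes that non-degeneracy, the surviving-coordinate and homogeneity bookkeeping checks out, and the hypothesis that $q$ is not a root of unity enters exactly where you say it does.
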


It is convenient to describe these $\C{H}$-primes by using diagrams.

\begin{defn} \label{diagramdef}
An $m\times n$ \emph{diagram} is an $m\times n$ grid of squares, each square colored either black or white.
\end{defn}

We index the squares of a diagram as one would the entries of an $m\times n$ matrix. If $$K=\langle t_{i,j}\mid (i,j)\in B\rangle\in\hspec(R^{(1)})$$ for some $B\subseteq [m]\times [n]$, then the diagram corresponding to $K$ as that in which the black squares are precisely those $(i,j)\in B$. Conversely, any diagram defines a subset $B\subseteq[m]\times [n]$ corresponding to the indices of the black squares, and therefore a corresponding $K\in\hspec(R^{(1)})$. We henceforth identify a diagram with the corresponding subset $B\subseteq [m]\times [n]$. Figure~\ref{diagramex} presents two diagrams, the left one corresponding to the $\C{H}$-prime $\langle t_{1,1},t_{2,1},t_{2,3}\rangle\in \hspec(\C{O}_q(\B{K}^{3\times 4})).$.

\begin{figure}[htbp]
\begin{center}
\begin{tikzpicture}[xscale=1.1, yscale=1.1]

\draw [fill=black] (0,1) rectangle (0.5,1.5);
\draw [fill=black] (0,0.5) rectangle (0.5,1);
\draw [fill=black] (1,0.5) rectangle (1.5,1);
\draw [color=gray] (0,0) rectangle (0.5,0.5);
\draw [color=gray] (1,0) rectangle (1.5,0.5);
\draw [color=gray] (1.5,0) rectangle (2,0.5);
\draw [color=gray] (1.5,0.5) rectangle (2,1);
\draw [color=gray] (0,1) rectangle (0.5,1.5);
\draw [color=gray] (1,1) rectangle (1.5,1.5);
\draw [color=gray] (0.5,0) rectangle (1,0.5);
\draw [color=gray] (0.5,0.5) rectangle (1,1);
\draw [color=gray] (0.5,1) rectangle (1,1.5);
\draw [color=gray] (1.5,1) rectangle (2,1.5);

\draw [color=gray, fill=black] (3.5,1) rectangle (4,1.5);
\draw [color=gray,fill=black] (3.5,0) rectangle (4,0.5);
\draw [color=gray,fill=black] (3.5,0.5) rectangle (4,1);
\draw [color=gray,fill=black] (3,0.5) rectangle (3.5,1);
\draw [color=gray,fill=black] (4,0.5) rectangle (4.5,1);
\draw [color=gray,fill=black] (4.5,1) rectangle (5,1.5);
\draw [color=gray] (3,0) rectangle (3.5,0.5);
\draw [color=gray] (4,0) rectangle (4.5,0.5);
\draw [color=gray] (4.5,0) rectangle (5,0.5);
\draw [color=gray] (4.5,0.5) rectangle (5,1);
\draw [color=gray] (3,1) rectangle (3.5,1.5);
\draw [color=gray] (4,1) rectangle (4.5,1.5);
\end{tikzpicture}
\caption{Two $3\times 4$ diagrams.}
\label{diagramex}
\end{center}
\end{figure}
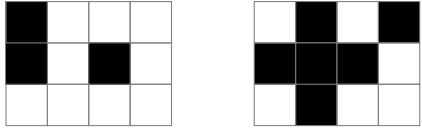

The deleting derivations map behaves nicely with respect to $\C{H}$-primes.
\begin{thm}[Cauchon~\cite{cauchon2}, Section 3.1] \label{Cauchonmap1}
For every $t\in [mn]$, $t\neq 1$,  the map $\phi_t$ injects $\hspec(R^{(t)})$ into $\hspec(R^{(t-1)})$. Consequently, the composition $$\phi=\phi_{2}\circ \cdots \phi_{mn}$$ is an injection of $\hspec(\qmatrix)$ into $\hspec(\qaffine)$.\qed
\end{thm}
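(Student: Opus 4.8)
The plan is to promote $\phi_t$ to a map that respects the torus action and then combine this with the fact, already recorded in Theorem~\ref{Cauchonmap}, that $\phi_t$ is injective. Since the matrices of generators of both $R^{(t)}$ and $R^{(t-1)}$ are indexed by $[m]\times[n]$, the same torus $\C{H}=(\B{K}^*)^{m+n}$ acts on each of them, via $h\cdot x_{i,j}=\rho_i\gamma_j x_{i,j}$ on the one side and $h\cdot y_{i,j}=\rho_i\gamma_j y_{i,j}$ on the other. Because $x_{r,s}$ and $y_{r,s}$ are $\C{H}$-eigenvectors, these actions extend uniquely to the localizations $R^{(t)}[x_{r,s}^{-1}]$ and $R^{(t-1)}[y_{r,s}^{-1}]$, and — this is the one point to be checked with care below — under the identification $R^{(t)}[x_{r,s}^{-1}]=R^{(t-1)}[y_{r,s}^{-1}]$ of Theorem~\ref{ddtheorem}(4) the two extended actions coincide.

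First I would verify that the adding and deleting derivations maps of Theorem~\ref{ddtheorem} are $\C{H}$-equivariant. This is a generator-by-generator computation from the explicit formulas: for $i<r$, $j<s$,
\begin{align*}
h\cdot\overleftarrow{x_{i,j}}
&= h\cdot\big(y_{i,j}+y_{i,s}y_{r,s}^{-1}y_{r,j}\big) \\
&= \rho_i\gamma_j\,y_{i,j} + (\rho_i\gamma_s)(\rho_r\gamma_s)^{-1}(\rho_r\gamma_j)\,y_{i,s}y_{r,s}^{-1}y_{r,j} \\
&= \rho_i\gamma_j\,\overleftarrow{x_{i,j}},
\end{align*}
the crucial point being that the factors $\rho_r$ and $\gamma_s$ cancel, so the mixed term carries exactly the weight of $x_{i,j}$; the remaining generators (where $\overleftarrow{x_{i,j}}=y_{i,j}$) and the map $\overrightarrow{\cdot}$ are treated identically. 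One also checks, even more easily, that the homomorphism $g\colon R^{(t-1)}\to R^{(t)}/\langle x_{r,s}\rangle$ of Theorem~\ref{Cauchonmap}(2) is $\C{H}$-equivariant: $\langle x_{r,s}\rangle$ is $\C{H}$-stable, so $\C{H}$ acts on the quotient, and $g$ sends each $y_{i,j}$ to a generator of the same weight.

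Granting this, I would show $\phi_t$ is $\C{H}$-equivariant, i.e. $\phi_t(h\cdot P)=h\cdot\phi_t(P)$ for all $h\in\C{H}$ and $P\in\spec(R^{(t)})$, by splitting into the two cases of Theorem~\ref{Cauchonmap}. If $x_{r,s}\notin P$, then $\phi_t(P)=\overleftarrow{P}[y_{r,s}^{-1}]\cap R^{(t-1)}$ is built from $P$ by applying the $\C{H}$-equivariant embedding $\overleftarrow{\cdot}$, extending to an ideal of the localization (compatible with $\C{H}$ since $y_{r,s}$ is an eigenvector), and contracting to the $\C{H}$-stable subalgebra $R^{(t-1)}$; each of these three operations commutes with $\C{H}$. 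If $x_{r,s}\in P$, then $\phi_t(P)=g^{-1}(P/\langle x_{r,s}\rangle)$ is the $g$-preimage of the image of $P$ under the canonical map $R^{(t)}\to R^{(t)}/\langle x_{r,s}\rangle$; since that quotient map and $g$ are both $\C{H}$-equivariant, so is $\phi_t$ on this piece. In both cases, if $P\in\hspec(R^{(t)})$ then $h\cdot\phi_t(P)=\phi_t(h\cdot P)=\phi_t(P)$ for every $h$, and since $\phi_t(P)$ is automatically prime we get $\phi_t(P)\in\hspec(R^{(t-1)})$. As $\phi_t$ is injective on all of $\spec(R^{(t)})$, its restriction to $\hspec(R^{(t)})$ is an injection into $\hspec(R^{(t-1)})$, and the final claim follows because $\phi=\phi_2\circ\cdots\circ\phi_{mn}$ is then a composition of injections $\hspec(\qmatrix)=\hspec(R^{(mn)})\hookrightarrow\cdots\hookrightarrow\hspec(R^{(1)})=\hspec(\qaffine)$.

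The main thing to be careful about is not any single computation but the bookkeeping around the identification $R^{(t)}[x_{r,s}^{-1}]=R^{(t-1)}[y_{r,s}^{-1}]$: one must confirm that the $\C{H}$-action coming from the $x_{i,j}$ and the one coming from the $y_{i,j}$ really are the \emph{same} action on this common ring (equivalently, that $\overleftarrow{\cdot}$ and $\overrightarrow{\cdot}$ are mutually inverse $\C{H}$-equivariant isomorphisms of localizations), so that in the formula $\overleftarrow{P}[y_{r,s}^{-1}]\cap R^{(t-1)}$ there is no ambiguity about which action we intersect against. Once that is pinned down, everything else is routine.
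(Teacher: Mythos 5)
Your proof is correct, and since the paper simply cites this result from Cauchon without reproducing an argument, your equivariance computation (the cancellation of $\rho_r$ and $\gamma_s$ in the weight of $y_{i,s}y_{r,s}^{-1}y_{r,j}$, plus the equivariance of $g$ in the $x_{r,s}\in P$ case) is exactly the standard justification. Combined with the injectivity of $\phi_t$ on all of $\spec(R^{(t)})$ from Theorem~\ref{Cauchonmap}, this gives the statement as claimed.
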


In view of the strategy mentioned in Section~\ref{DDsection}, a natural problem is to identify the diagrams of those $\C{H}$-primes in $\hspec(R^{(1)})$ that are the image of an $\C{H}$-prime in $\hspec(R^{(mn)})$ under $\phi$. We call these \emph{Cauchon diagrams}

\begin{defn} \label{cauchondiagramdef}
A diagram is a Cauchon diagram if, for any given black square, either every square to the left or every square above is also black.
\end{defn}

The right diagram in Figure~\ref{diagramex} is an example of a Cauchon diagram, while the left is not a Cauchon diagram since the black square in position $(2,3)$ has a white square both above and to its left.

\begin{thm}[Cauchon~\cite{cauchon2}, The\'eor\`eme 3.2.2] \label{Cauchondiagramtheorem}
A diagram is a Cauchon diagram if and only if the corresponding $\C{H}$-prime in $\hspec(R^{(1)})$ is the image under $\phi$ of an $\C{H}$-prime in $\hspec(R^{(mn)}).$ \qed
\end{thm}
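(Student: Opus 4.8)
The plan is to induct on the deleting derivations steps, handling one map $\phi_t$ at a time and tracking, for an $\C{H}$-prime $L$ of $R^{(t)}$, its ``$1\times 1$ black squares'' $\beta(L)=\{(i,j) : x_{i,j}\in L\}$. Two facts drive everything. First, if $(r,s)$ is the $t^{\textnormal{th}}$ smallest coordinate then $x_{i,j}x_{r,s}-x_{r,s}x_{i,j}=(q-q^{-1})x_{i,s}x_{r,j}$ in $R^{(t)}$ for all $i<r$, $j<s$ (Definition~\ref{Rtdef}(4)); since $q-q^{-1}\neq 0$ and $\C{H}$-primes are prime, if $x_{i,j}$ or $x_{r,s}$ lies in an $\C{H}$-prime then so does $x_{i,s}$ or $x_{r,j}$. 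Second, by Theorem~\ref{Cauchonmap} and the identification $R^{(t)}[x_{r,s}^{-1}]=R^{(t-1)}[y_{r,s}^{-1}]$ of Theorem~\ref{ddtheorem}(4), a generator outside the northwest block of $(r,s)$ ``is'' the corresponding generator on the other side, while a northwest generator $x_{i,j}$ corresponds to $y_{i,j}+y_{i,s}y_{r,s}^{-1}y_{r,j}$. A short case check with the first fact then shows that along the chain defined from $K=K_{mn}\in\hspec(\qmatrix)$ by $K_{t-1}=\phi_t(K_t)$ one has $\beta(K_t)\subseteq\beta(K_{t-1})$, and, since $(r,s)$ is never northwest of a strictly smaller coordinate, that $\phi(K)=K_1=\langle t_{i,j} : (i,j)\in B\rangle$ with $B=\{(i,j) : x_{i,j}\in K_{\mathrm{rank}(i,j)}\}$.

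\emph{Necessity.} Suppose $(r,s)\in B$, and let $t=\mathrm{rank}(r,s)$, so $x_{r,s}\in K_t$. For every $i<r$, $j<s$ the first fact gives $x_{i,s}\in K_t$ or $x_{r,j}\in K_t$, hence $(i,s)\in\beta(K_t)\subseteq B$ or $(r,j)\in\beta(K_t)\subseteq B$. If the Cauchon condition failed at $(r,s)$, pick $i_0<r$ with $(i_0,s)\notin B$ and $j_0<s$ with $(r,j_0)\notin B$, a contradiction. Hence every diagram in the image of $\phi$ is Cauchon.

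\emph{Sufficiency.} Given a Cauchon diagram $B$, put $K_1=\langle t_{i,j} : (i,j)\in B\rangle\in\hspec(R^{(1)})$ and restore $\C{H}$-primes $K_2,\dots,K_{mn}$ with $\phi_t(K_t)=K_{t-1}$, so $\phi(K_{mn})=K_1$; running $\beta(K_t)\subseteq\beta(K_{t-1})$ in reverse gives $\beta(K_{t-1})\subseteq B$ at every stage, and no square of $B$ is ever removed. At step $t$, with $(r,s)$ the $t^{\textnormal{th}}$ smallest coordinate: if $(r,s)\notin B$ then $y_{r,s}\notin K_{t-1}$, so $K_{t-1}\in\spec^{\not\in}(R^{(t-1)})$ and we take $K_t=\phi_t^{-1}(K_{t-1})=\overrightarrow{K_{t-1}}[x_{r,s}^{-1}]\cap R^{(t)}\in\spec^{\not\in}(R^{(t)})$, an $\C{H}$-prime because the deleting derivations map is $\C{H}$-equivariant. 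If $(r,s)\in B$ we want $K_t\in\spec^{\in}(R^{(t)})$, so we put $K_t=\pi^{-1}(g(K_{t-1}))$ for $\pi\colon R^{(t)}\to R^{(t)}/\langle x_{r,s}\rangle$ the quotient and $g$ as in Theorem~\ref{Cauchonmap}(2); then $\phi_t(K_t)=K_{t-1}$ once $\ker g\subseteq K_{t-1}$. Comparing presentations, $R^{(t)}/\langle x_{r,s}\rangle$ and $R^{(t-1)}/\langle y_{r,s}\rangle$ differ only by the relations $x_{i,s}x_{r,j}=0$ ($i<r$, $j<s$) forced by the $2\times 2$ submatrices with corner $x_{r,s}$, so $\ker g=\langle y_{r,s}\rangle+\langle y_{i,s}y_{r,j} : i<r,\ j<s\rangle$; thus one must check $y_{r,s}\in K_{t-1}$ (clear, as $(r,s)$ survives restoration) and $y_{i,s}y_{r,j}\in K_{t-1}$ for all $i<r$, $j<s$.

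This last point is where the Cauchon hypothesis is genuinely used, and I expect it to be the main obstacle. Since $B$ is Cauchon, either $\{(i,s) : i<r\}\subseteq B$ or $\{(r,j) : j<s\}\subseteq B$; in the first case it suffices that the whole black column segment above $(r,s)$ still lie in $\beta(K_{t-1})$. Now $x_{i,s}$ (for $i<r$) can leave the black set only at a step $u\le t-1$ whose coordinate $(r_u,s_u)$ has $(i,s)$ northwest of it — forcing $r_u<r$ — and for which the correction term $y_{i,s_u}y_{r_u,s_u}^{-1}y_{r_u,s}$ is not in the ideal; but $(r_u,s)$ lies in the segment $\{(i',s) : i'<r\}\subseteq B$, and an induction on $u$ (using that a restoration step only alters generators northwest of its coordinate) shows $(r_u,s)\in\beta(K_{u-1})$, so the correction term \emph{is} in the ideal and $x_{i,s}$ cannot leave. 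Hence the segment survives, $\ker g\subseteq K_{t-1}$, and $K_t$ is the required $\C{H}$-prime: prime because $g$ is onto with $\ker g\subseteq K_{t-1}$ and $R^{(t-1)}/K_{t-1}$ a domain, $\C{H}$-stable because $g$ and $\langle x_{r,s}\rangle$ are, and containing $x_{r,s}$ by construction. In effect this last argument amounts to a combinatorial parametrization of $\hspec(R^{(t)})$ for the intermediate $t$ — close in spirit to the path model of Section~\ref{pathsection} — and it is the technical heart of the theorem; the rest reduces to the one-line quadratic-relation computations above together with routine localization facts such as $I[y^{-1}]\cap R=I$ when $y$ is regular modulo a prime $I$.
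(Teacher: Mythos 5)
The paper does not prove this statement at all: it is quoted from Cauchon \cite{cauchon2} (Th\'eor\`eme 3.2.2) with an immediate \verb|\qed|, so there is no in-paper argument to compare against. Your proposal is, in substance, a correct reconstruction of Cauchon's original two-directional argument, and the two key mechanisms are exactly right: for necessity, primality applied to $x_{i,j}x_{r,s}-x_{r,s}x_{i,j}=(q-q^{-1})x_{i,s}x_{r,j}$ forces, for each pair $i<r$, $j<s$, one of $x_{i,s},x_{r,j}$ into the ideal, which (together with the monotonicity $\beta(K_t)\subseteq\beta(K_{t-1})$, whose case check you correctly reduce to the same relation) yields the Cauchon condition; for sufficiency, the only obstruction to restoring across a black step is $\ker g\subseteq K_{t-1}$, and your presentation comparison giving $\ker g=\langle y_{r,s}\rangle+\langle y_{i,s}y_{r,j}\mid i<r,\ j<s\rangle$ is legitimate (the two quotients have literally identical presentations, since the only relations that differ between $R^{(t)}$ and $R^{(t-1)}$ are the $ad$-relations with corner $(r,s)$, which modulo $x_{r,s}$ become $x_{i,s}x_{r,j}=0$ and $0=0$ respectively). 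The survival argument for the black column segment $\{(i,s):i<r\}$ is the genuinely delicate point and you handle it correctly: the observation that $(i,s)$ northwest of $(r_u,s_u)<(r,s)$ forces $r_u<r$, so the correction term involves $x_{r_u,s}$ which lies in the same black segment, closes the induction (and the row case is even easier, since $(r,j)$ is never northwest of any earlier step's pivot). The remaining gaps are only the standard facts you explicitly invoke ($\C{H}$-equivariance of $\phi_t^{-1}$ on $\spec^{\not\in}$, $I[y^{-1}]\cap R=I$ for $y$ regular mod $I$), plus a harmless notational slip in writing the restoration correction term with $y$'s rather than $x$'s; none of these affects the validity of the argument.
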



\section{Quantum Matrices by paths} \label{pathsection0}
\subsection{Graphs and Paths} \label{pathsection}
Let $B$ be a Cauchon diagram and, by Theorem~\ref{Cauchondiagramtheorem}, consider the corresponding $\C{H}$-prime $K$ of $\qmatrix$. With the notation of Section~\ref{Hstratsection}, the image of $K$ under the composition $\phi_{t+1}\circ\cdots \circ \phi_{mn}$ is an $\C{H}$-prime $K_t$ of $R^{(t)}$. The goal of this section is to explain how $R^{(t)}/K_t$ is isomorphic to a subalgebra $A_B^{(t)}$ of the quantum torus $\qtorus$ defined by considering paths in a directed graph that is defined using $B$. In particular, when $B=\emptyset$, we obtain a combinatorial description of $\qmatrix$. 


\begin{defn} \label{FactorGraph}
To a Cauchon diagram $B$ construct a directed graph $G_B^{m\times n}$ called the \emph{Cauchon graph}\footnote{``Cauchon graphs'' already appear in~\cite{postnikov} where they are called $\Gamma$-graphs. We here call these Cauchon graphs to be consistent with the Cauchon diagrams from which they derive.} as follows. The vertex set consists of \emph{white vertices} $$W=\left([m]\times [n]\right)\setminus B,$$ together with \emph{row vertices} $R=[m]$, and \emph{column vertices}\footnote{There is ambiguity between labels of the row and column vertices, but the type of vertex we mean will always be explicitly stated.} $C=[n]$.
The set of directed edges $E$ consists precisely of those in the following list.
\begin{itemize}
\item[(1)]  If $(i,j), (i,j^\prime) \in W$ are distinct white vertices with $j>j^\prime$ and such that there is no white vertex $(i,j^{\prime\prime})$ for any $j^\prime<j^{\prime\prime}<j$, then we make an edge from $(i,j)$ to $(i,j^\prime)$;
\item[(2)]  If $(i,j), (i^\prime,j) \in W$ are distinct white vertices with $i<i^\prime$ such that there is no white vertex $(i^{\prime\prime},j)$ for any $i<i^{\prime\prime}<i^\prime$, then we make an edge from $(i,j)$ to $(i^\prime,j)$;
\item[(3)]  For $i\in R,$ we make an edge from $i$ to $(i,j)$, where $j$ is the largest integer such that $(i,j)\in W$ (if such a $j$ exists);
\item[(4)]  For $j\in C,$ we make an edge from $(i,j)$ to $j$ where $i$ is the largest integer such that $(i,j)\in W$ (if such an $i$ exists).
\end{itemize}

\end{defn}

\begin{note} 
There is a natural way to embed a Cauchon graph in the plane by placing it ``on top'' of  the Cauchon diagram $B$ as follows. The white vertices are placed at the center of the corresponding white squares, the row vertices to the right of the corresponding diagram row, and the column vertices underneath the corresponding diagram column. An example is illustrated in Figure~\ref{factorgraphex}. We call this the \emph{standard embedding} and always assume a given Cauchon graph is equipped with it. Hence, without confusion we can refer to aspects of a Cauchon graph using common directional or geometric terms\footnote{For example, horizontal, vertical, above, below, northwest, etc.} That a diagram is a Cauchon diagram easily implies that the corresponding Cauchon graph has the following important property.
\end{note}
\begin{figure}[htbp]
\begin{center}

\begin{tikzpicture}[xscale=1.5, yscale=1.5]

\draw [fill=gray] (7,2) rectangle (8,3);
\draw [fill=gray] (9,1) rectangle (10,3);

\draw [help lines, darkgray] (7,0) grid   (10,3);

\node at (10.7,2.52) {$\bullet$ 1};
\node at (10.7,1.52) {$\bullet$ 2};
\node at (10.7,0.52) {$\bullet$ 3};

\node at (7.505,-0.46) {$\hspace{0.32cm}\bullet$ 1};
\node at (8.505,-0.46) {$\hspace{0.32cm}\bullet$ 2};
\node at (9.505, -0.46) {$\hspace{0.32cm}\bullet$ 3};

\node[scale=0.7]  at (8.5, 2.7) {$(1,2)$};
\node at (8.5, 2.5) {$\bullet$};

\node[scale=0.7]  at (7.5, 1.7) {$(2,1)$};
\node at (7.5, 1.5) {$\bullet$};

\node at (8.5, 1.5) {$\bullet$};
\node[scale=0.7]  at (8.75, 1.7) {$(2,2)$};

\node[scale=0.7]  at (7.75, 0.7) {$(3,1)$};
\node at (7.5, 0.5) {$\bullet$};

\node[scale=0.7] at (8.75, 0.7) {$(3,2)$};
\node at (8.5, 0.5) {$\bullet$};

\node[scale=0.7]  at (9.5, 0.7) {$(3,3)$};
\node at (9.5, 0.5) {$\bullet$};

\draw [<-, thick, black] (8.6,2.5)--(10.5,2.5);
\draw [<-, thick, black] (8.6,1.5)--(10.5,1.5);
\draw [<-, thick, black] (7.6,1.5)--(8.35,1.5);
\draw [<-, thick, black] (7.6,0.5)--(8.35,0.5);
\draw [<-, thick, black] (8.6,0.5)--(9.35,0.5);
\draw [<-, thick, black] (9.6,0.5)--(10.5,0.5);

\draw [->, thick, black] (7.5,0.4)--(7.5,-0.4);
\draw [->, thick, black] (7.5,1.4)--(7.5,0.6);
\draw [->, thick, black] (8.5,0.4)--(8.5,-0.4);
\draw [->, thick, black] (8.5,1.4)--(8.5,0.6);
\draw [->, thick, black] (8.5,2.4)--(8.5,1.6);
\draw [->, thick, black] (9.5,0.4)--(9.5,-0.4);
\end{tikzpicture}
\caption{The graph $G^{3\times 3}_B$, embedded on top of the $3\times 3$ Cauchon diagram $B={\{(1,1),(1,3),(2,3)\}}$.}
\label{factorgraphex}
\end{center}
\end{figure}
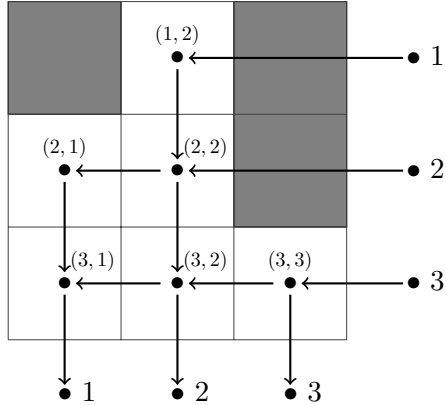
\begin{prop}
The standard embedding of a Cauchon graph is planar. \qed
\end{prop}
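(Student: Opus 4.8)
The plan is to verify planarity directly from the combinatorial structure of the edge set $E$ given in Definition~\ref{FactorGraph}, using the standard embedding. The key observation is that the four edge types are of a very restricted geometric nature: type (1) edges are horizontal segments connecting a white vertex to the nearest white vertex immediately to its left \emph{within the same row}; type (2) edges are vertical segments connecting a white vertex to the nearest white vertex immediately below it \emph{within the same column}; type (3) edges are horizontal segments leaving a row vertex (placed to the right of its row) and entering the rightmost white square of that row; type (4) edges are vertical segments leaving the bottommost white vertex of a column and entering the column vertex below it. So every edge, when drawn as a straight line segment in the standard embedding, lies entirely inside a single row strip or a single column strip of the grid, plus the thin margin strips holding the row and column vertices.

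First I would make precise the picture: draw each white vertex at the centre of its square, each row vertex $i$ at a point to the right of row $i$ (all at the same vertical level as that row), each column vertex $j$ at a point below column $j$. Draw each edge as a straight segment. Then I would argue that two edges can only cross if they are drawn in a common row strip or a common column strip (since a horizontal edge stays at a fixed height inside one row strip, and a vertical edge stays at a fixed horizontal coordinate inside one column strip). Within a single row strip, the edges present are: the type (1) edges of that row, which by their defining ``no white vertex strictly between'' condition form a disjoint chain of consecutive segments along the horizontal line, together with at most one type (3) edge attached at the right end to the rightmost white vertex — again a segment that does not overlap the interior of any type (1) edge. Hence no two of them cross. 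Symmetrically for column strips with type (2) and type (4) edges. A horizontal edge and a vertical edge can only meet at a shared endpoint (a white vertex), not at an interior crossing, because their interiors live at different heights/columns respectively. This shows the straight-line standard embedding has no edge crossings, i.e.\ it is planar.

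The one genuine subtlety — the step I expect to be the main obstacle — is handling the row-vertex and column-vertex edges cleanly so that the margin regions do not introduce crossings: a priori a type (3) edge leaving row vertex $i$ travels horizontally through the right margin, and one must check it does not collide with type (3) edges of other rows (it does not, since they are at different heights) nor with type (4) edges (which are vertical, in the bottom margin and the column strips, hence disjoint from the right margin except possibly at the very corner, which can be avoided by placing the row vertices slightly staggered or simply by noting the relevant segments share no points). Similarly one checks that the type (3) edge of row $i$, upon entering the grid, only traverses squares to the right of the rightmost white square of row $i$ — but by the Cauchon condition, and the definition of ``rightmost white vertex'', those squares are black, so the segment passes over black squares only and meets no other white vertex or edge. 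I would phrase this by choosing the positions of the row/column vertices far enough out and, if needed, perturbing the straight segments slightly within their strips; since within each strip the relevant edges are linearly ordered and non-overlapping, such a perturbation is harmless. Once these margin cases are dispatched, planarity follows, and I would remark that in fact the embedding can be taken with straight-line edges, which is the form used implicitly in all later figures.
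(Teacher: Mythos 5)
There is a genuine gap, and it sits exactly at the one point where the Cauchon condition is indispensable. Your claim that ``a horizontal edge and a vertical edge can only meet at a shared endpoint\ldots because their interiors live at different heights/columns respectively'' is not a valid argument: a horizontal segment drawn at the height of row $i$ and a vertical segment drawn at the abscissa of column $j$ intersect at the centre of square $(i,j)$ whenever both segments pass over that square. This really happens for non-Cauchon diagrams. Take the left diagram of Figure~\ref{diagramex} (black squares $(1,1),(2,1),(2,3)$): the type~(1) edge from $(2,4)$ to $(2,2)$ and the type~(2) edge from $(1,3)$ to $(3,3)$ both pass over the black square $(2,3)$ and cross there. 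So your argument, as written, would ``prove'' planarity of the standard embedding for an arbitrary diagram, which is false; the reduction of all crossings to ``common row strip or common column strip'' is where it breaks, since a row strip and a column strip overlap in a grid square.

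The missing step is the following use of Definition~\ref{cauchondiagramdef}. Suppose a horizontal edge of row $i$ and a vertical edge of column $j$ cross in their interiors. Then $(i,j)$ is black (it is interior to both edges, hence not a white vertex), the horizontal edge has a white endpoint $(i,j')$ with $j'<j$ (every type~(1) or type~(3) edge of row $i$ whose interior reaches column $j$ ends at a white vertex strictly to the left), and the vertical edge has a white endpoint $(i'',j)$ with $i''<i$. But the Cauchon condition forces every square to the left of $(i,j)$ to be black or every square above $(i,j)$ to be black, contradicting one of these two white squares. This one paragraph is the entire content of the proposition — the paper's terse treatment (``That a diagram is a Cauchon diagram easily implies\ldots'') is pointing precisely at it — and your within-strip analysis and margin discussion, which are correct but routine, do not substitute for it. (Your margin remark that the type~(3) edge ``passes over black squares only and meets no other white vertex or edge'' has the same defect: blackness of the squares alone does not exclude a vertical edge passing through them; you again need the Cauchon condition as above.)
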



\begin{defn} A \emph{path} in $\fgraph$ is a sequence $P=(v_0,v_1,\ldots,v_{k})$ of distinct vertices such that\footnote{strictly speaking, we are defining a \emph{directed} path, but we will never have use for non-directed paths in this paper.}  for all $i\in [k]$, there exists an edge in $\fgraph$ directed from $v_{i-1}$ to $v_{i}$. Naturally, we say that $P$ \emph{starts} at $v_0$ and \emph{ends} at $v_k$ and write $P\colon v_0\to v_k$.
\end{defn}

We consider a directed edge $e$ from $v$ to $w$ to be a path and write $e\colon v\to w$. If $e$ is the edge between two consecutive vertices in a path $P$, then we abuse notation by writing $e\in P$. Finally, if $P\colon u\to v$, $Q\colon v\to w$, then we write $P\cup Q$ to denote the concatenation of $P$ and $Q$. To a path in a Cauchon graph we will assign an element of the quantum torus as follows.




\begin{defn} \label{weightdefn}
Let $\fgraph$ be a Cauchon graph. Define the function $$w\colon E \rightarrow \qtorus$$ as follows, where the numbering and notation correspond to the edge types of Definition~\ref{FactorGraph}:
\begin{itemize}
\item[(1)] $w(e\colon(i,j)\rightarrow(i,j^\prime))=t_{i,j}^{-1}t_{i,j^\prime}$;
\item[(2)] $w(e\colon(i,j)\rightarrow(i^\prime,j))=1$;
\item[(3)] $w(e\colon i\rightarrow (i,j))=t_{i,j}$;
\item[(4)] $w(e\colon(i,j)\rightarrow j)=1$.
\end{itemize}
The image $w(e)$ of an edge $e$ is called the \emph{weight} of $e$. 

If $P=(v_0,v_1,\ldots v_{k})$ is a path, and $e_i\colon v_{i-1}\to v_i$, then the weight of $P$ is defined to be $$w(P)=w(e_1)w(e_2)\cdots w(e_k).$$
\end{defn}

\begin{figure}[htbp]
\begin{center}

\begin{tikzpicture}[xscale=1.9, yscale=1.9]

\node at (10.7,2.52) {$\bullet$ 1};
\node at (10.7,1.52) {$\bullet$ 2};
\node at (10.7,0.52) {$\bullet$ 3};

\node at (7.505,-0.46) {$\hspace{0.32cm}\bullet$ 1};
\node at (8.505,-0.46) {$\hspace{0.32cm}\bullet$ 2};
\node at (9.505, -0.46) {$\hspace{0.32cm}\bullet$ 3};

\node at (8.5, 2.5) {$\bullet$};

\node at (7.5, 1.5) {$\bullet$};

\node at (8.5, 1.5) {$\bullet$};

\node at (7.5, 0.5) {$\bullet$};

\node at (8.5, 0.5) {$\bullet$};

\node at (9.5, 0.5) {$\bullet$};

\draw [<-, thick, black] (8.6,2.5)--(10.5,2.5);
\node[scale=0.75] at (9.55, 2.65) {$t_{1,2}$};

\draw [<-, thick, black] (8.6,1.5)--(10.5,1.5);
\node[scale=0.75] at (9.55, 1.65) {$t_{2,2}$};

\draw [<-, thick, black] (7.6,1.5)--(8.35,1.5);
\node[scale=0.75] at (7.975, 1.65) {$t_{2,2}^{-1}t_{2,1}$};

\draw [<-, thick, black] (7.6,0.5)--(8.35,0.5);
\node[scale=0.75] at (7.975, 0.65) {$t_{3,2}^{-1}t_{3,1}$};

\draw [<-, thick, black] (8.6,0.5)--(9.35,0.5);
\node[scale=0.75] at (8.975, 0.65) {$t_{3,3}^{-1}t_{3,2}$};

\draw [<-, thick, black] (9.6,0.5)--(10.5,0.5);
\node[scale=0.75] at (10.05, 0.65) {$t_{3,3}$};

\draw [->, thick, black] (7.5,0.4)--(7.5,-0.4);
\node[scale=0.75] at (7.3, 0) {$1$};

\draw [->, thick, black] (7.5,1.4)--(7.5,0.6);
\node[scale=0.7] at (7.3, 1) {$1$};

\draw [->, thick, black] (8.5,0.4)--(8.5,-0.4);
\node[scale=0.7] at (8.3, 0) {$1$};

\draw [->, thick, black] (8.5,1.4)--(8.5,0.6);
\node[scale=0.7] at (8.3, 1) {$1$};

\draw [->, thick, black] (8.5,2.4)--(8.5,1.6);
\node[scale=0.7] at (8.3, 2) {$1$};

\draw [->, thick, black] (9.5,0.4)--(9.5,-0.4);
\node[scale=0.7] at (9.3, 0) {$1$};

\end{tikzpicture}
\caption{The graph $G^{3\times 3}_B$, with $B={\{(1,1),(1,3),(2,3)\}}$, and edges labeled by their weights. (Labels of white vertices omitted.)}
\label{factorgraphex2}
\end{center}
\end{figure}
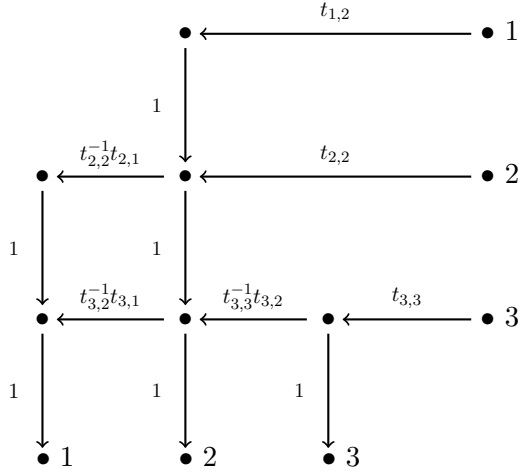

\begin{ex} \label{pathweightex}
 Figure~\ref{factorgraphex2} illustrates the graph of Figure~\ref{factorgraphex} with edges labeled by their weights. The weight of the path $$P=(1, (1,2), (2,2),(2,1),(3,1),1)$$ is \begin{align*} w(P) & =(t_{1,2})(1)(t_{2,2}^{-1}t_{2,1})(1)(1)\\ &= t_{1,2}t_{2,2}^{-1}t_{2,1}.\end{align*}
\end{ex}
It is convenient to observe that for a row vertex $i$ and a column vertex $j$, the weight of a path $P\colon i\to j$ can be computed by looking at the sequence of ``turns''. 

\begin{defn}
Let $P=(v_0,v_1,\ldots ,v_{k-1},v_k)$ be a path in a Cauchon graph starting from row vertex $i=v_0$ and ending at column vertex $j=v_k$. \begin{itemize}\item A \emph{$\Gamma$-turn} in $P$ is a white vertex $v_i\in P$ such that the edge from $v_{i-1}$ to $v_i$ is horizontal, and the edge from $v_i$ to  $v_{i+1}$ is vertical. \item A \reflectbox{L}\emph{-turn} in $P$ is a white vertex $v_i\in P$ such that the edge from $v_{i-1}$ to $v_i$ is vertical and the edge from $v_i$ to $v_{i+1}$ is horizontal. 
\end{itemize}
\end{defn}

The next proposition follows easily using the definitions of edge and path weights.
\begin{prop} \label{turnsprop}
Let $P\colon i\to j$ be a path in a Cauchon graph where $i$ is a row vertex and $j$ is a column vertex.  If $(v_{i_1},v_{i_2},\ldots,v_{i_t})\subset P$ is the subsequence consisting of all $\Gamma$-turns and \textnormal{\reflectbox{L}}-turns, then $$w(P)=t_{v_{i_1}}t^{-1}_{v_{i_2}}t_{v_{i_3}}\cdots t_{v_{i_{t-1}}}^{-1}t_{v_{i_t}}.$$
\end{prop}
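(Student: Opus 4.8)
The plan is to split $P$ into its maximal \emph{straight runs}---maximal consecutive stretches of edges that are all horizontal or all vertical---and to compute the weight of each run, using that edges of types (2) and (4) have weight $1$ and that the weights of the type (1) edges inside a horizontal run telescope.

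I first record the shape of $P$. The only edge out of the row vertex $i$ is the type (3) edge to $(i,j_0)$, where $j_0$ is the largest column index with $(i,j_0)$ white, and in the standard embedding this edge is horizontal, so $P$ begins with a horizontal run; dually, the only edge into the column vertex $j$ is a type (4) edge, which is vertical, so $P$ ends with a vertical run. By the definitions, the orientation of the edges of $P$ changes exactly at the $\Gamma$-turns and \reflectbox{L}-turns, and between consecutive turns the orientation is constant. Hence the runs and turns of $P$ interleave as $(\text{run }1), v_{i_1}, (\text{run }2), v_{i_2}, \ldots, v_{i_t}, (\text{run }t{+}1)$, where run $1$ is horizontal and run $t{+}1$ is vertical, and the orientations alternate H, V, H, V, \ldots. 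A turn $v_{i_s}$ is then a $\Gamma$-turn when run $s$ (the run entering it) is horizontal, i.e.\ when $s$ is odd, and a \reflectbox{L}-turn when $s$ is even; and since run $t{+}1$ must be vertical, $t$ is odd. Thus $v_{i_1},v_{i_3},\dots,v_{i_t}$ are the $\Gamma$-turns and $v_{i_2},v_{i_4},\dots,v_{i_{t-1}}$ are the \reflectbox{L}-turns.

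Now I compute. Every vertical run is made of type (2) edges, together with the final type (4) edge in the case of run $t{+}1$, so each vertical run has weight $1$ and drops out of the product $w(P)=w(e_1)\cdots w(e_k)$. Run $1$ has the form $i\to(i,j_0)\to(i,c_1)\to\cdots\to(i,c_p)$ with $(i,c_p)=v_{i_1}$, so its weight is
\[
t_{i,j_0}\bigl(t_{i,j_0}^{-1}t_{i,c_1}\bigr)\bigl(t_{i,c_1}^{-1}t_{i,c_2}\bigr)\cdots\bigl(t_{i,c_{p-1}}^{-1}t_{i,c_p}\bigr)=t_{i,c_p}=t_{v_{i_1}},
\]
the cancellations being valid because at each step the factor $t_{i,c_\ell}$ and its inverse sit in adjacent positions, so no commutation relation is invoked (and when $p=0$ the run weight is simply $t_{i,j_0}=t_{v_{i_1}}$). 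Similarly, run $2k$ for $k\ge 1$---the horizontal run from the \reflectbox{L}-turn $v_{i_{2k}}=(i',d_0)$ to the $\Gamma$-turn $v_{i_{2k+1}}=(i',d_r)$, with $r\ge 1$---has weight $\bigl(t_{i',d_0}^{-1}t_{i',d_1}\bigr)\cdots\bigl(t_{i',d_{r-1}}^{-1}t_{i',d_r}\bigr)=t_{v_{i_{2k}}}^{-1}t_{v_{i_{2k+1}}}$ by the same telescoping. Multiplying the run weights in the order they occur along $P$, the vertical runs contributing $1$, yields
\[
w(P)=t_{v_{i_1}}\bigl(t_{v_{i_2}}^{-1}t_{v_{i_3}}\bigr)\cdots\bigl(t_{v_{i_{t-1}}}^{-1}t_{v_{i_t}}\bigr)=t_{v_{i_1}}t_{v_{i_2}}^{-1}t_{v_{i_3}}\cdots t_{v_{i_{t-1}}}^{-1}t_{v_{i_t}}.
\]

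The only substantive step is the structural claim of the second paragraph---that $P$ starts with a horizontal run, ends with a vertical run, and has alternating run orientations, so that its turns alternate in type---after which everything is bookkeeping together with the observation that the telescoping of type (1) weights never requires a commutation relation. I would be careful to check the degenerate cases explicitly: $t=1$ (a single $\Gamma$-turn, $w(P)=t_{v_{i_1}}$) and middle horizontal runs consisting of a single edge ($r=1$).
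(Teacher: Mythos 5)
Your proof is correct and is exactly the routine verification the paper has in mind when it says the proposition "follows easily using the definitions of edge and path weights" (the paper gives no further argument). The decomposition into alternating straight runs, the telescoping of type-(1) weights, and the check that no commutation relations are ever needed are all accurate, including the degenerate cases.
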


\begin{ex}

For the path $P$ in Example~\ref{pathweightex}, the vertex $(1,2)$ is a $\Gamma$-turn, $(2,2)$ is a $\reflectbox{L}$-turn, and $(2,1)$ is a $\Gamma$-turn, so that $w(P)=(t_{1,2})(t_{2,2}^{-1})(t_{2,1}).$ This, of course, agrees with Example~\ref{pathweightex}.

\end{ex}

Parts 1 and 2 of the next result are Lemmas 3.5 and 3.6 respectively in ~\cite{casteels}. Part 3 is proven similarly. 

\begin{lem} \label{pathscommonvertex}
In a Cauchon graph $\fgraph$, let $(a,b)$ be a white vertex,  $i$ and $k$ row vertices with $i<k$, and $j$ and $\ell$ column vertices with $j<\ell$. \begin{enumerate}
\item \label{pathscommonvertex3} If $P\colon i\rightarrow (a,b)$ and $Q\colon (a,b)\rightarrow \ell$ are paths in $\fgraph$ with only $(a,b)$ in common, then
$$w(P)w(Q) = 
\begin{cases}
w(Q)w(P), & \mbox{if $b=\ell$, i.e., $Q$ has only vertical edges,}\\
q^{-1}w(Q)w(P), & \mbox{otherwise.}\end{cases}$$
\item \label{pathscommonvertex2}If $P\colon (a,b)\rightarrow j$ and $Q\colon (a,b)\rightarrow \ell$ are paths in $\fgraph$ with only $(a,b)$ in common, then
$$w(P)w(Q) = 
\begin{cases}
w(Q)w(P), & \mbox{if $b=\ell$, i.e., $Q$ has only vertical edges,}\\
qw(Q)w(P), & \mbox{otherwise.}\end{cases}$$
\item \label{pathscommonvertex1}If $P\colon i\rightarrow (a,b)$ and $Q \colon k\rightarrow (a,b)$ are paths in $\fgraph$ with only $(a,b)$ in common, then $$w(P)w(Q)= qw(Q)w(P).$$

\end{enumerate}
\end{lem}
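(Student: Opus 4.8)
The plan is to push everything down to the commutation relations among the standard generators $t_{i,j}$ of the quantum torus $\qtorus$. Reading these off from the defining relations of $R^{(1)}=\qaffine$ in Definition~\ref{Rtdef} (the case $t=1$), one gets a clean rule: $t_{i,j}$ and $t_{k,\ell}$ commute unless $(i,j)$ and $(k,\ell)$ share a row or a column, in which case $t_{i,j}t_{k,\ell}=qt_{k,\ell}t_{i,j}$ whenever $(i,j)$ is the lexicographically smaller of the two (equivalently, the one lying further up its column, or further to the left in its row). Hence any two Laurent monomials in the $t_{i,j}$ $q^*$-commute, and the exponent $r$ in $\vect{t}^{M}\vect{t}^{N}=q^{r}\vect{t}^{N}\vect{t}^{M}$ is a sum of pairwise contributions, one for each ordered pair $(t_u^{\delta},t_v^{\eta})$ with $t_u^{\delta}$ a factor of $\vect{t}^{M}$, $t_v^{\eta}$ a factor of $\vect{t}^{N}$, and $u,v$ in a common row or column; such a pair contributes $\delta\eta$ if $u$ precedes $v$ lexicographically, and $-\delta\eta$ otherwise. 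So the whole lemma reduces to bookkeeping of which turns of $P$ share a line with which turns of $Q$.

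First I would put the relevant weights into a normal form. For a path from a row vertex to a column vertex this is exactly Proposition~\ref{turnsprop}: $w(P)$ is a Laurent monomial whose factors $t_v^{\pm1}$ are indexed by the turns $v$ of $P$, with exponent $+1$ at the $\Gamma$-turns and $-1$ at the \reflectbox{L}-turns. For the partial paths appearing in the statement the same description holds, with the interior endpoint $(a,b)$ counted as a turn precisely when the incident edge bends there; I would get this either by completing $P$ to a row-to-column path, or directly by telescoping the edge weights of Definition~\ref{weightdefn}. The second ingredient is the ``staircase'' anatomy of a path in a Cauchon graph: it is an alternating sequence of maximal horizontal and vertical segments, it meets any given row (resp.\ column) in at most one segment, since vertical edges only go downward and horizontal edges only go leftward and so a path cannot re-enter a line once it has left it; and a vertical segment together with its two bounding turns occupies a single column, with the upper turn carrying exponent $+1$ and the lower turn exponent $-1$ in the weight monomial (and symmetrically for horizontal segments and rows). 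The segments that contribute a single turn rather than a cancelling $\{+1,-1\}$ pair are exactly the starting row, the terminating column, and the segment incident to the interior endpoint $(a,b)$.

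The core step is then to compute, in each case, the exponent $r$ in $w(P)w(Q)=q^{r}w(Q)w(P)$ as the double sum over turns of $P$ against turns of $Q$, and to see that almost everything cancels. Fix a column $c$ through which $P$ has a vertical segment. That segment passes through \emph{every} white vertex of column $c$ between its two bounding turns, so any turn of $Q$ in column $c$ lying strictly between those turns would be a second vertex common to $P$ and $Q$, hence equals $(a,b)$. Therefore, apart from $(a,b)$, a turn of $Q$ in column $c$ lies entirely above, or entirely below, $P$'s whole segment, and its two pairwise contributions --- against the $+1$ upper turn and the $-1$ lower turn of $P$'s segment --- carry the same order-sign and cancel. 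Planarity of the standard embedding is exactly what forbids the interleaved configuration. The identical argument applies with $P$ and $Q$ interchanged, and with ``column'' replaced by ``row''. What survives is supported entirely on the unique column and row having $(a,b)$ as a segment endpoint of $P$ or of $Q$, where one turn of the cancelling pair is ``missing'' and a residual contribution remains; a direct check of the possible orientations of the first and last edges of $P$ and $Q$ at $(a,b)$ finishes each case. In Parts~1 and~2 the hypothesis that $Q$ has only vertical edges forces $b=\ell$ and $w(Q)=1$, giving the trivial equality; in the remaining case of Part~1 exactly one uncancelled pair survives with net exponent $-1$; in the remaining case of Part~2 one survives with exponent $+1$; and in Part~3, where $i<k$ breaks the left--right symmetry of the two segments ending at $(a,b)$, exactly one uncancelled pair survives with exponent $+1$, yielding $q$.

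I expect the main obstacle to be precisely this last bookkeeping: correctly enumerating, for each of the three statements, the orientations of the edges of $P$ and $Q$ incident to $(a,b)$ (together with the degenerate sub-cases in which $P$ or $Q$ is a single edge, a single straight segment, or begins/ends at a line of the grid), keeping track of the $\pm1$ exponents in the weight monomials and of the lexicographic order among the relevant coordinates, and confirming that the surviving contributions sum to exactly the asserted power of $q$ and nothing more. Nothing here is conceptually deep, but boundary terms are easy to miscount, so I would organise the whole computation around the single invariant ``net exponent $=$ contribution of the row-segment and column-segment having $(a,b)$ as an endpoint,'' and treat Parts~1--3 as three instances of it; alternatively, one can run the same reduction inside an induction on the total number of turns of $P$ and $Q$, peeling off the outermost segment at each step.
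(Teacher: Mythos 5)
Your strategy is sound and, as far as I can check, produces the right exponents; note, however, that the paper does not actually prove this lemma itself: Parts 1 and 2 are quoted from Lemmas 3.5 and 3.6 of \cite{casteels} (and Part 3 is said to be "proven similarly"), where the computation is organised edge-by-edge (compare the use of Lemma 3.4 of that reference in Part 4a of Theorem~\ref{pathcommutation}), whereas you organise it turn-by-turn via Proposition~\ref{turnsprop} together with a global cancellation argument. Your key observation --- that a complete vertical (resp.\ horizontal) segment of one path contributes a $\{+1,-1\}$ pair of exponents in its column (resp.\ row), and that any turn of the other path in that line either lies strictly outside the segment, so that its two pairwise contributions cancel, or lies on the segment and is therefore the common vertex $(a,b)$ --- is correct, and it does localise the residual exponent to the row and column through $(a,b)$ together with the boundary segments you list; I spot-checked the surviving contributions in all three parts and they agree with the statement. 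Two cautions. First, your pairwise rule as literally stated assigns $-\delta\eta$ to a pair with $u=v$; it must assign $0$ there (same generator), and this case genuinely occurs, e.g.\ in Part 1 when $P$ ends and $Q$ begins with a horizontal edge at $(a,b)$, where $w(P)$ contains $t_{a,b}$ and $w(Q)$ contains $t_{a,b}^{-1}$ --- with the convention as written the total exponent would come out as $0$ instead of the correct $-1$. Second, the argument is not complete until the finite case analysis at $(a,b)$ (orientations of the incident edges of $P$ and $Q$, plus the degenerate single-segment subcases) is written out; you have framed it correctly and it is routine, but it is where the asserted powers of $q$ actually come from, so it cannot be omitted from a final write-up.
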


For the remainder of this section, fix $t\in [mn]$ and let $(r,s)$ be the $t^\textnormal{th}$ smallest coordinate. 

\begin{notn} \label{setofpaths}

For a row vertex $i$ and a column vertex $j$ of $\fgraph$, let $\Gamma^{(t)}_B(i,j)$ denote the set of all  paths $P\colon i\to j$ in $\fgraph$ for which no vertex larger than $(r,s)$ is a $\reflectbox{L}$-turn.

\end{notn}

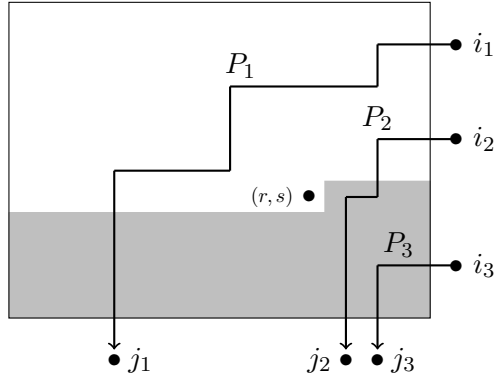
\begin{figure}[htbp]
\begin{center}

\begin{tikzpicture}[xscale=1.4, yscale=1.4]
\draw[color=lightgray, fill=lightgray] (0,0) rectangle (4,1);
\draw[color=lightgray, fill=lightgray] (3,1) rectangle (4,1.3);
\draw (0,0) rectangle (4,3);
\node at (2.85, 1.15) {$\bullet$};
\node[scale=0.7] at (2.5, 1.15) {$(r,s)$};
\node at (4.4, 2.6) {$\bullet$ $i_1$};
\node at (2.2, 2.4) {$P_1$};
\draw [thick, black] (4.2,2.6)--(3.5,2.6);
\draw [thick, black] (3.5,2.6)--(3.5,2.2);
\draw [thick, black] (3.5,2.2)--(2.1,2.2);
\draw [thick, black] (2.1,2.2)--(2.1,1.4);
\draw [thick, black] (2.1,1.4)--(1,1.4);
\draw [->, thick, black] (1,1.4)--(1,-0.3);
\node at (1, -0.4) {$\bullet$};
\node at (1.25, -0.4) {$j_1$};
\node at (4.4, 1.7) {$\bullet$ $i_2$};
\node at (3.5, 1.9) {$P_2$};
\draw [thick, black] (4.2,1.7)--(3.5,1.7);
\draw [thick, black] (3.5,1.7)--(3.5,1.15);
\draw [thick, black] (3.5,1.15)--(3.2,1.15);
\draw [->, thick, black] (3.2,1.15)--(3.2,-0.3);
\node at (3.2, -0.4) {$\bullet$};
\node at (2.95, -0.4) {$j_2$};
\node at (4.4, 0.5) {$\bullet$ $i_3$};
\node at (3.7, 0.7) {$P_3$};
\draw [thick, black] (4.2,0.5)--(3.5,0.5);
\draw [->, thick, black] (3.5,0.5)--(3.5,-0.3);
\node at (3.5, -0.4) {$\bullet$};
\node at (3.75, -0.4) {$j_3$};
\end{tikzpicture}
\caption{The shaded area represents all white vertices greater than the $t^\textnormal{th}$ smallest coordinate $(r,s)$. (This convention will be repeated in later illustrations.) In this example, $P_1\in \Gamma^{(t)}_B(i_1,j_1)$, $P_3\in\Gamma^{(t)}_B(i_3,j_3)$ but $P_2\not\in \Gamma^{(t)}_B(i_2,j_2)$.}
\label{Gammaexample}
\end{center}
\end{figure}

Figure~\ref{Gammaexample} is meant to clarify Notation~\ref{setofpaths}, and while we have drawn a vertex $(r,s)$ in this figure, it will not exist if $(r,s)\in B$. The main theorem of this section is the following. 

\begin{thm} \label{pathcommutation}
Let $\fgraph$ be a Cauchon graph, let $i,k$ be row vertices with $i<k$, and let $j,\ell$ be column vertices.

\begin{enumerate}

\item If $j<\ell$, then there exists a permutation of $\Gamma_B^{(t)}(i,j)\times \Gamma_B^{(t)}(i,\ell)$ sending $(P,Q)\mapsto (\tilde{P},\tilde{Q})$ where $$w(P)w(Q)=qw(\tilde{Q})w(\tilde{P}).$$

\item If $j=\ell$, then there exists a permutation of $\Gamma_B^{(t)}(i,j)\times \Gamma_B^{(t)}(k,j)$ sending $(P,Q)\mapsto (\tilde{P},\tilde{Q})$ where $$w(P)w(Q)=qw(\tilde{Q})w(\tilde{P}).$$

\item If $j>\ell$, then there exists a permutation of $\Gamma_B^{(t)}(i,j)\times \Gamma_B^{(t)}(k,\ell)$ sending $(P,Q)\mapsto (\tilde{P},\tilde{Q})$ where $$w(P)w(Q)=w(\tilde{Q})w(\tilde{P}).$$

\item If $j<\ell$, then:
\begin{enumerate} \item If $P\in \Gamma_B^{(t)}(i,j)$, $Q\in \Gamma_B^{(t)}(k,\ell)$ and $P\cap Q=\emptyset$, then $$w(P)w(Q)=w(Q)w(P);$$
\item There exists a bijective function from the subset of $\Gamma_B^{(t)}(i,j)\times \Gamma_B^{(t)}(k,\ell)$ consisting of those $(P,Q)$ with $P\cap Q\neq \emptyset$, to $\Gamma_B^{(t)}(i,\ell)\times \Gamma_B^{(t)}(k,j)$ sending $(P,Q)$ to $(\tilde{P},\tilde{Q})$ where $$w(P)w(Q)=qw(\tilde{Q})w(\tilde{P}).$$

\end{enumerate}
\end{enumerate}
\end{thm}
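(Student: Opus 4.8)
The plan is to treat all four statements as combinatorial refinements of algebraic identities, using only Proposition~\ref{turnsprop} and Lemma~\ref{pathscommonvertex}. By Proposition~\ref{turnsprop} each $w(P)$ is an alternating Laurent monomial $t_{v_{i_1}}t^{-1}_{v_{i_2}}\cdots$ in the generators of $\qtorus$ indexed by the turns of $P$, so in the quantum torus any two path weights $q^*$-commute; hence parts (1)--(3) are the shadows of the commutation relations (same row, same column, and the vanishing commutator of a NE/SW pair) and part (4) is the shadow of $ad=da+(q-q^{-1})bc$, applied to the elements $\sum_P w(P)$ over the various $\Gamma^{(t)}_B$. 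Since $\fgraph$ is acyclic and its standard embedding is planar, the common vertices of two paths appear in the same order along each path; between consecutive common vertices the two sub-paths either coincide or bound a lens, and at each common vertex the local picture is governed by one of the three cases of Lemma~\ref{pathscommonvertex}. These are the only configurations that need analysis.

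I would first dispose of the cases where the matching can be taken to be the identity. For (4a), disjointness together with $i<k$ and $j<\ell$ forces $P$ and $Q$ to be ``unlinked'': $Q$ lies in the region that $P$ cuts off toward the bottom-right of the box, so in every row that both meet the span of $P$ lies strictly left of the span of $Q$, and in every column strictly above it. Reading $w(P),w(Q)$ off Proposition~\ref{turnsprop}, the contribution of each row and column to the exchange exponent is $(+1)+(-1)+(-1)+(+1)=0$ (the $\Gamma$- and \reflectbox{L}-turns sit with opposite signs at the two ends of each span, with the degenerate ends working out the same way), so $w(P)$ and $w(Q)$ commute. The base cases of (1),(2),(3) are: $P,Q$ share only a common head (for (1)); a common tail ending at the column vertex $j$ (for (2)); or a single vertex $v$ (for (3)). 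Decomposing at $v$ and applying the three parts of Lemma~\ref{pathscommonvertex} to the sub-configurations, the exchange exponents add to exactly $1$ for (1) and (2) and exactly $0$ for (3); the cancellations are forced by the fact that a sub-path leaving $v=(a,b)$ only reaches columns $\le b$, which in (1) rules out ``the $Q$-head straight down'' and in (3) rules out ``the crossing vertex in column $\ell$'', pinning the one exponent that would otherwise be wrong. The $\Gamma^{(t)}_B$-constraint is irrelevant here since we only decompose paths.

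The surgeries handle everything else. For the general case of (1)--(3), induct on the number of lenses formed by $P$ and $Q$: if there is at least one, swap the two sides of $P$ and $Q$ across it; this fixes both endpoints, so it stays inside $\Gamma^{(t)}_B(i,j)\times\Gamma^{(t)}_B(i,\ell)$ (resp.\ $\Gamma^{(t)}_B(i,j)\times\Gamma^{(t)}_B(k,j)$, $\Gamma^{(t)}_B(i,j)\times\Gamma^{(t)}_B(k,\ell)$), one checks from the local picture at the (at most two) vertices whose turn-type changes that no \reflectbox{L}-turn above $(r,s)$ is created, Lemma~\ref{pathscommonvertex} records the single power of $q$ introduced, and the configuration becomes strictly simpler, so the claim follows by induction from the base cases. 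For (4b) --- the one place the endpoints are allowed to move --- swap the tails of $P$ and $Q$ at their \emph{first} common vertex $v$: $\tilde P=P|_{i\to v}\cup Q|_{v\to\ell}\in\Gamma^{(t)}_B(i,\ell)$ and $\tilde Q=Q|_{k\to v}\cup P|_{v\to j}\in\Gamma^{(t)}_B(k,j)$. These are genuine paths because the part of $P$ before $v$ and the part of $Q$ after $v$ can only meet at $v$; the identity $w(P)w(Q)=q\,w(\tilde Q)w(\tilde P)$ comes from a short computation with Lemma~\ref{pathscommonvertex} at $v$; and the map hits every element of $\Gamma^{(t)}_B(i,\ell)\times\Gamma^{(t)}_B(k,j)$ exactly once, since any such pair has interleaved endpoints on the boundary of the embedding and hence must cross (Jordan curve), with ``first common vertex'' providing the inverse.

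The real obstacle is not this skeleton but making its two verifications uniform. First, one must show each surgery returns a path still in the prescribed $\Gamma^{(t)}_B$-set, i.e.\ that the ``no \reflectbox{L}-turn above $(r,s)$'' condition survives the re-gluing; this is exactly where the restriction to $\Gamma^{(t)}_B$ bites, and it requires examining the turns at the gluing vertices. Second, one must check that in every local configuration the exponents produced by Lemma~\ref{pathscommonvertex} together with the base commutations of $\qtorus$ sum to precisely the claimed power of $q$. Both checks branch on several alternatives --- whether the relevant shared vertex lies on a target row or column, whether a sub-path is purely vertical, which way each path turns at a branch vertex --- and the work is in organizing them so that the planarity and acyclicity of $\fgraph$ (which force the common vertices to be totally and consistently ordered) do the bookkeeping, rather than a long list of special cases.
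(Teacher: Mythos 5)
Your skeleton for Parts 4a and 4b is essentially the paper's: disjoint paths commute edge-by-edge, and intersecting pairs in the $j<\ell$, $i<k$ case are matched with $\Gamma_B^{(t)}(i,\ell)\times\Gamma_B^{(t)}(k,j)$ by switching tails at an extremal common vertex, with planarity/Jordan-curve forcing every pair on the target side to intersect so that the map is onto. (The paper switches at the \emph{last} common vertex rather than the first; both choices give well-defined inverse maps, though the weight computation is arranged differently.) You are also right that the one genuinely delicate point left implicit in the paper is checking that the re-glued paths acquire no new \reflectbox{L}-turn at a vertex greater than $(r,s)$; this does hold, because a problematic new \reflectbox{L}-turn at the splice vertex $(a,b)$ forces one of the original paths to have had an \reflectbox{L}-turn at $(a,b)$ itself or at some $(a,b')$ with $b'>b$ in the same row, which pins $(a,b)\leq(r,s)$.

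The gap is in your treatment of Parts 1--3. The paper needs no induction on lenses: it splits $P$ and $Q$ once, at the last common vertex (Part 1), the first common vertex (Part 2), or both (Part 3), and the point of choosing the \emph{extremal} common vertices is that all but one of the resulting pairwise intersections of sub-paths reduce to that single vertex, so Lemma~\ref{pathscommonvertex} applies verbatim to each adjacent transposition in a short direct computation. (The one remaining multi-intersection pair, $w(P_1)$ versus $w(Q_3)$ in Part 3, is handled by inserting $w(Q_2)^{-1}w(Q_2)$ and applying the lemma to the concatenations $Q_1\cup Q_2$ and $Q_2\cup Q_3$ --- a trick, not an induction.) Your lens-by-lens induction has no coherent inductive statement: the theorem asks for a partner pair $(\tilde P,\tilde Q)$ with $w(P)w(Q)=q\,w(\tilde Q)w(\tilde P)$ in \emph{reversed} order, and a single lens swap relates $w(P)w(Q)$ to $w(P')w(Q')$ in the \emph{same} order by commutation factors that are exactly the multi-intersection computations the induction was meant to avoid; moreover, if each swap really "introduced a single power of $q$" the total would be $q^{\#\mathrm{lenses}}$, not $q$. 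The fix is simply to abandon the induction and adopt the extremal-vertex decomposition, after which your base-case exponent analysis becomes the whole proof.
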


\begin{proof}

\noindent\emph{Part 1:} Let $(P,Q)\in \Gamma_B^{(t)}(i,j)\times \Gamma_B^{(t)}(i,\ell)$. Since $j<\ell$, $P$ and $Q$ have a last (white) vertex in common, say $(a,b)$. See Figure~\ref{tailswitchex}. Therefore, we may write $P=P_1\cup P_2$ where $P_1\colon i\to (a,b)$ and $P_2\colon(a,b)\to j$, and $Q=Q_1\cup Q_2$ where $Q_1\colon k\rightarrow (a,b)$ and $Q_2\colon (a,b)\rightarrow \ell$. Define $\tilde{P}=Q_1\cup P_2$ and $\tilde{Q}=P_1\cup Q_2$. We have $(\tilde{P}, \tilde{Q})\in \Gamma_B^{(t)}(i,j)\times \Gamma_B^{(t)}(i,\ell)$ and that $\tilde{\tilde{P}}=P$ and $\tilde{\tilde{Q}}=Q$, i.e., the map $(P,Q)\mapsto (\tilde{P},\tilde{Q})$ is an involution and so a permutation.

Finally, we apply Lemma~\ref{pathscommonvertex} to make our final conclusion as follows. If $Q_2$ has only vertical edges, then \begin{align*} 
w(P)w(Q) & =  w(P_1)w(P_2)w(Q_1)w(Q_2)\\
& =  qw(P_1)w(Q_1)w(P_2)w(Q_2) \mbox{ (Lemma~\ref{pathscommonvertex} (1)),}\\
& =  qw(P_1)w(Q_2)w(Q_1)w(P_2) \mbox{ (Lemma~\ref{pathscommonvertex} (1)\,\&\,(3))}\\
&=  qw(\tilde{Q})w(\tilde{P}).
\end{align*} 
If $Q_2$ has a horizontal edge, then \begin{align*} 
w(P)w(Q) & =  w(P_1)w(P_2)w(Q_1)w(Q_2)\\
& =  q^{-1}qw(P_1)w(Q_2)w(P_2)w(Q_1) \mbox{ (Lemma~\ref{pathscommonvertex} (1))}\\
& =  qw(P_1)w(Q_2)w(Q_1)w(P_2) \mbox{ (Lemma~\ref{pathscommonvertex} (1)\,\&\,(3))} \\
&= qw(\tilde{Q})w(\tilde{P}).
\end{align*}

\begin{figure}[htbp]
\begin{center}

\begin{tikzpicture}[xscale=1.4, yscale=1.4]

\draw (0,0) rectangle (3,2.8);

\node at (3.5, 2.3) {$\bullet$ $i=k$};
\node at (1.08, -0.3) {$\bullet$ $j$};
\node at (1.7, -0.3) {$\bullet$ $\ell$};

\node at (0.8, 1.2) {$P$};
\draw [thick, black] (3.1,2.3)--(2.8,2.3);
\draw [thick, black] (2.8,2.3)--(2.8,1.8);
\draw [thick, black] (2.8,1.8)--(2,1.8);
\draw [thick, black] (2,1.8)--(2, 1.1);
\draw [thick, black] (2,1.1)--(1, 1.1);
\draw [->, thick, black] (1,1.1)--(1, -0.2);

\draw [thick, dashed] (3.1,2.25)--(2.6,2.25);
\draw [thick, dashed] (2.6,2.25)--(2.6,1.6);
\draw [thick, dashed] (2.6,1.6)--(1.6,1.6);
\draw [->, thick, dashed] (1.6,1.6)--(1.6,-0.2);
\node at (1.8, 0.6) {$Q$};
\node at (1.6, 1.1) {$\bullet$};
\node[scale=0.7]  at (1.34, 1.25) {$(a,b)$};

\draw (5,0) rectangle (8,2.8);

\node at (8.5, 2.3) {$\bullet$ $i=k$};
\node at (6.08, -0.3) {$\bullet$ $j$};
\node at (6.7, -0.3) {$\bullet$ $\ell$};

\node at (5.8, 1.2) {$\tilde{P}$};
\draw [thick, dashed] (8.1,2.3)--(7.8,2.3);
\draw [thick, dashed] (7.8,2.3)--(7.8,1.8);
\draw [thick, dashed] (7.8,1.8)--(7,1.8);
\draw [thick, dashed] (7,1.8)--(7, 1.1);
\draw [thick, dashed] (7,1.8)--(7, 1.1);

\draw [thick, dashed] (7,1.1)--(6.6, 1.1);
\draw [thick, black] (6.6,1.1)--(6, 1.1);
\draw [->, thick, black] (6,1.1)--(6, -0.2);

\draw [thick, black] (8.1,2.25)--(7.6,2.25);
\draw [thick, black] (7.6,2.25)--(7.6,1.6);
\draw [thick, black] (7.6,1.6)--(6.6,1.6);
\draw [thick, black] (6.6,1.6)--(6.6,1.1);
\draw [->, thick, dashed] (6.6,1.1)--(6.6,-0.2);

\node at (6.8, 0.6) {$\tilde{Q}$};
\node at (6.6, 1.1) {$\bullet$};
\node[scale=0.7]  at (6.34, 1.25) {$(a,b)$};

\end{tikzpicture}
\caption{Illustration of Part 1 in the proof of Theorem~\ref{pathcommutation}. The left figure shows paths $P$ (solid) and $Q$ (dashed). Right figure shows paths $\tilde{P}$ (solid) and $\tilde{Q}$ (dashed).}
\label{tailswitchex}
\end{center}
\end{figure}
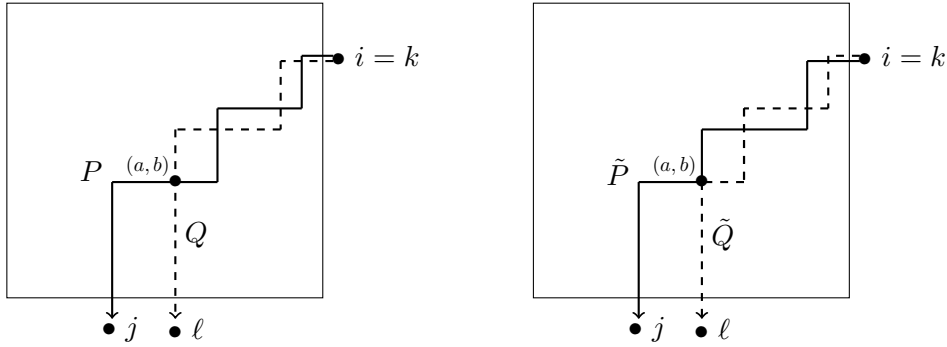

\noindent\emph{Part 2:} Let $(P,Q)\in \Gamma_B^{(t)}(i,j)\times \Gamma_B^{(t)}(k,j)$. In this case, $P$ and $Q$ have a first common vertex, say $(a,b)$. Therefore, we may write  $P=P_1\cup P_2$ where $P_1\colon i\to (a,b)$ and $P_2\colon(a,b)\to j$, and $Q=Q_1\cup Q_2$ where $Q_1\colon k\rightarrow (a,b)$ and $Q_2\colon (a,b)\rightarrow \ell$. Define $\tilde{P}=P_1\cup Q_2$ and $\tilde{Q}=Q_1 \cup P_2$. We again have $(\tilde{P}, \tilde{Q})\in \Gamma_B^{(t)}(i,j)\times \Gamma_B^{(t)}(k,j)$ and that the map $(P,Q)\mapsto (\tilde{P},\tilde{Q})$ is a permutation. The remainder of the proof for Part 2 proceeds as in Part 1 and by using Lemma~\ref{pathscommonvertex}, Parts 1 and 2. 

\noindent\emph{Part 3:} Let $(P,Q)\in \Gamma_B^{(t)}(i,j)\times \Gamma_B^{(t)}(k,\ell)$ where $i<k$ and $j>\ell$. In this case, $P$ and $Q$ have a first common vertex $(a,b)$ and a last common vertex $(a^\prime,b^\prime)$. We can write $P=P_1\cup P_2\cup P_3$ where $P_1\colon i\rightarrow (a,b)$, $P_2\colon (a,b)\rightarrow (a^\prime,b^\prime)$ and $P_3\colon (a^\prime,b^\prime)\to j$. Similarly $Q=Q_1\cup Q_2\cup Q_3$ where $Q_1\colon k\to (a,b)$, $Q_2\colon (a,b)\to (a^\prime,b^\prime)$ and $Q_3\colon (a^\prime,b^\prime) \to \ell$. Define $\tilde{P}=P_1\cup Q_2\cup P_3$ and $\tilde{Q}=Q_1\cup P_2\cup Q_3$. 

We again have $(\tilde{P}, \tilde{Q})\in \Gamma_B^{(t)}(i,j)\times \Gamma_B^{(t)}(k,\ell)$ and that the map $(P,Q)\mapsto (\tilde{P},\tilde{Q})$ is a permutation. To prove the final conclusion concerning the weights relation, we must consider several possibilities according to whether or not any of $P_2,P_3$ and $Q_2$ consists only of vertical edges, or no edges at all (the other paths here always have a horizontal edge). We here only discuss the case that  $P_2,P_3$ and $Q_2$ each have a horizontal edge, the other possibilities being dealt with similarly. Before we begin, we should mention that, strictly speaking, $P_2$ and $Q_2$ do not begin nor end at a row or column vertex, and so Lemma~\ref{pathscommonvertex} does not directly apply. In order to use the lemma, we identify $P_2$ and $Q_2$ respectively with the paths obtained by adding the vertical path from $(a^\prime,b^\prime)$ to $b^\prime$ and the horizontal path from $a$ to $(a,b)$. We can do this since In either case, these latter paths have the same weight as $w(P_2)$ or $w(P_3)$ respectively, by Proposition~\ref{turnsprop}.

We have
 \begin{align*} 
w(P)w(Q) & = w(P_1)w(P_2)w(P_3)w(Q_1)w(Q_2)w(Q_3) \\
& = q w(P_1)w(P_2)w(Q_1)w(Q_2)w(P_3)w(Q_3)\mbox{ (Lemma~\ref{pathscommonvertex} (1))} \\
&= w(P_1)w(P_2)w(Q_1)w(Q_2)w(Q_3)w(P_3)\mbox{ (Lemma~\ref{pathscommonvertex} (2))}\\
&= q^{-1} w(P_1)w(P_2)w(Q_1)w(Q_3)w(Q_2)w(P_3) \mbox{ (Lemma~\ref{pathscommonvertex} (1))} \\
&= w(P_1)w(Q_1)w(P_2)w(Q_3)w(Q_2)w(P_3) \mbox{ (Lemma~\ref{pathscommonvertex} (1))}  \\
&= w(Q_1)w(P_2)w(P_1)w(Q_3)w(Q_2)w(P_3) \mbox{ (Lemma~\ref{pathscommonvertex} (3)\,\&\,(1)),} 
\end{align*} 
where the second line is applying the cited lemma to $P_2$ and $Q_1\cup Q_2$. That the last line is equal to $w(\tilde{Q})w(\tilde{P})$ is now implied by the fact that $w(P_1)$ and $w(Q_3)$ commute. Indeed, we have
\begin{align*}
w(P_1)w(Q_3) & = w(P_1)w(Q_2)^{-1}w(Q_2)w(Q_3) \\
&= q w(Q_2)^{-1}w(P_1)w(Q_2)w(Q_3) \mbox{ (Lemma~\ref{pathscommonvertex} (1))} \\
&= w(Q_2)^{-1}w(Q_2)w(Q_3)w(P_1) \mbox{ (Lemma~\ref{pathscommonvertex} (1))} \\
&= w(Q_3)w(P_1),
\end{align*}
where the third line is applying the cited lemma to $P_3$ and $Q_2\cup Q_3$. 

%
%

\noindent\emph{Part 4a:}  Lemma 3.4 in~\cite{casteels} shows that the weight of any edge not sharing a vertex with $Q$ commutes with $w(Q)$. Since this is the case for all edges of $P$ we immediately have $w(P)w(Q)=w(Q)w(P)$.

\noindent\emph{Part 4b:} As in Part 1, we let $(a,b)$ be the last common vertex in a non-disjoint pair of paths $(P,Q)\in \Gamma_B^{(t)}(i,j)\times \Gamma_B^{(t)}(k,\ell)$. We then ``switch'' the tails of $P$ and $Q$ at $(a,b)$ to obtain a $\tilde{P}\colon i\to \ell$ and a $\tilde{Q}\colon k\to j$. The remainder of the proof is as in Part 1.

\end{proof}

\subsection{The Algebras $A_B^{(t)}$} \label{ABsection}

In this section we introduce, for each $t\in [mn]$ and Cauchon diagram $B$, a subalgebra $A_B^{(t)}$ of $\qtorus$. When $B=\emptyset$, we will see that $A_\emptyset^{(t)}\simeq R^{(t)}$. Throughout this section we fix $t\in [mn]$ and let $(r,s)$ be the $t^\textnormal{th}$ smallest coordinate. 

\begin{defn}\label{ASt}
We define $A_B^{(t)}$ to be the subalgebra of $\qtorus$ with the $m\times n$ matrix of generators $[x_{i,j}]$ where, for each coordinate $(i,j)$,
$$x_{i,j}=\sum_{P\in \Gamma^{(t)}_B(i,j)}  w(P).$$ When $B=\emptyset$ we write $A^{(t)}=A_\emptyset^{(t)}$. 
\end{defn}

\begin{ex} \label{ABtex}
Consider the $2\times 3$ Cauchon diagram $B=\{ (1,1)\}$. Figure~\ref{Aex} presents two copies of the corresponding Cauchon graph, where we continue our illustrative convention that no path may contain a $\reflectbox{L}$-turn in the shaded region. For each $t\in [6]$, we denote by $[x_{i,j}^{(t)}]$ the matrix of generators for $A_B^{(t)}$.

The left graph of Figure~\ref{Aex} corresponds to $t=1$. In this case, any path from row vertex $1$ to column vertex $1$ necessarily contains a $\reflectbox{L}$-turn in the shaded region. Therefore, $A_B^{(1)}$ has the matrix of generators $$\begin{bmatrix} x_{1,1}^{(1)} & x_{1,2}^{(1)}  & x^{(1)} _{1,3} \\ x^{(1)}_{2,1} & x^{(1)}_{2,2} & x^{(1)}_{2,3}\end{bmatrix}=\begin{bmatrix} 0 & t_{1,2} & t_{1,3} \\ t_{2,1} & t_{2,2} & t_{2,3} \end{bmatrix}.$$

One may check that $A_B^{(1)}=A_B^{(2)}=A_B^{(3)} =A_B^{(4)}$. For $t=5$, the Cauchon graph is illustrated on the right in Figure~\ref{Aex}. In this case, there exists a unique path in $\Gamma_B^{(5)}(1,1)$, so that the matrix of generators for $A_B^{(5)}$ is $$\begin{bmatrix} x^{(5)}_{1,1} & x^{(5)}_{1,2} & x^{(5)}_{1,3} \\ x^{(5)}_{2,1} & x^{(5)}_{2,2} & x^{(5)}_{2,3}\end{bmatrix} = \begin{bmatrix} t_{1,2}t_{2,2}^{-1}t_{2,1} & t_{1,2} & t_{1,3} \\ t_{2,1} & t_{2,2} & t_{2,3} \end{bmatrix}.$$ 

Finally, one may check that $A_B^{(6)}$ has matrix of generators 
$$\begin{bmatrix} x^{(6)}_{1,1} & x^{(6)}_{1,2} & x^{(6)}_{1,3} \\ x^{(6)}_{2,1} & x^{(6)}_{2,2} & x^{(6)}_{2,3}\end{bmatrix} =\begin{bmatrix} t_{1,2}t_{2,2}^{-1}t_{2,1} + t_{1,3}t_{2,3}^{-1}t_{2,1} & t_{1,2}+t_{1,3}t_{2,3}^{-1}t_{2,2} & t_{1,3} \\ t_{2,1} & t_{2,2} & t_{2,3} \end{bmatrix}.$$

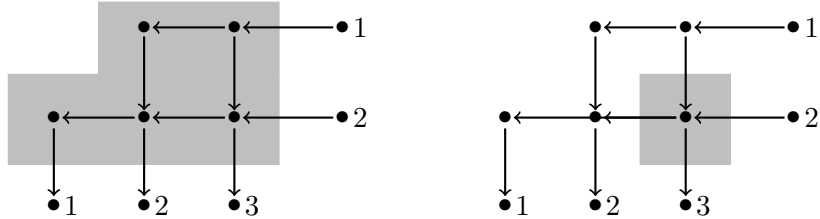
\begin{figure}[htbp]
\begin{center}

\begin{tikzpicture}[xscale=1.2, yscale=1.2]

\draw[color=lightgray, fill=lightgray] (7,0) rectangle (10,1);
\draw[color=lightgray, fill=lightgray] (8,1) rectangle (10,1.8);

\node at (10.7,1.52) {$\bullet$};
\node at (10.7,0.52) {$\bullet$};
\node at (10.9,1.52) {$1$};
\node at (10.9,0.52) {$2$};

\node at (7.505,-0.46) {$\bullet$};
\node at (8.505,-0.46) {$\bullet$};
\node at (9.505, -0.46) {$\bullet$};
\node at (7.7,-0.46) {$1$};
\node at (8.7,-0.46) {$2$};
\node at (9.7, -0.46) {$3$};

\node at (8.505, 1.52) {$\bullet$};
\node at (9.505,1.52) {$\bullet$};

\node at (8.505, 0.52) {$\bullet$};
\node at (9.505,0.52) {$\bullet$};
\node at (7.505, 0.52) {$\bullet$};

\draw [->, thick, black] (10.6,1.52)--(9.6,1.52);
\draw [->, thick, black] (10.6,0.52)--(9.6,0.52);
\draw [->, thick, black] (9.4,1.52)--(8.6,1.52);
\draw [->, thick, black] (9.4,0.52)--(8.6,0.52);
\draw [->, thick, black] (8.4,0.52)--(7.6,0.52);

\draw [->, thick, black] (9.505,1.42)--(9.505,0.6);
\draw [->, thick, black] (8.505,1.42)--(8.505,0.6);
\draw [->, thick, black] (9.505, 0.4)--(9.505,-0.35);
\draw [->, thick, black] (8.505, 0.4)--(8.505,-0.35);
\draw [->, thick, black] (7.505, 0.4)--(7.505,-0.35);

\draw[color=lightgray, fill=lightgray] (14,0) rectangle (15,1);

\node at (15.7,1.52) {$\bullet$};
\node at (15.7,0.52) {$\bullet$};
\node at (15.9,1.52) {$1$};
\node at (15.9,0.52) {$2$};

\node at (12.505,-0.46) {$\bullet$};
\node at (13.505,-0.46) {$\bullet$};
\node at (14.505, -0.46) {$\bullet$};
\node at (12.7,-0.46) {$1$};
\node at (13.7,-0.46) {$2$};
\node at (14.7, -0.46) {$3$};

\node at (13.505, 1.52) {$\bullet$};
\node at (14.505,1.52) {$\bullet$};

\node at (13.505, 0.52) {$\bullet$};
\node at (14.505,0.52) {$\bullet$};
\node at (12.505, 0.52) {$\bullet$};

\draw [->, thick, black] (15.6,1.52)--(14.6,1.52);
\draw [->, thick, black] (15.6,0.52)--(14.6,0.52);
\draw [->, thick, black] (14.4,1.52)--(13.6,1.52);
\draw [->, thick, black] (14.4,0.52)--(13.6,0.52);
\draw [->, thick, black] (14.4,0.52)--(12.6,0.52);

\draw [->, thick, black] (14.505,1.42)--(14.505,0.6);
\draw [->, thick, black] (13.505,1.42)--(13.505,0.6);
\draw [->, thick, black] (14.505, 0.4)--(14.505,-0.35);
\draw [->, thick, black] (13.505, 0.4)--(13.505,-0.35);
\draw [->, thick, black] (12.505, 0.4)--(12.505,-0.35);

\end{tikzpicture}
\caption{Two copies of the graph $G^{2\times 3}_{\{(1,1)\}}$ referred to in Example~\ref{ABtex}. The left picture is shaded to assist the definition of $A_B^{(1)}$, the right picture for $A_B^{(5)}$.}
\label{Aex}
\end{center}
\end{figure}

\end{ex}

Theorem~\ref{pathcommutation} implies some commutation relations between the generators of $A_B^{(t)}$.

\begin{thm}[cf. Definition~\ref{Rtdef}] \label{generatorrelations}
If $X=[x_{i,j}]$ is the matrix of generators for $A_B^{(t)}$, and $$\begin{bmatrix} a&b\\c&d\end{bmatrix}$$ is any $2\times 2$ submatrix of $X$, then:

\begin{enumerate}
\item $ab=qba$, $cd=qdc$;
\item $ac=qca$, $bd=qdb$;
\item $bc=cb$;
\item $ad=\begin{cases} da, &\textnormal{if $d=x_{k,\ell}$ and $(k,\ell)>(r,s)$;}\\
da+(q-q^{-1})bc, & \textnormal{if $d=x_{k,\ell}$ and $(k,\ell)\leq(r,s)$.}\end{cases}$

\end{enumerate}
\end{thm}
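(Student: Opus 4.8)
The plan is to deduce each relation in Theorem~\ref{generatorrelations} directly from the corresponding case of Theorem~\ref{pathcommutation}, using the fact that $x_{i,j}=\sum_{P\in\Gamma^{(t)}_B(i,j)}w(P)$ and that the weights lie in the quantum torus $\qtorus$, which is a domain with a nice monomial basis. The key observation is that if $\sigma$ is a permutation of $\Gamma^{(t)}_B(i,j)\times\Gamma^{(t)}_B(k,\ell)$ with $w(P)w(Q)=q^c\,w(\tilde Q)w(\tilde P)$ for every pair, then summing over all pairs gives
\begin{align*}
x_{i,j}x_{k,\ell} &= \sum_{(P,Q)} w(P)w(Q) = q^c\sum_{(P,Q)} w(\tilde Q)w(\tilde P) = q^c\!\!\sum_{(P',Q')}\!\! w(Q')w(P') = q^c\,x_{k,\ell}x_{i,j},
\end{align*}
since $(P,Q)\mapsto(\tilde P,\tilde Q)$ is a bijection of the index set. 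So each $q^*$-commutation relation is an immediate corollary.

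First I would handle Part 3 (the relation $bc=cb$): writing $a=x_{i,j}$, $b=x_{i,\ell}$, $c=x_{k,j}$, $d=x_{k,\ell}$ with $i<k$, $j<\ell$, the pair $(b,c)=(x_{i,\ell},x_{k,j})$ has the ``antidiagonal'' index pattern, so Theorem~\ref{pathcommutation}(3) (with the roles of the column vertices $j,\ell$ swapped, i.e.\ applied to column vertices $\ell>j$) gives the permutation with exponent $q^0$, yielding $bc=cb$. Next, Parts 1 and 2: the relations $ab=qba$, $cd=qdc$ come from Theorem~\ref{pathcommutation}(1) (same row vertex, distinct column vertices), and $ac=qca$, $bd=qdb$ come from Theorem~\ref{pathcommutation}(2) (distinct row vertices, same column vertex); in each case the exponent is exactly $q^1$, so no further computation is needed. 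The only subtlety is bookkeeping: one must make sure the $2\times 2$ submatrix $\left[\begin{smallmatrix}a&b\\c&d\end{smallmatrix}\right]$ sits in rows $i<k$ and columns $j<\ell$ so that $a=x_{i,j}$, $b=x_{i,\ell}$, $c=x_{k,j}$, $d=x_{k,\ell}$, and then match each of the four ``edge'' pairs $ab,cd,ac,bd$ and the ``diagonal'' pair $bc$ to the appropriate case of Theorem~\ref{pathcommutation}.

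The substantive step is Part 4, the relation for $ad=x_{i,j}x_{k,\ell}$. Here I would use Theorem~\ref{pathcommutation}(4), which splits the product $\Gamma^{(t)}_B(i,j)\times\Gamma^{(t)}_B(k,\ell)$ into the disjoint pairs and the intersecting pairs. For the disjoint pairs, part (4a) gives $w(P)w(Q)=w(Q)w(P)$, so their contribution to $x_{i,j}x_{k,\ell}$ equals their contribution to $x_{k,\ell}x_{i,j}$. For the intersecting pairs, part (4b) gives a bijection onto $\Gamma^{(t)}_B(i,\ell)\times\Gamma^{(t)}_B(k,j)$ with $w(P)w(Q)=q\,w(\tilde Q)w(\tilde P)$; summing, the total intersecting contribution equals $q$ times $\sum w(\tilde Q)w(\tilde P)$ over the image. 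Now I must relate this to $x_{k,\ell}x_{i,j}$ and to $bc=x_{i,\ell}x_{k,j}$. Running the same dichotomy on the product $x_{k,\ell}x_{i,j}=\sum_{(Q,P)}w(Q)w(P)$ (disjoint part identical, intersecting part a bijection onto the same $\Gamma^{(t)}_B(i,\ell)\times\Gamma^{(t)}_B(k,j)$) lets me cancel the disjoint contributions and express $x_{i,j}x_{k,\ell}-x_{k,\ell}x_{i,j}$ purely in terms of sums of products $w(\tilde Q)w(\tilde P)$ over $\Gamma^{(t)}_B(i,\ell)\times\Gamma^{(t)}_B(k,j)$, which is essentially $\sum_{(P',Q')}w(P')w(Q')=x_{i,\ell}x_{k,j}=bc$ up to a scalar coming from the $q$ in (4b) and from reordering $w(\tilde Q)w(\tilde P)$ versus $w(\tilde P)w(\tilde Q)$ (the latter handled by Part 1 applied to the pair $(x_{i,\ell},x_{k,j})$, i.e.\ by the already-proved relation $bd=qdb$ or rather the relevant $q^*$-commutation). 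Carefully tracking these scalars should produce exactly the coefficient $(q-q^{-1})$ in the case $(k,\ell)\le(r,s)$, and $0$ (i.e.\ $ad=da$) in the case $(k,\ell)>(r,s)$ — the latter because, when $(k,\ell)>(r,s)$, no path in $\Gamma^{(t)}_B(k,\ell)$ may have a \reflectbox{L}-turn at a vertex above $(k,\ell)$, which is precisely what forces the intersecting pairs to contribute in a way that cancels. The main obstacle is this last scalar bookkeeping: getting the power of $q$ and the sign right so that the difference collapses to exactly $(q-q^{-1})bc$ rather than some other $q$-multiple, and confirming the vanishing in the $(k,\ell)>(r,s)$ case hinges on a careful reading of which pairs are intersecting under the $\Gamma^{(t)}_B$ constraint.
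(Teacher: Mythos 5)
Your proposal follows the paper's proof essentially verbatim: sum the pathwise relations of Theorem~\ref{pathcommutation} over the given permutations/bijections to obtain Parts 1--3, and use the disjoint/intersecting dichotomy of Part 4 together with $bc=cb$ (to identify $x_{k,j}x_{i,\ell}$ with $x_{i,\ell}x_{k,j}$) for the $ad$ relation. The one point to correct is the case $(k,\ell)>(r,s)$: it is not that the intersecting pairs ``contribute in a way that cancels'' --- there are no intersecting pairs at all, since $\Gamma_B^{(t)}(k,\ell)$ then consists of the single hook path through $(k,\ell)$ and any $P\in\Gamma_B^{(t)}(i,j)$ meeting it would need a \reflectbox{L}-turn at a vertex $\geq(k,\ell)>(r,s)$, which is forbidden; hence $ad=da$ follows immediately from Part 4a alone.
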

\begin{proof}

First note that for any coordinates $(i,j)$ and $(i^\prime,j^\prime)$, 
\begin{align} 
x_{i,j}x_{i^\prime,j^\prime}& =  \sum_{\substack{P\in \Gamma_B^{(t)}(i,j),\\ Q\in \Gamma_B^{(t)}(i^\prime,j^\prime)}} w(P)w(Q)\nonumber \\
&=\sum_{\substack{P, Q\,: \\ P\cap Q=\emptyset}} w(P)w(Q) + \sum_{\substack{P, Q\,:\\ P\cap Q\neq\emptyset}} w(P)w(Q).\label{eq1}
\end{align}

Let $$\begin{bmatrix} a&b\\c&d\end{bmatrix}=\begin{bmatrix} x_{i,j} & x_{i,\ell} \\ x_{k,j} & x_{k,\ell} \end{bmatrix}$$ be a $2\times 2$ submatrix of $X$. 

First, consider $x_{i,j}$ and $x_{i,\ell}$. In this case the first sum in Equation~\eqref{eq1} is necessarily empty, since any pair $(P,Q)\in \Gamma_B^{(t)}(i,j)\times \Gamma_B^{(t)}(i,\ell)$ have row vertex $i$ in common. Part 1 of Theorem~\ref{pathcommutation} shows that for any such pair, there is a unique pair $(\tilde{P},\tilde{Q})\in\Gamma_B^{(t)}(i,j)\times \Gamma_B^{(t)}(i,\ell)$ such that $w(P)w(Q)=qw(\tilde{Q})w(\tilde{P})$. Hence, Equation~\eqref{eq1} implies $x_{i,j}x_{i,\ell} =  qx_{i,\ell}x_{i,j}$
The relations between: $x_{k,j}$ and $x_{k,\ell}$; $x_{i,j}$ and $x_{k,j}$; $x_{i,\ell}$ and $x_{k,\ell}$; and $x_{i,j}$ and $x_{k,j}$ are all obtained similarly.

Now consider $x_{i,j}$ and $x_{k,\ell}$. If $(r,s)< (k,\ell)$, then $$\Gamma_B^{(t)}(k,\ell) =\{Q=(k,(k,\ell),\ell)\}$$ and any $P\in\Gamma_B^{(t)}(i,j)$ is disjoint from $Q$ by definition of $\Gamma_B^{(t)}(i,j)$. Hence $x_{i,j}x_{k,\ell}=x_{k,\ell}x_{i,j}$ by Part 4a of Theorem~\ref{pathcommutation}.
If $(k,\ell)\leq (r,s)$, then by Equation~\eqref{eq1} and Part 4b of Theorem~\ref{pathcommutation}, we obtain
\begin{align*} x_{i,j}x_{k,\ell}& =  qx_{i,\ell}x_{k,j}+\sum_{\substack{P\in \Gamma_B^{(t)}(i,j),\,Q\in \Gamma_B^{(t)}(i,j):\\ P\cap Q=\emptyset}} w(P)w(Q).\end{align*}
Since the weights of disjoint paths commute by Part 4a of Theorem~\ref{pathcommutation}, it follows that $x_{i,j}x_{k,\ell}-x_{k,\ell}x_{i,j} = (q-q^{-1})x_{i,\ell}x_{k,j}.$

\end{proof}

The intuition behind these algebras is that one obtains $A_B^{(t)}$ from $A_B^{(t-1)}$ by ``allowing more paths.'' To be more precise, let $[x_{i,j}]$  be the matrix of generators for $A_B^{(t)}$, and $[y_{i,j}]$ that of $A_B^{(t-1)}$. As elements of $\qtorus$ we have \begin{align}x_{i,j} &= y_{i,j}+\sum w(P),\label{eqn5}\end{align} where the sum is over all paths $P\colon i\to j$ for which $(r,s)$ is a $\reflectbox{L}$-turn in $P$. 
If $i\geq r$, $j\geq s$, or $(r,s)\in B$, then no such $P$ exists and $$x_{i,j} = y_{i,j}.$$ On the other hand, if $(r,s)\not\in B$ and both $i<r$ and $j<s$, suppose $P\colon i\to j$ is a path with a \reflectbox{L}-turn at $(r,s)$.  Consider $w(P)w(Q)$, where $Q=(r,(r,s),s)$. As in the proof of Theorem~\ref{pathcommutation}, we may form paths $\tilde{P}\colon i\to s$ and $\tilde{Q}\colon r\to j$ by ``switching tails'' at $(r,s)$. Since $w(P)w(Q)=qw(\tilde{Q})w(\tilde{P})$, multiplying Equation~\eqref{eqn5} through by $y_{r,s}=x_{r,s}=w(Q)$ gives \begin{align*} x_{i,j}x_{r,s} &= y_{i,j}y_{r,s}+\sum w(P)y_{r,s} \\
&= y_{i,j}y_{r,s} + q y_{i,s}y_{r,j}.\end{align*}

One may easily check that $t_{r,s}=x_{r,s}=y_{r,s}$ generates a left and right Ore set for $A_B^{(t)}$ and $A_B^{(t-1)}$. (For $x_{r,s}$, this follows from the observation that $x_{i,j}x_{r,s}^{m+1}=x_{r,s}^m a$ for some $a\in A_B^{(t)}$ when $x_{i,j}\neq 0$ and $(i,j)$ is northwest of $(r,s)$.) Hence, we have just proved Parts 1 and 2 of the following result. Part 3 follows from these, and Part 4 is trivial. 

\begin{thm}[cf. Proposition 5.4.2 in~\cite{cauchon1}]
The following hold.

\begin{enumerate}
\item If $(r,s)\not\in B$, then $A_B^{(t-1)}$ is a subalgebra of $$A_B^{(t)}[x_{r,s}^{-1}]$$ where $$ y_{i,j}= \begin{cases} x_{i,j} - x_{i,s}\left(x_{r,s}\right)^{-1}x_{r,j}, & \textnormal{ if $i<r$ and $j<s$;} \\  x_{i,j} & \textnormal{ otherwise.}\end{cases}$$

\item If $(r,s)\not\in B$, then $A_B^{(t)}$ is a subalgebra of $$A_B^{(t-1)}[y_{r,s}^{-1}]$$ where $$ x_{i,j} = \begin{cases} y_{i,j} + y_{i,s}\left(y_{r,s}\right)^{-1}y_{r,j}, & \textnormal{ if $i<r$ and $j<s$;} \\  y_{i,j} & \textnormal{ otherwise.}\end{cases}$$

\item If $(r,s)\not\in B$, then $A^{(t)}_B[x_{r,s}^{-1}]= A_B^{(t-1)}[y_{r,s}^{-1}].$
\item If $(r,s)\in B$, then $A_B^{(t)}=A_B^{(t-1)}$.\qed

\end{enumerate}
\end{thm}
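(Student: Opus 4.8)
The plan is to establish the four parts of the theorem by combining the commutation structure from Theorem~\ref{pathcommutation} (and the explicit relations in Theorem~\ref{generatorrelations}) with the path-splitting identity in Equation~\eqref{eqn5}, which was already derived just before the statement. The key geometric fact is that, by definition of $\Gamma_B^{(t)}$ versus $\Gamma_B^{(t-1)}$, the \emph{only} difference between the two path-sets $\Gamma_B^{(t)}(i,j)$ and $\Gamma_B^{(t-1)}(i,j)$ is whether a \reflectbox{L}-turn is permitted at the specific coordinate $(r,s)$. Hence $\Gamma_B^{(t)}(i,j)=\Gamma_B^{(t-1)}(i,j)$ unless $(r,s)\notin B$ and $(i,j)$ is strictly northwest of $(r,s)$, which immediately proves Part~4 and gives the ``otherwise'' cases of Parts~1 and~2.

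For the northwest case of Part~2, I would argue as follows. Given $y_{i,j}$ (the $(i,j)$ generator of $A_B^{(t-1)}$) with $i<r$, $j<s$, I want to show $y_{i,j}+y_{i,s}y_{r,s}^{-1}y_{r,j}$ equals $x_{i,j}$ inside the localized algebra. Starting from Equation~\eqref{eqn5}, $x_{i,j}=y_{i,j}+\sum_P w(P)$ where $P$ ranges over paths $i\to j$ having a \reflectbox{L}-turn at $(r,s)$. The already-established computation shows $x_{i,j}x_{r,s}=y_{i,j}y_{r,s}+q\,y_{i,s}y_{r,j}$, equivalently $\bigl(\sum_P w(P)\bigr)y_{r,s}=q\,y_{i,s}y_{r,j}$, so $\sum_P w(P)=q\,y_{i,s}y_{r,j}y_{r,s}^{-1}$. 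I then need $q\,y_{i,s}y_{r,j}y_{r,s}^{-1}=y_{i,s}y_{r,s}^{-1}y_{r,j}$, which follows from the $q^*$-commutation of $y_{r,j}$ and $y_{r,s}$ given by Part~1 of Theorem~\ref{generatorrelations} (they lie in the same row, so $y_{r,j}y_{r,s}=q^{-1}y_{r,s}y_{r,j}$ since $j<s$, giving $y_{r,j}y_{r,s}^{-1}=q\,y_{r,s}^{-1}y_{r,j}$). This yields $x_{i,j}=y_{i,j}+y_{i,s}y_{r,s}^{-1}y_{r,j}$, which is Part~2; Part~1 is then obtained by solving this relation for $y_{i,j}$, using once more the $q^*$-commutation to move $x_{r,s}^{-1}$ past $x_{r,j}$ and match the stated formula $y_{i,j}=x_{i,j}-x_{i,s}x_{r,s}^{-1}x_{r,j}$. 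One must also check the Ore condition so that the localizations $A_B^{(t)}[x_{r,s}^{-1}]$ and $A_B^{(t-1)}[y_{r,s}^{-1}]$ make sense; this is asserted in the paragraph preceding the statement via the identity $x_{i,j}x_{r,s}^{m+1}=x_{r,s}^m a$, and I would simply invoke it.

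Part~3 is then formal: Part~2 shows $A_B^{(t)}\subseteq A_B^{(t-1)}[y_{r,s}^{-1}]$, and since $x_{r,s}=y_{r,s}$ is invertible on the right-hand side we get $A_B^{(t)}[x_{r,s}^{-1}]\subseteq A_B^{(t-1)}[y_{r,s}^{-1}]$; symmetrically Part~1 gives the reverse inclusion, so the two localized algebras coincide. I would record that both localizations are taken inside the common overalgebra $\qtorus$, so there is no ambiguity in the equality. I expect the main obstacle to be purely bookkeeping: making sure the powers of $q$ coming from Lemma~\ref{pathscommonvertex} and Theorem~\ref{generatorrelations} cancel correctly when translating between the ``$q\,y_{i,s}y_{r,j}y_{r,s}^{-1}$'' form produced by the path sum and the ``$y_{i,s}y_{r,s}^{-1}y_{r,j}$'' form in the statement, together with the (routine but necessary) verification that $\Gamma_B^{(t)}$ and $\Gamma_B^{(t-1)}$ genuinely agree outside the northwest region — which is where the hypothesis that $B$ is a Cauchon diagram and the structure of the Cauchon graph are implicitly used.
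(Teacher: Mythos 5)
Your proposal follows the same route as the paper: the paper's own proof of Parts 1 and 2 is precisely the discussion preceding the statement (Equation~\eqref{eqn5}, the tail-switching identity $x_{i,j}x_{r,s}=y_{i,j}y_{r,s}+q\,y_{i,s}y_{r,j}$, and the Ore observation), with Part 3 formal and Part 4 trivial, and you reproduce each of these steps. The one genuine problem is the commutation relation you invoke to convert $q\,y_{i,s}y_{r,j}y_{r,s}^{-1}$ into $y_{i,s}y_{r,s}^{-1}y_{r,j}$. You assert $y_{r,j}y_{r,s}=q^{-1}y_{r,s}y_{r,j}$, but Part 1 of Theorem~\ref{generatorrelations} reads $ab=qba$ for same-row generators with the column index of $a$ smaller, i.e.\ $y_{r,j}y_{r,s}=q\,y_{r,s}y_{r,j}$ since $j<s$. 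With the relation as you state it one gets $y_{i,s}y_{r,s}^{-1}y_{r,j}=q^{-1}y_{i,s}y_{r,j}y_{r,s}^{-1}$, so the identity you need would force $q^{2}=1$, contradicting the standing hypothesis that $q$ is a non-root of unity: the step fails as written. With the correct relation one has $y_{r,s}^{-1}y_{r,j}=q\,y_{r,j}y_{r,s}^{-1}$, the factor of $q$ cancels exactly, and the conclusion $x_{i,j}=y_{i,j}+y_{i,s}y_{r,s}^{-1}y_{r,j}$ does hold; so the gap is a sign error in an exponent rather than a structural flaw.

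Everything else is sound: the identification of $\Gamma_B^{(t)}(i,j)$ with $\Gamma_B^{(t-1)}(i,j)$ except when $(r,s)\notin B$ and $(i,j)$ is northwest of $(r,s)$ gives Part 4 and the ``otherwise'' cases; solving for $y_{i,j}$ gives Part 1 (in fact no further commutation is needed there, since $y_{i,s}=x_{i,s}$, $y_{r,s}=x_{r,s}$ and $y_{r,j}=x_{r,j}$ by the ``otherwise'' case, so the expression is already in the stated form); and Part 3 follows formally because both localizations sit inside the common overalgebra $\qtorus$.
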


In view of Theorem~\ref{ddtheorem}, we conclude the following when $B=\emptyset$.

\begin{cor} \label{isocor}
For every $t\in [mn]$ we have $R^{(t)}\simeq A^{(t)}$, where $R^{(t)}$ are the algebras of Definition~\ref{Rtdef}, and where the standard generator of $R^{(t)}$ with coordinate $(i,j)$ maps to the generator of $A^{(t)}$ with coordinate $(i,j)$.
\end{cor}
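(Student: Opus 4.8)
The plan is to show that the assignment sending the standard generator $x_{i,j}$ of $R^{(t)}$ to the generator $x_{i,j}=\sum_{P\in\Gamma^{(t)}_\emptyset(i,j)}w(P)$ of $A^{(t)}$ extends to an algebra isomorphism, by establishing separately that it is a well-defined surjective homomorphism and that it is injective, the latter by a dimension-counting argument using the bases from Theorem~\ref{Rtproperties}.

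First I would check that the map extends to a homomorphism. By the universal property of $R^{(t)}$ (it is defined by the generators $x_{i,j}$ and the relations (1)--(4) of Definition~\ref{Rtdef}), it suffices to verify that the elements $x_{i,j}\in A^{(t)}$ satisfy exactly those relations. But this is precisely the content of Theorem~\ref{generatorrelations} applied with $B=\emptyset$: relations (1)--(3) hold, and relation (4) holds with the case split governed by whether $(k,\ell)>(r,s)$ or $(k,\ell)\leq(r,s)$, matching Definition~\ref{Rtdef} verbatim. So there is a $\B{K}$-algebra homomorphism $\psi\colon R^{(t)}\to A^{(t)}$; it is surjective since the $x_{i,j}$ generate $A^{(t)}$ by definition.

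It remains to prove injectivity. The idea is to induct on $t$ using the deleting derivations maps, comparing the diagram in Theorem~\ref{ddtheorem} with the one given by the last displayed Theorem (Cauchon's Proposition 5.4.2 analogue) in the excerpt, whose formulas for passing between $A^{(t)}_\emptyset$ and $A^{(t-1)}_\emptyset$ are \emph{identical} to Cauchon's formulas for $R^{(t)}$ and $R^{(t-1)}$. Concretely: the base case $t=1$ is easy, since $A^{(1)}$ is visibly a quantum affine space on the generators $t_{i,j}$ (for $B=\emptyset$, $\Gamma^{(1)}_\emptyset(i,j)$ contains the single two-edge path $i\to(i,j)\to j$ of weight $t_{i,j}$), and $R^{(1)}=\qaffine$ maps isomorphically onto it. For the inductive step, assume $\psi_{t-1}\colon R^{(t-1)}\xrightarrow{\sim}A^{(t-1)}$. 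The maps $\overrightarrow{\cdot}$ and $\overleftarrow{\cdot}$ of Theorem~\ref{ddtheorem} identify $R^{(t)}[x_{r,s}^{-1}]=R^{(t-1)}[y_{r,s}^{-1}]$, and the cited Proposition 5.4.2 analogue identifies $A^{(t)}[x_{r,s}^{-1}]=A^{(t-1)}[y_{r,s}^{-1}]$, via formulas in the generators that correspond under $\psi_{t-1}$. Hence $\psi_{t-1}$ localizes to an isomorphism $R^{(t)}[x_{r,s}^{-1}]\xrightarrow{\sim}A^{(t)}[x_{r,s}^{-1}]$ carrying the generators $x_{i,j}$ of $R^{(t)}$ to the generators $x_{i,j}$ of $A^{(t)}$. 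Restricting this isomorphism to $R^{(t)}\hookrightarrow R^{(t)}[x_{r,s}^{-1}]$ shows $\psi_t$ is injective (it is the restriction of an injective map), completing the induction.

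\textbf{Alternative for injectivity, and the main obstacle.} If one prefers to avoid the localization bookkeeping, one can instead argue directly that $\psi$ sends the $\B{K}$-basis $\{\vect{x}^N : N\in\matnonneg\}$ of $R^{(t)}$ (Theorem~\ref{Rtproperties}(2)) to a linearly independent subset of $A^{(t)}\subseteq\qtorus$. Expanding $\psi(\vect{x}^N)=\prod_{(i,j)}\big(\sum_{P\in\Gamma^{(t)}_\emptyset(i,j)}w(P)\big)^{(N)_{i,j}}$ in the lexicographic-monomial basis of the quantum torus (Proposition~\ref{LID}), one identifies a distinguished ``leading'' lexicographic term — coming from choosing, in each factor, the path whose weight is the appropriate monomial — and shows these leading terms are distinct for distinct $N$, so no cancellation across different $N$ can occur. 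The main obstacle in \emph{either} approach is the same: pinning down precisely which element of the quantum torus each generator $x_{i,j}$ of $A^{(t)}$ reduces to and controlling the path-sums well enough to see no collapse; in the first approach this is hidden inside verifying that the two identifications of localizations really do match up on the nose (which is where the explicit formulas in the preceding Theorem are doing the work), and in the second it is the explicit leading-term analysis. I expect the cleanest write-up is the inductive/localization one, since the formula-matching is immediate from the statements already proved.
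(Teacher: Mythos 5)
Your proposal is correct and follows essentially the same route as the paper: the paper deduces the corollary precisely by comparing Theorem~\ref{ddtheorem} with the preceding theorem (the Proposition 5.4.2 analogue), whose localization formulas for $A^{(t)}\leftrightarrow A^{(t-1)}$ match Cauchon's for $R^{(t)}\leftrightarrow R^{(t-1)}$, with the base case $A^{(1)}\simeq\qaffine$. Your additional use of Theorem~\ref{generatorrelations} to get the surjective homomorphism up front is a harmless (and clarifying) supplement to the inductive localization argument that does the real work.
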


Hence, $A^{(1)}\simeq \qaffine$, $A^{(mn)}\simeq\qmatrix$ and both the deleting derivations and $\C{H}$-stratification theories apply to $A^{(t)}$. Moreover, we follow the arrow notation introduced in Section~\ref{DDsection} to distinguish a generator $x_{i,j}$ of $A_B^{(t)}$ from its image $\overleftarrow{x_{i,j}}$ in $A_B^{(t-1)}$, and a generator $y_{i,j}$ of $A_B^{(t-1)}[y_{r,s}^{-1}]$ from its image $\overrightarrow{y_{i,j}}$ in $A_B^{(t)}[x_{r,s}^{-1}]$.

\subsection{$\C{H}$-Primes as Kernels}\label{kernelsection}

Fix $t\in[mn]$ and a Cauchon diagram $B$.  Denote the matrix of generators for $A^{(t)}$ by $[x_{i,j}]$ and the matrix of generators for $A_B^{(t)}$ by $[x_{i,j}^B]$.

\begin{defn} \label{phidef}
For $t\in [mn]$ and a Cauchon diagram $B$, let $\sigma_B^{(t)}:A^{(t)}\to A_B^{(t)}$ be defined on the standard generators by $$\sigma_B^{(t)}(x_{i,j})=x_{i,j}^B.$$
\end{defn}

The content of Section 3.1 of~\cite{cauchon2} imply the following two results.

\begin{prop}\label{welldefinedprop}
The map $\sigma_B^{(t)}$ extends to a well-defined, surjective homomorphism.
\end{prop}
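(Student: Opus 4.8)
The plan is to show that $\sigma_B^{(t)}$, which \emph{a priori} is only defined as a map on the free algebra generated by symbols $x_{i,j}$, descends to a homomorphism on $A^{(t)}$, and that it is surjective. Surjectivity is immediate once the map is well-defined, since the generators $x_{i,j}^B$ of $A_B^{(t)}$ are precisely the images of the generators $x_{i,j}$ of $A^{(t)}$; so the entire content is the claim that $\sigma_B^{(t)}$ respects the defining relations of $A^{(t)}$. By Corollary~\ref{isocor} we may identify $A^{(t)}$ with $R^{(t)}$, so the defining relations are exactly those listed in Definition~\ref{Rtdef} (equivalently, Theorem~\ref{generatorrelations} with $B=\emptyset$): the four families of quadratic relations among $2\times 2$ submatrices.

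The key step is therefore: verify that the elements $x_{i,j}^B \in \qtorus$ satisfy the same four relations. But this is \emph{precisely} the statement of Theorem~\ref{generatorrelations}, applied to the Cauchon diagram $B$ (rather than to $\emptyset$). That theorem was proved directly from Theorem~\ref{pathcommutation}, whose statement and proof already incorporate an arbitrary Cauchon diagram $B$ via the sets $\Gamma_B^{(t)}(i,j)$. So the argument is: (i) present $A^{(t)} = R^{(t)}$ by generators and the relations of Definition~\ref{Rtdef}; (ii) observe that to extend $\sigma_B^{(t)}$ to an algebra homomorphism it suffices, by the universal property of algebras presented by generators and relations, that the images $x_{i,j}^B$ satisfy those same relations; (iii) invoke Theorem~\ref{generatorrelations} to conclude that they do; (iv) note surjectivity is automatic because the $x_{i,j}^B$ generate $A_B^{(t)}$ by Definition~\ref{ASt}.

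One small technical point to address carefully: the relation in Part 4 of Definition~\ref{Rtdef}/Theorem~\ref{generatorrelations} is stated with the \emph{same} threshold coordinate $(r,s)$ (the $t^{\text{th}}$ smallest) in both the presentation of $R^{(t)}$ and in the definition of $A_B^{(t)}$, so there is genuinely nothing to reconcile — both sides use the $\Gamma^{(t)}$-condition with cut-off $(r,s)$. I would make this explicit so the reader sees that the $(k,\ell) > (r,s)$ versus $(k,\ell)\le (r,s)$ dichotomy matches on the nose. I would also remark that since both $A^{(t)}$ and $A_B^{(t)}$ are defined as concrete subalgebras of the quantum torus $\qtorus$, there is no subtlety about the target being merely a quotient: $x_{i,j}^B$ is a genuine element of $\qtorus$, and Theorem~\ref{generatorrelations} is an honest identity there.

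The main obstacle is essentially nil at this stage of the paper: all the real work has been front-loaded into Theorem~\ref{pathcommutation} and Theorem~\ref{generatorrelations}. If anything, the only thing requiring a moment's care is the bookkeeping in invoking the universal property — making sure that ``$R^{(t)}$ is the algebra on generators $x_{i,j}$ modulo the relations (1)--(4)'' is used in the form ``any assignment of the $x_{i,j}$ to elements of a $\B{K}$-algebra satisfying (1)--(4) extends uniquely to a $\B{K}$-algebra homomorphism,'' and that Theorem~\ref{generatorrelations} supplies exactly the needed relations (1)--(4) for the target elements $x_{i,j}^B$. Once that is spelled out, the proposition is proved in a few lines.
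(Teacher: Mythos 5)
Your proof is correct, but it follows a genuinely different route from the paper. The paper offers no argument of its own for Proposition~\ref{welldefinedprop}: it simply asserts that the result follows from Section~3.1 of Cauchon's work \cite{cauchon2}, i.e.\ it leans on the external theory of the deleting derivations algorithm to identify $A_B^{(t)}$ with the quotient $R^{(t)}/K_t$. You instead give a self-contained, internal argument: read Definition~\ref{Rtdef} as a genuine presentation of $R^{(t)}\simeq A^{(t)}$ (free algebra modulo the listed relations), invoke the universal property of such a presentation, and then supply the needed verification that the target elements $x_{i,j}^B\in\qtorus$ satisfy relations (1)--(4) by citing Theorem~\ref{generatorrelations} for the diagram $B$. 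This is sound — Theorem~\ref{generatorrelations} and its input Theorem~\ref{pathcommutation} are proved for arbitrary Cauchon diagrams and do not depend on Proposition~\ref{welldefinedprop}, so there is no circularity — and your observation that surjectivity is automatic from Definition~\ref{ASt} is exactly right. What your approach buys is independence from \cite{cauchon2} for this step, in keeping with the paper's ``by paths'' philosophy; what the paper's citation buys (and your argument does not address, nor need it, since the proposition does not claim it) is the finer information that the kernel is the $\C{H}$-prime $K_t$ obtained by iterating $\phi$, which is the content of the subsequent Theorem~\ref{kerthm}. Your only implicit assumption — that mapping out of a generators-and-relations presentation requires no PBW-type input — is correct as stated, since injectivity is not being claimed.
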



%



\begin{thm} \label{kerthm}
One has $$\ker\left(\sigma_B^{(t)}\right) \in \hspec\left(A^{(t)}\right).$$ Moreover, if $t>1$, $$\ker\left(\sigma_B^{(t-1)}\right)=\phi_t\left(\ker\left(\sigma_B^{(t)}\right)\right),$$ where $\phi_t$ is as in Theorem~\ref{Cauchonmap}.
\end{thm}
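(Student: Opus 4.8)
The plan is to prove the two assertions separately, leaning heavily on the fact that $\sigma_B^{(t)}$ is $\C{H}$-equivariant. First I would observe that $\C{H}=(\B{K}^*)^{m+n}$ acts on both $A^{(t)}$ and $A_B^{(t)}$ (since the generators of the latter sit inside $\qtorus$, where the $t_{i,j}$ carry the obvious grading and each $x_{i,j}^B$ is a sum of monomials of degree $(e_i,e_j)$ — this follows from Proposition~\ref{turnsprop}, as the turns of a path $i\to j$ contribute weights whose total $\B{Z}^{m+n}$-degree telescopes to $(e_i,e_j)$). Thus $\sigma_B^{(t)}$ intertwines the two $\C{H}$-actions, so $\ker(\sigma_B^{(t)})$ is an $\C{H}$-stable ideal. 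Since $A_B^{(t)}\subseteq \qtorus$ is a domain (being a subalgebra of a domain), the kernel is prime. Hence $\ker(\sigma_B^{(t)})\in\hspec(A^{(t)})$.

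For the second assertion I would use the compatibility of $\sigma_B^{(t)}$, $\sigma_B^{(t-1)}$ with the deleting/adding derivations maps. The key identity is that the diagram relating $A^{(t-1)}\to A^{(t)}[x_{r,s}^{-1}]$ (via $\overrightarrow{\cdot}$), $A_B^{(t-1)}\to A_B^{(t)}[(x^B_{r,s})^{-1}]$, $\sigma_B^{(t)}$ and $\sigma_B^{(t-1)}$ commutes: on standard generators both routes send $y_{i,j}\in A^{(t-1)}$ to $x_{i,j}^B - x_{i,s}^B (x_{r,s}^B)^{-1} x_{r,j}^B$ when $i<r,j<s$ (when $(r,s)\notin B$), using the explicit formulas in Theorem~\ref{ddtheorem}(2) and in Part~1 of the $A_B^{(t)}$-version theorem above, and to $x_{i,j}^B$ otherwise; the case $(r,s)\in B$ is even simpler since then $A_B^{(t)}=A_B^{(t-1)}$. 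Granting this commuting square (and that $\sigma_B^{(t)}$ extends to the localizations, using that $x_{r,s}$ is Ore on both sides), one then splits into the two cases of Theorem~\ref{Cauchonmap}. If $x_{r,s}\notin\ker(\sigma_B^{(t)})$ — which happens exactly when $(r,s)\notin B$, since $\sigma_B^{(t)}(x_{r,s})=t_{r,s}\neq 0$ in that case — then $\ker(\sigma_B^{(t)})\in\spec^{\not\in}$, and by Theorem~\ref{Cauchonmap}(1), $\phi_t(\ker\sigma_B^{(t)}) = \overleftarrow{\ker\sigma_B^{(t)}}[y_{r,s}^{-1}]\cap A^{(t-1)}$; tracing through the commuting square identifies this with $\ker(\sigma_B^{(t-1)})$. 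If $x_{r,s}\in\ker(\sigma_B^{(t)})$, i.e.\ $(r,s)\in B$, then $\ker(\sigma_B^{(t)})\supseteq\langle x_{r,s}\rangle$ and $\phi_t$ is the contraction along $g:A^{(t-1)}\to A^{(t)}/\langle x_{r,s}\rangle$; since $A_B^{(t)}=A_B^{(t-1)}$ here, $\sigma_B^{(t)}$ factors through $A^{(t)}/\langle x_{r,s}\rangle$ as $\bar\sigma\circ(\text{quotient})$ with $\bar\sigma\circ g = \sigma_B^{(t-1)}$, whence $\phi_t(\ker\sigma_B^{(t)}) = g^{-1}(\ker\bar\sigma) = \ker\sigma_B^{(t-1)}$.

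The main obstacle I expect is verifying the commuting square rigorously, in particular that $\sigma_B^{(t)}$ really does extend to a homomorphism of the localized algebras $A^{(t)}[x_{r,s}^{-1}]\to A_B^{(t)}[(x^B_{r,s})^{-1}]$ and that it restricts correctly to the non-localized subalgebras; this is where one must be careful that $x_{r,s}$ and its image are genuinely regular normal-ish (Ore) elements, and that the contraction $(-)[x_{r,s}^{-1}]\cap A^{(t-1)}$ commutes with applying $\sigma$ — this uses that $\sigma_B^{(t-1)}$ is surjective with the localized map respecting the subalgebra inclusions, exactly the content attributed to Section 3.1 of~\cite{cauchon2}. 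Once the square is in place, everything else is a diagram chase. A secondary (routine) point is confirming that $\sigma_B^{(t)}(x_{r,s}) = t_{r,s}$ precisely when $(r,s)\notin B$ and $=0$ when $(r,s)\in B$, which is immediate from Definition~\ref{ASt} since $\Gamma_B^{(t)}(r,s)$ consists of the single path $(r,(r,s),s)$ of weight $t_{r,s}$ when $(r,s)\notin B$ and is empty otherwise.
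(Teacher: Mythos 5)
Your argument is correct, but note that the paper itself offers no proof of this theorem: it simply states that Theorem~\ref{kerthm} (together with Proposition~\ref{welldefinedprop}) follows from the content of Section~3.1 of Cauchon's paper, where these kernels arise as the canonical $\C{H}$-primes produced by the deleting derivations algorithm and the compatibility with $\phi_t$ is built into that construction. What you have written is essentially a self-contained reconstruction of that argument in the paths/weights language, and both halves hold up: the $\B{Z}^{m+n}$-degree of $w(P)$ for $P\colon i\to j$ does telescope to $(e_i,e_j)$ by the alternating-turns description in Proposition~\ref{turnsprop}, so $\sigma_B^{(t)}$ is $\C{H}$-equivariant and its kernel is $\C{H}$-stable and completely prime (the image $A_B^{(t)}$ sits inside the domain $\qtorus$); and the two cases of the second assertion match the two branches of Theorem~\ref{Cauchonmap}, with $x_{r,s}\in\ker(\sigma_B^{(t)})$ iff $(r,s)\in B$ exactly as you say. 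The points you flag as needing care are the right ones, and all of them are supplied by results the paper already quotes: well-definedness of $\sigma_B^{(t)}$ is Proposition~\ref{welldefinedprop}, the Ore property of $x_{r,s}$ and $x_{r,s}^B$ is noted before the theorem of Section~\ref{ABsection}, and in the $(r,s)\notin B$ case the passage between the $\overleftarrow{\cdot}$ formula for $\phi_t$ and your $\overrightarrow{\cdot}$ commuting square is legitimate because $A^{(t)}[x_{r,s}^{-1}]=A^{(t-1)}[y_{r,s}^{-1}]$ and $\phi_t$ and $\phi_t^{-1}$ are mutually inverse on $\spec^{\not\in}$, so it suffices to check $\phi_t^{-1}(\ker\sigma_B^{(t-1)})=\ker\sigma_B^{(t)}$, which is exactly your square. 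So the proposal fills in a proof the paper delegates to a citation, by the same underlying mechanism.
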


We conclude this short section with a technical lemma. For $M\in\matnonneg$, write $M=M_0 + M_1$, where $$(M_0)_{i,j} =\begin{cases} (M)_{i,j} & \textnormal{ if $(i,j)\leq (r,s)$;}\\ 0 & \textnormal{ if $(i,j)> (r,s)$,}\end{cases}$$ and $M_1 = M-M_0$. Now, let $K_t=a\in\ker\left(\sigma_B^{(t)}\right)$. Let $\C{M}$ denote the set of $M\in\matnonneg$ for which $\vect{x}^M$ is a lex term of $a$. Hence, for some $\alpha_M\in\B{K}^*$, we have \begin{align*} a&=\sum_{M\in\C{M}} \alpha_M\vect{x}^M \\ 
&=\sum_{M\in\C{M}} \alpha_M \vect{x}^{M_0}\vect{x}^{M_1} \\
&=
\sum_{N\in\mat} \left(\sum_{\substack{M\in\C{M}:\\ M_1=N_1}} \alpha_M \vect{x}^{M_0}\right)\vect{x}^{N_1}. \end{align*}

Consider \begin{align} \sigma_B^{(t)}(a) = \sum_{N\in\mat} \left(\sum_{\substack{M\in\C{M}: \\M_1=N_1}} \alpha_M \sigma_B^{(t)}(\vect{x}^{M_0})\right)\sigma_B^{(t)}(\vect{x}^{N_1}) = 0\label{Neqn} \end{align}

Let $N\in\mat$. If there is a coordinate $(i,j)>(r,s)$ with both $(i,j)\in B$ and $(N)_{i,j}\geq 1$, then $\vect{x}^{N_1}\in K_t$ since $x_{i,j}=t_{i,j}$ and $\sigma_B^{(t)}(x_{i,j})=0$. Otherwise, $\vect{x}^{N_1}\neq 0$, and the coefficient of $\sigma_B^{(t)}(\vect{x}^{N_1})$ must be $0$ by Proposition~\ref{LID}, i.e., that $$\sum_{\substack{M\in\C{M}:\\ M_1=N_1}} \alpha_M \vect{x}^{M_0}\in K_t.$$ 

\begin{lem}\label{splittingup}
 
With notation as in the preceding two paragraphs, we have that if $a\in K_t$, then $$a=a^\prime+ \sum_{\substack{N\in\mat, \\ \vect{x}^{N_1}\not\in K_t}} a_N\vect{x}^{N_1},$$ where in the second summand each $a_N\in K_t$, and $a^\prime\in K_t$ has the property that every lex term $\vect{x}^L$ of $a^\prime$ satisfies $\vect{x}^{L_1}\in K_t$, i.e., $(L)_{i,j}\geq 1$ for some $(i,j)>(r,s)$ and $(i,j)\in B$. 

%
%
%
%
%
\end{lem}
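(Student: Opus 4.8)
The plan is to follow the computation that is carried out in the two paragraphs immediately preceding the lemma statement, and simply package its conclusion. The point is that everything has already been set up: we have written $a=\sum_{N\in\mat}a_N\vect{x}^{N_1}$ where $a_N=\sum_{M\in\C{M},\,M_1=N_1}\alpha_M\vect{x}^{M_0}$, and we have observed, using Equation~\eqref{Neqn} together with Proposition~\ref{LID}, that for every $N$ with $\vect{x}^{N_1}\notin K_t$ one has $a_N\in K_t$. So the first step is to split the sum over $N$ into two groups according to whether $\vect{x}^{N_1}\in K_t$ or not:
$$a=\sum_{\substack{N\in\mat,\\ \vect{x}^{N_1}\in K_t}} a_N\vect{x}^{N_1}+\sum_{\substack{N\in\mat,\\ \vect{x}^{N_1}\notin K_t}} a_N\vect{x}^{N_1}.$$

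The second step is to set $a'$ equal to the first summand. I would then check the two asserted properties of $a'$. That $a'\in K_t$ is immediate: each term $a_N\vect{x}^{N_1}$ with $\vect{x}^{N_1}\in K_t$ lies in the ideal $K_t$ (a two-sided ideal absorbs the right multiplier $\vect{x}^{N_1}$ regardless of what $a_N$ is), and $K_t$ is closed under addition. For the statement about lex terms: a lex term $\vect{x}^L$ of $a'$ arises as a lex term of some $a_N\vect{x}^{N_1}=\big(\sum_{M_1=N_1}\alpha_M\vect{x}^{M_0}\big)\vect{x}^{N_1}$ with $\vect{x}^{N_1}\in K_t$; since $\vect{x}^{M_0}$ involves only coordinates $\le(r,s)$ and $\vect{x}^{N_1}$ involves only coordinates $>(r,s)$, the product $\vect{x}^{M_0}\vect{x}^{N_1}$ is already a lexicographic term (the indices are automatically in lex order) and $L_1=N_1$. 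So $\vect{x}^{L_1}=\vect{x}^{N_1}\in K_t$. Finally, recall from the paragraph before the lemma that $\vect{x}^{N_1}\in K_t$ happens exactly when there is a coordinate $(i,j)>(r,s)$ with $(i,j)\in B$ and $(N)_{i,j}\ge 1$, since for such $(i,j)$ we have $x_{i,j}=t_{i,j}$ and $\sigma_B^{(t)}(x_{i,j})=0$; hence $(L)_{i,j}=(N)_{i,j}\ge1$ for some such $(i,j)$, which is the displayed characterization. For the second summand, each $a_N$ with $\vect{x}^{N_1}\notin K_t$ lies in $K_t$ by the discussion surrounding Equation~\eqref{Neqn}, so the required decomposition holds.

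There is no real obstacle here: the lemma is essentially a bookkeeping restatement of the calculation already displayed, and the only things to be careful about are (i) making sure the product $\vect{x}^{M_0}\vect{x}^{N_1}$ of a ``small-coordinate'' lexicographic term and a ``large-coordinate'' lexicographic term is again a single lexicographic term, so that we can read off $L_1=N_1$ and talk about the lex terms of $a'$ cleanly, and (ii) invoking Proposition~\ref{LID} correctly to conclude that the coefficients $a_N$ vanish modulo $K_t$ when $\vect{x}^{N_1}\notin K_t$ — this is the one place where the hypothesis that $\B{K}$ is infinite and $q$ is a non-root of unity is silently being used (it underlies Proposition~\ref{LID} and the structure of $A^{(t)}$). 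Since both of these points are already established in the run-up to the statement, the proof is short; I would simply write ``The claim follows by collecting the terms in Equation~\eqref{Neqn} as in the preceding paragraph'' and spell out the two properties of $a'$ as above.
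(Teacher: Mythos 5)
Your proposal is correct and matches the paper's intent exactly: the lemma is stated without a separate proof precisely because it is the bookkeeping consequence of the computation in the two preceding paragraphs (split the sum over $N$ according to whether $\vect{x}^{N_1}\in K_t$, take $a'$ to be the first group, and use Proposition~\ref{LID} for the second). Your extra care about $\vect{x}^{M_0}\vect{x}^{N_1}$ being a single lexicographic term with $L_1=N_1$ is a correct and worthwhile observation, not a deviation.
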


\section{Generators of $\C{H}$-Primes} \label{gensection}

The goal of this section is the proof of Theorem~\ref{genthm} where we show that an $\C{H}$-prime in $\hspec(\qmatrix)$ has, as a right ideal, a Gr\"obner basis consisting of the quantum minors it contains. That these elements also form a Gr\"obner basis as a left ideal can be shown similarly. 

We begin by defining quantum minors in Section~\ref{minorsection} and recall Theorem 4.4 in ~\cite{casteels} which shows that a $q$-analogue of Lindstr\"oms classic lemma~\cite{lind} holds in the context of Cauchon graphs. We follow this by reviewing the notions of Gr\"obner bases as applied to the algebras $A^{(t)}$, and finally prove the main result in Section~\ref{generatorsection}.

\subsection{Quantum Minors} \label{minorsection}

Throughout this section, we fix a Cauchon diagram $B$ and a $t\in [mn]$. Set $(r,s)$ to be the $t^\textnormal{th}$ smallest coordinate and $[x_{i,j}]$ to be the matrix of generators for $A_B^{(t)}$.

\begin{defn} \label{minordef}

Let $I=\{i_1<i_2<\cdots<i_k\}\subseteq [m]$ and $J=\{j_1<j_2<\cdots<j_k\}\subseteq [n]$ be nonempty subsets of the same cardinality. The \emph{quantum minor} associated to $I$ and $J$ is the element of $A^{(t)}_B$ defined by
\begin{eqnarray*} 
[\ImidJ]_B^{(t)}&= &\sum_{\sigma\in S_k} (-q)^{\ell(\sigma)} x_{i_1,j_{\sigma(1)}}\cdots x_{i_k,j_{\sigma(k)}}
\end{eqnarray*}
where $S_k$ is the set of permutations of $[k]$ and $\ell(\sigma)$ is the number of inversions of $\sigma\in S_k$, i.e., the number of pairs $i,i^\prime \in [k]$ with $i<i^\prime$ but $\sigma(i)>\sigma(i^\prime)$.
\end{defn}

\begin{rem} \label{ltremark}
The defining expression for $[\ImidJ]^{(t)}_B$ is its lexicographic expression. More precisely, for $\sigma\in S_k$, write $P_\sigma$ to be the $m\times n$ matrix whose submatrix indexed by $(I,J)$ equals the standard $k\times k$ permutation matrix corresponding to $\sigma$, and where all other entries of $P_\sigma$ are zero. We can then write \begin{align*}[\ImidJ]^{(t)}_B = \sum_{\sigma\in S_k} (-q)^{\ell(\sigma)} \vect{x}^{P_\sigma}. \end{align*}
\end{rem}

We will often write $[\ImidJ]^{(t)}$ for $[\ImidJ]^{(t)}_\emptyset$. However, for the remainder of this section, we write $[\ImidJ] = [\ImidJ]^{(t)}_B.$ For the remainder of this paper we shorten ``quantum minor'' to just ``minor.'' 

\begin{defn} \label{coordinatesdef}
For $I=\{i_1<i_2<\cdots<i_k\}\subseteq [m]$ and $J=\{j_1<j_2<\cdots<j_k\}\subseteq [n]$, each $(i_\ell,j_\ell)$ is called a \emph{diagonal coordinate} of $[\ImidJ]$. Moreover, $(i_k,j_k)$ is the \emph{maximum coordinate} of $[\ImidJ]$.
\end{defn}

As elements of $\qtorus$, each minor whose maximum coordinate is at most $(r,s)$ reduces to a particularly nice form via a $q$-analogue of Lindstr\"om's Lemma. To explain, we first need to set some notation. At this point, the reader may wish to recall some of the notation set in Section~\ref{pathsection}. 

\begin{defn} \label{pathsystem} 
Let $I=\{i_1,\ldots,i_k\}\subseteq [m]$ and $J=\{j_1,\ldots,j_k\}\subseteq [n]$ be such that $|I|=|J|=k$. 

\begin{enumerate} \item A \emph{vertex-disjoint path system} from the row vertices $I$ to the column vertices $J$ in $\fgraph$ is a set of $k$ mutually disjoint paths $(P_1,\ldots, P_k)$ where $P_r \in \Gamma_B^{(t)}(i_r,j_r)$ for each $r\in [k]$. We write $$ \Gamma_B^{(t)}(\ImidJ) = \{\textnormal{all vertex-disjoint path systems from $I$ to $J$ in $\fgraph$}\}.$$

\item If $\mathcal{P}=(P_1,\ldots, P_k)\in\Gamma_B^{(t)}(\ImidJ)$, then the \emph{weight} of $\C{P}$ is the product $$ w(\C{P})=w(P_1)w(P_2)\cdots w(P_k)\in\qtorus.$$
\end{enumerate}
\end{defn}

\begin{notn}
If we wish to explicitly write out the elements of $I$ and $J$ in either $[\ImidJ]$ or $\Gamma_B^{(t)}(\ImidJ)$, we will omit the braces. For example, we write $$[\ImidJ]=[\{i_1,\ldots,i_k\}\,|\,\{j_1,\ldots,j_k\}] = [i_1,\ldots,i_k\,|\, j_1,\ldots,j_k].$$

\end{notn}
\begin{ex}\label{pathsystemexample}
For the Cauchon graph of Figure~\ref{pathsystemex}, the path system $\C{P}=(P_1,P_2,P_3)$ where \begin{align*}P_1&=(1,(1,3),(1,2),(2,2),(4,2),(4,1),1),\\ P_2 &= (2,(2,3), (3,3),(4,3),3),\\ P_3&= (4,(4,4),4)\end{align*} is a vertex-disjoint path system in $\Gamma^{(16)}_B(1,2,3\,|\,1,3,4)$. In fact, it is the unique such vertex-disjoint path system and $$w(\C{P})=(t_{1,2}t_{4,2}^{-1}t_{4,1})(t_{2,3})(t_{3,4}).$$

The reader may verify that the set $\Gamma^{(16)}_B(1,2\,|\,1,2)$ is empty.

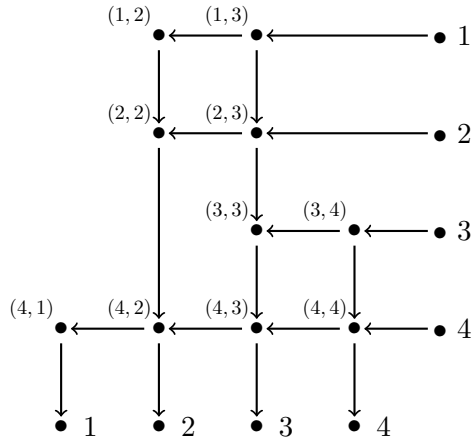
\begin{figure}[htbp]
\begin{center}

\begin{tikzpicture}[xscale=1.3, yscale=1.3]

\node at (0,0) {$\bullet$};
\node[scale=0.7] at (-0.3, 0.2) {$(4,1)$};
\node at (1,0) {$\bullet$};
\node[scale=0.7] at (0.7, 0.2) {$(4,2)$};
\node at (2,0) {$\bullet$};
\node[scale=0.7] at (1.7, 0.2) {$(4,3)$};
\node at (3,0) {$\bullet$};
\node[scale=0.7] at (2.7, 0.2) {$(4,4)$};

\node at (2,1) {$\bullet$};
\node[scale=0.7] at (1.7, 1.2) {$(3,3)$};
\node at (3,1) {$\bullet$};
\node[scale=0.7] at (2.7, 1.2) {$(3,4)$};

\node at (1,2) {$\bullet$};
\node[scale=0.7] at (0.7, 2.2) {$(2,2)$};
\node at (2,2) {$\bullet$};
\node[scale=0.7] at (1.7, 2.2) {$(2,3)$};
\node at (1,3) {$\bullet$};
\node[scale=0.7] at (0.7, 3.2) {$(1,2)$};
\node at (2,3) {$\bullet$};
\node[scale=0.7] at (1.7, 3.2) {$(1,3)$};

\node at (0,-1) {$\bullet$};
\node at (1,-1) {$\bullet$};
\node at (2,-1) {$\bullet$};
\node at (3,-1) {$\bullet$};
\node at (0.3,-1) {$1$};
\node at (1.3,-1) {$2$};
\node at (2.3,-1) {$3$};
\node at (3.3,-1) {$4$};
\node at (4,0) {$\bullet$ $4$};
\node at (4,1) {$\bullet$ $3$};
\node at (4,2) {$\bullet$ $2$};
\node at (4,3) {$\bullet$ $1$};

\draw [->, thick, black] (3.75,0)--(3.1,0);
\draw [->, thick, black] (3.75,1)--(3.1,1);
\draw [->, thick, black] (3.75,2)--(2.1,2);
\draw [->, thick, black] (3.75,3)--(2.1,3);

\draw [->, thick, black] (2.85,0)--(2.1,0);
\draw [->, thick, black] (2.85,1)--(2.1,1);
\draw [->, thick, black] (1.85,2)--(1.1,2);
\draw [->, thick, black] (1.85,3)--(1.1,3);
\draw [->, thick, black] (1.85,0)--(1.1,0);
\draw [->, thick, black] (0.85,0)--(0.1,0);

\draw [->, thick, black] (0,-0.15)--(0,-0.9);
\draw [->, thick, black] (1,-0.15)--(1,-0.9);
\draw [->, thick, black] (2,-0.15)--(2,-0.9);
\draw [->, thick, black] (3,-0.15)--(3,-0.9);

\draw [->, thick, black] (2,0.85)--(2,0.1);
\draw [->, thick, black] (3,0.85)--(3,0.1);

\draw [->, thick, black] (1,1.85)--(1,0.1);
\draw [->, thick, black] (2,1.85)--(2,1.1);
\draw [->, thick, black] (1,2.85)--(1,2.1);
\draw [->, thick, black] (2,2.85)--(2,2.1);

\end{tikzpicture}
\caption{A Cauchon graph.}
\label{pathsystemex}
\end{center}
\end{figure}

\end{ex}

The following is the $q$-analogue of a special case of Lindstr\"om's Lemma. 

\begin{thm}[\cite{casteels}, Theorem 4.4] \label{lindstrom}

If $[\ImidJ]$ has maximum coordinate at most $(r,s)$, then, as an element of $\qtorus$, $$[\ImidJ]=\sum_{\C{P}\in \Gamma_B^{(t)}(\ImidJ)} w(\C{P}).$$ \qed \end{thm}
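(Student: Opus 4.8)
The plan is to prove this by induction on $t$, using the deleting derivations machinery to reduce from $A_B^{(t)}$ to $A_B^{(t-1)}$. Let $(r,s)$ be the $t^{\textnormal{th}}$ smallest coordinate, and suppose $[\ImidJ]$ has maximum coordinate $(i_k,j_k)\leq (r,s)$.

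\medskip

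\textbf{Base case.} When $t=1$, we have $A_B^{(1)}=A_B^{(2)}=\cdots$ up to the point where the diagram forces the first nonzero generator, but more to the point, the maximum coordinate being at most $(1,1)$ forces $I=J=\{1\}$ (a $1\times 1$ minor). In that case $[\{1\}\mid\{1\}]=x_{1,1}$, which by Definition~\ref{ASt} is exactly $\sum_{P\in\Gamma_B^{(1)}(1,1)}w(P)$, and a $1$-element path system is just a single path, so the claim is immediate. (The same argument in fact settles every $1\times 1$ minor for every $t$, since $x_{i,j}=\sum_{P\in\Gamma_B^{(t)}(i,j)}w(P)$ by definition, and this will be useful as the bottom of the induction when $k=1$.)

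\medskip

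\textbf{Inductive step.} Assume the result for $t-1$; I want it for $t$. First dispose of the case $(r,s)\in B$: then $A_B^{(t)}=A_B^{(t-1)}$ as algebras with the same generators (Part 4 of the theorem on subalgebras), and $\Gamma_B^{(t)}(i',j')=\Gamma_B^{(t-1)}(i',j')$ for all relevant $(i',j')$ because no vertex $(r,s)$ exists to be a \reflectbox{L}-turn; so both sides are literally unchanged and we are done by induction. So assume $(r,s)\notin B$. Now there are two subcases. If the maximum coordinate $(i_k,j_k)$ is strictly less than $(r,s)$, or equals $(r,s)$ but the row/column data makes the deleting derivations map act as the identity on all generators $x_{i',j'}$ appearing in the minor — concretely, if it is \emph{not} the case that $i_\ell<r$ and $j_\ell<s$ for some diagonal index — then I claim both the minor and every path system are already "at level $t-1$". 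For the minor: by Equation~\eqref{eqn5}, $x_{i',j'}$ and $y_{i',j'}$ (the level-$(t-1)$ generator) differ only when $i'<r$ and $j'<s$, and when they differ the correction is a sum of weights of paths with a \reflectbox{L}-turn at $(r,s)$, i.e.\ passing through a vertex $\geq(r,s)$ in a forbidden way — but since $(i_k,j_k)\leq(r,s)$ and we are in the lexicographic order, any generator $x_{i',j'}$ with $(i',j')$ appearing in the expansion of $[\ImidJ]$ has $(i',j')\leq(i_k,j_k)\leq(r,s)$, so the only generator that could be affected sits at coordinates northwest of $(r,s)$; I need to check that these corrections cancel inside the minor. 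This is exactly the step where I expect to lean on Theorem~\ref{lindstrom} at level $t-1$ combined with a weight-matching argument: each path system in $\Gamma_B^{(t)}(\ImidJ)$ either uses no \reflectbox{L}-turn at $(r,s)$, in which case it is a member of $\Gamma_B^{(t-1)}(\ImidJ)$ with the same weight, or it does, and such systems must be shown to contribute zero net weight to the minor (they come in pairs via a tail-switching / sign-reversing involution, exactly the mechanism used in the proof of Theorem~\ref{pathcommutation} and in the derivation of Equation~\eqref{eqn5}).

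\medskip

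\textbf{The main obstacle and how to handle it.} The genuinely delicate point is the bookkeeping for path systems that \emph{do} pass through a \reflectbox{L}-turn at $(r,s)$: I must exhibit a weight-preserving, sign-reversing involution on the set of such "bad" path systems (or more precisely on the bad terms in the alternating sum defining $[\ImidJ]$), so that they cancel, leaving exactly $\sum_{\C{P}\in\Gamma_B^{(t-1)}(\ImidJ)}w(\C{P})$, which by the inductive hypothesis equals $[\ImidJ]^{(t-1)}_B$, which in turn equals $[\ImidJ]^{(t)}_B$ as an element of $\qtorus$ (since the two minors are the images of the same element under the equality $A_B^{(t)}[x_{r,s}^{-1}]=A_B^{(t-1)}[y_{r,s}^{-1}]$, noting the minor lies in the subalgebra before inverting $x_{r,s}$ because its maximum coordinate is $\leq(r,s)$ and — when $(i_k,j_k)=(r,s)$ — expanding along the last row/column reduces to smaller minors strictly below $(r,s)$). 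The involution: given a bad path system, locate the path passing through $(r,s)$ as a \reflectbox{L}-turn; switch its initial segment with the constant vertical segment through $(r,s)$ coming "from the outside", the standard move that converts a \reflectbox{L}-turn into no turn. One checks this is an involution and that it changes $\ell(\sigma)$ by an odd amount (hence flips the sign $(-q)^{\ell(\sigma)}$) while multiplying the weight by the correct power of $q$ so that, combined with the sign, the two terms cancel — this is precisely the content of the identity $w(P)w(Q)=qw(\tilde Q)w(\tilde P)$ from Theorem~\ref{pathcommutation} together with the fact that the involution transposes two diagonal coordinates. I expect this sign/weight reconciliation to be the one place where care is genuinely needed; everything else is a routine unwinding of definitions plus the inductive hypothesis.
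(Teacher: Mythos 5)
The paper does not reprove this statement: it cites \cite{casteels}, Theorem 4.4, whose argument is the direct Lindstr\"om--Gessel--Viennot one. One expands $[\ImidJ]=\sum_\sigma(-q)^{\ell(\sigma)}x_{i_1,j_{\sigma(1)}}\cdots x_{i_k,j_{\sigma(k)}}$, writes each generator as a sum over paths, observes that planarity of the standard embedding forces any pairwise vertex-disjoint $k$-tuple to have $\sigma=\textnormal{id}$, and cancels the intersecting tuples in pairs by tail-switching exactly as in Part 4b of Theorem~\ref{pathcommutation}. Your induction on $t$ via deleting derivations is a genuinely different route, but as set up it does not work.

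The central claim of your inductive step is false. You assert that the path systems in $\Gamma_B^{(t)}(\ImidJ)$ containing a \reflectbox{L}-turn at $(r,s)$ ``contribute zero net weight,'' so that the sum collapses to one over $\Gamma_B^{(t-1)}(\ImidJ)$, and that $[\ImidJ]_B^{(t)}=[\ImidJ]_B^{(t-1)}$ in $\qtorus$. Neither holds. The inclusion $\Gamma_B^{(t-1)}(\ImidJ)\subseteq\Gamma_B^{(t)}(\ImidJ)$ is strict in general, and distinct vertex-disjoint path systems have distinct monomial weights, hence linearly independent ones (this is precisely the argument in the proof of Theorem~\ref{zerocondition}); a nonempty set of extra systems therefore cannot sum to zero. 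Indeed, in Example~\ref{pathsystemexample2} the nonzero minor $[1,2,3\,|\,1,3,4]$ is computed by a unique path system whose first path has a \reflectbox{L}-turn at an interior white vertex, so such systems genuinely contribute. Nor are the two minors equal in $\qtorus$: northwest of $(r,s)$ one has $x_{i,j}=y_{i,j}+y_{i,s}y_{r,s}^{-1}y_{r,j}\neq y_{i,j}$ (for $2\times 2$, $B=\emptyset$, $(r,s)=(2,2)$, already $x_{1,1}=t_{1,1}+t_{1,2}t_{2,2}^{-1}t_{2,1}$ while $y_{1,1}=t_{1,1}$), so the generators themselves, and hence the minors built from them, change between levels --- they are \emph{not} ``images of the same element'' under the identification of the localizations. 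The cancellation the theorem actually needs is among the non-vertex-disjoint $k$-tuples of paths arising in the alternating sum over $\sigma$, and that involution --- the real content of the proof --- is never constructed in your sketch; the appeal to a quantum Laplace expansion when the maximum coordinate equals $(r,s)$ is a further unproved ingredient.
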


The proof in~\cite{casteels} deals with the case $t=mn$ and uses a technique similar to the ``tail-switching'' method of Theorem~\ref{pathcommutation}. The same proof is valid here due to the assumption that the maximum coordinate of the minor is at most $(r,s)$.

\begin{ex} \label{pathsystemexample2}
In the Cauchon graph of Figure~\ref{pathsystemex}, say with $t=16$, there is no vertex-disjoint path system from $\{1,2\}$ to $\{1,2\}$. Theorem~\ref{lindstrom} tells us that $[1,2\,|\,1,2] = 0$. This may be verified directly: \begin{align*} [1,2\,|\,1,2] & = x_{1,1}x_{2,2}-qx_{1,2}x_{2,1} \\ &= (t_{1,2}t_{4,2}^{-1}t_{4,1}+ t_{1,3}t_{2,3}^{-1}t_{2,2}t_{4,2}^{-1}t_{4,1} + t_{1,3}t_{4,3}^{-1}t_{4,1})(t_{2,2}+t_{2,3}t_{4,3}^{-1}t_{4,2}) \\ &- q(t_{1,2}+t_{1,3}t_{2,3}^{-1}t_{1,3}t_{4,3}^{-1}t_{4,2})(t_{2,2}t_{4,2}^{-1}t_{4,1}+t_{2,3}t_{4,3}^{-1}t_{4,1})\\ &= 0\end{align*}

Similarly, if one so wishes, it may be checked that \begin{align*}[1,2,3\,|\,1,3,4] &= x_{1,1}x_{2,3}x_{3,4} - q x_{1,1}x_{2,4}x_{3,3} - qx_{1,3}x_{2,1}x_{3,4} - q^3x_{1,4}x_{2,3}x_{3,1} \\ &+ q^2x_{1,3}x_{2,4}x_{3,1}+q^2x_{1,4}x_{2,1}x_{3,3}\\&= w(P_1)w(P_2)w(P_3) \\ &= (t_{1,2}t_{4,2}^{-1}t_{4,1})(t_{2,3})(t_{3,4}),\end{align*} where $P_1,P_2$ and $P_3$ are as in Example~\ref{pathsystemexample}. 
\end{ex}

Before moving on, a quick application of Theorem~\ref{lindstrom} is worth mentioning: the well-known fact that in $\C{O}_q(\C{M}_{n,n}(\B{K}))$ the \emph{quantum determinant} $$D_q=[1,2\ldots,n\,|\,1,2,\ldots,n]$$ is central. Indeed, it is easy to see that there is exactly one vertex-disjoint path system from $[n]$ to $[n]$ in $G^{n\times n}_\emptyset$, namely $\C{P}=(P_1,\ldots,P_n),$ where $P_i=(i,(i,i),i)$ for each $i\in [n]$. Hence, $$D_q =t_{1,1}t_{2,2}\cdots t_{n,n}.$$ Centrality of $D_q$ follows from the observation that the right hand side commutes with every generator $t^{\pm 1}_{i,j}$ of $\qtorus$.

The next result was given as Theorem 4.5 in~\cite{casteels}, but under the additional assumption that $q$ is transcendental over $\B{Q}$. We here provide a proof for when $q$ is a nonzero, non-root of unity. 

\begin{thm} \label{zerocondition}
A quantum minor $[\ImidJ]$ with maximum coordinate at most $(r,s)$ equals zero if and only if there does \emph{not} exist a vertex-disjoint path system from $I$ to $J$, i.e., if and only if $\Gamma_B^{(t)}(\ImidJ)=\emptyset$.
\end{thm}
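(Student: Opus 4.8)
The plan is to prove both implications, the nontrivial direction being that $[\ImidJ]=0$ forces $\Gamma_B^{(t)}(\ImidJ)=\emptyset$. By Theorem~\ref{lindstrom}, since the maximum coordinate of the minor is at most $(r,s)$, we have the identity $[\ImidJ]=\sum_{\C{P}\in\Gamma_B^{(t)}(\ImidJ)} w(\C{P})$ inside the quantum torus $\qtorus$. So the statement is equivalent to showing that this sum of path-system weights vanishes if and only if there are no path systems at all; i.e., a nonempty sum of these particular Laurent monomials can never be zero. The natural strategy is therefore to understand the weights $w(\C{P})$ as monomials in $\qtorus$ and argue that there is no cancellation among them, or more precisely that their ``leading'' contributions cannot all cancel.

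First I would recall from Theorem~\ref{Rtproperties}(2) and Proposition~\ref{LID} that the quantum torus has as a $\B{K}$-basis the lexicographic monomials $\vect{t}^N$ with $N\in\mat$, and that a set of monomials in the standard generators is linearly independent over $\B{K}$ precisely when the associated lexicographic exponent matrices are distinct. Each weight $w(\C{P})$ is a product of path weights, each of which (by Definition~\ref{weightdefn} and Proposition~\ref{turnsprop}) is a monomial in the $t_{i,j}^{\pm 1}$; so $w(\C{P})=q^{e(\C{P})}\vect{t}^{N(\C{P})}$ for some integer $e(\C{P})$ and exponent matrix $N(\C{P})\in\mat$. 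Thus $[\ImidJ]=\sum_{\C{P}} q^{e(\C{P})}\vect{t}^{N(\C{P})}$, and by Proposition~\ref{LID} this is zero if and only if, after grouping path systems by their exponent matrix $N$, each group's coefficient $\sum_{\C{P}:\,N(\C{P})=N} q^{e(\C{P})}$ vanishes. Since $q$ is not a root of unity, a sum $\sum q^{e_i}$ of powers of $q$ with all $e_i$ giving the same ``type'' cannot accidentally vanish unless the relevant terms pair up with opposite signs — but here all coefficients are genuine positive powers $q^{e}$ with coefficient $+1$ (no minus signs survive, since the signs $(-q)^{\ell(\sigma)}$ in the minor were already absorbed when passing to the path-system expansion in Theorem~\ref{lindstrom}). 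Hence each such group-sum is a sum of nonzero scalars of the same sign and cannot be zero; consequently $[\ImidJ]=0$ is impossible when $\Gamma_B^{(t)}(\ImidJ)\neq\emptyset$.

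Concretely, the steps in order would be: (i) invoke Theorem~\ref{lindstrom} to replace $[\ImidJ]$ by $\sum_{\C{P}} w(\C{P})$; (ii) write each $w(\C{P})=q^{e(\C{P})}\vect{t}^{N(\C{P})}$ using Proposition~\ref{turnsprop}; (iii) collect terms by exponent matrix and apply Proposition~\ref{LID} to reduce vanishing of the whole expression to vanishing of each collected coefficient in $\B{K}$; (iv) observe that each collected coefficient is a nonempty sum of powers of $q$ all appearing with coefficient $+1$, hence nonzero; (v) conclude $\Gamma_B^{(t)}(\ImidJ)=\emptyset$. The reverse implication is immediate: if $\Gamma_B^{(t)}(\ImidJ)=\emptyset$ then the sum in Theorem~\ref{lindstrom} is empty, so $[\ImidJ]=0$.

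The main obstacle is step~(iv): one must be certain that the weights of distinct path systems never contribute terms with \emph{opposite} signs that could cancel. This is where the structure of Theorem~\ref{lindstrom} is essential — the whole point of the $q$-Lindström expansion is that all the signs from the permutation sum have already cancelled via tail-switching, leaving only positively-signed monomial weights. I would want to double-check (or cite from~\cite{casteels}) that the statement of Theorem~\ref{lindstrom} indeed produces the sum $\sum_{\C{P}} w(\C{P})$ with no signs, as written; granting that, the argument that a nonempty $\B{K}$-linear combination $\sum q^{e_i}\vect{t}^{N_i}$ with all coefficients equal to positive powers of $q$ is nonzero reduces, via Proposition~\ref{LID}, to the fact that for each fixed $N$ the scalar $\sum_{i:\,N_i=N} q^{e_i}\neq 0$ in $\B{K}$, which holds because it is a sum of nonzero elements lying in the (additively closed) subsemigroup generated by $1$ and $q$ — more carefully, one notes it is a Laurent polynomial in $q$ with nonnegative coefficients, not all zero, evaluated at a non-root of unity, hence nonzero since $q\ne 0$ and such a polynomial has no roots among nonzero field elements when $\mathrm{char}(\B{K})=0$, and when $\mathrm{char}(\B{K})=p>0$ one uses instead that all exponents $e_i$ can be controlled so that the monomials $\vect{t}^{N_i}$ are genuinely distinct after accounting for the $q$-power, so that $q$ being a non-root of unity guarantees distinctness in $\qtorus$ and no cancellation occurs. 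Pinning down this last point cleanly for positive characteristic is the subtle part and is presumably exactly why the hypothesis ``$q$ a non-root of unity over an arbitrary infinite field'' (rather than ``$q$ transcendental'') suffices.
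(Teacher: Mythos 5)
Your steps (i)--(iii) match the paper's strategy exactly: invoke Theorem~\ref{lindstrom}, write each $w(\C{P})=q^{e(\C{P})}\vect{t}^{N(\C{P})}$, and reduce via Proposition~\ref{LID} to the vanishing of the grouped coefficients. But step (iv) is where the proof actually lives, and your argument for it is both incomplete and, as stated, incorrect. A nonempty sum of powers of $q$ with coefficient $+1$ \emph{can} vanish at a nonzero non-root of unity: $1+1=0$ in characteristic $2$, and even in characteristic $0$ the Laurent polynomial $2+q$ vanishes at $q=-2$, which is not a root of unity. So "a sum of powers of $q$ all appearing with coefficient $+1$ is nonzero" is false, and your claim that a polynomial with nonnegative coefficients "has no roots among nonzero field elements when $\mathrm{char}(\B{K})=0$" fails for negative $q$. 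You sense this yourself when you defer the positive-characteristic case to "controlling the exponents so that the monomials are genuinely distinct" --- but that deferral is precisely the theorem's real content, and you never supply it.

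The missing idea is the injectivity of $\C{P}\mapsto M_{\C{P}}$: the paper shows that two distinct vertex-disjoint path systems in $\Gamma_B^{(t)}(\ImidJ)$ always have distinct exponent matrices, so after grouping by lexicographic monomial each group is a \emph{singleton} and its coefficient is a single $q^{\alpha}\neq 0$; no cancellation question ever arises, in any characteristic. Proving this requires a genuine combinatorial argument: assuming $M_{\C{P}}=M_{\C{Q}}$ (same set of $\Gamma$-turns and \reflectbox{L}-turns), one compares the first turning vertices of $P_k$ and $Q_k$, uses the Cauchon condition to show that a discrepancy would force two paths of $\C{Q}$ to intersect (contradicting vertex-disjointness), and iterates down the paths and through the path system to conclude $\C{P}=\C{Q}$. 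Without this step your proof does not go through.
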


\begin{proof}
If $\Gamma_B^{(t)}(\ImidJ)=\emptyset$, then Theorem~\ref{lindstrom} implies that $[\ImidJ]=0$.

Now suppose $\Gamma_B^{(t)}(\ImidJ)\neq \emptyset$, i.e., there is at least one vertex-disjoint path system from $I$ to $J$. The weight of a vertex-disjoint path system $\C{P}$ is equal to $q^\alpha \vect{t}^{M_\C{P}}\in\qtorus$ for some integer $\alpha$, where $$(M_\C{P})_{i,j} =\begin{cases} 1 & \textnormal{ if there is a path in $\C{P}$ with a $\Gamma$-turn at $(i,j)$;} \\
-1 & \textnormal{ if there is a path in $\C{P}$ with a $\reflectbox{L}$-turn at $(i,j)$;}\\
0 & \textnormal{ otherwise.}\end{cases}$$

Therefore, if for any distinct $\C{P},\C{Q}\in\Gamma_B^{(t)}(\ImidJ)$ one has $M_\C{P}\neq M_\C{Q}$, then by Theorem~\ref{lindstrom} and Proposition~\ref{LID}, we may conclude that $[\ImidJ]\neq 0$.

Suppose $\C{P}=(P_1,\ldots,P_k)$ and $\C{Q}=(Q_1,\ldots,Q_k)$ are two vertex-disjoint path systems from $I$ to $J$ and that $M_\C{P}=M_\C{Q}$, i.e., a path in $\C{P}$ has a $\Gamma$-turn (respectively $\reflectbox{L}$-turn) at $(i,j)$ if and only if a path in $Q$ does. We aim to show that $\C{P}=\C{Q}$. First, consider the paths $P_k$ and $Q_k$. Let $(i_k,\ell)$ be the first vertex where $P_k$ turns, and $(i_k,\ell^\prime)$ be the first vertex where $Q_k$ turns. If $\ell>\ell^\prime$, then $Q_k$ goes straight through $(i_k, \ell^\prime)$. However, since $\C{Q}$ contains some path $Q$ that turns at $(i,\ell)$, this implies (since $B$ is a Cauchon diagram) that $Q$ and $Q_k$ intersect, contradicting the choice of $\C{Q}$ as a vertex-disjoint path system. The symmetric case shows that $\ell\not<\ell^\prime$ and hence $\ell=\ell^\prime$. A similar argument can then be applied to the remainder of the turning vertices (if any) in $P_k$ and $Q_k$, from which we conclude that $P_k=Q_k$. Repeating the argument with $P_{k-1}$ and $Q_{k-1}$, etc., we see that $\C{P}=\C{Q}$, as desired.

\end{proof}

\begin{cor} \label{lindstromcor}
(Recall the map $\sigma_B^{(t)}: A^{(t)}\to A_B^{(t)}$ of Section~\ref{kernelsection}.) A quantum minor $[\ImidJ]^{(t)}\in A^{(t)}$ with maximum coordinate at most $(r,s)$ is in $\ker(\sigma_B^{(t)})$ if and only if there does not exist a vertex-disjoint path system from $I$ to $J$ in $\fgraph$, i.e., $\Gamma_B^{(t)}(\ImidJ)=\emptyset$. \qed
\end{cor}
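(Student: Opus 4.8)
The plan is to deduce Corollary~\ref{lindstromcor} directly from Theorem~\ref{zerocondition} by chasing the definitions of the map $\sigma_B^{(t)}$ and of the quantum minors in the two algebras $A^{(t)}$ and $A_B^{(t)}$. The starting observation is that $\sigma_B^{(t)}\colon A^{(t)}\to A_B^{(t)}$ is the homomorphism of Definition~\ref{phidef} sending the standard generator $x_{i,j}$ of $A^{(t)}$ to the generator $x_{i,j}^B$ of $A_B^{(t)}$. Since the quantum minor $[\ImidJ]^{(t)}$ is a fixed polynomial expression (the one in Definition~\ref{minordef}) in the generators $x_{i,j}$, and the ``same'' polynomial in the generators $x_{i,j}^B$ is by definition the minor $[\ImidJ] = [\ImidJ]^{(t)}_B$ in $A_B^{(t)}$, applying the homomorphism $\sigma_B^{(t)}$ term by term gives
\begin{align*}
\sigma_B^{(t)}\left([\ImidJ]^{(t)}\right) = [\ImidJ]^{(t)}_B.
\end{align*}

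First I would make this identity precise: $\sigma_B^{(t)}$ being an algebra homomorphism commutes with sums and products, and $(-q)^{\ell(\sigma)}$ is a scalar, so it sends the defining sum $\sum_{\sigma\in S_k}(-q)^{\ell(\sigma)}x_{i_1,j_{\sigma(1)}}\cdots x_{i_k,j_{\sigma(k)}}$ in $A^{(t)}$ to the corresponding sum with each $x_{i_\ell,j_{\sigma(\ell)}}$ replaced by $x_{i_\ell,j_{\sigma(\ell)}}^B$, which is exactly $[\ImidJ]^{(t)}_B$. Consequently $[\ImidJ]^{(t)}\in\ker(\sigma_B^{(t)})$ if and only if $[\ImidJ]^{(t)}_B = 0$ as an element of $A_B^{(t)}\subseteq\qtorus$.

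Second, with the hypothesis that the maximum coordinate of the minor is at most $(r,s)$, Theorem~\ref{zerocondition} applies verbatim to $[\ImidJ]^{(t)}_B$ and tells us that this element is zero precisely when $\Gamma_B^{(t)}(\ImidJ)=\emptyset$, i.e. when there is no vertex-disjoint path system from $I$ to $J$ in $\fgraph$. Combining the two equivalences yields the statement of the corollary.

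Since each step is purely a matter of unwinding definitions and quoting the previous theorem, there is no real obstacle here; the only point requiring a moment's care is the bookkeeping that $\sigma_B^{(t)}$ really does carry the minor $[\ImidJ]^{(t)}$ to $[\ImidJ]^{(t)}_B$ — that is, that the ``minor in $A^{(t)}$'' and the ``minor in $A_B^{(t)}$'' are images of one another under $\sigma_B^{(t)}$ — which is immediate from Definition~\ref{minordef} together with the fact (Proposition~\ref{welldefinedprop}) that $\sigma_B^{(t)}$ is a well-defined homomorphism sending $x_{i,j}\mapsto x_{i,j}^B$.
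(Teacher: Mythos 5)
Your proposal is correct and matches the paper's (implicit) argument: the corollary is stated with no written proof precisely because it follows by applying the homomorphism $\sigma_B^{(t)}$ to the defining expression of the minor, so that $\sigma_B^{(t)}([\ImidJ]^{(t)})=[\ImidJ]^{(t)}_B$, and then invoking Theorem~\ref{zerocondition}. The bookkeeping step you flag is exactly the content the paper treats as immediate.
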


We conclude this section by showing how one may construct new vertex-disjoint path systems from $I$ to $J$ from old. First, suppose $i$ is a row vertex and $j$ is a column vertex in $\fgraph$, and consider two paths $P\colon i\to j$ and $Q\colon i\to j$. Let $(i=v_0,\ldots,v_k=j)$ be the subsequence of all vertices that $P$ and $Q$ have in common. For each $a\in[k]$, let $P_a$ (respectively $Q_a$) denote the sub-path of $P$ (respectively $Q$) starting at $v_{a-1}$ and ending at $v_a$. If $P_a\neq Q_a$, then the first edge of $P_a$ is perpendicular to the first edge of $Q_a$. If the first edge of $P_a$ is horizontal, let us say that $P_a$ is \emph{above} $Q_a$, otherwise $P_a$ is \emph{below} $Q_a$. Now consider the paths $$U_a = \begin{cases} P_a & \textnormal{ if $P_a=Q_a$,} \\ P_a &\textnormal{ if $P_a$ is above $Q_a$,} \\
Q_a &\textnormal{if $Q_a$ is above $P_a$,}\end{cases}$$ 
and 
$$L_a = \begin{cases} P_a & \textnormal{ if $P_a=Q_a$,} \\ P_a &\textnormal{ if $P_a$ is below $Q_a$,} \\
Q_a &\textnormal{ if $Q_a$ is below $P_a$.}\end{cases}$$ 

\begin{defn}\label{ULdef}
With notation as in the preceding paragraph, we let $U(P,Q)\colon i\to j$ be the path $$U(P,Q)=U_1\cup U_2\cup\cdots\cup U_k$$ and $L(P,Q)\colon i\to j$ be the path $$L(P,Q)=L_1\cup L_2\cup\cdots\cup L_k.$$ 
\end{defn}

\begin{ex} \label{Uex}
With respect to Figure~\ref{Uexample}, $U_1$ is the solid path from $i=v_0$ to $v_1$, $U_2$ is the dashed path from $v_1$ to $v_2$, $U_3$ is the solid path from $v_2$ to $v_3$, etc. On the other hand, $L_1$ is the solid path from $i=v_0$ to $v_1$, $L_2$ is the solid path from $v_1$ to $v_2$, $L_3$ is the solid path from $v_2$ to $v_3$, etc.
\begin{figure}[htbp]
\begin{center}

\begin{tikzpicture}[xscale=1.9, yscale=1.9]

\draw (0,0) rectangle (3,3);

\draw[color={rgb:black,1;white,10}, fill={rgb:black,1;white,8}] (2.35,2.45) rectangle (3.5,2.55);
\draw[color={rgb:black,1;white,10}, fill={rgb:black,1;white,8}] (2.35,1.8) rectangle (2.45,2.55);
\draw[color={rgb:black,1;white,10}, fill={rgb:black,1;white,8}] (1.95,1.75) rectangle (2.45,1.85);
\draw[color={rgb:black,1;white,10}, fill={rgb:black,1;white,8}] (1.95,1.05) rectangle (2.05,1.85);
\draw[color={rgb:black,1;white,10}, fill={rgb:black,1;white,8}] (0.95,1.05) rectangle (2.05,1.15);
\draw[color={rgb:black,1;white,10}, fill={rgb:black,1;white,8}] (0.95,-0.2) rectangle (1.05,1.15);

\node at (3.5, 2.5) {$\bullet$};
\node at (4, 2.5) {$i=v_0$};

\node at (1, -0.3) {$\bullet$};
\node at (1.4,-0.3) {$v_6=j$};

\node at (0.8, 1.2) {$P$};
\draw [thick, black] (3.5,2.5)--(2.8,2.5);
\node at (2.8, 2.5) {$\bullet$};
\node[scale=0.7] at (2.8, 2.6) {$v_1$};
\draw [thick, black] (2.8,2.5)--(2.8,1.8);
\node at (2.8, 2.15) {$\bullet$};
\node at (2.8, 1.8) {$\bullet$};
\node at (2.4, 1.8) {$\bullet$};
\draw [thick, black] (2.8,1.8)--(2,1.8);
\node at (2, 1.8) {$\bullet$};
\node[scale=0.7] at (2.5,1.9) {$v_2$};
\draw [thick, black] (2,1.8)--(2, 1.1);
\node at (2, 1.1) {$\bullet$};
\node[scale=0.7] at (2.1,1.2) {$v_3$};
\draw [thick, black] (2,1.1)--(1, 1.1);
\node at (1.33, 1.1) {$\bullet$};
\node[scale=0.7] at (1.67,1.2) {$v_4$};

\node at (1.67, 1.1) {$\bullet$};
\node at (1, 1.1) {$\bullet$};
\draw [->, thick, black] (1,1.1)--(1, -0.2);
\node at (1, 0.7) {$\bullet$};
\node at (1, 0.3) {$\bullet$};

\node[scale=0.7] at (0.85,0.3) {$v_5$};
\draw [->, thick, black] (1,1.1)--(1, -0.2);

\draw [thick, dashed] (3.5,2.52)--(2.8,2.52);
\node at (2.15, 2.6) {$Q$};
\draw [thick, dashed] (2.8,2.52)--(2.4,2.52);
\node at (2.4, 2.52) {$\bullet$};
\draw [thick, dashed] (2.4,2.52)--(2.4,1.8);
\draw [thick, dashed] (2.4,1.8)--(2.4,1.1);
\node at (2.4, 1.1) {$\bullet$};
\draw [thick, dashed] (2.4,1.08)--(2,1.08);
\draw [thick, dashed] (2,1.08)--(1.67,1.08);
\draw [thick, dashed] (1.67,1.08)--(1.67,0.3);
\node at (1.67, 0.3) {$\bullet$};
\node at (1.67, 0.7) {$\bullet$};
\draw [thick, dashed] (1.67,0.3)--(1.02,0.3);
\node at (1.33, 0.3) {$\bullet$};
\draw [->, thick, dashed] (1.02,0.3)--(1.02,-0.2);

\end{tikzpicture}
\caption{$P$ is the solid path; $Q$ is the dashed path; $U(P,Q)$ is the shadowed path.}
\label{Uexample}
\end{center}
\end{figure}
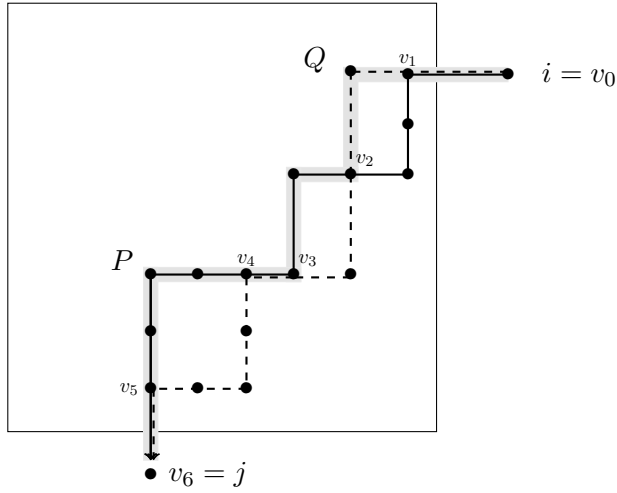

\end{ex}

The following lemma states the key property of $U(P,Q)$ that we require.

\begin{lem} \label{disjointU}
For a row vertex $i$ and column vertex $j$ in $\fgraph$, consider two paths $P\colon i\to j$ and $Q\colon i\to j$. Suppose that $R\colon i^\prime\to j^\prime$ is a path with $i^\prime>i$. If $R$ is disjoint from either $P$ or $Q$, then $R$ is disjoint from $U(P,Q)$. 
\end{lem}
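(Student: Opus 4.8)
The plan is to exploit the planarity of the Cauchon graph (in its standard embedding) together with the geometric description of $U(P,Q)$ as the ``upper envelope'' of the two paths $P$ and $Q$. The key point is that $U(P,Q)$ lies, as a curve in the plane, weakly above (in the partial sense of north/west) both $P$ and $Q$; more precisely, every vertex of $U(P,Q)$ lies on $P$ or on $Q$, and on each common segment $U_a$ is whichever of $P_a,Q_a$ is ``above.'' So the plan is: first show that the vertex set of $U(P,Q)$ is contained in $V(P)\cup V(Q)$ except possibly for vertices that neither $P$ nor $Q$ passes through — but in fact by construction every $U_a$ is a subpath of either $P$ or $Q$, so $V(U(P,Q))\subseteq V(P)\cup V(Q)$. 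That observation already disposes of the case where $R$ is disjoint from \emph{both} $P$ and $Q$.

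First I would reduce to the interesting case: $R$ is disjoint from exactly one of $P,Q$, say disjoint from $P$ but $R\cap Q\neq\emptyset$. I then want to derive a contradiction. Pick a vertex $(a,b)\in R\cap Q$. Since $V(U(P,Q))\subseteq V(P)\cup V(Q)$, if $(a,b)\notin U(P,Q)$ we are fine for that vertex, so suppose toward a contradiction that some vertex of $R$ lies on $U(P,Q)$; such a vertex must lie on $Q$ (it cannot lie on $P$ since $R\cap P=\emptyset$), hence it lies on some segment $Q_c$ where $Q_c$ is the ``above'' path, i.e.\ $U_c=Q_c$. The heart of the argument is then a planarity/monotonicity argument: $R$ starts at a row vertex $i'$ with $i'>i$, so $R$ starts strictly \emph{south} of where $P$ and $Q$ start. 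Using that all edges of the Cauchon graph go north-to-south or east-to-west (edge types (1)–(4) of Definition~\ref{FactorGraph} all decrease the row index or decrease the column index in the standard embedding), $R$ is a monotone lattice path heading generally southwest. The claim is that a southwest-monotone path $R$ that starts south of $P$ and meets the segment $Q_c$ of $Q$ — which on that segment lies strictly north of the corresponding segment $P_c$ of $P$ — must, by the Jordan-curve/planarity property (Proposition stating the standard embedding is planar), also cross $P$ somewhere between its start and that crossing point, contradicting $R\cap P=\emptyset$. The precise mechanism: between the common vertices $v_{c-1}$ and $v_c$, the region bounded by $P_c$ and $Q_c$ is a lattice rectangle-like region; $R$ enters this region (it touches $Q_c$, the northern boundary) having come from strictly south of $P_c$, the southern boundary, so by planarity $R$ must cross $P_c$.

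The main obstacle I anticipate is making the ``$R$ must cross $P$'' step fully rigorous rather than merely plausible from a picture — one has to be careful because $R$ and $Q$ might share a whole subpath rather than crossing transversally, and because the ``above/below'' dichotomy is only defined segment-by-segment between consecutive common vertices of $P$ and $Q$. I would handle this by choosing the crossing vertex $(a,b)\in R\cap U(P,Q)$ to be the \emph{first} vertex of $R$ (in $R$'s own ordering) lying on $U(P,Q)$, noting it lies on some $Q_c=U_c$, and then tracking $R$ backwards from $(a,b)$: $R$'s predecessor vertices lie strictly outside $U(P,Q)$, and since $R$ comes from the south (row index $>i$ at the start, and monotone), the portion of $R$ before $(a,b)$ together with the portion of $P$ (which on segment $c$ is $P_c$, lying weakly south of $Q_c$) and suitable boundary pieces encloses a region; a parity/winding argument on the planar embedding forces $R$ to have met $P_c$, i.e.\ $R\cap P\neq\emptyset$, the desired contradiction. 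Since the roles of "disjoint from $P$" and "disjoint from $Q$" are symmetric under swapping $P\leftrightarrow Q$ (which swaps $U$-segments for $L$-segments on the segments where the two differ, but $U(P,Q)$ itself is symmetric in $P,Q$), this completes the proof.
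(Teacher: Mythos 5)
Your proposal is correct and follows essentially the same route as the paper: observe that every vertex of $U(P,Q)$ lies on $P$ or $Q$, reduce (WLOG) to $R\cap P=\emptyset$, locate any offending vertex on a segment $Q_a$ with $U_a=Q_a$ (so $Q_a$ is above $P_a$), and use planarity plus $i'>i$ to force $R$ to cross the Jordan curve $P_a\cup Q_a$ at a vertex of $P$, a contradiction. The paper's proof is exactly this, stated just as tersely at the Jordan-curve step you flag as the delicate point.
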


\begin{proof}
With respect to $P$ and $Q$, we use the notation of the paragraph just prior to Example~\ref{Uex}. Without loss of generality, suppose $P$ and $R$ are disjoint. 

If $R$ and $U(P,Q)$ have a vertex $w$ in common, then $w\in Q$ and there exists an $a$ such that $w$ is in the subpath $Q_a$ of $Q$. Since $w\in U(P,Q)$, we have $U_a=Q_a$ for this $a$ and so $Q_a$ is above $P_a$. On the other hand, since $i^\prime>i$, $R$ must intersect the Jordan curve formed by $P_a$ and $Q_a$. Since $\fgraph$ is planar, the intersection occurs at a vertex of $P$, a contradiction.
\end{proof}

\begin{cor}\label{disjointUcor}
Let $i<i^\prime$ be two row vertices and $j<j^\prime$ be two column vertices in $\fgraph$. Suppose $P\colon i\to j$ and $P^\prime\colon i^\prime \to j^\prime$ are disjoint paths and $Q\colon i\to j$ and $Q^\prime\colon i^\prime \to j^\prime$ are disjoint paths. Then $U(P,Q)$ and $U(P^\prime,Q^\prime)$ are disjoint.
\end{cor}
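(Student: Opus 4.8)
The plan is to apply Lemma~\ref{disjointU} twice and then invoke an elementary observation about the vertices of $U(P^\prime,Q^\prime)$. First I would use that $P$ and $Q$ are both paths $i\to j$ and that $P^\prime\colon i^\prime\to j^\prime$ has $i^\prime>i$: since $P^\prime$ is disjoint from $P$, Lemma~\ref{disjointU} (applied to the pair $P,Q$ and the path $R=P^\prime$) gives that $P^\prime$ is disjoint from $U(P,Q)$. Running the same argument with $R=Q^\prime$ and using that $Q^\prime$ is disjoint from $Q$, we conclude that $Q^\prime$ is also disjoint from $U(P,Q)$.

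Next I would note, straight from Definition~\ref{ULdef}, that every vertex of $U(P^\prime,Q^\prime)$ lies on $P^\prime$ or on $Q^\prime$: by construction $U(P^\prime,Q^\prime)=U_1\cup\cdots\cup U_k$ where each $U_a$ is either the subpath $P^\prime_a$ of $P^\prime$ or the subpath $Q^\prime_a$ of $Q^\prime$. Hence a vertex common to $U(P,Q)$ and $U(P^\prime,Q^\prime)$ would be a vertex of $U(P,Q)$ lying on $P^\prime$ or on $Q^\prime$, contradicting the first paragraph. Therefore $U(P,Q)$ and $U(P^\prime,Q^\prime)$ are disjoint, which is the claim.

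I do not expect a real obstacle here; the one point that needs care is to apply Lemma~\ref{disjointU} to the \emph{correct} pair — namely $P,Q$, which start at the smaller row vertex $i$ — so that the hypothesis $i^\prime>i$ holds and it is the envelope $U(P,Q)$ (not $U(P^\prime,Q^\prime)$) that is being controlled; the roles of the two pairs are not symmetric in the lemma. Finally, I would remark that the hypothesis $j<j^\prime$ is not needed for this argument and is recorded only because that is the setting in which the corollary is later used, namely vertex-disjoint path systems between increasing index sets.
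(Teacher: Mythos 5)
Your proposal is correct and follows essentially the same route as the paper: two applications of Lemma~\ref{disjointU} show $U(P,Q)$ is disjoint from both $P^\prime$ and $Q^\prime$, and then the observation that $U(P^\prime,Q^\prime)$ is assembled from subpaths of $P^\prime$ and $Q^\prime$ finishes the argument. Your side remarks about the asymmetry of the lemma and the redundancy of the hypothesis $j<j^\prime$ are accurate but not needed.
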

\begin{proof}
By two applications of Lemma~\ref{disjointU}, $U(P,Q)$ is disjoint from both $P^\prime$ and $Q^\prime$. Since $U(P^\prime,Q^\prime)$ consists only of subpaths coming from either $P^\prime$ or $Q^\prime$, we have that $U(P,Q)$ and $U(P^\prime, Q^\prime)$ are disjoint as well.
\end{proof}
Repeated application of Corollary~\ref{disjointUcor} immediately gives the following result.
\begin{cor} \label{newvdps}
Let $\C{P}=(P_1,\ldots, P_k)$ and $\C{Q}=(Q_1,\ldots,Q_k)$ be vertex-disjoint path systems from $I$ to $J$. Then $$U(\C{P},\C{Q})=( U(P_1,Q_1),\ldots, U(P_k,Q_k))$$ is a vertex-disjoint path system from $I$ to $J$. \qed
\end{cor}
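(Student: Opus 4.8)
The plan is to derive the statement by iterating Corollary~\ref{disjointUcor}, treating the pairs of indices one at a time. Recall that $U(\C{P},\C{Q})$ is defined componentwise: its $\ell$-th component is $U(P_\ell,Q_\ell)$, which by Definition~\ref{ULdef} is a path from $i_\ell$ to $j_\ell$ lying in $\fgraph$; so each component already has the correct endpoints and the only thing to verify is mutual vertex-disjointness of the $k$ paths $U(P_1,Q_1),\ldots,U(P_k,Q_k)$.

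First I would fix any two distinct indices $a,b\in[k]$ with, say, $i_a<i_b$, and observe that since $I=\{i_1<\cdots<i_k\}$ and $J=\{j_1<\cdots<j_k\}$ are listed in increasing order, the hypothesis $i_a<i_b$ forces $j_a<j_b$ as well. Because $\C{P}$ is a vertex-disjoint path system, the paths $P_a\colon i_a\to j_a$ and $P_b\colon i_b\to j_b$ are disjoint; likewise $Q_a$ and $Q_b$ are disjoint. These are exactly the hypotheses of Corollary~\ref{disjointUcor} (with $i=i_a$, $i'=i_b$, $j=j_a$, $j'=j_b$, $P=P_a$, $P'=P_b$, $Q=Q_a$, $Q'=Q_b$), so that corollary yields that $U(P_a,Q_a)$ and $U(P_b,Q_b)$ are disjoint. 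Since $a$ and $b$ were an arbitrary pair, all $k$ paths are pairwise disjoint, and hence $U(\C{P},\C{Q})\in\Gamma_B^{(t)}(\ImidJ)$.

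There is essentially no obstacle here — the corollary is a direct packaging of Corollary~\ref{disjointUcor}. The only mild subtlety worth a sentence is the monotonicity remark: one must use that $I$ and $J$ are both written in increasing order so that the rows and columns of the two components are "nested" in the sense required by Corollary~\ref{disjointUcor} (i.e.\ the smaller row index pairs with the smaller column index). Given that, the statement follows by applying Corollary~\ref{disjointUcor} to each of the $\binom{k}{2}$ pairs of components.
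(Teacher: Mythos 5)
Your proof is correct and matches the paper's approach: the paper simply states that the corollary follows by repeated application of Corollary~\ref{disjointUcor}, which is exactly the pairwise argument you spell out. Your extra remark about the increasing-order convention forcing $i_a<i_b \Rightarrow j_a<j_b$ is the right (and only) detail needed to see that the hypotheses of Corollary~\ref{disjointUcor} are met for each pair.
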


Now, if $\Gamma_B^{(t)}(\ImidJ)$ is non-empty, then repeated applications of Corollary~\ref{newvdps} to the finitely many path systems in $\Gamma_B^{(t)}(\ImidJ)$ shows that the next definition is sensible.
\begin{defn}\label{supdef}
If $\Gamma_B^{(t)}(\ImidJ)\neq\emptyset$, then the \emph{supremum} of $\Gamma_B^{(t)}(\ImidJ)$ is the (unique) vertex-disjoint path system $(Q_1,\ldots,Q_k) \in \Gamma_B^{(t)}(\ImidJ)$ such that for any $\C{P}=(P_1,\cdots,P_k)\in \Gamma_B^{(t)}(\ImidJ)$ one has, for each $i\in [k]$, $$U(Q_i,P_i)=Q_i.$$ 

\end{defn}

For $L(P,Q)$, it is clear that results similar to Lemma~\ref{disjointU}, Corollary~\ref{disjointUcor} and Corollary~\ref{newvdps} hold. We omit their explicit statements here, but note that the next definition is also sensible.

\begin{defn}\label{infdef}
If $\Gamma_B^{(t)}(\ImidJ)\neq\emptyset$, then the \emph{infimum} of $\Gamma_B^{(t)}(\ImidJ)$ is the (unique) vertex-disjoint path system $(Q_1,\ldots,Q_k) \in \Gamma_B^{(t)}(\ImidJ)$ such that for any $\C{P}=(P_1,\cdots,P_k)\in \Gamma_B^{(t)}(\ImidJ)$ one has, for each $i\in [k]$, $$L(Q_i,P_i)=Q_i.$$ 

\end{defn}

\begin{ex} Once again, consider the Cauchon graph of Figure~\ref{pathsystemex}. The supremum of $\Gamma_B^{(16)}(1,3\,|\,1,3)$ is the path system $(\tilde{Q}_1,\tilde{Q}_2)$ where \begin{align*}\tilde{Q}_1&=(1,(1,3),(1,2),(2,2),(4,2),(4,1),1),\\ \tilde{Q}_2 &= (3,(3,4),(3,3),(4,3),3),\end{align*} while the infimum of $\Gamma_B^{(16)}(1,3\,|\,1,3)$ is the path system $(Q_1,Q_2)$, where \begin{align*}Q_1&=(1,(1,3),(2,3),(2,2),(4,2),(4,1),1),\\ Q_2 &= (3,(3,4),(4,4), (4,3), 3).\end{align*}
\end{ex}

\subsection{Gr\"obner Bases}\label{grobnersection}

Gr\"obner basis theory is well-known in commutative algebra and fortunately many of its key aspects transfer easily to quantum matrices and the algebras $R^{(t)}\simeq A^{(t)}$. For a more general and detailed account of Gr\"obner basis theory for noncommutative algebras, we refer the reader to the book of Bueso, G\'omez-Torrecillas and Verschoren~\cite{grobner}. 

Throughout this section, we fix $t\in [mn]$, let $(r,s)$ be the $t^\textnormal{th}$ smallest coordinate, and denote the matrix of generators of $A^{(t)}$ by $[x_{i,j}]$.  We now define a total order of the lexicographic monomials in $A^{(t)}$.

\begin{defn} \label{revlex} The \emph{matrix lexicographic order} $\prec$ on $\mat$ is defined as follows. If $M\neq N\in \mat$, let $(k,\ell)$ be the least coordinate in which $M$ and $N$ differ. Then we set \begin{align*}M\prec N \Leftrightarrow (M)_{k,\ell}<(N)_{k,\ell}\end{align*} and say that ``$M\prec N$ \emph{at} $(k,\ell)$.''

If $M\prec N$ are both in $\matnonneg$, then the matrix lexicographic order induces a total order (that we also call matrix lexicographic) on the lexicographic monomials of $A^{(t)}$ by setting 
\begin{align*}\vect{x}^M\prec \vect{x}^{N} \Leftrightarrow M\prec N.\end{align*}
By allowing the $(r,s)$-entry in $M$ and $N$ to be negative, this terminology extends to a total order on the lexicographic monomials of $A^{(t)}[x_{r,s}^{-1}]$.

\end{defn}
For example, under the matrix lexicographic order, we have $$x_{i,j}\prec x_{k,\ell} \Leftrightarrow (i,j)>(k,\ell).$$ 

If $(i,j),(k,\ell)\leq (r,s)$, and $(i,j)$ is northwest of $(k,\ell)$, then we have the relation $$x_{k,\ell}x_{i,j}=x_{i,j}x_{k,\ell}-(q-q^{-1})x_{i,\ell}x_{k,j}.$$ On the other hand, we also have $$x_{i,\ell}x_{k,j}\prec x_{i,j}x_{k,\ell}.$$ Essentially by repeated application of these facts and the other relations amongst the standard generators, we obtain the following, which is a special case of the more general Proposition~2.4 in~\cite{grobner}.

\begin{prop} \label{straightening}
For $M,N\in \matnonneg$, the lexicographic expression of $\vect{x}^M\vect{x}^N$ is $$\vect{x}^M\vect{x}^N=q^\alpha\vect{x}^{M+N} + \sum_{L\in\matnonneg} \alpha_L\vect{x}^{L},$$ for some integer $\alpha$ and where for every $\alpha_L\neq 0$, one has $L\prec M+N$. \qed
\end{prop}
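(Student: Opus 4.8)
The plan is to prove Proposition~\ref{straightening} by induction on a suitable measure of how far the product $\vect{x}^M\vect{x}^N$ is from being written as a single lexicographic term. First I would observe that it suffices to establish the following local fact: whenever $(k,\ell) > (i,j)$ in the lexicographic order, the product $x_{k,\ell}x_{i,j}$ rewrites as $q^{\beta} x_{i,j}x_{k,\ell} + \sum_L \gamma_L \vect{x}^L$ with every occurring $L \prec$ (the exponent matrix of $x_{i,j}x_{k,\ell}$) at the coordinate $\min\{(i,j),(k,\ell)\} = (i,j)$ or later. This follows directly by inspecting the four families of relations in Definition~\ref{Rtdef} (equivalently Theorem~\ref{generatorrelations}, since $A^{(t)}\simeq R^{(t)}$ by Corollary~\ref{isocor}): relations (1)--(3) give $x_{k,\ell}x_{i,j} = q^{\pm1}x_{i,j}x_{k,\ell}$ with no lower-order correction, while relation (4), applied when $x_{i,j}$ plays the role of $a=x_{i,j}$ and $x_{k,\ell}$ the role of $d=x_{k,\ell}$ with $(i,j)$ northwest of $(k,\ell)$, yields $x_{k,\ell}x_{i,j} = x_{i,j}x_{k,\ell} - (q-q^{-1})x_{i,\ell}x_{k,j}$, and here one checks that the exponent matrix of $x_{i,\ell}x_{k,j}$ is $\prec$ that of $x_{i,j}x_{k,\ell}$ at coordinate $(i,j)$ exactly as in the displayed inequality $x_{i,\ell}x_{k,j}\prec x_{i,j}x_{k,\ell}$ preceding the proposition. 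The case where $x_{i,j}$ and $x_{k,\ell}$ are not in the same $2\times2$ submatrix configuration — e.g. same row or same column, or one strictly northwest of the other versus northeast — is covered by the appropriate sub-case of the same four relations.

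Next I would set up the induction itself. Write $\vect{x}^M = x_{a_1,b_1}\cdots x_{a_p,b_p}$ and $\vect{x}^N = x_{c_1,d_1}\cdots x_{c_k,d_k}$, each factored so its own indices are lexicographically nondecreasing left to right. Concatenating gives a word $\vect{w}$ whose multiset of letters has exponent matrix $M+N$; it fails to be a lexicographic term precisely when some letter is lexicographically larger than the letter immediately to its right, and all such "descents" occur at the junction (or propagate from it). The induction parameter is the number of inversions in the word $\vect{w}$, i.e. the number of pairs of positions that are out of lexicographic order. If there are no inversions, $\vect{w}$ is already a lexicographic monomial and (tracking the $q$-powers picked up from commuting letters within $\vect{x}^M$ and within $\vect{x}^N$ during the concatenation, all of which are genuine commutations producing only $q^{\pm1}$ scalars) we get $\vect{x}^M\vect{x}^N = q^\alpha \vect{x}^{M+N}$ with no error terms. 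Otherwise pick an adjacent inversion, apply the local fact above to that pair; the leading term $q^\beta(\text{swapped word})$ has strictly fewer inversions, and each correction term $\gamma_L\vect{x}^L$ is (after re-sorting its letters, which only multiplies by a $q$-power and does not change its exponent matrix $L$) a lexicographic monomial with $L \prec M+N$, because the swap strictly decreased the $(i,j)$-entry while leaving all earlier entries fixed — so $L\prec M+N$ at $(i,j)$, and by transitivity of $\prec$ this survives under any further rewriting applied to the leading term. Apply the induction hypothesis to the swapped word to finish.

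The one point requiring genuine care — and the step I expect to be the main obstacle — is the bookkeeping showing that the exponent matrix of every correction term is $\prec M+N$ and, crucially, that this inequality is \emph{preserved} as the leading term is rewritten further down the induction. The subtlety is that when we rewrite the swapped word, new correction terms are produced, and we must be sure their exponent matrices are still $\prec M+N$, not merely $\prec$ the exponent matrix of the swapped word. This is handled by noting that the first coordinate at which a correction term differs from $M+N$ is bounded below (in the order of coordinates) by the coordinate of whichever letter was "absorbed" in the relation, and that successive rewrites only ever act on letters at coordinates $\geq$ that one while strictly decreasing some entry there or later — so transitivity of $\prec$ applies cleanly. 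I would phrase this as: every monomial appearing anywhere in the reduction of $\vect{x}^M\vect{x}^N$ other than $q^\alpha\vect{x}^{M+N}$ itself has exponent matrix $\prec M+N$, proved alongside the main induction. The remaining verifications — that intra-factor commutations contribute only scalars, and the explicit check of the $\prec$ inequality for relation (4) — are routine given Theorem~\ref{generatorrelations} and the definition of $\prec$.
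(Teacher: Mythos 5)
Your overall strategy is sound and, notably, more self-contained than the paper's: the paper does not actually prove Proposition~\ref{straightening}, but presents it as a special case of the general Proposition~2.4 of Bueso, G\'omez-Torrecillas and Verschoren, offering only the one-line observation about the relation $x_{k,\ell}x_{i,j}=x_{i,j}x_{k,\ell}-(q-q^{-1})x_{i,\ell}x_{k,j}$ together with the inequality $x_{i,\ell}x_{k,j}\prec x_{i,j}x_{k,\ell}$. Your ``local fact'' is exactly that observation, correctly verified in all configurations of $(i,j)$ and $(k,\ell)$, and your use of transitivity of $\prec$ to propagate the bound $L\prec M+N$ through successive rewrites is the right mechanism.

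There is, however, one step that fails as written, and it affects the termination of your induction. You claim that each correction term ``is (after re-sorting its letters, which only multiplies by a $q$-power and does not change its exponent matrix $L$) a lexicographic monomial.'' That is false in general: although $x_{i,\ell}x_{k,j}$ is itself in lexicographic order, the full correction word $\cdots x_{i,\ell}x_{k,j}\cdots$ may contain pairs in the relation-(4) configuration between these new letters and the surrounding ones, so re-sorting it produces further correction terms, not merely a scalar. Consequently your single induction parameter, the number of inversions, does not suffice: the swapped leading word has strictly fewer inversions, but a correction word need not, so the recursion is not obviously well-founded. The repair is standard. All four defining relations are homogeneous for the $\B{Z}_{\geq 0}^{m+n}$-grading of Section~\ref{Rtsection}, so every exponent matrix arising in the reduction has the same row and column sums as $M+N$ and therefore lies in a finite set, on which $\prec$ is a well-order. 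Induct first on the exponent matrix under $\prec$ (this handles the correction terms, whose matrices strictly decrease) and, for a fixed exponent matrix, on the number of inversions (this handles the swapped leading word). With that reorganization your argument is complete and gives a direct proof where the paper gives only a citation.
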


\begin{defn} \label{dividedef}
Let $M, N\in \matnonneg$. We say that $\vect{x}^M$ \emph{divides} $\vect{x}^N$ if $(M)_{i,j}\leq (N)_{i,j}$ for all $(i,j)\in [m]\times [n]$. 
\end{defn}

Using this terminology, we will use Proposition~\ref{straightening} in the following way.

\begin{cor}  \label{strcor}
Let $M,N\in\matnonneg$. If $\vect{x}^M$ divides $\vect{x}^N$, then there exists an integer $\alpha$,  matrices $L\prec N$, and scalars $\alpha_L\in\B{K}^*$  such that
 \begin{align*} \vect{x}^N & =  q^\alpha\vect{x}^M\vect{x}^{N-M} + \sum_L \alpha_L \vect{x}^{L}.\qed
\end{align*}
\end{cor}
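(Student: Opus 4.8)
The goal is to deduce Corollary~\ref{strcor} from Proposition~\ref{straightening} together with the divisibility hypothesis, so the plan is essentially to reorganize the straightening identity.

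The plan is to start from the hypothesis that $\vect{x}^M$ divides $\vect{x}^N$, so that $(M)_{i,j}\leq (N)_{i,j}$ for all $(i,j)$. Then $N-M$ has all nonnegative entries, and both $M$ and $N-M$ lie in $\matnonneg$ with $M+(N-M)=N$. First I would apply Proposition~\ref{straightening} to the pair $M$ and $N-M$: this yields an integer $\alpha$ with
\begin{align*}
\vect{x}^M\vect{x}^{N-M}=q^{\alpha}\vect{x}^{N}+\sum_{L\in\matnonneg}\beta_L\vect{x}^{L},
\end{align*}
where every $L$ with $\beta_L\neq 0$ satisfies $L\prec N$. Since $q\neq 0$, I can multiply through by $q^{-\alpha}$ and rearrange to isolate $\vect{x}^N$, obtaining
\begin{align*}
\vect{x}^{N}=q^{-\alpha}\vect{x}^M\vect{x}^{N-M}-\sum_{L\in\matnonneg}q^{-\alpha}\beta_L\vect{x}^{L}.
\end{align*}
Renaming $-\alpha$ as the integer appearing in the statement (or simply noting the statement only claims existence of \emph{some} integer), and setting $\alpha_L=-q^{-\alpha}\beta_L$, gives exactly the claimed expression, with each $\alpha_L\in\B{K}^*$ (discarding those $L$ with $\beta_L=0$) and each such $L\prec N$.

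One small point worth addressing explicitly in the writeup: the sum in Proposition~\ref{straightening} ranges over $L\in\matnonneg$, and the $\vect{x}^L$ appearing there are genuine lexicographic monomials of $A^{(t)}$; after scaling by $q^{-\alpha}$ and negating, this remains a $\B{K}$-linear combination of lexicographic monomials with $L\prec N$, so nothing about the lexicographic-expression structure is lost. I would also remark that if $M=N$ then $N-M=0$, $\vect{x}^{N-M}=1$, and the identity degenerates to $\vect{x}^N=q^{\alpha}\vect{x}^N+(\text{lower terms})$; Proposition~\ref{straightening} still applies in this trivial case since $0\in\matnonneg$.

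There is essentially no obstacle here — this is a purely formal consequence of Proposition~\ref{straightening}. The only thing to be careful about is bookkeeping: making sure $N-M\in\matnonneg$ (which is precisely the divisibility hypothesis), that $q^{\alpha}$ is invertible (guaranteed since $q\neq 0$), and that the error terms $L\prec N$ survive the rearrangement unchanged in their partial-order position. I would keep the proof to a few lines.
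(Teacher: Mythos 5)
Your proof is correct and is exactly the argument the paper intends: the corollary is stated with a \qed precisely because it follows by applying Proposition~\ref{straightening} to the pair $M$ and $N-M$ (which lies in $\matnonneg$ by the divisibility hypothesis) and rearranging, using $q\neq 0$ to divide by $q^{\alpha}$. Your bookkeeping about the lower-order terms $L\prec N$ surviving the rearrangement is also right.
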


\begin{rem}
Proposition~\ref{straightening}, Defintion~\ref{dividedef} and Corollary~\ref{strcor} extend to $A^{(t)}[x_{r,s}^{-1}]$ by allowing the $(r,s)$-entry in each matrix to be negative.
\end{rem}

\begin{defn}\label{leadingtermdef} Let $a\in A^{(t)}$ with lexicographic expression $$a=\sum_L\alpha_L\vect{x}^L.$$ The \emph{leading term} of $a$ is the maximum lex term of $a$ with respect to the matrix lexicographic order. We denote the leading term of $a$ by $\lt(a)$.
\end{defn}

We are now ready to give the definition of a Gr\"obner basis for a right ideal. 

\begin{defn} \label{grobnerdef}
Let $J$ be a right ideal of $A^{(t)}$, and let $$G=\{g_1,g_2,\ldots, g_k\}\subseteq J.$$ We say that $G$ is a \emph{Gr\"obner basis} for $J$ if for every $a\in J$ there exists a $g_i\in G$ such that $\lt(g_i)$ divides $\lt(a)$.
\end{defn}

If one has a Gr\"obner basis $\{g_1,g_2,\ldots, g_k\}$ for a right ideal $J$, then one may find an expression for any $a\in J$ as a combination of the $g_i$ recursively. If $\lt(a)$ is divided by $\lt(g_i)$, then by Corollary~\ref{strcor} we may write $$a=g_i a^\prime+ b$$ where $\lt(b)\prec \lt(a)$. Since $b\in J$, we can repeat the process if $b\neq 0$. As there are only finitely many lexicographic terms smaller than $\lt(a)$, this will end after finitely many steps. Thus, the elements of the Gr\"obner basis generate $J$.


We will eventually deal with quantum minors and in this context require the following, more refined version of Corollary~\ref{strcor}. 

\begin{lem} \label{strcor2}
Let $[\ImidJ]^{(t)}\in A^{(t)}$ be a minor with maximum coordinate $(i_k,j_k)$. Recalling Remark~\ref{ltremark}, if we write \begin{align*}[\ImidJ]^{(t)} = \sum_{\sigma\in S_k} (-q)^{\ell(\sigma)} \vect{x}^{P_\sigma}, \end{align*} then: 
\begin{enumerate}
\item One has $\lt([\ImidJ]^{(t)}) = \vect{x}^{P_{\textnormal{id}}},$ where $\textnormal{id}$ is the identity permutation;


\item If $\vect{x}^{P_{\textnormal{id}}}$ divides $\vect{x}^M$ for some $M\in\matnonneg$, then \begin{align} \vect{x}^M &= q^\alpha [\ImidJ]^{(t)}\vect{x}^{M-{P_{\textnormal{id}}}} + w, \label{minorrelation} \end{align} for some integer $\alpha$ and $w\in A^{(t)}$ where, if $\lt(w)=\vect{x}^K$, then $K\prec M$ at a coordinate northwest of $(i_k,j_k)$.
\end{enumerate}

\end{lem}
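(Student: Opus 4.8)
The plan is to establish both parts by analyzing the lexicographic expression $[\ImidJ]^{(t)} = \sum_{\sigma\in S_k}(-q)^{\ell(\sigma)}\vect{x}^{P_\sigma}$ together with the straightening result of Corollary~\ref{strcor}. For Part~(1), I would show that among the monomials $\vect{x}^{P_\sigma}$, $\sigma\in S_k$, the monomial $\vect{x}^{P_{\mathrm{id}}}$ is the largest in the matrix lexicographic order. This is a direct combinatorial comparison: $P_\sigma$ has a $1$ in position $(i_a, j_{\sigma(a)})$ for each $a$. Comparing $P_{\mathrm{id}}$ with $P_\sigma$ for $\sigma\neq\mathrm{id}$, let $a$ be the least index with $\sigma(a)\neq a$; then necessarily $\sigma(a) > a$ (since $\sigma$ fixes $1,\dots,a-1$), so $j_{\sigma(a)} > j_a$, and the least coordinate where $P_{\mathrm{id}}$ and $P_\sigma$ differ is $(i_a, j_a)$, where $P_{\mathrm{id}}$ has entry $1$ and $P_\sigma$ has entry $0$. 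Hence $P_\sigma \prec P_{\mathrm{id}}$ at $(i_a,j_a)$, and since the sum defining the minor is already its lexicographic expression (Remark~\ref{ltremark}), we get $\lt([\ImidJ]^{(t)}) = \vect{x}^{P_{\mathrm{id}}}$.

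For Part~(2), suppose $\vect{x}^{P_{\mathrm{id}}}$ divides $\vect{x}^M$. Apply Corollary~\ref{strcor} with the divisor $\vect{x}^{P_{\mathrm{id}}}$: there is an integer $\beta$ and lower terms so that $\vect{x}^M = q^\beta \vect{x}^{P_{\mathrm{id}}}\vect{x}^{M - P_{\mathrm{id}}} + (\text{terms} \prec M)$. Now I would rewrite $\vect{x}^{P_{\mathrm{id}}}$ using the minor: since $[\ImidJ]^{(t)} = \vect{x}^{P_{\mathrm{id}}} + \sum_{\sigma\neq\mathrm{id}}(-q)^{\ell(\sigma)}\vect{x}^{P_\sigma}$, we have $\vect{x}^{P_{\mathrm{id}}} = [\ImidJ]^{(t)} - \sum_{\sigma\neq\mathrm{id}}(-q)^{\ell(\sigma)}\vect{x}^{P_\sigma}$. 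Substituting and collecting, $\vect{x}^M = q^\beta [\ImidJ]^{(t)}\vect{x}^{M-P_{\mathrm{id}}} + w$ where $w = -\sum_{\sigma\neq\mathrm{id}}(-q)^{\ell(\sigma)} q^\beta \vect{x}^{P_\sigma}\vect{x}^{M-P_{\mathrm{id}}} + (\text{the lower terms from Corollary~\ref{strcor}})$. Setting $\alpha = \beta$ gives the shape of \eqref{minorrelation}; it remains to control $\lt(w)$.

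The main obstacle — and where the real work lies — is showing that the leading term $\vect{x}^K$ of $w$ satisfies $K \prec M$ \emph{at a coordinate northwest of $(i_k,j_k)$}, not merely $K\prec M$. There are two kinds of contributions to $w$. For the terms $\vect{x}^{P_\sigma}\vect{x}^{M - P_{\mathrm{id}}}$ with $\sigma\neq\mathrm{id}$: by Proposition~\ref{straightening}, the leading monomial of this product is $q^{(\ast)}\vect{x}^{P_\sigma + M - P_{\mathrm{id}}}$ and all other terms are strictly smaller. Comparing $P_\sigma + M - P_{\mathrm{id}}$ with $M$: these differ exactly where $P_\sigma$ and $P_{\mathrm{id}}$ differ, the least such coordinate being $(i_a, j_a)$ with $a$ as above, and there $P_\sigma + M - P_{\mathrm{id}}$ has entry one less than $M$; so this term is $\prec M$ at $(i_a,j_a)$, which is a diagonal coordinate of the minor hence weakly northwest of $(i_k,j_k)$ — and it is strictly northwest since $a<k$ forces $(i_a,j_a)\neq(i_k,j_k)$ (this uses $\sigma\neq\mathrm{id}$, so $a\le k-1$). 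The strictly-lower straightening terms need a separate check: by the structure of the $R^{(t)}$-relations in Definition~\ref{Rtdef}, any lex term strictly below $q^\alpha\vect{x}^{M+N}$ in the product $\vect{x}^M\vect{x}^N$ arises from applying relation~(4), which replaces a factor $x_{i,j}x_{k,\ell}$ (with $(i,j)$ northwest of $(k,\ell)$) by $x_{i,\ell}x_{k,j}$; iterating, every such term differs from $M+N$ first at a coordinate strictly northwest of the largest coordinate appearing, and in our situation the relevant ``anchor'' is $(i_k,j_k)$. I would make this precise by an induction on the straightening process, tracking that the first coordinate of disagreement always lies northwest of $(i_k,j_k)$; this bookkeeping, combined with the observation that $M - P_{\mathrm{id}} + P_{\mathrm{id}} = M$ agrees with $M$ at and beyond $(i_k,j_k)$ while $P_{\mathrm{id}}$ is supported on the diagonal coordinates (all weakly northwest of $(i_k,j_k)$, with $(i_k,j_k)$ itself the maximum), yields the claimed refinement. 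Finally, take the maximum over all these contributions to conclude $\lt(w) = \vect{x}^K$ with $K\prec M$ at a coordinate northwest of $(i_k,j_k)$.
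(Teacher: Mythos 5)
Your proposal is correct and follows essentially the same route as the paper: Part (1) is the same direct comparison of $P_\sigma$ with $P_{\mathrm{id}}$, and Part (2) pulls out $\vect{x}^{P_{\mathrm{id}}}$, substitutes the minor, and controls the error terms by tracking that relation (4) only perturbs entries at coordinates northwest of the diagonal coordinates, hence northwest of $(i_k,j_k)$. The ``separate check'' you sketch (induction on the straightening process, using that the relations are homogeneous and that $P_{\mathrm{id}}$ is supported weakly northwest of $(i_k,j_k)$) is precisely the content of the paper's auxiliary Lemma~\ref{strcor2lemma}, so you have identified the right mechanism even though the bookkeeping is left unexecuted.
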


The first part of Lemma~\ref{strcor2} is a trivial observation. The justification for the second part is fairly technical, but its heart is the following auxiliary lemma. For this lemma we set $E_{k,\ell}$ to be the $m\times n$ matrix with a $1$ in coordinate $(k,\ell)$ and $0$ elsewhere.

\begin{lem}\label{strcor2lemma} 

If $(i,j)\in [m]\times [n]$ and $\vect{x}^M\in A^{(t)}$ is such that all entries of $M$ in coordinates larger than $(a,b)$ are zero, then we may write \begin{align*} \vect{x}^Mx_{i,j} &= q^\alpha x_{i,j}\vect{x}^M + w,\end{align*} where $\alpha\in\mathbb{Z}$, and if $w\neq 0$ and $\vect{x}^K$ is a lex term of $w$, then $M$ and $K$ are equal in all entries northeast of $(i,j)$. Moreover, if $\lt(w)=\vect{x}^L$, then $L\prec M+E_{i,j}$ at a coordinate northwest of $(i,j)$

\end{lem}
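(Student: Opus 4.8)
\medskip

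The plan is to prove the statement by induction on the number of generators appearing in $\vect{x}^M$ (that is, on $\sum_{k,\ell}(M)_{k,\ell}$), reducing the general case to the base case where $\vect{x}^M$ is a single generator $x_{k,\ell}$ with $(k,\ell)\leq(a,b)$. For the base case, I would simply run through the relations in Theorem~\ref{generatorrelations} (equivalently Definition~\ref{Rtdef}) applied to the $2\times 2$ submatrix determined by $(k,\ell)$ and $(i,j)$. The only relation producing a nonzero ``error term'' $w$ is Part~4 in the case $(k,\ell)$ northwest of $(i,j)$ and $(i,j)\leq(r,s)$, giving $x_{k,\ell}x_{i,j}=x_{i,j}x_{k,\ell}-(q-q^{-1})x_{i,\ell}x_{k,j}$; here $w=-(q-q^{-1})x_{i,\ell}x_{k,j}$, and $\lt(w)$ corresponds to the matrix $E_{i,\ell}+E_{k,j}$. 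One checks directly that this matrix differs from $E_{k,\ell}+E_{i,j}$ first at the coordinate $(i,\ell)$ (where it is larger) — wait, rather: $E_{i,\ell}+E_{k,j}\prec E_{k,\ell}+E_{i,j}$ precisely at $(k,j)$ if $(k,j)<(i,\ell)$ or at $(i,\ell)$ otherwise, and in either case this coordinate is strictly northwest of $(i,j)$. In the other cases of Part~4 (namely $(k,\ell)>(r,s)$, or $(k,\ell)$ not northwest of $(i,j)$) and in all of Parts~1--3 the two generators $q^*$-commute, so $w=0$ and there is nothing to check. The hypothesis that all entries of $M$ in coordinates larger than $(a,b)$ vanish is used only implicitly: it guarantees $(k,\ell)\leq(a,b)$, but for the base step we actually only need the combinatorics of the $2\times2$ submatrix.

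\medskip

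For the inductive step, write $\vect{x}^M=\vect{x}^{M'}x_{k,\ell}$ where $x_{k,\ell}$ is the largest generator appearing in $\vect{x}^M$, so $(k,\ell)$ is the \emph{largest} coordinate with $(M)_{k,\ell}\neq 0$ and $M'=M-E_{k,\ell}$ also has all entries above $(a,b)$ zero. Then
\begin{align*}
\vect{x}^M x_{i,j} &= \vect{x}^{M'}\,(x_{k,\ell}x_{i,j}) = \vect{x}^{M'}\bigl(q^\beta x_{i,j}x_{k,\ell} + w_0\bigr)
\end{align*}
by the base case, where $\lt(w_0)\prec E_{k,\ell}+E_{i,j}$ at a coordinate northwest of $(i,j)$ and every lex term of $w_0$ agrees with $E_{k,\ell}+E_{i,j}$ northeast of $(i,j)$. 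Apply the inductive hypothesis to $\vect{x}^{M'}x_{i,j}$ to rewrite $\vect{x}^{M'}x_{i,j}=q^{\beta'}x_{i,j}\vect{x}^{M'}+w_1$ with the stated properties, and then multiply on the right by $x_{k,\ell}$; combined with Proposition~\ref{straightening} (or Corollary~\ref{strcor}) to put $\vect{x}^{M'}w_0$ and $w_1\,x_{k,\ell}$ into lexicographic form, one collects all the non-leading contributions into a single $w$. The routine-but-necessary bookkeeping is to verify two things for the resulting $w$: first, that every lex term of $w$ agrees with $M+E_{i,j}$ in all entries northeast of $(i,j)$ — this follows because $x_{k,\ell}$ is the top generator of $\vect{x}^M$ (hence $(k,\ell)$ lies on or northeast-adjacent to the "frontier", and multiplying by $x_{k,\ell}$ on the right does not disturb, up to straightening, entries northeast of $(i,j)$), together with the analogous property carried along inductively for $w_1$; and second, that $\lt(w)\prec M+E_{i,j}$ strictly, and moreover that the first coordinate of difference is northwest of $(i,j)$. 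The latter uses that a straightening step (Proposition~\ref{straightening}) only lowers the lex-order at a coordinate $\preceq$ the product of the supports, and the error-producing coordinates introduced are exactly those appearing in $w_0,w_1$, all northwest of $(i,j)$.

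\medskip

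The main obstacle I anticipate is the second bookkeeping point: controlling \emph{where} (at which coordinate) the various non-leading terms drop below $M+E_{i,j}$, and ensuring the drop is always northwest of $(i,j)$ after the straightening steps, since straightening $\vect{x}^{M'}w_0$ can in principle cascade. The clean way to handle this is to strengthen the inductive statement slightly: instead of tracking only $\lt(w)$, track that \emph{every} lex term $\vect{x}^K$ of $w$ satisfies both (a) $K$ agrees with $M+E_{i,j}$ northeast of $(i,j)$, and (b) $K\prec M+E_{i,j}$ at a coordinate northwest of $(i,j)$ — i.e.\ the largest coordinate at which $K$ and $M+E_{i,j}$ differ is northwest of $(i,j)$. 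Property (b) for every term is stable under right multiplication by a generator and under the straightening of Proposition~\ref{straightening} (which only introduces differences at coordinates $\preceq$ those already present, i.e.\ still northwest of $(i,j)$, while preserving agreement northeast of $(i,j)$ by (a)). With this strengthened hypothesis the induction closes cleanly, and the statement of Lemma~\ref{strcor2lemma} as written follows by specializing "every lex term" to "the leading term."
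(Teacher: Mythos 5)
Your strategy (induct on the total degree of $\vect{x}^M$, peeling off the largest generator $x_{k,\ell}$ from the right) is genuinely different from the paper's, which inducts on the \emph{column index} $j$ of the generator being commuted and sweeps once through the entries of $M$, handling the displaced generators $x_{i,b}$ (with $b<j$) by the induction hypothesis for the smaller column. Your plan correctly isolates the hard point but does not resolve it: the stability of your strengthened invariant under the straightening of $w_1x_{k,\ell}$ is asserted, not proved. You justify it by saying that Proposition~\ref{straightening} ``only introduces differences at coordinates $\preceq$ those already present.'' Proposition~\ref{straightening} says no such thing --- it asserts only $L\prec M+N$, with no control on \emph{where} the first difference occurs or on which entries are disturbed. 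Localizing the first difference to a coordinate northwest of $(i,j)$ and showing that no entry northeast of $(i,j)$ changes is precisely the content of Lemma~\ref{strcor2lemma}, so invoking such a refinement at this point is circular. To close your induction you would have to analyze the cascade explicitly: a lex term $\vect{x}^K$ of $w_1$ contains generators at coordinates larger than $(k,\ell)$ (namely $(i,j)$ and the displaced coordinates $(a,j)$ and $(i,b)$ created earlier), commuting $x_{k,\ell}$ leftward past these produces new error pairs $x_{k,d}x_{c,\ell}$, and the repositioning of \emph{those} must again be controlled. This can be done, using the specific structure of the lex terms of $w_1$ (e.g.\ their entries in columns $>j$ agree with those of $M'$ and hence sit in rows $<k$, which rules out the dangerous commutations), but that analysis is of the same order of difficulty as the paper's whole proof and is absent from your write-up.

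There is also an error in your base case. The relation is $x_{k,\ell}x_{i,j}=x_{i,j}x_{k,\ell}+(q-q^{-1})x_{k,j}x_{i,\ell}$, and the error matrix $E_{k,j}+E_{i,\ell}$ first differs from $E_{k,\ell}+E_{i,j}$ at $(k,\ell)$ --- the lexicographically smallest of the four coordinates involved --- where it is smaller. Your candidates $(k,j)$ and $(i,\ell)$ lie in column $j$ and row $i$ respectively, so \emph{neither} is northwest of $(i,j)$; had either actually been the first coordinate of difference, the base case would fail. The statement survives only because the true coordinate $(k,\ell)$ is northwest of $(i,j)$. The correct principle --- each commutation move first perturbs the matrix at the \emph{northwest corner} of the $2\times 2$ submatrix it uses, and one must track that the minimal such corner over a whole cascade stays northwest of $(i,j)$ --- is exactly what needs to be propagated through your inductive step, and is what the paper's choice of induction variable is designed to make manageable.
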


\begin{proof}

We proceed by induction on $j$, starting with the easy observation that for $j=1$, $x_{i,j}$ and $\vect{x}^M$ $q^*$-commute. 

Now, fix $j>1$.  Consider the process of commuting $x_{i,j}$ to the left of $\vect{x}^M$, where \emph{step $(a,b)$} is defined to be the point in this process just before we commute $x_{i,j}$ past $x_{a,b}^{(M)_{a,b}}$. For a given $(a,b)$, let $M_0\in\matnonneg$ be equal to $M$ in all entries with coordinate less than $(a,b)$ and let $M_1=M-M_0$. Suppose we are at step $(a,b)$ and we have an expression of the form \begin{align*}\vect{x}^Mx_{i,j}&=q^\alpha \vect{x}^{M_0}x_{a,b}^{(M)_{a,b}}x_{i,j}\vect{x}^{M_1}+w,\end{align*} where $\alpha\in\mathbb{Z}$ and $w\in A^{(t)}$ is such that $\lt(w)\prec M+E_{i,j}$ and if $w\neq 0$, and $\vect{x}^K$ is a lex term of $w$, then $M$ and $K$ are equal in all entries northeast of $(i,j)$. We claim that there is such an expression for step $(a,b)^-$. Note that, once proven, repeated applications of this claim proves the inductive step, and hence the lemma.

If $x_{a,b}$ and $x_{i,j}$ $q^*$-commute, then the claim is trivial, so suppose $x_{a,b}x_{i,j}=x_{i,j}x_{a,b} +(q-q^{-1})x_{i,b}x_{a,j}$. Thus $b<j$ and, as is easily shown by induction on $(M)_{a,b}$, there is a $c\in\B{K}$ such that \begin{align*}x_{a,b}^{(M)_{a,b}}x_{i,j}&=x_{i,j}x_{a,b}^{(M)_{a,b}} +cx_{i,b}x_{a,b}^{(M)_{a,b}-1}x_{a,j}.\end{align*} From this we obtain  \begin{align*}q^\alpha \vect{x}^{M_0}x_{a,b}^{(M)_{a,b}}x_{i,j}\vect{x}^{M_1}+w &= q^\alpha \vect{x}^{M_0}x_{i,j}x_{a,b}^{(M)_{a,b}}\vect{x}^{M_1}  \\&+ cq^\alpha \vect{x}^{M_0}x_{i,b}x_{a,b}^{(M)_{a,b}-1}x_{a,j}\vect{x}^{M_1}+w. \end{align*}
Note that the claim is established if we can show that any lex term $\vect{x}^{K}$ of $\vect{x}^{M_0}x_{i,b}x_{a,b}^{(M)_{a,b}-1}x_{a,j}\vect{x}^{M_1}$ is such that $K$ equals $M$ northeast of $(i,j)$. 

As $M_1$ is zero in all entries with coordinates less than $(a,b)$,  there is a $\beta\in\mathbb{Z}$ with $x_{a,b}^{(M)_{a,b}-1}x_{a,j}\vect{x}^{M_1} = q^\beta \vect{x}^{M_1^\prime}$, where $M_1^\prime=M_1+\left((M)_{a,b}-1\right)E_{a,b}+E_{a,j}.$ Since $b<j$, we apply the induction hypothesis for $b$ to obtain \begin{align*} x_{i,b}\vect{x}^{M_1^\prime} &= q^\gamma\vect{x}^{M_1^\prime+E_{i,b}} - w^\prime,\end{align*} for some integer $\gamma$ and $w^\prime\in A^{(t)}$, where any lex term $\vect{x}^{K^\prime}$ of $w^\prime$ is such that  $K^\prime\prec M_1^\prime$ and $K^\prime$ equals $M_1^\prime$ in all entries northeast of $(i,b)$, and so in particular northeast of $(i,j)$. Moreover, since $K^\prime\prec M_1^\prime$, we know that $K^\prime$ can only be zero in all entries with coordinate less than $(a,b)$. For this reason, $\vect{x}^{M_0}\vect{x}^{K^\prime}=\vect{x}^{M_0+K^\prime}$ where $M_0+K^\prime$ is equal to $M$ in all entries northeast of $(i,j)$. As $M_1^\prime+E_{i,b}$ also equals $M$ in all entries northeast of $(i,j)$, we have established the claimed expression at step $(a,b)^-$. 

Finally, from the above procedure we also get $L\prec M+E_{i,j}$ where $\lt(w)=\vect{x}^L$. Furthermore, since the commutation relations are homogeneous with respect to the grading introduced at the end of Section~\ref{Rtsection}, we in fact have that $L\prec M+E_{i,j}$ at a coordinate northwest of $(i,j)$. 
\end{proof}

Lemma~\ref{strcor2lemma} roughly says that as we commute $x_{i,j}$ to the left of $\vect{x}^M$ and find the lexicographic expression of any new terms, one never needs to ``create or destroy'' any generator with coordinate northeast of $(i,j)$. 

\begin{proof}[Proof of Lemma~\ref{strcor2}, Part 2]

By applying Lemma~\ref{strcor2lemma} to the generators corresponding to $\vect{x}^{P_{\textnormal{id}}}$ in $\vect{x}^M$, we find that there is an integer $\alpha$ and a $w\in A^{(t)}$ such that \begin{align*} \vect{x}^M &= q^\alpha \vect{x}^{P_\textnormal{id}} \vect{x}^{M-P_\textnormal{id}} + w^\prime, \end{align*} where $w^\prime \in A^{(t)}$ and if $\lt(w^\prime)=
\vect{x}^K$, then $K\prec M$ at a coordinate northwest of $(i_k,j_k)$. On the other hand, notice that if $\sigma\in S_k$ with $\sigma\neq\textnormal{id}$, then \begin{align*} \vect{x}^{P_\sigma}\vect{x}^{M-P_\textnormal{id}} &= \vect{x}^{M-P_\textnormal{id}+P_\sigma} + w^{\prime\prime},\end{align*} where $\vect{x}^{M-P_\textnormal{id}+P_\sigma}$ is the leading term of the right-side and $M-P_\textnormal{id}+P_\sigma\prec M$ at a coordinate northwest of $(i_k,j_k)$. Our desired equation 
\begin{align*} \vect{x}^M &= q^\alpha [\ImidJ]^{(t)}\vect{x}^{M-P_\textnormal{id}} + w, \end{align*} follows for some integer $\alpha$ and $w\in A^{(t)}$ where, if $\lt(w)=\vect{x}^K$, then $K\prec M$ at a coordinate northwest of $(i_k,j_k)$.

\end{proof}

\subsection{Adding Derivations and Lexicographic Expressions} \label{addingderivationssection}

Throughout this section, we fix $t\in [mn], t\neq 1$ and let $(r,s)$ be the $t^\textnormal{th}$ smallest coordinate. Let $[x_{i,j}]$ be the matrix of generators for $A^{(t)}$, and $[y_{i,j}]$ the matrix of generators for $A^{(t-1)}$.

The proof of the main theorem requires a somewhat detailed understanding of the effect of the adding derivations map on the lexicographic expressions of an element $a\in A^{(t)}$ and its image $\overleftarrow{a}\in A^{(t-1)}[y_{r,s}^{-1}]$. This short section provides this information. 

Recall from Section~\ref{DDsection} that the adding derivations map is the homomorphism $$\overleftarrow{\cdot}:A^{(t)}\to A^{(t-1)}[y_{r,s}^{-1}]$$ defined on the standard generators by \begin{align*} 
\overleftarrow{x_{i,j}}=\begin{cases} y_{i,j}+ y_{i,s}y_{r,s}^{-1}y_{r,j}, & \textnormal{if $(i,j)$ is northwest of $(r,s)$;} \\
y_{i,j}, &\textnormal{ otherwise,}\end{cases}
\end{align*}
or, equivalently, by 
\begin{align*} 
\overleftarrow{x_{i,j}}=\begin{cases} y_{i,j}+ qy_{i,s}y_{r,j}y_{r,s}^{-1}, & \textnormal{if $(i,j)$ is northwest of $(r,s)$;} \\
y_{i,j}, &\textnormal{ otherwise.}\end{cases}
\end{align*}

Let $\vect{x}^M\in A^{(t)}$ and write \begin{align*} \vect{x}^M =  x_{i_1,j_1}x_{i_2,j_2}\cdots x_{i_p,j_p}, \end{align*} where for each $k\in [p-1]$, $(i_k,j_k)\leq (i_{k+1},j_{k+1})$. Let $\C{D}$ be the set of all $k$ such that $(i_k,j_k)$ is northwest of $(r,s)$. Then we may write, 
\begin{align}
\overleftarrow{\vect{x}^M} &= \sum_{C\subseteq \C{D}} q^{|C|} \stackrel{C}{\overleftarrow{x_{i_1,j_1}}} \stackrel{C}{\overleftarrow{x_{i_2,j_2}}} \cdots \stackrel{C}{\overleftarrow{x_{i_p,j_p}}},\label{Cequation}
\end{align}
where, for a $C\subseteq  \C{D}$, \begin{align*}  \stackrel{C}{\overleftarrow{x_{i_k,j_k}}} = \begin{cases} y_{i_k,s}y_{r,j_k}y_{r,s}^{-1}, & \textnormal{ if $k\in C$;} \\ y_{i_k,j_k}, & \textnormal{ if $k\not\in C$.}\end{cases}\end{align*}

\begin{lem}  \label{technicallemmacrit}
With notation as in the preceding discussion, let $z\in A^{(t-1)}[y_{r,s}^{-1}]$ be a summand on the right side of Equation~(\ref{Cequation}), so that for some $C\subseteq D$, \begin{align*} z =~~ \stackrel{C}{\overleftarrow{x_{i_1,j_1}}} \stackrel{C}{\overleftarrow{x_{i_2,j_2}}} \cdots \stackrel{C}{\overleftarrow{x_{i_p,j_p}}}.\end{align*} Then in the lexicographic expression of $z$, written as \begin{align*} z&=\sum_{L_C \in \mat} \alpha_{L_C} \vect{y}^{L_C}\end{align*} where $\alpha_{L_C}\in \B{K}^*$, the following hold.
\begin{enumerate}
\item For each $L_C$, $$(L_C)_{r,s} = (M)_{r,s} - |C|.$$
\item If $C\neq \emptyset$, then for every $L_C$, we have $L_C\prec M$ at the least $(i_k,j_k)$ for which $k \in C$.
\item For each $L_C$ and for each $i\in [m]\setminus r$, $$(L_C)_{i,s} = (M)_{i,s}+ |\{k\in C\mid i_k=i\}|.$$
\item If $(i,j)$ is northwest of $(r,s)$ and if $$(L_C)_{i,j}>(M)_{i,j}-|\{k\in C\mid (i_k,j_k)=(i,j)\}|,$$ then there is a coordinate $(i,j^\prime)$ with $1\leq j^\prime<j$ such that $$(L_C)_{i,j^\prime}<(M)_{i,j^\prime}-|\{k\in C\mid (i_k,j_k)=(i,j^\prime)\}|.$$
\item For each $L_C$, the entries in coordinates \emph{not} north, west or northwest of $(r,s)$ are equal to the corresponding entries in $M$.
\end{enumerate}
\end{lem}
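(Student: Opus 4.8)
The plan is to analyze the product $z = \stackrel{C}{\overleftarrow{x_{i_1,j_1}}} \cdots \stackrel{C}{\overleftarrow{x_{i_p,j_p}}}$ directly by tracking where each factor contributes to a given lex term, working through the straightening process that reorders the $y$-generators into lexicographic order. The key observation is the structure of each factor: if $k \notin C$ then $\stackrel{C}{\overleftarrow{x_{i_k,j_k}}} = y_{i_k,j_k}$, while if $k \in C$ then $\stackrel{C}{\overleftarrow{x_{i_k,j_k}}} = y_{i_k,s}y_{r,j_k}y_{r,s}^{-1}$, which involves: one generator in column $s$ at row $i_k$, one generator in row $r$ at column $j_k$, and one inverse $y_{r,s}^{-1}$. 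So before any straightening, the ``raw'' exponent matrix of the product is exactly $M - \sum_{k\in C}(E_{i_k,j_k} - E_{i_k,s} - E_{r,j_k} + E_{r,s})$; call this $M_C^{\mathrm{raw}}$. The content of the lemma is then that straightening $z$ into lexicographic form produces a leading term equal to $\vect{y}^{M_C^{\mathrm{raw}}}$ and lower terms controlled as stated.

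First I would establish Parts 1, 3 and 5 essentially by a grading/bookkeeping argument, using the $\B{Z}_{\geq 0}^{m+n}$-grading from the end of Section~\ref{Rtsection}. Part 5 is immediate: a coordinate $(i,j)$ that is not north, west, or northwest of $(r,s)$ is never touched — for such $(i,j)$ the only way to change $(M)_{i,j}$ would be via a factor with $k\in C$ and $(i_k,j_k)=(i,j)$ (contributing $-1$ to that coordinate and $+1$ to $(i,s)$ and $(r,j)$), but $k\in C$ forces $(i_k,j_k)$ northwest of $(r,s)$, and the straightening relations are homogeneous in the grading, hence cannot move weight into a row $>r$ and column $>s$ region; so one argues $(L_C)_{i,j}=(M)_{i,j}$ there directly. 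Part 1 follows because each $k\in C$ contributes exactly one $y_{r,s}^{-1}$ and nothing else touches coordinate $(r,s)$ — and straightening preserves the $(r,s)$-exponent since the relations among the $y$'s are homogeneous and the only relation involving $y_{r,s}$ with a northwest generator is $\overleftarrow{x_{i,j}}$'s definition already accounted for; more carefully, one checks that in $A^{(t-1)}[y_{r,s}^{-1}]$ the generator $y_{r,s}$ $q^*$-commutes past everything, so its exponent is a grading invariant. Part 3 is the row-$i$ column-$s$ count: for $i\ne r$, the factors contributing to column $s$ at row $i$ are the original $(M)_{i,s}$ copies of $y_{i,s}$ plus one extra $y_{i,s}$ for each $k\in C$ with $i_k=i$; again straightening is column-$s$-at-row-$i$ homogeneous in the grading, so this total is preserved exactly.

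Next, for Part 2 I would use Lemma~\ref{strcor2lemma} (or the straightening Proposition~\ref{straightening}) applied step by step. Let $(i_{k_0},j_{k_0})$ be the least diagonal coordinate with $k_0\in C$; it is northwest of $(r,s)$. Before this factor, all factors $\stackrel{C}{\overleftarrow{x_{i_k,j_k}}}$ with $k<k_0$ are just $y$-generators with $k\notin C$ (since $k_0$ is least in $C$), lying at coordinates $\le (i_{k_0},j_{k_0})$. The factor $\stackrel{C}{\overleftarrow{x_{i_{k_0},j_{k_0}}}} = y_{i_{k_0},s}y_{r,j_{k_0}}y_{r,s}^{-1}$ replaces what ``should'' be a $y_{i_{k_0},j_{k_0}}$: instead of incrementing coordinate $(i_{k_0},j_{k_0})$, we increment $(i_{k_0},s)$ and $(r,j_{k_0})$, both of which are strictly larger than $(i_{k_0},j_{k_0})$ in lexicographic order (since $s>j_{k_0}$ and $r>i_{k_0}$). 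So the exponent vector of the raw product is already $\prec M$ at coordinate $(i_{k_0},j_{k_0})$ — it has a $0$ there where $M$ has something positive while agreeing at all smaller coordinates — and by Proposition~\ref{straightening} all further straightening-induced terms are $\preceq$ the raw leading term, hence also $\prec M$ at a coordinate $\le (i_{k_0},j_{k_0})$, and in fact at $(i_{k_0},j_{k_0})$ or smaller; a short argument that nothing can increase the exponent at a coordinate $<(i_{k_0},j_{k_0})$ (because those factors are untouched by $C$ and by homogeneity) pins it to exactly $(i_{k_0},j_{k_0})$.

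Part 4 is the subtlest and I expect it to be the main obstacle. It says: if some lex term $\vect{y}^{L_C}$ of $z$ has an exponent at a northwest coordinate $(i,j)$ strictly exceeding the ``expected'' count $(M)_{i,j} - |\{k\in C: (i_k,j_k)=(i,j)\}|$, then there is an earlier coordinate $(i,j')$ in the same row with $j'<j$ where the exponent is strictly below its expected count. The plan here is to track row $i$ carefully: the only ways to create an ``extra'' $y_{i,j}$-generator during straightening are (a) via the relation $y_{a,b}y_{i,j}=y_{i,j}y_{a,b}+(q-q^{-1})y_{i,b}y_{a,j}$-type moves — but these, as Lemma~\ref{strcor2lemma} shows, preserve counts northeast of the moved generator, so an extra $y_{i,j}$ at coordinate $(i,j)$ must be compensated by a deficit at some $(i,j')$ with $j'<j$ in the same row (the ``northwest'' partner of the $2\times 2$ relation); and (b) via the factors $\stackrel{C}{\overleftarrow{x_{i_k,j_k}}} = y_{i_k,s}y_{r,j_k}y_{r,s}^{-1}$ with $i_k = i$, which put a $y_{i,s}$ at column $s > j$, not at column $j$, so these do not directly create extra $y_{i,j}$'s either (they're already counted in Part 3). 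So I would argue: decompose the deviation $(L_C)_{i,\bullet} - (M - \sum_{k\in C, i_k=i} E_{i,j_k})_{i,\bullet}$ along row $i$; its total weighted sum (with appropriate column weights from the grading, namely the column-sum grading) is zero by homogeneity, so if it is strictly positive at $(i,j)$ it must be strictly negative somewhere; and the $2\times 2$ straightening relations are of the form ``exponent moves from a northwest/southeast pair to a northeast/southwest pair'', which combined with Lemma~\ref{strcor2lemma}'s ``northeast-preserving'' property forces the compensating deficit to occur at a \emph{smaller} column $j' < j$ in the same row $i$. Making this precise — in particular handling the interaction of multiple $k\in C$ in the same row and ensuring one always finds the deficit strictly to the west of $j$ rather than merely somewhere in the row — is the delicate part, and I would do it by induction on the straightening steps, maintaining as an invariant the statement of Part 4 itself (suitably phrased for partial products), which is exactly the kind of bookkeeping Lemma~\ref{strcor2lemma} was built to support.
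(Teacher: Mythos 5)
Your overall route is the same as the paper's: identify the raw exponent matrix of the product before reordering, and then control what the straightening process can do to it. The paper organizes this by first splitting $z$ into row blocks and observing that, after the harmless $q^*$-commutations (including that $y_{r,s}^{\pm1}$ $q^*$-commutes with everything, which gives Part~1), the \emph{only} non-trivial reordering moves are of the form $y_{r,j}y_{i',j'} = y_{i',j'}y_{r,j} - (q-q^{-1})y_{i',j}y_{r,j'}$ with $i'<r$ and $j'<j<s$; every conclusion of the lemma is then read off from the fact that the correction term destroys a generator at $(i',j')$ and creates one at $(i',j)$ (strictly further east in the same row $i'<r$) and one at $(r,j')$ (strictly west of $(r,s)$). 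Your Part~4 mechanism (west-to-east moves within a row force the first discrepancy to be a deficit) and your Part~2 argument (coordinates before the least $C$-coordinate are never touched) are exactly the paper's.

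The one step that does not hold as written is your justification of Parts~3 and~5 by ``homogeneity.'' The $\B{Z}_{\geq 0}^{m+n}$-grading preserves only row sums and column sums, so it is perfectly consistent with homogeneity for straightening to trade weight at $(i,s)$ and $(r,j)$ for weight at $(i,j)$ and $(r,s)$ --- which would falsify Parts~1, 3 and 5 simultaneously. What actually rules this out is the pointwise inspection above: one must check that every out-of-order pair in the raw product either $q^*$-commutes or fires the single relation displayed, whose correction term creates generators only at coordinates $(i',j)$ with $i'<r$, $j<s$ and $(r,j')$ with $j'<s$; hence no $y_{i,s}$, no $y_{r,s}^{\pm1}$, and no generator outside the north/west/northwest region is ever created or destroyed. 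Since your treatment of Part~4 already requires cataloguing exactly which relations fire, this is a repair within your own framework rather than a change of approach, but as stated the homogeneity appeal is not a proof of those parts.
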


\begin{proof}
First, let us split the summand $z$ by row indices, i.e., write $$z=(\stackrel{C}{\overleftarrow{x_{1,j_{1,1}}}}\stackrel{C}{\overleftarrow{x_{1,j_{1,2}}}}\cdots \stackrel{C}{\overleftarrow{x_{1,j_{1,p_1}}}})\cdots (\stackrel{C}{\overleftarrow{x_{m,j_{m,1}}}}\stackrel{C}{\overleftarrow{x_{m,j_{1,2}}}}\cdots \stackrel{C}{\overleftarrow{x_{m,j_{1,p_m}}}}),$$ where, for each $i\in [m]$, the generators appearing in the monomial $$\stackrel{C}{\overleftarrow{x_{i,j_{i,1}}}}\stackrel{C}{\overleftarrow{x_{i,j_{i,2}}}}\cdots \stackrel{C}{\overleftarrow{x_{i,j_{i,p_i}}}}$$ have indices $$(a,b) \in \{ (i,j)\mid j\in [n]\}\cup\{(r,j)\mid j\in [s]\}.$$ Moreover, if $y_{r,j}$ appears with $j\neq s$, then $y_{r,j}$ is to the right of any $y_{i,j^\prime}$ with $j^\prime<j$. In other words, such a $y_{r,j}$ $q^*$-commutes with every generator appearing to its right. Also, in $A^{(t-1)}$, we have that $y_{r,s}$ actually $q^*$-commutes with every generator of $A^{(t-1)}$. Thus $y_{r,s}^{-1}$ $q^*$-commutes with every generator in $A^{(t-1)}[y_{r,s}^{-1}]$ and we may write  $$\stackrel{C}{\overleftarrow{x_{i,j_{i,1}}}}\stackrel{C}{\overleftarrow{x_{i,j_{i,2}}}}\cdots \stackrel{C}{\overleftarrow{x_{i,j_{i,p_1}}}} = q^\alpha\vect{y}^{M_i}\vect{y}^{R_i}y_{r,s}^{-\beta},$$ where $\alpha\in\B{Z}$, $\beta$ is the number of occurrences of $y_{r,s}^{-1}$ in the left monomial, $M_i\in\matnonneg$ is the matrix defined by $$(M_i)_{a,b} = 
\begin{cases} 0 & \textnormal{ if $a\neq i$;}\\ 
 (M)_{i,b}-|\{k\in C\mid (i_k,j_k)=(i,b)\}| & \textnormal{ if $a=i$ and $1\leq b<s$;}\\
 (M)_{i,s}+|\{k\in C\mid i_k=i\}| & \textnormal{ if $a=i$ and $b=s$;}\\
 (M)_{i,b} & \textnormal{ if $s<b\leq n$,}
 \end{cases}$$
 and $R_i$ is a matrix whose nonzero entries appear only in coordinates between $(r,1)$ and $(r,s-1)$.

%
%
It follows that we may write \begin{align} z=q^{\alpha^\prime}\vect{y}^{M_1}\vect{y}^{R_1}\vect{y}^{M_2}\vect{y}^{R_2}\cdots \vect{y}^{M_{r-1}}\vect{y}^{R_{r-1}}\vect{y}^{R_r}y_{r,s}^{-|C|}\vect{y}^L,\label{bigeqn}\end{align} for some $\alpha^\prime\in\B{Z}$, where the entries of $R_r$ equal those of $M$ at coordinates between $(r,1)$ and $(r,s-1)$ and are zero elsewhere, and where entries of $L$ equal those of $M$ at all coordinates greater than $(r,s)$. 

Next, let $y_{r,j}$ be a generator with $1\leq j<s$, and consider $y_{r,j}\vect{y}^{M_i}$ for some $1\leq i<r$. Recall that, for $j^\prime<j$, we have the relation \begin{align*} y_{r,j}y_{i,j^\prime} &= y_{i,j^\prime}y_{r,j} - (q-q^{-1})y_{i,j}y_{r,j^\prime}. \end{align*} Repeated applications of this relation imply that $$y_{r,j}\vect{y}^{M_i} = \vect{y}^{M_i}y_{r,j} + \sum_\ell \alpha_\ell \vect{y}^{M_i^\ell}\vect{y}^{R^\ell},$$ for nonzero scalars $\alpha_\ell$ and where:
\begin{enumerate}
\item Every $M_i^\ell \in \matnonneg$ satisfies $M_i^\ell\prec M_i$, and the entries of each $M_i^\ell$ differ from those in $M_i$ only between coordinates $(i,1)$ and $(i,s-1)$;
\item Each $R^\ell\in\matnonneg$ has nonzero entries only between coordinates $(r,1)$ and $(r,s-1)$.
\end{enumerate}
In particular, when finding the lexicographic expression of the monomial $z$ written in the form of Equation~\eqref{bigeqn}, we never create or destroy any of the generators $y_{i,s}$, $y_{r,s}^{\pm 1}$, nor any generator with coordinates \emph{not} north, west or northwest. Parts 1,3 and 5 of the lemma follow. It also follows that for every $L_C$ and $i\in [r-1]$, if the entries in $L_C$ and $M$ with coordinates between $(i,1)$ and $(i,s-1)$ differ, then the first different entry is smaller in $L_C$. This implies Part 4. Finally, Part 2 comes from the fact that each term in the lexicographic expression of $z$ must start with $y_{i_1,j_1}\cdots y_{i_{k-1},j_{k-1}}$ since no subsequent relation produces a generator $y_{a,b}$ with $(a,b)<(i_k,j_k)$.

\end{proof}
\begin{cor} \label{leadingterm}
If $a\in A^{(t)}$, and $\lt (a)=\vect{x}^M$, then $\lt(\overleftarrow{a})=\vect{y}^M.$
\end{cor}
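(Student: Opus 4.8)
The plan is to deduce Corollary~\ref{leadingterm} from Lemma~\ref{technicallemmacrit} by tracking which lexicographic term of $\overleftarrow{a}$ is largest with respect to the matrix lexicographic order $\prec$. Write the lexicographic expression $a=\sum_{N}\alpha_N\vect{x}^N$, so $\lt(a)=\vect{x}^M$ means $N\prec M$ for every other $N$ with $\alpha_N\neq 0$. Applying the homomorphism $\overleftarrow{\cdot}$ termwise, $\overleftarrow{a}=\sum_N \alpha_N\overleftarrow{\vect{x}^N}$, and each $\overleftarrow{\vect{x}^N}$ expands via Equation~\eqref{Cequation} as a sum over subsets $C$ of the northwest-of-$(r,s)$ positions of $\vect{x}^N$, each summand $z$ in turn having a lexicographic expression $\sum_{L_C}\alpha_{L_C}\vect{y}^{L_C}$ governed by Lemma~\ref{technicallemmacrit}. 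So the overall strategy is: among all the matrices $L_C$ arising from all $N$ and all $C$, identify $M$ itself as the unique $\prec$-largest one, and check that it occurs with a nonzero net coefficient.

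First I would handle the contributions coming from $N=M$. For the empty subset $C=\emptyset$ of $\C{D}$, the corresponding summand of $\overleftarrow{\vect{x}^M}$ is just $y_{i_1,j_1}\cdots y_{i_p,j_p}$, i.e. $\vect{y}^M$ up to reordering, whose lexicographic expression has leading term $\vect{y}^M$ (by Proposition~\ref{straightening}, applied in $A^{(t-1)}$) with a nonzero scalar, and all other lex terms $\prec M$. For a nonempty $C\subseteq\C{D}$, Part~2 of Lemma~\ref{technicallemmacrit} says every $L_C$ satisfies $L_C\prec M$. Hence within $\overleftarrow{\vect{x}^M}$ the term $\vect{y}^M$ appears exactly once, from $C=\emptyset$, with a nonzero coefficient, and every other lex term is strictly smaller.

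Next I would rule out any $N\neq M$ contributing a lex term that is $\succeq M$. Here $N\prec M$. The point is that the adding derivations map, according to Lemma~\ref{technicallemmacrit}, can only increase an exponent in the column-$s$ entries (Part~3) or, within a row northwest of $(r,s)$, ``push weight to the right'' in the sense of Part~4 — but in the latter case it strictly decreases some earlier entry in that same row. So I would argue: for any lex term $\vect{y}^{L_C}$ of any summand $z$ coming from $\overleftarrow{\vect{x}^N}$, we have $L_C\preceq N'$ for a matrix $N'$ obtained from $N$ by the controlled modifications of Lemma~\ref{technicallemmacrit}; using Parts~1, 3, 4, 5 one checks that in the lexicographic comparison, $L_C$ can never exceed $N$ at a coordinate strictly less than the first coordinate where $N$ and $M$ differ (Part~4 guarantees any increase is preceded by a decrease, and Parts~1, 3, 5 pin down the remaining entries relative to $M$). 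Since $N\prec M$, it follows that $L_C\prec M$. Combining the two cases, the unique $\prec$-maximal lex term among all contributions to $\overleftarrow{a}$ is $\vect{y}^M$, occurring with nonzero coefficient, so $\lt(\overleftarrow{a})=\vect{y}^M$.

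The main obstacle I anticipate is the bookkeeping in the second case — making precise the claim that for $N\prec M$ no resulting $L_C$ can ``catch up'' to $M$. The subtlety is that $\overleftarrow{\cdot}$ genuinely can raise some entries (the column-$s$ entries, and, via Part~4, later entries within a northwest row), so one cannot simply say $L_C\preceq N$. The correct argument must locate the first coordinate $(k,\ell)$ where $N$ differs from $M$ (so $(N)_{k,\ell}<(M)_{k,\ell}$ and $N$, $M$ agree everywhere before it), and show that $\overleftarrow{\vect{x}^N}$ produces only terms agreeing with $N$ — hence with $M$ — strictly before $(k,\ell)$ and no larger than $N$ at $(k,\ell)$ itself; Part~4's ``a decrease precedes every increase'' is exactly what makes this work, together with the observation that the column-$s$ increases in Part~3 and the invariance in Part~5 do not affect coordinates where $N$ already agrees with $M$ unless an earlier decrease has already forced $L_C\prec M$. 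Once this localization is set up carefully, the rest is routine.
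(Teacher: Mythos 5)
Your proposal is correct and takes essentially the same route as the paper: everything rests on Lemma~\ref{technicallemmacrit}, with the $C=\emptyset$ summand of $\overleftarrow{\vect{x}^M}$ producing exactly $\vect{y}^M$ and Part~2 of that lemma forcing every summand with $C\neq\emptyset$ below it. The only comment is that your second case is more elaborate than necessary: for a lex term $\vect{x}^N$ of $a$ with $N\prec M$, Part~2 applied directly to $\vect{x}^N$ already gives $L_C\prec N$ for every nonempty $C$ (and $L_C=N$ for $C=\emptyset$), so $L_C\prec M$ follows by transitivity of the total order $\prec$, and the localization argument via Parts~1, 3, 4, 5 that you sketch is not needed.
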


\begin{proof}
If $C\neq\emptyset$, then each term $\vect{y}^{L_C}$ in the resulting lexicographic expression satisfies $\vect{y}^{L_C}\prec \vect{y}^M$ by Part 2 of Lemma~\ref{technicallemmacrit}. On the other hand, $$\vect{y}^M=\,\,\,\stackrel{\emptyset}{\overleftarrow{x_{i_1,j_1}}} \stackrel{\emptyset}{\overleftarrow{x_{i_2,j_2}}} \cdots \stackrel{\emptyset}{\overleftarrow{x_{i_p,j_p}}}.$$
\end{proof}

\subsection{Generators of $\C{H}$-primes} \label{generatorsection}

We come to the main theorem of this paper. It is fairly straightforward to modify the proof and some of the above definitions to obtain the analogous result for left ideals. We remind the reader that an appendix to this paper provides an index of terms and notation used in the following proof.

\begin{thm} \label{genthm}
Fix the following data: A Cauchon diagram $B$; $t\in [mn]$; $(r,s)$ the $t^\textnormal{th}$ smallest coordinate; $[x_{i,j}]$ the matrix of generators for $A^{(t)}$; and the sequence of $\C{H}$-primes $(K_1,\ldots,K_{mn})$, where $$K_t=\ker\left(\sigma_B^{(t)}\right).$$ 

Let $G_t$ be the set of all $x_{i,j}$ with $(i,j)>(r,s)$ and $(i,j)\in B$, together with all quantum minors in $K_t$ whose maximum coordinate is at most $(r,s)$. Then $G_t$ is a Gr\"obner basis for $K_t$ as a right ideal. 
\end{thm}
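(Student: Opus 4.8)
The plan is to induct on $t$. For the base case $t=1$, $A^{(1)}\simeq\qaffine$ and $K_1=\langle t_{i,j}\mid (i,j)\in B\rangle$ by Theorem~\ref{grobner1}, so $G_1=\{x_{i,j}\mid (i,j)\in B\}$; in a quantum affine space every lex monomial of an element of an ideal generated by generators is divisible by one of those generators, so in particular $\lt(a)$ is, and $G_1$ is a Gr\"obner basis. For the inductive step I would first reduce, using Lemma~\ref{splittingup}, to elements supported in ``small'' coordinates. Write $a=a'+\sum_{N:\,\vect{x}^{N_1}\notin K_t}a_N\vect{x}^{N_1}$. The lex terms of $a'$ each involve a black generator $x_{i,j}$ with $(i,j)>(r,s)$, while those of each $a_N\vect{x}^{N_1}$ are supported in coordinates $\le(r,s)$ together with the (white, $>(r,s)$) support of $N_1$; hence these summands have pairwise disjoint lex terms and $\lt(a)$ is the leading term of one of them. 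If it is $\lt(a')$, then $x_{i,j}\in G_t$ divides $\lt(a)$ and we are done. Otherwise $\lt(a)=\lt(a_N\vect{x}^{N_1})=\lt(a_N)\vect{x}^{N_1}$ (as $\vect{x}^{N_1}$ $q^*$-commutes with $a_N$), $a_N\in K_t$ lies in the subalgebra $S$ generated by $\{x_{i,j}\mid(i,j)\le(r,s)\}$, and $\lt(a_N)\mid\lt(a)$; so it suffices to prove the statement for $a\in K_t\cap S$.

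For such $a$, with $\lt(a)=\vect{x}^M$ (so $M$ is supported in coordinates $\le(r,s)$), I treat two cases. If $(r,s)\in B$, then $x_{r,s}\in K_t$ (one checks $\Gamma_B^{(t)}(r,s)=\emptyset$: after its first vertical step a path from the row vertex $r$ can never turn left without a forbidden \reflectbox{L}-turn, so it must exit at a column vertex $j$ with $(r,j)$ white, forcing $j\ne s$), and $x_{r,s}=[r\mid s]^{(t)}\in G_t$; if $(M)_{r,s}\ge1$ we are done, and otherwise $M$ is supported in coordinates $<(r,s)$, where $A^{(t)}$ and $A^{(t-1)}$ agree (they differ only in relations involving $x_{r,s}$) and $A_B^{(t)}=A_B^{(t-1)}$, so after reducing away any terms of $a$ involving $x_{r,s}$ (which does not change $\lt(a)$, since reductions by $x_{r,s}$ only produce strictly smaller terms) we land in $K_{t-1}$ and invoke the induction hypothesis directly, extracting a minor of maximum coordinate $<(r,s)$ exactly as in the second case below.

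The main case is $(r,s)\notin B$, where the adding derivations map $\overleftarrow{\cdot}\colon A^{(t)}\to A^{(t-1)}[y_{r,s}^{-1}]$ is available. Since $x_{r,s}\notin K_t$, Theorem~\ref{kerthm} and Theorem~\ref{Cauchonmap}(1) give $\overleftarrow{K_t}[y_{r,s}^{-1}]=K_{t-1}[y_{r,s}^{-1}]$ and $y_{r,s}\notin K_{t-1}$, so $\overleftarrow{a}\in K_{t-1}[y_{r,s}^{-1}]$ and $b:=\overleftarrow{a}\,y_{r,s}^{N_0}\in K_{t-1}$ for $N_0\gg0$. By Corollary~\ref{leadingterm}, $\lt(\overleftarrow a)=\vect{y}^M$, so $\lt(b)=\vect{y}^{M+N_0E_{r,s}}$ up to a scalar. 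The induction hypothesis yields $g'\in G_{t-1}$ with $\lt(g')\mid\vect{y}^{M+N_0E_{r,s}}$. If $g'$ were a generator $y_{i,j}$ ($(i,j)\ge(r,s)$, $(i,j)\in B$), divisibility would force $(i,j)=(r,s)$ (as $M$ is supported in coordinates $\le(r,s)$), contradicting $(r,s)\notin B$; hence $g'=[\ImidJ]^{(t-1)}\in K_{t-1}$ is a minor of maximum coordinate $(i_k,j_k)<(r,s)$, and by Lemma~\ref{strcor2}(1) $\lt(g')=\vect{y}^{P_{\textnormal{id}}}$ with $P_{\textnormal{id}}$ supported on the diagonal coordinates, all $<(r,s)$; thus $\vect{x}^{P_{\textnormal{id}}}\mid\vect{x}^M=\lt(a)$. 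It remains to show $[\ImidJ]^{(t)}\in K_t$; then $[\ImidJ]^{(t)}\in G_t$ (its maximum coordinate is $<(r,s)\le(r,s)$) and $\lt([\ImidJ]^{(t)})=\vect{x}^{P_{\textnormal{id}}}$ divides $\lt(a)$, completing the induction. Taking $B=\emptyset$-aside, applying the result with $t=mn$ to an arbitrary $\C{H}$-prime (which is $\ker(\sigma_B^{(mn)})$ for a suitable Cauchon diagram $B$) then gives the Goodearl--Lenagan conjecture.

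The step I expect to be the main obstacle is precisely showing $[\ImidJ]^{(t)}\in K_t$ from $[\ImidJ]^{(t-1)}\in K_{t-1}$. By Corollary~\ref{lindstromcor} this amounts to proving $\Gamma_B^{(t-1)}(\ImidJ)=\emptyset\implies\Gamma_B^{(t)}(\ImidJ)=\emptyset$ for minors of maximum coordinate $<(r,s)$. The sets $\Gamma_B^{(t)}(\ImidJ)$ and $\Gamma_B^{(t-1)}(\ImidJ)$ differ only in that the former also permits a \reflectbox{L}-turn at the single vertex $(r,s)$, so if $\Gamma_B^{(t)}(\ImidJ)\ne\emptyset$ while $\Gamma_B^{(t-1)}(\ImidJ)=\emptyset$ then the supremum $\C{Q}=\sup\Gamma_B^{(t)}(\ImidJ)$ (Definition~\ref{supdef}) must contain a path with a \reflectbox{L}-turn at $(r,s)$. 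One then reroutes that path to turn higher in column $s$, producing a strictly ``larger'' vertex-disjoint path system in $\Gamma_B^{(t)}(\ImidJ)$ and contradicting maximality of $\C{Q}$; this rerouting is legitimate because $B$ is a Cauchon diagram (the squares involved are white) and the Cauchon graph is planar, and is carried out much as in the proof of Theorem~\ref{zerocondition}. Verifying this rerouting argument carefully, and the bookkeeping in the Lemma~\ref{splittingup} reduction, is where the real work lies; everything else is an assembly of the already-established Theorems~\ref{pathcommutation},~\ref{lindstrom},~\ref{zerocondition}, Corollary~\ref{leadingterm} and Lemma~\ref{strcor2}.
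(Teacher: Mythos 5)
Your setup --- the induction on $t$, the base case, the reduction via Lemma~\ref{splittingup} to terms supported in coordinates at most $(r,s)$, the easy case $(r,s)\in B$, and passing to $b=\overleftarrow{a}\,y_{r,s}^{h}\in K_{t-1}$ so as to extract by induction a minor $[\ImidJ]^{(t-1)}\in G_{t-1}$ whose leading term divides $\vect{y}^M$ --- all matches the paper. The gap is exactly the step you flagged as the main obstacle: the implication $\Gamma_B^{(t-1)}(\ImidJ)=\emptyset\Rightarrow\Gamma_B^{(t)}(\ImidJ)=\emptyset$ is false, so $[\ImidJ]^{(t-1)}\in K_{t-1}$ does \emph{not} imply $[\ImidJ]^{(t)}\in K_t$. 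Allowing a \reflectbox{L}-turn at $(r,s)$ genuinely creates new vertex-disjoint path systems; this is precisely the mechanism by which elements leave the $\C{H}$-prime as $t$ increases. Concretely, take $B=\{(1,1)\}$ in the $2\times 3$ case (Example~\ref{ABtex}) and $t=5$, so $(r,s)=(2,2)$: the minor $[1\,|\,1]$ has maximum coordinate $(1,1)<(2,2)$, and $\Gamma_B^{(4)}(1\,|\,1)=\emptyset$ so $[1\,|\,1]^{(4)}=0\in K_4$, yet $\Gamma_B^{(5)}(1\,|\,1)$ contains the path $(1,(1,3),(1,2),(2,2),(2,1),1)$ with a \reflectbox{L}-turn at $(2,2)$, and $[1\,|\,1]^{(5)}=t_{1,2}t_{2,2}^{-1}t_{2,1}\neq 0$ is not in $K_5$. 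Your proposed rerouting (``turn higher in column $s$'') has nowhere to go here, since $(1,1)$ is black; in general nothing guarantees a usable white vertex above $(r,s)$ in column $s$, nor that a rerouted path still reaches the column vertex $j$ while staying disjoint from the other paths in the system.

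The paper's proof accepts that every such ``critical'' minor may fail to lift to $K_t$, and instead derives a contradiction with the induction hypothesis at level $t-1$. Assuming no element of $G_t$ has leading term dividing $\lt(a)$, it first establishes structural constraints on $M$ and on the critical coordinates (Claims 1 and 2, using suprema and infima of path systems and the constructions $U(P,Q)$, $L(P,Q)$), and then manufactures from $\overleftarrow{\vect{x}^M}\,y_{r,s}^h$ a specific lexicographic term $\vect{y}^{N_C}$ --- obtained by activating the adding-derivations correction exactly at the critical coordinates --- which is shown on the one hand to be divisible by the leading term of no element of $G_{t-1}$ (Claim 3) and on the other hand to be the leading term of an element of $K_{t-1}$ (Claims 4 and 5). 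That two-sided analysis, resting on Lemma~\ref{technicallemmacrit} and a lengthy sequence of path constructions, is the real content of the theorem and cannot be reached from the lifting claim your argument relies on.
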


\begin{proof}

First, note that $B=\emptyset$ if and only if $K_1=\langle 0\rangle$. On the other hand, in view of Theorem~\ref{ddtheorem}, we have $K_t=\langle 0\rangle$ for some $t\in[mn]$ if and only if $K_t=\langle 0\rangle$ for every $t\in[mn]$. Since the empty set generates $\langle 0\rangle$, we are done in the case $B=\emptyset$. From now on, we suppose $B\neq\emptyset$ and proceed by induction on $t$.

If $t=1$, then the only minor in $A^{(1)}=\qaffine$ whose maximum coordinate is $(1,1)$ is $$[1\,
|\,1]^{(1)}=t_{1,1}.$$ Since $t_{1,1}\in K_1$ if and only if $(1,1)\in B$, we see that $G_1$ is precisely the set of generators $t_{i,j}$ with $(i,j)\in B$. On the other hand, these $t_{i,j}$ generate $K_1$ by Theorem~\ref{grobner1} and so Proposition~\ref{LID} implies $G_1$ is indeed a Gr\"obner basis.

So now suppose $t\neq 1$ and that $G_{t-1}$ is a Gr\"obner basis for $K_{t-1}$. Let $[y_{i,j}]$ be the matrix of generators for $A^{(t-1)}$. There are two cases to consider, according to whether or not $(r,s)\in B$.

If $(r,s)\in B$, then, as elements of $\qtorus$, we have for each coordinate $(i,j)$ that $$\sigma_B^{(t)}(x_{i,j})=\sigma_B^{(t-1)}(y_{i,j}).$$ Therefore,  $$a=\sum_L \alpha_L\vect{x}^L\in K_t$$ if and only if $$a^\prime=\sum_L \alpha_L\vect{y}^L\in K_{t-1}.$$ Hence, if $\vect{y}^M$ divides $\lt(a^\prime)$, then $\vect{x}^M$ divides $\lt(a)$. 

Now, the previous paragraph also implies that if $[\ImidJ]^{(t-1)}\in K_{t-1}$ with maximum coordinate at most $(r,s)^-$, then $[\ImidJ]^{(t)}\in K_t$ with maximum coordinate strictly less than $(r,s)$ so that $[\ImidJ]^{(t)}\in G_t$. Also, if $(i,j)>(r,s)$ is such that $(i,j)\in B$, then $x_{i,j}\in K_t$. Finally, since $(r,s)\in B$, $$[r\,|\, s]^{(t)}=x_{r,s}\in K_t.$$ It now follows that since $G_{t-1}$ is a Gr\"obner basis for $K_{t-1}$, $G_t$ is a Gr\"obner basis\footnote{In general we have actually shown that a subset of $G_t$ is a Gr\"obner basis for $K_t$, but nothing is lost by adding the extra minors in $K_t$ with maximum coordinate equal to $(r,s)$.} for $K_t$.

Now assume $(r,s)\not\in B$, i.e., $x_{r,s}\not\in K_t$ and that $G_{t-1}$ is a Gr\"obner basis for $K_{t-1}$. In the following we aim to verify that $G_t$ satisfies Definition~\ref{grobnerdef} for $K_t$, but this requires some effort. The strategy we employ is as follows. Suppose a nonzero $a\in K_t$ is chosen such that $\lt(a)=\vect{x}^M$ is not divisible by the leading term of a member of $G_t$. Using the full power of the paths viewpoint developed above, we deduce in Claims $1$ and $2$ some structural properties of $M$. Using the information so obtained, we then find a term $\vect{y}^{N_C}\in A^{(t-1)}$ that \emph{is not} divisible by the leading term of any member of $G_{t-1}$ (Claim $3$) yet \emph{is} the leading term of an element of $K_{t-1}$ (Claims $4$ and $5$). Of course, these opposing properties contradict the induction hypothesis.

Fix a nonzero, monic $a\in K_t$ with lexicographic expression $$a=\vect{x}^M + \sum_L \alpha_L \vect{x}^L,$$ where $\lt(a)=\vect{x}^M$. Furthermore, we may assume that $a$ is homogeneous with respect to the grading introduced at the end of Section~\ref{Rtsection}, i.e., that for each $i\in[m]$, the $i^\textnormal{th}$ row sum of every $L$ and $M$ are equal, and for every $j\in [n]$, the $j^\textnormal{th}$ column sum of $M$ and every $L$ are equal.

If there exists an $(i,j)\in B$ with $(i,j)>(r,s)$ and $(M)_{i,j}\geq 1$, then $x_{i,j}\in G_t$ divides $\lt(a)$, and we are done. So we may assume no such $(i,j)$ exists. In fact by Lemma~\ref{splittingup} we may further assume that $M$ and every $L$ have the same values in each coordinate $(i,j)>(r,s)$, and, without loss of generality, that these entries are all zero, i.e., $(M)_{i,j}=0=(L)_{i,j}$ for all $(i,j)>(r,s)$.

Since $(r,s)\not\in B$, we have $$K_t=\overrightarrow{K_{t-1}}[x_{r,s}^{-1}]\cap A^{(t)},$$ and so there exists a $b\in K_{t-1}$ and a nonnegative integer $h$ with $$a=\overrightarrow{b}x_{r,s}^{-h}.$$ Then     $b=\overleftarrow{a}y_{r,s}^h,$ and by Corollary~\ref{leadingterm}, $$\lt(b)=\vect{y}^{M}y_{r,s}^h.$$ 

We henceforth call a minor in $G_{t-1}$ whose leading term divides $\lt(b)$ \emph{critical}. Note that since the maximum coordinate of a critical minor is at most $(r,s)^-$, its leading term actually divides $\vect{y}^M$. By induction, there exists at least one critical minor. Now, if $[\ImidJ]^{(t-1)}$ is critical and $[\ImidJ]^{(t)}\in K_t$, then, since the maximum coordinate of $[\ImidJ]^{(t)}$ is strictly less than $(r,s)$, we have have found an element of $G_t$ whose leading term divides $\lt(a)$, and we are done. \emph{From now on, we assume that if $[\ImidJ]^{(t-1)}$ is critical, then $[\ImidJ]^{(t)}\not\in K_t$.}
 
\begin{claim2} \label{critminorsclaim}

If $[\ImidJ]^{(t-1)}$ is critical, where $I=(i_1<i_2<\cdots <i_k)$ and $J=(j_1<j_2<\cdots < j_k)$, then we may assume the following.

\begin{enumerate} 
\item The set $\Gamma_B^{(t)}(\ImidJ)$ is nonempty and every vertex-disjoint path system in it contains a path with a $\textnormal{\reflectbox{L}}$-turn at $(r,s)$.
\item If $(i_{k^\prime},j_{k^\prime})$ is the largest diagonal coordinate northwest of $(r,s)$, then $$[i_1,\ldots,i_{k^\prime}\,|\,j_1,\ldots, j_{k^\prime}]^{(t-1)}$$ is critical.
\item If $(i_k,j_k)$ is northwest of $(r,s)$, then for every $(i,j)$ with $i_k<i\leq r$ and $j_k<j\leq s$, one has $(M)_{i,j} =0$.

\end{enumerate}
\end{claim2}

\noindent \emph{Proof of Claim~\ref{critminorsclaim}}:

\noindent \emph{Part 1}: This is simply restating the assumption preceding the claim, since otherwise there is a vertex-disjoint path system in $\Gamma_B^{(t-1)}(\ImidJ)$, i.e., $$[\ImidJ]^{(t-1)}\not\in K_{t-1}.$$

\noindent \emph{Part 2}: By Part 1, there exists a $\reflectbox{L}$-turn at $(r,s)$ in any vertex-disjoint path system in $\Gamma_B^{(t)}(\ImidJ)$. Hence $r\not\in I$ (in particular, $i_k<r$), $s\not\in J$ and at least $(i_1,j_1)$ is northwest of $(r,s)$. Therefore $(i_k,j_k)$ is either northwest or northeast of $(r,s)$.

If $(i_k,j_k)$ is northwest of $(r,s)$, then there is nothing to prove, so suppose $(i_k,j_k)$ is northeast of $(r,s)$. If $[I\setminus i_k\,|\, J\setminus j_k]^{(t-1)} \in K_{t-1}$, then replace $[\ImidJ]^{(t-1)}$ with $[I\setminus i_k\,|\, J\setminus j_k]^{(t-1)}$ and restart this argument. So assume that $(i_k,j_k)$ is northeast of $(r,s)$ and $[I\setminus i_k\,|\, J\setminus j_k]^{(t-1)} \not\in K_{t-1}$, i.e, there exists a vertex-disjoint path system $$\C{P}=(P_1,\ldots, P_{k-1})\in \Gamma_B^{(t-1)}(I\setminus i_k\,|\,J\setminus j_k).$$

Let $$\C{Q}=(Q_1,\ldots,Q_k)\in \Gamma_B^{(t)}(\ImidJ).$$ From Part 1, there exists a $Q_\alpha:i_\alpha\rightarrow j_\alpha$ containing $(r,s)$ as a $\reflectbox{L}$-turn. Clearly, we must have $\alpha=k^\prime$, and $k^\prime \neq k$ since $(i_k,j_k)$ is northeast of $(r,s)$. Recalling Corollary~\ref{newvdps}, consider the vertex-disjoint path system $$\C{R}=U(\C{P},\C{Q}\setminus Q_k)\in \Gamma_B^{(t-1)}(I\setminus i_k\,|\,J\setminus j_k)$$ See Figure~\ref{part2figure}. Since $P_{k^\prime}$ does not contain a $\reflectbox{L}$-turn at $(r,s)$, the path $U(P_{k^\prime},Q_{k^\prime})$ does not contain a $\reflectbox{L}$-turn at $(r,s)$. Moreover, by Corollary~\ref{disjointUcor}, $\C{R}$ is disjoint from $Q_k$. Hence, $\C{R}\cup Q_k$ is a vertex-disjoint path system in the empty set $\Gamma_B^{(t-1)}(\ImidJ)$, an impossibility.
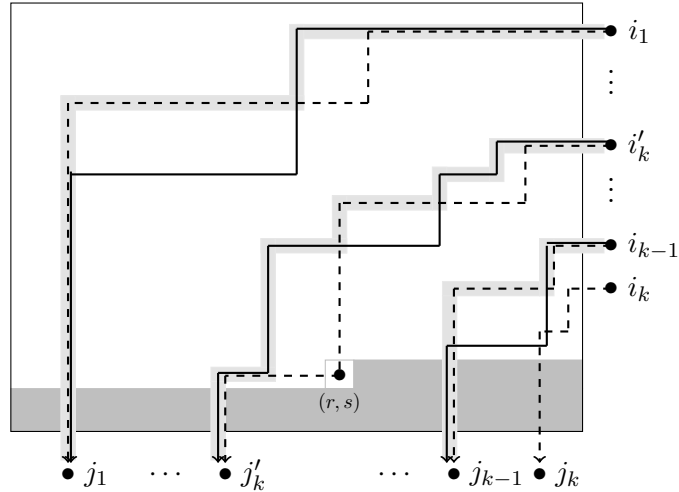
\begin{figure}[htbp]
\begin{center}

\begin{tikzpicture}[xscale=1.9, yscale=1.9]
\draw[color=lightgray, fill=lightgray] (0,0) rectangle (4,0.3);
\draw[color=lightgray, fill=lightgray] (2.4,0.3) rectangle (4,0.5);
\draw[color=lightgray] (2.2,0.3) rectangle (2.4,0.5);
\draw (0,0) rectangle (4,3);

\node at (2.3, 0.39) {$\bullet$};
\node[scale=0.7] at (2.3, 0.2) {$(r,s)$};

\node at (4.2,1) {$\bullet$};  
\node at (4.4,1) {$i_k$};
\node at (3.7,-0.3) {$\bullet$};
\node at (3.9,-0.3) {$j_k$};

\draw [thick, dashed] (4.2,1)--(3.9,1);
\draw [thick, dashed] (3.9,1)--(3.9,0.7);
\draw [thick, dashed] (3.9,0.7)--(3.7,0.7);
\draw [->,thick, dashed] (3.7,0.7)--(3.7,-0.22);

\node at (3.1,-0.3) {$\bullet$};
\node at (3.4,-0.3) {$j_{k-1}$};
\node at (2.7,-0.3) {$\cdots$};

\node at (4.2,1.3) {$\bullet$};
\node at (4.5, 1.3) {$i_{k-1}$};
\draw[color={rgb:black,1;white,10}, fill={rgb:black,1;white,8}] (4.15, 1.25) rectangle (3.7,1.35);
\draw[color={rgb:black,1;white,10}, fill={rgb:black,1;white,8}] (3.8, 0.95) rectangle (3.7,1.35);
\draw[color={rgb:black,1;white,10}, fill={rgb:black,1;white,8}] (3.02, 0.95) rectangle (3.7,1.05);
\draw[color={rgb:black,1;white,10}, fill={rgb:black,1;white,8}] (3.02, 0.95) rectangle (3.12,-0.2);

\draw [thick, dashed] (4.2,1.3)--(3.8,1.3);
\draw [thick, dashed] (3.8,1.3)--(3.8,1);
\draw [thick, dashed] (3.8,1)--(3.1,1);
\draw [->,thick, dashed] (3.1,1)--(3.1,-0.22);
\draw [thick, black] (4.2,1.32)--(3.75,1.32);
\draw [thick, black] (3.75,1.3)--(3.75,0.6);
\draw [thick,black] (3.75,0.6)--(3.05,0.6);
\draw [->,thick, black] (3.05,0.6)--(3.05,-0.22);

\node at (4.2,1.75) {$\vdots$};

\node at (4.2,2) {$\bullet$};
\node at (4.4,2) {$i_k^\prime$};

\draw[color={rgb:black,1;white,10}, fill={rgb:black,1;white,8}] (4.15,2.05) rectangle (3.35,1.95);
\draw[color={rgb:black,1;white,10}, fill={rgb:black,1;white,8}] (3.35, 1.75) rectangle (3.45,2.05);
\draw[color={rgb:black,1;white,10}, fill={rgb:black,1;white,8}] (3.35, 1.75) rectangle (2.95,1.85);
\draw[color={rgb:black,1;white,10}, fill={rgb:black,1;white,8}] (2.95, 1.55) rectangle (3.05,1.85);
\draw[color={rgb:black,1;white,10}, fill={rgb:black,1;white,8}] (2.25, 1.55) rectangle (2.95,1.65);
\draw[color={rgb:black,1;white,10}, fill={rgb:black,1;white,8}] (2.25, 1.25) rectangle (2.35,1.65);
\draw[color={rgb:black,1;white,10}, fill={rgb:black,1;white,8}] (1.75, 1.25) rectangle (2.35,1.35);
\draw[color={rgb:black,1;white,10}, fill={rgb:black,1;white,8}] (1.75, 0.35) rectangle (1.85,1.35);
\draw[color={rgb:black,1;white,10}, fill={rgb:black,1;white,8}] (1.4, 0.35) rectangle (1.85,0.45);
\draw[color={rgb:black,1;white,10}, fill={rgb:black,1;white,8}] (1.4, 0.35) rectangle (1.5,-0.2);

\draw [thick, dashed] (4.2,2)--(3.6,2);
\draw [thick, dashed] (3.6,2)--(3.6,1.6);
\draw [thick, dashed] (3.6,1.6)--(2.3,1.6);
\draw [thick, dashed] (2.3,1.6)--(2.3,0.39);
\draw [thick, dashed] (2.3,0.39)--(1.5,0.39);
\draw [->, thick, dashed] (1.5,0.39)--(1.5,-0.22);
\draw [thick, black] (4.2,2.03)--(3.4,2.03);
\draw [thick, black] (3.4,2.03)--(3.4,1.8);
\draw [thick, black] (3.4,1.8)--(3,1.8);
\draw [thick, black] (3,1.8)--(3,1.3);
\draw [thick, black] (3,1.3)--(1.8,1.3);
\draw [thick, black] (1.8,1.3)--(1.8,0.41);
\draw [thick, black] (1.8,0.41)--(1.45,0.41);
\draw [->,thick, black] (1.45,0.41)--(1.45,-0.22);

\node at (4.2,2.5) {$\vdots$};
\node at (4.2,2.8) {$\bullet$};
\node at (4.4,2.8) {$i_1$};

\draw[color={rgb:black,1;white,10}, fill={rgb:black,1;white,8}] (2,2.75) rectangle (4.15,2.85);
\draw[color={rgb:black,1;white,10}, fill={rgb:black,1;white,8}] (1.95,2.3) rectangle (2.05,2.85);
\draw[color={rgb:black,1;white,10}, fill={rgb:black,1;white,8}] (0.35,2.25) rectangle (2.05,2.35);
\draw[color={rgb:black,1;white,10}, fill={rgb:black,1;white,8}] (0.35,-0.2) rectangle (0.45,2.35);
\draw [thick, dashed] (2.5,2.8)--(2.5,2.3);
\draw [thick, dashed] (4.2,2.8)--(2.5,2.8);
\draw [thick, dashed] (2.5,2.3)--(0.4,2.3);
\draw [->,thick, dashed] (0.4,2.3)--(0.4,-0.22);
\draw [thick, black] (4.2,2.82)--(2,2.82);
\draw [thick, black] (2,2.82)--(2,1.8);
\draw [thick, black] (2,1.8)--(0.42,1.8);
\draw [->,thick, black] (0.42,1.82)--(0.42,-0.22);

\node at (0.4,-0.3) {$\bullet$};
\node at (0.6,-0.3) {$j_1$};

\node at (1.5,-0.3) {$\bullet$};
\node at (1.7,-0.3) {$j_k^\prime$};

\node at (1.1,-0.3) {$\cdots$};

\end{tikzpicture}
\caption{Illustration of the idea used to prove Part 2 of Claim 1. The dashed paths represent $\mathcal{Q}\in\Gamma^{(t)}_B(\ImidJ)$. The solid paths represent $\mathcal{P}\in\Gamma_B^{(t-1)}(I\setminus i_k\,|\,J\setminus j_k)$. The shaded paths represent $U(\mathcal{P},\mathcal{Q}\setminus Q_k)$.}
\label{part2figure}
\end{center}
\end{figure}

\noindent \emph{Part 3}: 
If $(i,j)=(r,s)$ and $(M)_{r,s}\geq 1$, then $[I\cup r\,|\, J\cup s]^{(t)}$ is a minor whose leading term divides $\vect{x}^M$ with maximum coordinate $(r,s)$. The only path in $\Gamma_B^{(t)}(r,s)$ is $(r,(r,s),s)$. Hence, if $\Gamma_B^{(t)}(I\cup r\,|\, J\cup s)$ is nonempty, then any path system in this set would have a sub-path system from $I$ to $J$ not using $(r,s)$. But this is a vertex-disjoint path system in the empty set $\Gamma_B^{(t-1)}(\ImidJ)$, an impossibility. Thus, $[I\cup r\,|\, J\cup s]^{(t)}\in G_t$ with leading term dividing $\vect{x}^M=\lt(a)$, and there is nothing left to prove. So we may assume $(M)_{r,s}=0$.

If $(i,j)\neq (r,s)$ but $(M)_{i,j}\geq 1$, then the leading term of $[I\cup i\,|\, J\cup j]^{(t-1)}$ divides $\vect{y}^M$. Since $[\ImidJ]^{(t-1)}\in K_{t-1}$,  there is no vertex-disjoint path system in $\Gamma_B^{(t-1)}(\ImidJ)$ and so certainly no vertex-disjoint path system in $\Gamma_B^{(t-1)}(I\cup i\,|\,J\cup j)$. Thus, $[I\cup i \,|\, J\cup j]^{(t-1)}$ is critical and so there exists a $\C{P} \in \Gamma_B^{(t)}(I\cup i\,|\, J\cup j)$. By Part 1 and vertex-disjointness, the path $P: i \to j \in \C{P}$ is necessarily the path with a $\reflectbox{L}$-turn at $(r,s)$. But then $\C{P}\setminus\{P\}$ is a vertex-disjoint path system in the empty set $\Gamma_B^{(t-1)}(\ImidJ)$, an impossibility. This completes the proof of Claim 1.


We now say that a coordinate $(i,j)$ is \emph{critical} if $(i,j)$ is northwest of $(r,s)$ and there exists a critical minor with $(i,j)$ as its maximum coordinate. 

\begin{claim2} \label{criticalclaim}
If $(i,j)$ critical, then every $(i,j^\prime)$ for $j<j^\prime<s$ with $(M)_{i,j^\prime}\geq 1$ is critical, and every $(i^\prime, j)$ for $i<i^\prime<r$ with $(M)_{i^\prime,j}\geq 1$ is critical.
\end{claim2}

\noindent \emph{Proof of Claim~\ref{criticalclaim}}: 
Suppose $[\ImidJ]^{(t-1)}$ is a critical minor whose maximum coordinate is $(i,j)$. Notice that the leading term of $$[I\,|\,J\setminus j \cup j^\prime]^{(t-1)}$$ divides $\vect{y}^M$ and its maximum coordinate is $(i,j^\prime)$, so it remains to show that this minor is in $K_{t-1}$.

Since $[\ImidJ]^{(t-1)}$ is critical, we may consider the supremum $\C{P}\in \Gamma_B^{(t)}(\ImidJ)\neq\emptyset$, which, by Part 1 of Claim 1, contains a path $P:i\to j$ with a $\reflectbox{L}$-turn at $(r,s)$. Notice that $P$ must have a horizontal subpath from $(r,s)$ to $(r,j)$, followed by a $\Gamma$-turn at $(r,j)$, and then vertically down to the column vertex $j$. In particular, $(r,j)$ is a white vertex. See Figure~\ref{claim2figure}.
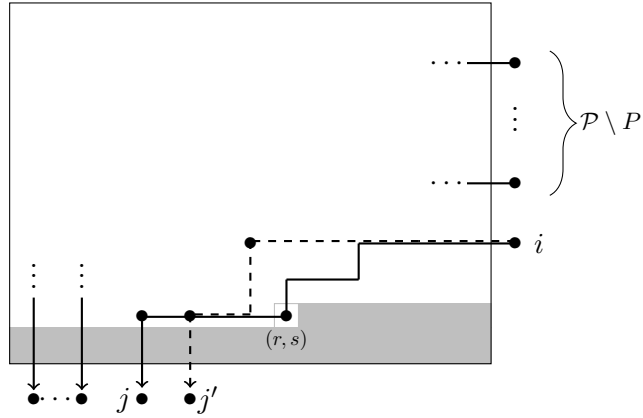
\begin{figure}[htbp]
\begin{center}

\begin{tikzpicture}[xscale=1.6, yscale=1.6]
\draw[color=lightgray, fill=lightgray] (0,0) rectangle (4,0.3);
\draw[color=lightgray, fill=lightgray] (2.4,0.3) rectangle (4,0.5);
\draw[color=lightgray] (2.2,0.3) rectangle (2.4,0.5);
\draw (0,0) rectangle (4,3);

\node at (2.3, 0.39) {$\bullet$};
\node[scale=0.7] at (2.3, 0.2) {$(r,s)$};

\node at (4.2,1) {$\bullet$};  
\node at (4.4,1) {$i$};
\node at (4.2, 1.5) {$\bullet$};  
\draw [thick, black] (4.2,1.5)--(3.8,1.5);
\node at (3.65, 1.5) {$\cdots$};  

\node at (4.2, 2.1) {$\vdots$};  
\node at (4.2, 2.5) {$\bullet$};  
\node at (3.65, 2.5) {$\cdots$}; 
\draw [thick, black] (4.2,2.5)--(3.8,2.5);
\usetikzlibrary{decorations.pathreplacing}
\draw [decorate,decoration={brace,amplitude=10pt,mirror,raise=4pt},yshift=0pt]
(4.4,1.4) -- (4.4,2.6) node [black,midway,xshift=0.95cm] {\footnotesize
$\C{P}\setminus P$};

\draw [thick, black] (4.2,1)--(2.9,1);
\draw [thick, black] (2.9,1)--(2.9,0.7);
\draw [thick, black] (2.9,0.7)--(2.3,0.7);
\draw [thick, black] (2.3,0.7)--(2.3,0.39);
\draw [thick, black] (2.3,0.39)--(1.1,0.39);
\draw [->, thick, black] (1.1,0.39)--(1.1,-0.2);

\draw [thick, dashed] (4.2,1.02)--(3.4,1.02);
\draw [thick, dashed] (3.4,1.02)--(2,1.02);
\draw [thick, dashed] (2,1.02)--(2,0.41);
\draw [thick, dashed] (2,0.41)--(1.5,0.41);

\node at (2,1) {$\bullet$};
\draw [->, thick, dashed] (1.5,0.41)--(1.5,-0.2);

\node at (1.1,-0.3) {$\bullet$};
\node at (0.95,-0.3) {$j$};
\node at (1.1,0.39) {$\bullet$};

\node at (1.5,-0.3) {$\bullet$};
\node at (1.65,-0.3) {$j^\prime$};
\node at (1.5,0.39) {$\bullet$};

\node at (0.2,-0.3) {$\bullet$}; 
\draw [<-, thick, black] (0.2,-0.22)--(0.2,0.55);
\node at (0.2,0.8) {$\vdots$};
 \node at (0.4,-0.3) {$\cdots$}; 
\node at (0.6,-0.3) {$\bullet$}; 
\node at (0.6,0.8) {$\vdots$};
\draw [<-, thick, black] (0.6,-0.22)--(0.6,0.55);

\end{tikzpicture}
\caption{Illustration of the idea used in proving Claim 2.  In the notation of that proof, the dashed line represents $Q$ and the solid line represents $P$. The other vertices and partial paths represent $\C{P}\setminus P= \C{Q}\setminus Q$. }
\label{claim2figure}
\end{center}
\end{figure}

Suppose that $[I\,|\,J\setminus j \cup j^\prime]^{(t-1)}\not\in K_{t-1}$, i.e., there exists a vertex-disjoint path system $\C{Q}$ from $I$ to $J\setminus j\cup j^\prime$ in $\Gamma_B^{(t-1)}(I\,|\,J\setminus j\cup j^\prime)$. Therefore, the path $Q: i\to j^\prime$ in $\C{Q}$ does \emph{not} use vertex $(r,s)$. By considering the appropriate supremums, we may assume without loss of generality that $\C{Q}\setminus Q = \C{P}\setminus P.$ Now, since $j^\prime>j$, $Q$ must intersect $P$ in order to end at $j^\prime$. Since $Q$ cannot have a \reflectbox{L}-turn at a $(r,s)$ or any larger vertex, the Cauchon condition implies that $(r,j^\prime)$ is a white vertex. On the other hand, $\C{P}\setminus P$ is disjoint from both $Q$ and $P$. If we let $R$ be the path starting at $i$, equal to $Q$ up to $(r,j^\prime)$, then equal to $P$ until the column vertex $j$, then $R$ is a path from $i$ to $j$ that does not contain $(r,s)$. Now $(\C{P}\setminus P)\cup R$ is a vertex-disjoint path system in $\Gamma_B^{(t-1)}(\ImidJ)$, a contradiction. That a coordinate $(i^\prime, j)$ with $i<i^\prime<r$ with $(M)_{i^\prime,j}\geq 1$ is critical is proven similarly. This completes the proof of Claim 2.\\

To summarize the discussion so far, we have shown that it suffices to assume the following. 
\begin{itemize}
\item If $[\ImidJ]^{(t-1)}$ is a critical minor, then $\Gamma_B^{(t)}(\ImidJ)\neq \emptyset$ and every vertex-disjoint path system contains a path with a \reflectbox{L}-turn at $(r,s)$ (by Part 1 of Claim 1).
\item Every critical minor contains a critical coordinate (by Part 2 of Claim 1).
\item For each critical coordinate $(i,j)$, there is a critical minor whose maximum coordinate is $(i,j)$ (by definition).
\item For each critical coordinate $(i,j)$ (of which there exists at least one), $(M)_{k,\ell}=0$ for all $i<k\leq r$ and $j<\ell\leq s$ (by Part 3 of Claim 1). In particular, no critical coordinate is northwest of another critical coordinate and so any critical minor contains a \emph{unique} critical coordinate. See Figure~\ref{Mstructure}.
\item If $(i,j)$ is northwest of $(r,s)$ and $(i,j)$ is \emph{not} a critical coordinate, then no coordinate above or to its left is critical (by Claim 2). 
\end{itemize}

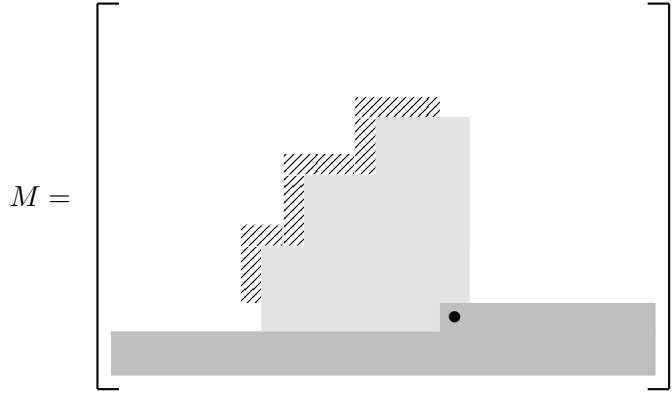
\begin{figure}[htbp]
\begin{center}
\begin{tikzpicture}[xscale=1.9, yscale=1.9]

\draw [thick, black] (0,0)--(0,2.7);
\draw [thick, black] (0,0)--(0.15,0);
\draw [thick, black] (0,2.7)--(0.15,2.7);

\draw [thick, black] (4,0)--(4,2.7);
\draw [thick, black] (4,0)--(3.85,0);
\draw [thick, black] (4,2.7)--(3.85,2.7);

\node at (2.5,0.5) {$\bullet$};  

\draw[color=lightgray, fill=lightgray] (0.1,0.1) rectangle (3.9,0.4);
\draw[color=lightgray, fill=lightgray] (2.4,0.4) rectangle (3.9,0.6);

\node at (2.5,0.5) {$\bullet$}; 
\node at (-0.4,1.35) {$M=$}; 
\usetikzlibrary{patterns};
\draw [color=white, step=0.5cm, pattern=north east lines] (1,0.6) rectangle (1.15,1);
\draw [color=white, step=0.5cm, pattern=north east lines] (1,1) rectangle (1.3,1.15);
\draw [color=white, step=0.5cm, pattern=north east lines] (1.3,1) rectangle (1.45,1.5);
\draw [color=white, step=0.5cm, pattern=north east lines] (1.3,1.5) rectangle (1.8,1.65);
\draw [color=white, step=0.5cm, pattern=north east lines] (1.8,1.5) rectangle (1.95,1.9);
\draw [color=white, step=0.5cm, pattern=north east lines] (1.8,1.9) rectangle (2.4,2.05);
\draw[color={rgb:black,1;white,8}, fill={rgb:black,1;white,8}] (1.15, 0.4) rectangle (2.4,1);
\draw[color={rgb:black,1;white,8}, fill={rgb:black,1;white,8}] (2.4, 0.6) rectangle (2.6,1.9);
\draw[color={rgb:black,1;white,8}, fill={rgb:black,1;white,8}] (1.45, 1) rectangle (2.6,1.5);
\draw[color={rgb:black,1;white,8}, fill={rgb:black,1;white,8}] (1.95, 1.5) rectangle (2.6,1.9);
\draw[color=lightgray, fill=lightgray] (0.1,0.1) rectangle (3.9,0.4);
\draw[color=lightgray, fill=lightgray] (2.4,0.4) rectangle (3.9,0.6);

\node at (2.5,0.5) {$\bullet$};

\end{tikzpicture}
\caption{Structure of $M$: The bullet represents coordinate $(r,s)$. All critical coordinates lie in the striped region. All entries in the two regions shaded solid gray are $0$.}
\label{Mstructure}
\end{center}
\end{figure}

The remainder of this proof will show that the above list of assumptions leads to a contradiction to the induction hypothesis.

Recalling the notation in Section~\ref{addingderivationssection}, let $$\lt(a)=\vect{x}^M=x_{i_1,j_1}x_{i_2,j_2}\cdots x_{i_p,j_p}, $$ and set $$C=\{k\in[p]\mid \textnormal{$(i_k,j_k)$ is critical}\}, $$ where $C$ is nonempty (since, by induction, there exists at least one critical minor, which in turn contains a critical coordinate). 
Consider the monomial \begin{align*} \stackrel{C}{\overleftarrow{x_{i_1,j_1}}} \stackrel{C}{\overleftarrow{x_{i_2,j_2}}} \cdots \stackrel{C}{\overleftarrow{x_{i_p,j_p}}}y_{r,s}^h.
\end{align*}
By the assumptions just established, Lemma~\ref{technicallemmacrit} and Proposition~\ref{straightening}, the lexicographic expression of this monomial equals
\begin{align}
q^\alpha \vect{y}^{N_C} + \sum_{L_C\in\mat} \alpha_{L_C} \vect{y}^{L_C}, \label{NCdefinition} 
 \end{align}
for some integer $\alpha$ and with every $L_C\prec N_C$ where
 \begin{align*} 
 (N_C)_{i,j} &= \begin{cases} 
 0, & \textnormal{ if $(i,j)$ is critical;}\\
 (M)_{i,j}, & \textnormal{ if $i\neq r$, $j\neq s$ and $(i,j)$ not critical;} \\
  (M)_{i,s}+\sum_{j^\prime} (M)_{i,j^\prime}, & \textnormal{ if $i\neq r$ and $j=s$;}\\
   (M)_{r,j}+\sum_{i^\prime} (M)_{i^\prime,j}, & \textnormal{ if $i=r$ and $j\neq s$;} \\
      h-|C|, & \textnormal{ if $i=r$ and $j=s$,} 
      \end{cases}
\end{align*}
and where the sum in the case that $i\neq r$ and $j=s$ is over all $j^\prime$ with $(i,j^\prime)$ critical, and the sum in the case that $i=r$ and $j\neq s$ is over all $i^\prime$ with $(i^\prime,j)$ critical. With respect to Figure~\ref{Mstructure}, the entries in the striped region are 0 in $N_C$, while entries above $(r,s)$ (respectively to the left of $(r,s)$) may become nonzero if there is a critical coordinate to the left (respectively above).

\begin{claim2} \label{notleadingtermclaim}
The term $\vect{y}^{N_C}$ is not divisible by the leading term of any element of $G_{t-1}$. Consequently, $\vect{y}^{N_C}$ is not the leading term of any element of $K_{t-1}$.
\end{claim2}
\noindent \emph{Proof of Claim~\ref{notleadingtermclaim}:} 
To the contrary, suppose that $\vect{y}^{N_C}$ is divisible by the leading term of some element in $G_{t-1}$. Since $(N_C)_{i,j}=(M)_{i,j}=0$ for every $(i,j)\geq (r,s)$, this element is a minor $$[\ImidJ]^{(t-1)},$$ where, say, $$I=(i_1<\cdots<i_z)  \textnormal{\hspace{0.2cm} and\hspace{0.2cm}   }  J=(j_1<\cdots<j_z).$$

Now, $[\ImidJ]^{(t-1)}$ does not contain a critical coordinate since $(N_C)_{i,j}=0$ for all critical coordinates $(i,j)$. Moreover, we may in this way conclude that $\vect{y}^M$ is not divisible by the leading term of $[\ImidJ]^{(t-1)}$. By the structure of the entries of $N_C$ compared to $M$, we then must have that $[\ImidJ]^{(t-1)}$ contains a coordinate $(i_k,j_k)$ in which $(N_C)_{i_k,j_k}>0$ while $(M)_{i,j}=0$, and so there are only two possibilities: either $(i_k,j_k)=(i_k,s)$ where $(i_k,j_k^\prime)$ is critical for some $j_k^\prime$, or $(i_k,j_k)=(r,j_k)$ where $(i_k^\prime,j_k)$ is critical for some $i_k^\prime$. We here show that the former possibility leads to a contradiction. The latter case is dealt with similarly.

Before we begin, we simplify our presentation slightly by further assuming that $(i_k,j_k)=(i_k,s)$ is the maximum coordinate of $[\ImidJ]^{(t-1)}$, i.e., that $z=k$. The general case is obtained by simply adding in $i_{k+1},\ldots,i_z$ and $j_{k+1},\ldots, j_z$ to the respective index sets of every minor we consider below.

As $\vect{y}^M$ is divisible by the leading term of $[I\setminus i_k\,|\, J\setminus s]^{(t-1)}$ (a minor with no critical coordinate), we have $[I\setminus i_k\,|\, J\setminus s]^{(t-1)}\not\in K_{t-1}$. So it is well-defined to set $$\tilde{\C{Q}}=(\tilde{Q}_1,\tilde{Q}_2,\ldots, \tilde{Q}_{k-1})$$ to be the supremum and $$\C{Q}=(Q_1,Q_2,\ldots, Q_{k-1})$$ to be the infimum of $\Gamma_B^{(t-1)}(I\setminus i_k\,|\, J\setminus s)$. 

Since $(i_k,j_k^\prime)$ is critical for some $j_k^\prime$, there exists, by Claim 1, a critical quantum minor $[I^\prime\,|\,J^\prime]^{(t-1)}$ where, for a (possibly nonpositive) integer $\alpha$, we write $$I^\prime=(i^\prime_\alpha<i^\prime_{\alpha+1}<\cdots <i^\prime_k=i_k) \textnormal{\hspace{0.2cm} and\hspace{0.2cm}   }J^\prime=(j^\prime_\alpha<j^\prime_{\alpha+1}<\cdots<j_k^\prime).$$ Set $$\tilde{\C{P}}=(\tilde{P}_\alpha,\ldots,\tilde{P}_k)$$ to be the supremum and $$\C{P}=(P_\alpha,\ldots,P_k)$$ to be the infimum of $\Gamma_B^{(t)}(I^\prime\,|\, J^\prime)$. By Claim 1, $P_k$ is a path from $i_k^\prime$ to $j_k^\prime$ in which $(r,s)$ is a \reflectbox{L}-turn.

The constructions to follow will show that if $\alpha\leq 1$, then we can construct a vertex-disjoint path system $$\C{R}_1\in \Gamma_B^{(t-1)}(\ImidJ),$$ or, if $\alpha>1$, a vertex-disjoint path system $$\C{R}^\prime_\alpha\in \Gamma_B^{(t-1)}(I^\prime\,|\, J^\prime).$$ As both $\Gamma_B^{(t-1)}(\ImidJ)$ and $\Gamma_B^{(t-1)}(I^\prime\,|\, J^\prime)$ were assumed to be empty sets, either case will establish a contradiction and so complete the proof of Claim~\ref{notleadingtermclaim}. The construction is fairly intricate so we first give an indication on how we plan to proceed. For $\ell\in[k]$, let $I_\ell=(i_\ell<\cdots<i_k)$ and $J_\ell=(j_\ell<\cdots j_k)$. Define $I^\prime_\ell$ and $J^\prime_\ell$ for $\alpha\leq\ell\leq k$ similarly. The first step is to build a vertex-disjoint path system $\C{R}_k\in\Gamma_B^{(t-1)}(I_k\,|\,J_k)$ using $\C{Q}$. If $k=1$, then we are done. Otherwise, we use $\C{R}_k$ to build $\C{R}^\prime_k\in\Gamma_B^{(t-1)}(I^\prime_k\,|\,J^\prime_k)$. Again, if $\alpha=k$, then we are done. Now suppose we have found $\C{R}_{\ell+1}\in\Gamma_B^{(t-1)}(I_{\ell+1}\,|\,J_{\ell+1})$ and $\C{R}^\prime_{\ell+1}\in \Gamma_B^{(t-1)}(I_{\ell+1}\,|\,J_{\ell+1})$ and that $\ell+1>\max(1,\alpha)$. We will show how to construct $\C{R}_\ell \in \Gamma_B^{(t-1)}(I_{\ell}\,|\,J_{\ell})$ using $\C{R}_{\ell+1}$ and $\C{R}^\prime_{\ell+1}$. If $\ell=1$ we are done. Otherwise, we construct $\C{R}_\ell^\prime \in \Gamma_B^{(t-1)}(I^\prime_\ell\,|\,J^\prime_\ell)$ using $\C{R}^\prime_{\ell+1}$ and the just constructed $\C{R}_\ell$. If $\ell=\alpha$ we are then done, otherwise we repeat the above, eventually ending with the desired vertex-disjoint path systems.

Now we give the promised details of the previous paragraph, beginning with the construction $\C{R}_k$. Recall that $P_k\in \C{P}$ has a subpath starting at row vertex $i^\prime_k=i_k$ and ending at vertex $(r,s)$. Define $Q_k$ to be this subpath followed by the vertical path from $(r,s)$ to column vertex $s$. For the purposes of the construction, set $v_k^0 = i_k$, $v_k^1=(r,s)$, and note that $v_k^0$ is the first vertex that $P_k$ and $Q_k$ have in common, while $v_k^1$ is the last vertex they have in common. If one sets $R_k=Q_k$, then note that we (trivially) have: $R_k=Q_k$ from $i_k$ to $v_k^0$; $R_k =U(P_k,Q_k)$ from $v_k^0$ to $v_k^1$; and $R_k=Q_k$ from $v_k^1$ to $j_k=s$.  See Figure~\ref{claim4figure1}.

\begin{figure}[htbp]
\begin{center}

\begin{tikzpicture}[xscale=1.9, yscale=1.9]
\draw[color=lightgray, fill=lightgray] (0,0) rectangle (4,0.3);
\draw[color=lightgray, fill=lightgray] (2.4,0.3) rectangle (4,0.5);
\draw[color=lightgray, fill=lightgray] (2.2,0.3) rectangle (2.4,0.5);
\draw (0,0) rectangle (4,3);

\node at (2.3, 0.39) {$\bullet$};
\node[scale=0.7] at (2.7, 0.39) {$(r,s)=v_k^1$};

\node at (4.2,1) {$\bullet$};  
\node at (4.9,1) {$i_k=i_k^\prime=v_k^0$};

\draw [thick, black] (4.2,1)--(2.9,1);
\draw [thick, black] (2.9,1)--(2.9,0.7);
\draw [thick, black] (2.9,0.7)--(2.3,0.7);
\draw [thick, black] (2.3,0.7)--(2.3,0.39);
\draw [thick, black] (2.3,0.39)--(1.1,0.39);
\draw [->, thick, black] (1.1,0.39)--(1.1,-0.2);

\draw [thick, dashed] (4.2,1.02)--(2.88,1.02);
\draw [thick, dashed] (2.88,1.02)--(2.88,0.72);
\draw [thick, dashed] (2.88,0.72)--(2.28,0.72);
\draw [thick, dashed,->] (2.28,0.72)--(2.28,-0.2);

\node at (1.1,-0.3) {$\bullet$};
\node at (1.3,-0.3) {$j_k^\prime$};
\node at (1.1,0.39) {$\bullet$};

\node at (2.28,-0.3) {$\bullet$};
\node at (2.66,-0.3) {$s=j_k$};

\end{tikzpicture}
\caption{Construction of $Q_k$ (dashed) from $P_k$ (solid) in the proof of Claim~\ref{notleadingtermclaim}.}
\label{claim4figure1}
\end{center}
\end{figure}
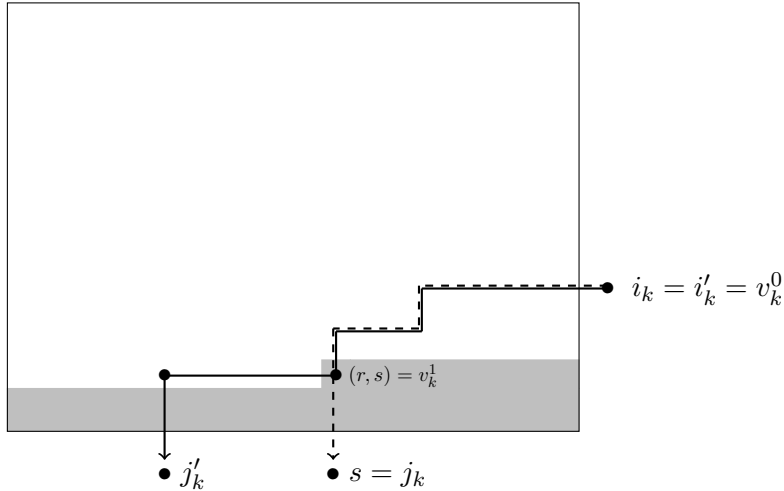

Set $\C{R}_k = (R_k)$. Of course, $\C{R}_k$ is a vertex-disjoint path system from $i_k$ to $j_k$ in $\Gamma_B^{(t-1)}(I_k\,|\,J_k)$. If $k=1$, then we are done, so we may assume $k>1$. 

In order to construct $\C{R}_k^\prime$, we first need to prove that $j_{k-1}\geq j_{k}^\prime$. To the contrary, suppose $j_{k-1}<j_k^\prime$, and consider $$[I \,|\, J\setminus s\cup j_k^\prime]^{(t-1)}.$$ If $[I \,|\, J\setminus s\cup j_k^\prime]^{(t-1)}\in K_{t-1}$, then it is critical and so there exists a vertex-disjoint path system from $I$ to $J\setminus s\cup j_k^\prime$ with the path from $i_k$ to $j_k^\prime$ containing a $\reflectbox{L}$-turn at $(r,s)$. But just as in the construction of $Q_k$ above, we may replace this path with a path from $i_k$ to $s$, thereby producing a vertex-disjoint path system from $I$ to $J$ in the empty set $\Gamma_B^{(t-1)}(\ImidJ)$, which is absurd. Next, suppose $[I \,|\, J\setminus s\cup j_k^\prime]^{(t-1)}\not\in K_{t-1}$, so that there does exist a vertex-disjoint path system from $I$ to $J\setminus s\cup j_k^\prime$ where the path $Q^\prime: i_k\to j_k^\prime$ does not contain a \reflectbox{L}-turn at $(r,s)$. We may take this path system to be $$(\tilde{Q}_1,\ldots,\tilde{Q}_{k-1}, Q^\prime).$$  

Now $\tilde{Q}_{k-1}$ is disjoint from $Q^\prime$, and so disjoint from $L(Q^\prime,P_k)$ by the lemma that is analogous to Lemma~\ref{disjointUcor}. But this latter path contains $(r,s)$ (since $P_k$ does) and so we may replace $Q^\prime$ with a path from $i_k$ to $s$, thereby again impossibly producing a vertex-disjoint path system in the empty set $\Gamma_B^{(t-1)}(\ImidJ)$. We can therefore conclude that $j_{k-1}\geq j_{k}^\prime$.

As $k>1$, consider $Q_{k-1}$, which, in particular, does not contain $(r,s)$. Now, $Q_{k-1}$ must intersect $Q_k$  at a vertex coming before $(r,s)$ on $Q_k$, as otherwise  $\C{Q}\cup Q_k\in \Gamma_B^{(t-1)}(\ImidJ)$. Let $w_k^0$ be the first such common vertex. On the other hand, since $j_{k-1}\geq j_k^\prime$ and $Q_{k-1}$ goes above $(r,s)$, $Q_{k-1}$ must also share with $P_k$ at least one vertex after $(r,s)$. Let $w_k^1$ be the last vertex that $Q_{k-1}$ and $P_k$ share. See Figure~\ref{claim4figure2}.

\begin{figure}[htbp]
\begin{center}

\begin{tikzpicture}[xscale=1.9, yscale=1.9]
\draw[color=lightgray, fill=lightgray] (0,0) rectangle (4,0.3);
\draw[color=lightgray, fill=lightgray] (2.4,0.3) rectangle (4,0.5);
\draw[color=lightgray, fill=lightgray] (2.2,0.3) rectangle (2.4,0.5);
\draw (0,0) rectangle (4,3);

\node at (2.3, 0.39) {$\bullet$};
\node[scale=0.85] at (2.8, 0.39) {$(r,s)=v_k^1$};

\node at (4.2,1) {$\bullet$};  
\node at (4.9,1) {$i_k=i_k^\prime=v_k^0$};

\node at (4.2, 2) {$\bullet$};
\node at (4.5, 2) {$i_{k-1}$};
\draw[color={rgb:black,1;white,10}, fill={rgb:black,1;white,8}] (4.15,0.95) rectangle (1.95,1.1);
\draw[color={rgb:black,1;white,10}, fill={rgb:black,1;white,8}] (1.95,1.1) rectangle (2.05,0.35);
\draw[color={rgb:black,1;white,10}, fill={rgb:black,1;white,8}] (1.05,0.45) rectangle (2.05,0.35);
\draw[color={rgb:black,1;white,10}, fill={rgb:black,1;white,8}] (1.05,0.45) rectangle (1.15,-0.2);

\draw [thick, dashed] (4.2,2)--(3.5,2);
\draw [thick, dashed] (3.5,2)--(3.5,1.5);
\draw [thick, dashed] (3.5,1.5)--(3.3,1.5);
\draw [thick, dashed] (3.3,1.5)--(3.3,1.04);
\node[scale=1.2] at (3.3,1) {$\bullet$};
\node[scale=0.85] at (3.3,0.8) {$w_{k}^0$};
\draw [thick, dashed] (3.3,1.04)--(2,1.04);
\draw [thick, dashed] (2,1.04)--(2,0.41);
\node[scale=1.2] at (1.6, 0.4) {$\bullet$};
\node[scale=0.85] at (1.6,0.6) {$w_{k}^1$};

\draw [thick, dashed] (2,0.41)--(1.6,0.41);
\draw [thick, dashed,->] (1.6,0.41)--(1.6,-0.2);
\node at (1.6,-0.3) {$\bullet$};
\node at (1.9,-0.3) {$j_{k-1}^\prime$};

\draw [thick, black] (4.2,1)--(2.9,1);
\draw [thick, black] (2.9,1)--(2.9,0.7);
\draw [thick, black] (2.9,0.7)--(2.3,0.7);
\draw [thick, black] (2.3,0.7)--(2.3,0.39);
\draw [thick, black] (2.3,0.39)--(1.1,0.39);
\draw [->, thick, black] (1.1,0.39)--(1.1,-0.2);


\node at (1.1,-0.3) {$\bullet$};
\node at (1.3,-0.3) {$j_k^\prime$};
\node at (1.1,0.39) {$\bullet$};


\end{tikzpicture}
\caption{$Q_{k-1}$ is the dashed path, $P_{k}$ is the solid path, $R_k^\prime$ is the shadowed path.}
\label{claim4figure2}
\end{center}
\end{figure}
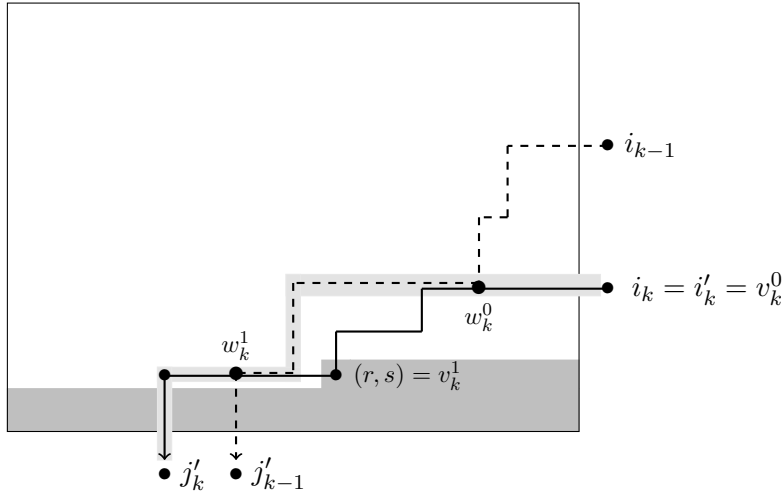

Define $R_k^\prime$ to be the path that equals $P_k$ from $i_k^\prime$ to $w_k^0$, then equals $U(Q_{k-1},P_k)$ from $w_k^0$ to $w_k^1$, and then equals $P_k$ from $w_k^1$ to $j_k^\prime$. Observe that $R_k^\prime$ does not contain $(r,s)$, so that $$\C{R}_k^\prime = (R_k^\prime)$$ is a vertex-disjoint path system in $\Gamma_B^{(t-1)}(I^\prime_k\,|\,J_k^\prime)$. If $k=\alpha$, then again we have obtained the desired contradiction, and so we may assume $\alpha<k$.

Now let $\ell$ be an integer with $\max(\alpha,1)\leq \ell <k$. Assume that $i_{\ell+1}\leq i_{\ell+1}^\prime$, $j_{\ell}\geq j_{\ell+1}^\prime$ and that we have the following data.
\begin{itemize} 

\item We have a $\C{R}_{\ell+1}=(R_{\ell+1},\ldots,R_k) \in \Gamma_B^{(t-1)}( I_{\ell+1}\,|\,J_{\ell+1})$. Moreover, there exists a vertex $v_{\ell+1}^0$ which is the first vertex that $P_{\ell+1}$ and $Q_{\ell+1}$ have in common, a vertex $v_{\ell+1}^1$ which is the last vertex that $P_{\ell+1}$ and $Q_{\ell+1}$ have in common, and $R_{\ell+1}$ equals $Q_{\ell+1}$ from $i_{\ell+1}$ to $v_{\ell+1}^0$, equals $U(P_{\ell+1},Q_{\ell+1})$ from $v_{\ell+1}^0$ to $v_{\ell+1}^1$, and equals $Q_{\ell+1}$ from $v_{\ell+1}^1$ to $j_{\ell+1}$. 

\item We have a $\C{R}^\prime_{\ell+1}=(R^\prime_{\ell+1},\ldots,R^\prime_k) \in \Gamma_B^{(t-1)}( I_{\ell+1}^\prime\,|\,J_{\ell+1}^\prime)$. Moreover, there exists a vertex $w_{\ell+1}^0$ which is the first vertex that $P_{\ell+1}$ and $Q_{\ell}$ have in common, a vertex $w_{\ell+1}^1$ which is the last vertex that $P_{\ell+1}$ and $Q_{\ell}$ have in common, and $R^\prime_{\ell+1}$ equals $P_{\ell+1}$ from $i_{\ell+1}^\prime$ to $w_{\ell+1}^0$, equals $U(P_{\ell+1},Q_{\ell})$ from $w_{\ell+1}^0$ to $w_{\ell+1}^1$, and equals $P_{\ell+1}$ from $w_{\ell+1}^1$ to $j^\prime_{\ell+1}$. 
\end{itemize}

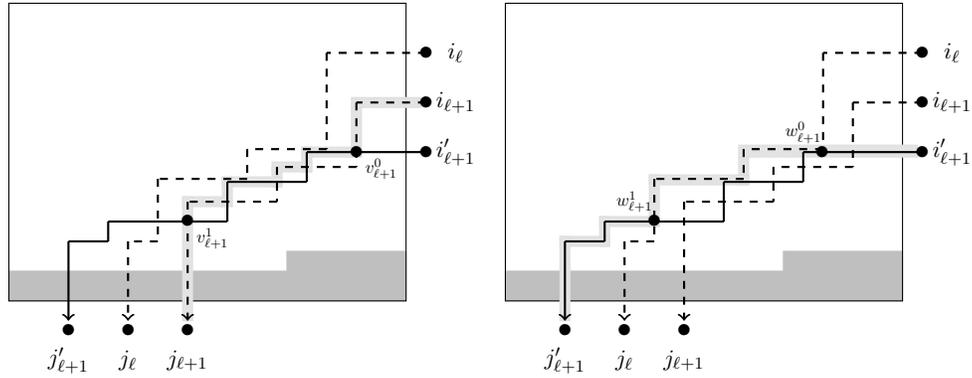
\begin{figure}[htbp]
\begin{center}

\begin{tikzpicture}[xscale=1.32, yscale=1.32]
\draw[color=lightgray, fill=lightgray] (0,0) rectangle (4,0.3);
\draw[color=lightgray, fill=lightgray] (2.8,0.3) rectangle (4,0.5);
\draw (0,0) rectangle (4,3);

\draw[color={rgb:black,1;white,7}, fill={rgb:black,1;white,8}] (3.45,1.95) rectangle (4.15,2.05);
\draw[color={rgb:black,1;white,7}, fill={rgb:black,1;white,8}] (3.45,1.45) rectangle (3.55,1.95);
\draw[color={rgb:black,1;white,7}, fill={rgb:black,1;white,8}] (2.95,1.45) rectangle (3.55,1.55);
\draw[color={rgb:black,1;white,7}, fill={rgb:black,1;white,8}] (2.95,1.3) rectangle (3.05,1.55);
\draw[color={rgb:black,1;white,7}, fill={rgb:black,1;white,8}] (2.65,1.3) rectangle (3.05,1.4);
\draw[color={rgb:black,1;white,7}, fill={rgb:black,1;white,8}] (2.65,1.15) rectangle (2.75,1.35);
\draw[color={rgb:black,1;white,7}, fill={rgb:black,1;white,8}] (2.15,1.15) rectangle (2.75,1.25);
\draw[color={rgb:black,1;white,7}, fill={rgb:black,1;white,8}] (2.15,0.95) rectangle (2.25,1.25);
\draw[color={rgb:black,1;white,7}, fill={rgb:black,1;white,8}] (1.75,0.95) rectangle (2.15,1.05);
\draw[color={rgb:black,1;white,7}, fill={rgb:black,1;white,8}] (1.75,-0.2) rectangle (1.85,1.05);

\node at (4.2, 1.5) {$\bullet$};
\node[scale=0.8] at (4.5, 1.5) {$i_{\ell+1}^\prime$};
\node at (4.2, 2) {$\bullet$};
\node[scale=0.8] at (4.5, 2) {$i_{\ell+1}$};
\node at (4.2, 2.5) {$\bullet$};
\node[scale=0.8] at (4.5, 2.5) {$i_{\ell}$};

\node at (0.6,-0.3) {$\bullet$};
\node[scale=0.8]  at (0.6,-0.6) {$j_{\ell+1}^\prime$};
\node at (1.2,-0.3) {$\bullet$};
\node[scale=0.8]  at (1.2,-0.6) {$j_{\ell}$};
\node at (1.8,-0.3) {$\bullet$};
\node[scale=0.8]  at (1.8,-0.6) {$j_{\ell+1}$};

\draw [thick, black] (4.2,1.5)--(3,1.5);
\draw [thick, black] (3,1.5)--(3,1.2);
\draw [thick, black] (3,1.2)--(2.2,1.2);
\draw [thick, black] (2.2,1.2)--(2.2,0.8);
\draw [thick, black] (2.2,0.8)--(1,0.8);
\draw [thick, black] (1,0.8)--(1,0.6);
\draw [thick, black] (1,0.6)--(0.6,0.6);
\draw [thick, black,->] (0.6,0.6)--(0.6,-0.2);

\draw [thick, dashed] (4.2,2)--(3.5,2);
\draw [thick, dashed] (3.5,2)--(3.5,1.35);
\draw [thick, dashed] (3.5,1.35)--(2.7,1.35);
\draw [thick, dashed] (2.7,1.35)--(2.7,1);
\draw [thick, dashed] (2.7,1)--(1.8,1);
\draw [thick, dashed,->] (1.8,1)--(1.8,-0.2);

\draw [thick, dashed] (4.2,2.5)--(3.2,2.5);
\draw [thick, dashed] (3.2,2.5)--(3.2,1.53);
\draw [thick, dashed] (3.2,1.53)--(2.4,1.53);
\draw [thick, dashed] (2.4,1.53)--(2.4,1.23);
\draw [thick, dashed] (2.4,1.23)--(1.5,1.23);
\draw [thick, dashed] (1.5,1.23)--(1.5,0.6);
\draw [thick, dashed] (1.5,0.6)--(1.2,0.6);
\draw [thick, dashed,->] (1.2,0.6)--(1.2,-0.2);

\node at (3.5,1.5) {$\bullet$};
\node[scale=0.6] at (3.75,1.33) {$v_{\ell+1}^0$};
\node at (1.8,0.8) {$\bullet$};
\node[scale=0.6] at (2.05,0.63) {$v_{\ell+1}^1$};

\draw[color=lightgray, fill=lightgray] (5,0) rectangle (9,0.3);
\draw[color=lightgray, fill=lightgray] (7.8,0.3) rectangle (9,0.5);
\draw (5,0) rectangle (9,3);

\draw[color={rgb:black,1;white,7}, fill={rgb:black,1;white,8}] (7.35,1.45) rectangle (9.15,1.57);
\draw[color={rgb:black,1;white,7}, fill={rgb:black,1;white,8}] (7.35,1.17) rectangle (7.45,1.45);
\draw[color={rgb:black,1;white,7}, fill={rgb:black,1;white,8}] (6.45,1.17) rectangle (7.45,1.27);
\draw[color={rgb:black,1;white,7}, fill={rgb:black,1;white,8}] (6.45,0.75) rectangle (6.55,1.27);
\draw[color={rgb:black,1;white,7}, fill={rgb:black,1;white,8}] (5.95,0.75) rectangle (6.55,0.85);
\draw[color={rgb:black,1;white,7}, fill={rgb:black,1;white,8}] (5.95,0.55) rectangle (6.05,0.85);
\draw[color={rgb:black,1;white,7}, fill={rgb:black,1;white,8}] (5.55,0.55) rectangle (6.05,0.65);
\draw[color={rgb:black,1;white,7}, fill={rgb:black,1;white,8}] (5.55,-0.2) rectangle (5.65,0.65);

\node at (9.2, 1.5) {$\bullet$};
\node[scale=0.8] at (9.5, 1.5) {$i_{\ell+1}^\prime$};
\node at (9.2, 2) {$\bullet$};
\node[scale=0.8] at (9.5, 2) {$i_{\ell+1}$};
\node at (9.2, 2.5) {$\bullet$};
\node[scale=0.8] at (9.5, 2.5) {$i_{\ell}$};

\node at (5.6,-0.3) {$\bullet$};
\node[scale=0.8]  at (5.6,-0.6) {$j_{\ell+1}^\prime$};
\node at (6.2,-0.3) {$\bullet$};
\node[scale=0.8]  at (6.2,-0.6) {$j_{\ell}$};
\node at (6.8,-0.3) {$\bullet$};
\node[scale=0.8]  at (6.8,-0.6) {$j_{\ell+1}$};

\draw [thick, black] (9.2,1.5)--(8,1.5);
\draw [thick, black] (8,1.5)--(8,1.2);
\draw [thick, black] (8,1.2)--(7.2,1.2);
\draw [thick, black] (7.2,1.2)--(7.2,0.8);
\draw [thick, black] (7.2,0.8)--(6,0.8);
\draw [thick, black] (6,0.8)--(6,0.6);
\draw [thick, black] (6,0.6)--(5.6,0.6);
\draw [thick, black,->] (5.6,0.6)--(5.6,-0.2);

\draw [thick, dashed] (9.2,2)--(8.5,2);
\draw [thick, dashed] (8.5,2)--(8.5,1.35);
\draw [thick, dashed] (8.5,1.35)--(7.7,1.35);
\draw [thick, dashed] (7.7,1.35)--(7.7,1);
\draw [thick, dashed] (7.7,1)--(6.8,1);
\draw [thick, dashed,->] (6.8,1)--(6.8,-0.2);

\draw [thick, dashed] (9.2,2.5)--(8.2,2.5);
\draw [thick, dashed] (8.2,2.5)--(8.2,1.53);
\draw [thick, dashed] (8.2,1.53)--(7.4,1.53);
\draw [thick, dashed] (7.4,1.53)--(7.4,1.23);
\draw [thick, dashed] (7.4,1.23)--(6.5,1.23);
\draw [thick, dashed] (6.5,1.23)--(6.5,0.6);
\draw [thick, dashed] (6.5,0.6)--(6.2,0.6);
\draw [thick, dashed,->] (6.2,0.6)--(6.2,-0.2);

\node at (8.19,1.5) {$\bullet$};
\node[scale=0.6] at (8,1.7) {$w_{\ell+1}^0$};
\node at (6.5,0.8) {$\bullet$};
\node[scale=0.6] at (6.3,1) {$w_{\ell+1}^1$};

\end{tikzpicture}
\caption{$R_{\ell+1}$ is shaded path on the left diagram; $R_{\ell+1}^\prime$ is shaded path on the right diagram.}
\label{claim4figure3}
\end{center}
\end{figure}

We will construct a path $R_\ell:i_\ell\to j_\ell$ disjoint from $R_{\ell+1}$, but first we need to show that $i_{\ell}\leq i_{\ell}^\prime$. Suppose that $i_{\ell}>  i_{\ell}^\prime$. Since $j_\ell\geq j_{\ell+1}^\prime > j_\ell^\prime$, we may consider the minor $$[I^{\prime\prime}\,|\,J^{\prime\prime}]^{(t-1)} = [i_\alpha^\prime,\ldots, i_\ell^\prime,i_{\ell},\ldots,i_{k-1}\,|\,j_\alpha^\prime,\ldots, j_\ell^\prime, j_{\ell},\ldots,j_{k-1}]^{(t-1)}.$$ Note that this minor does not contain a critical coordinate since $[\ImidJ]^{(t-1)}$ doesn't and $(i_k,j_k)$ is the unique critical coordinate in $[I^\prime\,|\,J^\prime]^{(t-1)}$. But as $\vect{y}^M$ is divisible by the leading term of $[I^{\prime\prime}\,|\,J^{\prime\prime}]^{(t-1)}$, we know that $[I^{\prime\prime}\,|\,J^{\prime\prime}]^{(t-1)} $ is not in $K_{t-1}$, i.e., $\Gamma_B^{(t-1)}(I^{\prime\prime}\,|\,J^{\prime\prime})$ is nonempty.

Indeed, $(\tilde{P}_1,\ldots,\tilde{P}_\ell, Q_\ell,\ldots, Q_{k-1})\in \Gamma_B^{(t-1)}(I^{\prime\prime}\,|\,J^{\prime\prime})$, since for any path system in $\Gamma_B^{(t-1)}(I^{\prime\prime}\,|\,J^{\prime\prime})$ we choose, the sub-path system from $\{i_1^\prime,\ldots,i_\ell^\prime\}$ to $\{j_1^\prime,\ldots,j_\ell^\prime\}$ may be replaced with the supremum of $$\Gamma_B^{(t-1)}(i_1^\prime,\ldots,i_\ell^\prime\,|\,j_1^\prime,\ldots,j_\ell^\prime),$$ and the sub-path system from $\{i_\ell,\ldots, i_{k-1}\}$ to $\{j_\ell,\ldots,j_{k-1}\}$ with the infimum of $$\Gamma_B^{(t-1)}(i_\ell,\ldots, i_{k-1}\,|\,j_\ell,\ldots,j_{k-1}).$$ These two sets are, of course, $(\tilde{P}_1,\ldots\tilde{P}_\ell)$ and $(Q_\ell,\ldots, Q_{k-1})$ respectively. In particular, this implies $\tilde{P}_\ell$ is disjoint from both $Q_\ell$. But $\tilde{P}_\ell$ is also disjoint from $P_{\ell+1}$. By the construction of $R_{\ell+1}^\prime$, it follows that $\tilde{P}_\ell$ and $R_{\ell+1}^\prime$ are also disjoint, so that $$\{\tilde{P}_1,\ldots,\tilde{P}_\ell\}\cup\C{R}_{\ell+1}^\prime$$ forms a vertex-disjoint path system in the empty set $\Gamma_B^{(t-1)}(\ImidJ)$. Since this is an impossibility, it must be the case that $i_{\ell}\leq i_{\ell}^\prime$.


Next, we construct $\C{R}_\ell$. Recall that $R_{\ell+1}^\prime$ has a first vertex $w_{\ell+1}^0$ that is common to $P_{\ell+1}$ and $Q_\ell$. On the other hand, since $P_\ell$ and $P_{\ell+1}$ are disjoint and $i_\ell \leq i^\prime_{\ell} < i_{\ell+1}^\prime$, it must be the case that $P_{\ell+1}$ intersects $Q_\ell$. Let $v_\ell^0$ be the first vertex they have in common and note that $v_\ell^0$ comes before $w_{\ell+1}^0$ on $Q_\ell$. See Figure~\ref{claim4figure4} for an example.

\begin{figure}[htbp]
\begin{center}

\begin{tikzpicture}[xscale=2.3, yscale=2.3]
\draw[color=lightgray, fill=lightgray] (0,0) rectangle (4,0.3);
\draw[color=lightgray, fill=lightgray] (2.8,0.3) rectangle (4,0.5);
\draw (0,0) rectangle (4,3);

\draw[color={rgb:black,1;white,7}, fill={rgb:black,1;white,8}] (3.45,1.95) rectangle (4.15,2.05);
\draw[color={rgb:black,1;white,7}, fill={rgb:black,1;white,8}] (3.45,1.45) rectangle (3.55,1.95);
\draw[color={rgb:black,1;white,7}, fill={rgb:black,1;white,8}] (2.95,1.45) rectangle (3.55,1.55);
\draw[color={rgb:black,1;white,7}, fill={rgb:black,1;white,8}] (2.95,1.3) rectangle (3.05,1.55);
\draw[color={rgb:black,1;white,7}, fill={rgb:black,1;white,8}] (2.65,1.3) rectangle (3.05,1.4);
\draw[color={rgb:black,1;white,7}, fill={rgb:black,1;white,8}] (2.65,1.15) rectangle (2.75,1.35);
\draw[color={rgb:black,1;white,7}, fill={rgb:black,1;white,8}] (2.15,1.15) rectangle (2.75,1.25);
\draw[color={rgb:black,1;white,7}, fill={rgb:black,1;white,8}] (2.15,0.95) rectangle (2.25,1.25);
\draw[color={rgb:black,1;white,7}, fill={rgb:black,1;white,8}] (1.75,0.95) rectangle (2.15,1.05);
\draw[color={rgb:black,1;white,7}, fill={rgb:black,1;white,8}] (1.75,-0.2) rectangle (1.85,1.05);

\draw[color={rgb:black,1;white,7}, fill={rgb:black,1;white,8}] (3.15,2.45) rectangle (4.15,2.55);
\draw[color={rgb:black,1;white,7}, fill={rgb:black,1;white,8}] (3.15,1.75) rectangle (3.25,2.55);
\draw[color={rgb:black,1;white,7}, fill={rgb:black,1;white,8}] (1.65,1.75) rectangle (3.25,1.85);
\draw[color={rgb:black,1;white,7}, fill={rgb:black,1;white,8}] (1.65,1.18) rectangle (1.75,1.85);
\draw[color={rgb:black,1;white,7}, fill={rgb:black,1;white,8}] (1.45,1.18) rectangle (1.75,1.28);
\draw[color={rgb:black,1;white,7}, fill={rgb:black,1;white,8}] (1.45,0.55) rectangle (1.55,1.28);
\draw[color={rgb:black,1;white,7}, fill={rgb:black,1;white,8}] (1.15,0.55) rectangle (1.55,0.65);
\draw[color={rgb:black,1;white,7}, fill={rgb:black,1;white,8}] (1.15,-0.2) rectangle (1.25,0.65);

\node at (4.2, 1.5) {$\bullet$};
\node[scale=0.8] at (4.4, 1.5) {$i_{\ell+1}^\prime$};
\node at (4.2, 2) {$\bullet$};
\node[scale=0.8] at (4.4, 2) {$i_{\ell+1}$};
\node at (4.2, 2.5) {$\bullet$};
\node[scale=0.8] at (4.4, 2.5) {$i_{\ell}$};
\node at (4.2,2.25) {$\bullet$};
\node[scale=0.8] at (4.4, 2.25) {$i_{\ell}^\prime$};

\node at (0.6,-0.3) {$\bullet$};
\node[scale=0.8]  at (0.6,-0.5) {$j_{\ell+1}^\prime$};
\node at (1.2,-0.3) {$\bullet$};
\node[scale=0.8]  at (1.2,-0.5) {$j_{\ell}$};
\node at (1.8,-0.3) {$\bullet$};
\node[scale=0.8]  at (1.8,-0.5) {$j_{\ell+1}$};
\node at (0.15,-0.3) {$\bullet$};
\node[scale=0.8]  at (0.15,-0.5) {$j_{\ell}^\prime$};

\draw [thick, black] (4.2,1.5)--(3,1.5);
\draw [thick, black] (3,1.5)--(3,1.2);
\draw [thick, black] (3,1.2)--(2.2,1.2);
\draw [thick, black] (2.2,1.2)--(2.2,0.8);
\draw [thick, black] (2.2,0.8)--(1,0.8);
\draw [thick, black] (1,0.8)--(1,0.6);
\draw [thick, black] (1,0.6)--(0.6,0.6);
\draw [thick, black,->] (0.6,0.6)--(0.6,-0.2);

\draw [thick, dashed] (4.2,2)--(3.5,2);
\draw [thick, dashed] (3.5,2)--(3.5,1.35);
\draw [thick, dashed] (3.5,1.35)--(2.7,1.35);
\draw [thick, dashed] (2.7,1.35)--(2.7,1);
\draw [thick, dashed] (2.7,1)--(1.8,1);
\draw [thick, dashed,->] (1.8,1)--(1.8,-0.2);

\draw [thick, dashed] (4.2,2.5)--(3.2,2.5);
\draw [thick, dashed] (3.2,2.5)--(3.2,1.53);
\draw [thick, dashed] (3.2,1.53)--(2.4,1.53);
\draw [thick, dashed] (2.4,1.53)--(2.4,1.23);
\draw [thick, dashed] (2.4,1.23)--(1.5,1.23);
\draw [thick, dashed] (1.5,1.23)--(1.5,0.6);
\draw [thick, dashed] (1.5,0.6)--(1.2,0.6);
\draw [thick, dashed,->] (1.2,0.6)--(1.2,-0.2);

\draw [thick, black] (4.2,2.25)--(3.6,2.25);
\draw [thick, black] (3.6,2.25)--(3.6,1.8);
\draw [thick, black] (3.6,1.8)--(1.7,1.8);
\draw [thick, black] (1.7,1.8)--(1.7,1.1);
\draw [thick, black] (1.7,1.1)--(0.15,1.1);
\draw [thick, black,->] (0.15,1.1)--(0.15,-0.2);

\node at (3.2,1.8) {$\bullet$};
\node[scale=0.7] at (3.1,1.9) {$v_{\ell}^0$};

\node at (1.5,1.1) {$\bullet$};
\node[scale=0.7] at (1.4,1.2) {$v_{\ell}^1$};

\node at (3.19,1.5) {$\bullet$};
\node[scale=0.7] at (3.04,1.65) {$w_{\ell+1}^0$};
\node at (1.5,0.8) {$\bullet$};
\node[scale=0.7] at (1.34,0.95) {$w_{\ell+1}^1$};

\end{tikzpicture}
\caption{Constructing $R_\ell$ (upper shaded path). Note that it is disjoint from $R_{\ell+1}$ (lower shaded path).}
\label{claim4figure4}
\end{center}
\end{figure}
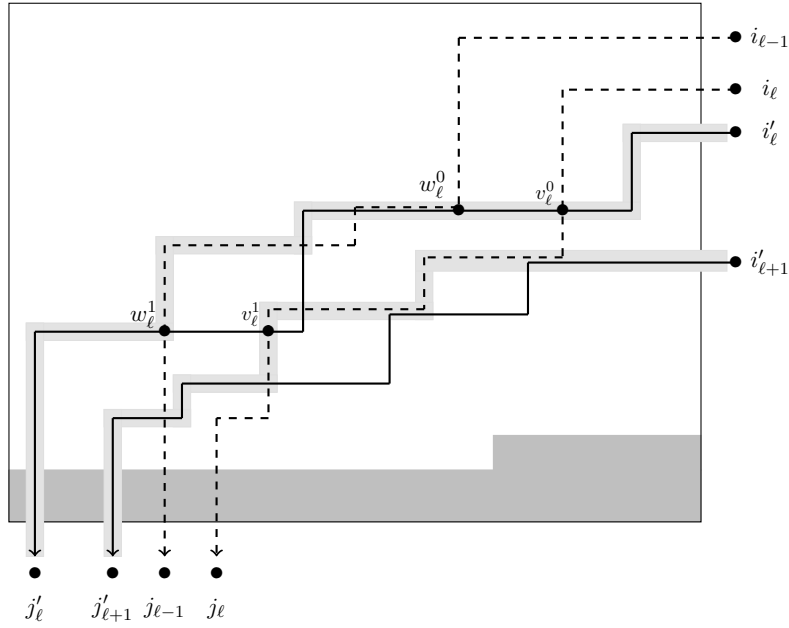
Next, observe that $P_\ell$ must also intersect $Q_\ell$ at a vertex coming after $w_{\ell+1}^0$. This is the case since otherwise, $P_\ell$ is disjoint from $R_{\ell+1}^\prime$ after $w_{\ell+1}^0$. But by the construction of $R_{\ell+1}^\prime$, we would then have $(P_1,\ldots, P_\ell)\cup\C{R}_\ell^\prime$, a vertex-disjoint path system in the empty set $\Gamma_B^{(t-1)}(I^\prime\,|\,J^\prime).$ So, let $v_{\ell}^1$ be the last vertex that $Q_\ell$ and $P_\ell$ have in common. Define $R_\ell$ as the path equal to $Q_\ell$ from $i_\ell$ to $v_\ell^0$, equal to $U(P_\ell,Q_\ell)$ from $v_\ell^0$ to $v_\ell^1$, and then equal to $Q_\ell$ from $v_{\ell}^1$ to $j_\ell$. Since $Q_\ell$ is disjoint from $Q_{\ell+1}$ up to $v_\ell^0$ and after $v_\ell^1$, and $U(P_\ell,Q_\ell)$ is disjoint from $U(P_{\ell+1},Q_{\ell+1})$, we see that $R_\ell$ is disjoint from $R_{\ell+1}$, and so $$\C{R}_\ell = \C{R}_{\ell+1}\cup R_\ell\in  \Gamma_B^{(t-1)}( i_{\ell},\ldots, i_k\,|\,j_{\ell},\ldots,j_k).$$ If $\ell=1$, then we have obtained the required path system completing the proof of this claim.

Assume $\ell>1$. To construct $\C{R}_\ell^\prime$, we first must show that $j_{\ell-1}\geq j^\prime_\ell$. To the contrary, suppose that $j_{\ell-1}<j^\prime_\ell$ Now, $i_{\ell-1}<i_\ell\leq i_\ell^\prime$, so we may consider the minor $$[I^{\prime\prime\prime}\,|\,J^{\prime\prime\prime}]^{(t-1)}= [i_1,\ldots,i_{\ell-1},i_\ell^\prime,\ldots, i_{k}^\prime\,|\,j_1,\ldots,j_{\ell-1},j_\ell^\prime,\ldots, j_{k}^\prime]^{(t-1)}.$$
Since $\vect{y}^M$ is divisible by the leading term of $[I^{\prime\prime\prime}\,|\,J^{\prime\prime\prime}]^{(t-1)}$, there are two possibilities. If $[I^{\prime\prime\prime}\,|\,J^{\prime\prime\prime}]^{(t-1)}$ is in $K_{t-1}$, then it is a critical minor, and so there is a vertex-disjoint path system in $$\Gamma_B^{(t)}(i_1,\ldots,i_{\ell-1},i_\ell^\prime,\ldots, i_{k}^\prime\,|\,j_1,\ldots,j_{\ell-1},j_\ell^\prime,\ldots, j_{k}^\prime),$$ which we may take to be $$(\tilde{Q}_1,\ldots,\tilde{Q}_{\ell-1}, P_\ell,\ldots,P_{k}).$$ Therefore, $\tilde{Q}_{\ell-1}$ is disjoint from both $P_\ell$ and $Q_\ell$, and so disjoint from $R_\ell$ by the latter path's construction. Hence, $(\tilde{Q}_1,\ldots,\tilde{Q}_{\ell-1})\cup\C{R}_\ell$ is a vertex-disjoint path system in the empty set $\Gamma_B^{(t-1)}(\ImidJ)$, an impossibility. The other possibility is that $[I^{\prime\prime\prime}\,|\,J^{\prime\prime\prime}]^{(t-1)}$ is not in $K_{t-1}$. This possibility is dealt with in a manner similar to the above when we justified the inequality $j_{k-1}\geq j_k^\prime$.  It follows that $j_{\ell-1}\geq j^\prime_\ell$. 

We now describe the construction of $R_\ell^\prime$. Since $\ell>1$, consider $Q_{\ell-1}$. This path is disjoint from $Q_\ell$. If $Q_{\ell-1}$ does not intersect $P_\ell$ at a vertex between $v_\ell^0$ and $v_\ell^1$, then $Q_{\ell-1}$ is disjoint from $R_\ell$ so that $(Q_1,\ldots,Q_{\ell-1})\cup \C{R}_\ell$ is a vertex-disjoint path system in the empty set $\Gamma_B^{(t-1)}(\ImidJ)$, an impossibility. So we may let $w_\ell^0$ be the first vertex that $Q_{\ell-1}$ shares with $P_\ell$. Now, since $j_\ell^\prime\leq j_{\ell-1}<j_\ell$, and the two subpaths of $P_\ell$ and $Q_\ell$ starting at $v^1_\ell$, together with the line from $j_\ell^\prime$ to $j_\ell$ is a closed curve in the plane, $Q_{\ell-1}$ must intersect $P_\ell$ at a vertex after $v_\ell^1$. Let $w_\ell^1$ be their last common vertex after $v_\ell^1$. We now take $R_\ell^\prime$ to be the path equal to $P_\ell$ from $i_\ell^\prime$ to $w_\ell^0$; equal to $U(P_\ell,Q_{\ell-1})$ from $w_\ell^0$ to $w_\ell^1$; and equal to $P_\ell$ from $w_\ell^1$ to $j_\ell^\prime$. See Figure~\ref{claim4figure5} for an example. That $R_{\ell}^\prime$ is disjoint from $R_{\ell+1}^\prime$ is seen similarly as when we showed that $R_\ell$ and $R_{\ell+1}$ are disjoint. 
\begin{figure}[htbp]
\begin{center}

\begin{tikzpicture}[xscale=2.3, yscale=2.3] 
\draw[color=lightgray, fill=lightgray] (5,0) rectangle (9,0.3);
\draw[color=lightgray, fill=lightgray] (7.8,0.3) rectangle (9,0.5);

\draw (5,0) rectangle (9,3);

\draw[color={rgb:black,1;white,7}, fill={rgb:black,1;white,8}] (7.35,1.45) rectangle (9.15,1.57);
\draw[color={rgb:black,1;white,7}, fill={rgb:black,1;white,8}] (7.35,1.17) rectangle (7.45,1.45);
\draw[color={rgb:black,1;white,7}, fill={rgb:black,1;white,8}] (6.45,1.17) rectangle (7.45,1.27);
\draw[color={rgb:black,1;white,7}, fill={rgb:black,1;white,8}] (6.45,0.75) rectangle (6.55,1.27);
\draw[color={rgb:black,1;white,7}, fill={rgb:black,1;white,8}] (5.95,0.75) rectangle (6.55,0.85);
\draw[color={rgb:black,1;white,7}, fill={rgb:black,1;white,8}] (5.95,0.55) rectangle (6.05,0.85);
\draw[color={rgb:black,1;white,7}, fill={rgb:black,1;white,8}] (5.55,0.55) rectangle (6.05,0.65);
\draw[color={rgb:black,1;white,7}, fill={rgb:black,1;white,8}] (5.55,-0.2) rectangle (5.65,0.65);
\draw[color={rgb:black,1;white,7}, fill={rgb:black,1;white,8}] (8.55,2.2) rectangle (9.15,2.3);
\draw[color={rgb:black,1;white,7}, fill={rgb:black,1;white,8}] (8.55,1.75) rectangle (8.65,2.3);
\draw[color={rgb:black,1;white,7}, fill={rgb:black,1;white,8}] (6.65,1.75) rectangle (8.65,1.85);
\draw[color={rgb:black,1;white,7}, fill={rgb:black,1;white,8}] (6.65,1.55) rectangle (6.75,1.85);
\draw[color={rgb:black,1;white,7}, fill={rgb:black,1;white,8}] (5.85,1.55) rectangle (6.75,1.65);
\draw[color={rgb:black,1;white,7}, fill={rgb:black,1;white,8}] (5.85,1.05) rectangle (5.95,1.65);
\draw[color={rgb:black,1;white,7}, fill={rgb:black,1;white,8}] (5.1,1.05) rectangle (5.95,1.15);
\draw[color={rgb:black,1;white,7}, fill={rgb:black,1;white,8}] (5.1,-0.2) rectangle (5.2,1.15);

\node at (9.2, 1.5) {$\bullet$};
\node[scale=0.8] at (9.4, 1.5) {$i_{\ell+1}^\prime$};

\node at (9.2, 2.5) {$\bullet$};
\node[scale=0.8] at (9.4, 2.5) {$i_{\ell}$};
\node at (9.2, 2.8) {$\bullet$};
\node[scale=0.8] at (9.4, 2.8) {$i_{\ell-1}$};

\draw [thick, dashed] (9.2,2.8)--(7.6,2.8);
\draw [thick, dashed] (7.6,2.8)--(7.6,1.82);
\draw [thick, dashed] (7.6,1.82)--(7,1.82);
\draw [thick, dashed] (7,1.82)--(7,1.6);
\draw [thick, dashed] (7,1.6)--(5.9,1.6);
\draw [thick, dashed,->] (5.9,1.6)--(5.9,-0.2);

\node at (5.9,1.1) {$\bullet$};
\node[scale=0.8] at (5.78,1.2) {$w_\ell^1$};

\node at (7.6,1.8) {$\bullet$};
\node[scale=0.8] at (7.45,1.95) {$w_\ell^0$};

\node at (5.6,-0.3) {$\bullet$};
\node[scale=0.8]  at (5.6,-0.5) {$j_{\ell+1}^\prime$};
\node at (5.9,-0.3) {$\bullet$};
\node[scale=0.8]  at (5.9,-0.5) {$j_{\ell-1}$};
\node at (6.2,-0.3) {$\bullet$};
\node[scale=0.8]  at (6.2,-0.5) {$j_{\ell}$};

\draw [thick, black] (9.2,1.5)--(8,1.5);
\draw [thick, black] (8,1.5)--(8,1.2);
\draw [thick, black] (8,1.2)--(7.2,1.2);
\draw [thick, black] (7.2,1.2)--(7.2,0.8);
\draw [thick, black] (7.2,0.8)--(6,0.8);
\draw [thick, black] (6,0.8)--(6,0.6);
\draw [thick, black] (6,0.6)--(5.6,0.6);
\draw [thick, black,->] (5.6,0.6)--(5.6,-0.2);

\draw [thick, dashed] (9.2,2.5)--(8.2,2.5);
\draw [thick, dashed] (8.2,2.5)--(8.2,1.53);
\draw [thick, dashed] (8.2,1.53)--(7.4,1.53);
\draw [thick, dashed] (7.4,1.53)--(7.4,1.23);
\draw [thick, dashed] (7.4,1.23)--(6.5,1.23);
\draw [thick, dashed] (6.5,1.23)--(6.5,0.6);
\draw [thick, dashed] (6.5,0.6)--(6.2,0.6);
\draw [thick, dashed,->] (6.2,0.6)--(6.2,-0.2);

\node at (9.2,2.25) {$\bullet$};
\node[scale=0.8] at (9.4, 2.25) {$i_{\ell}^\prime$};
\node at (5.15,-0.3) {$\bullet$};
\node[scale=0.8]  at (5.15,-0.5) {$j_{\ell}^\prime$};

\draw [thick, black] (9.2,2.25)--(8.6,2.25);
\draw [thick, black] (8.6,2.25)--(8.6,1.8);
\draw [thick, black] (8.6,1.8)--(6.7,1.8);
\draw [thick, black] (6.7,1.8)--(6.7,1.1);
\draw [thick, black] (6.7,1.1)--(5.15,1.1);
\draw [thick, black,->] (5.15,1.1)--(5.15,-0.2);

\node at (8.2,1.8) {$\bullet$};
\node[scale=0.7] at (8.1,1.9) {$v_{\ell}^0$};

\node at (6.5,1.1) {$\bullet$};
\node[scale=0.7] at (6.4,1.2) {$v_{\ell}^1$};

\end{tikzpicture}
\caption{Constructing $R_\ell^\prime$ (upper shaded path). Note that it is disjoint from $R_{\ell+1}^\prime$ (lower shaded path).}
\label{claim4figure5}
\end{center}
\end{figure}

Of course, we now take $$\C{R}_\ell^\prime = \C{R}_{\ell+1}^\prime\cup R_{\ell}^\prime\in \Gamma_B^{(t-1)}(i_\ell^\prime,\ldots,i_k^\prime\,|\,j_\ell^\prime,\ldots,j_k^\prime).$$
If $\ell=\alpha$, then we are done. Otherwise continue as above. As this process ends when $\ell=\max(\alpha,1)$, we eventually construct a vertex-disjoint path system in either the empty set $\Gamma_B^{(t-1)}(\ImidJ)$ or the empty set $\Gamma_B^{(t-1)}(I^\prime\,|\, J^\prime)$. This contradiction completes the proof of Claim~\ref{notleadingtermclaim}.

\begin{claim2} \label{uniquenessclaim}
The term $\vect{y}^{N_C}$ from Expression~\ref{NCdefinition} is a lex term of $b=\overleftarrow{a}y_{r,s}^h$. 
\end{claim2}

\noindent \emph{Proof of Claim~\ref{uniquenessclaim}:} 
Recall that a lexicographic term is said to be a \emph{lex term} of an element of $A^{(t-1)}$ or $A^{(t)}$ if it has a nonzero coefficient in the lexicographic expression of that element.

We have already seen that $\vect{y}^{N_C}$ is a lex term of \begin{align*} \stackrel{C}{\overleftarrow{x_{i_1,j_1}}} \stackrel{C}{\overleftarrow{x_{i_2,j_2}}} \cdots \stackrel{C}{\overleftarrow{x_{i_p,j_p}}}y_{r,s}^h.
\end{align*}
We will show that this is, in fact, the unique appearance of $\vect{y}^{N_C}$ in (the lexicographic expression of) any summand of $$b = \overleftarrow{a}y_{r,s}^h = \overleftarrow{\vect{x}^M}y_{r,s}^h + \sum_L \alpha_L\overleftarrow{\vect{x}^L}y_{r,s}^h,$$ and so is a lex term of $b$.

To start, consider in $\overleftarrow{\vect{x}^M}y_{r,s}^h$ the lexicographic expression of some \begin{align*} \stackrel{C^\prime}{\overleftarrow{x_{i_1,j_1}}} \stackrel{C^\prime}{\overleftarrow{x_{i_2,j_2}}} \cdots \stackrel{C^\prime}{\overleftarrow{x_{i_p,j_p}}}y_{r,s}^h &= \sum_{L_{C^\prime} \in \mat} \alpha_{L_{C^\prime}} \vect{y}^{L_{C^\prime}},\end{align*} where $C^\prime\neq C$. Suppose $C^\prime$ is chosen so that there is an $L_{C^\prime}$ equal to $N_C$.

Now, by Lemma~\ref{technicallemmacrit}, each term $\vect{y}^{L_{C^\prime}}$ satisfies $(L_{C^\prime})_{r,s}=h-|C^\prime|$. Since $(N_C)_{r,s}=h-|C|$, we must have if $|C^\prime|=|C| >0$. But, since $C\neq C^\prime$, there must exist $k\in C^\prime$ such that $(i_k,j_k)$ is not a critical coordinate. Since $(i_k,j_k)$ is not critical, we should have \begin{align*} (L_{C^\prime})_{i_k,j_k}&=(N_C)_{i_k,j_k}\\&=(M)_{i_k,j_k}\\&>(M)_{i_k,j_k}-|\{k^\prime\in C^\prime\mid (i_{k^\prime},j_{k^\prime})=(i_k,j_k)\}|.\end{align*}
By Part 4 of Lemma~\ref{technicallemmacrit}, there is a coordinate $(i_k,j)$ with $j<j_k$ and \begin{align*} (L_{C^\prime})_{i_k,j} &<  (M)_{i_k,j} \\ &= (N_C)_{i_k,j},\end{align*} where the equality follows from the fact that since $(i_k,j_k)$ is not critical, neither is $(i_k,j)$ by Claim 2. Hence, $L_{C^\prime}$ cannot be equal to $N_C$ since their entries differ in coordinate $(i_k,j)$. This is a contradiction and so we conclude that $\vect{y}^{N_C}$ is a lex term of $\overleftarrow{\vect{x}^M}y_{r,s}^{h}.$

Next, suppose $$\vect{x}^{L}=x_{a_1,b_1}\cdots x_{a_t,b_t},$$ appears in $a$, where $(a_k,b_k)\leq (a_{k+1},b_{k+1})$ for each $k\in[t-1]$, and where $L\prec M$ at coordinate $(i,j)$. With the notation of Section~\ref{addingderivationssection}, consider $$\overleftarrow{\vect{x}^L}y_{r,s}^h = \sum_{D} q^{|D|} \stackrel{D}{\overleftarrow{x_{a_1,b_1}}} \stackrel{D}{\overleftarrow{x_{a_2,b_2}}} \cdots \stackrel{D}{\overleftarrow{x_{a_t,b_t}}}y_{r,s}^h.$$ Suppose that $\vect{y}^{N_C}$ appears in $$\stackrel{D}{\overleftarrow{x_{a_1,b_1}}} \stackrel{D}{\overleftarrow{x_{a_2,b_2}}} \cdots \stackrel{D}{\overleftarrow{x_{a_t,b_t}}}y_{r,s}^h = \sum_{L_D}\alpha_{L_D}\vect{y}^{L_D}.$$

By Lemma~\ref{technicallemmacrit}, Part 5, every entry in an $L_D$ with coordinates not northwest, north or west of $(r,s)$ must equal the corresponding entry in $L$. Since we also require $L_D=N_C$ for some $D$, this implies that those entries are equal to the corresponding entry in $M$ as well. Thus, $(i,j)$ can only be north, west or northwest of $(r,s)$. On the other hand, if $j=s$, then all entries in $L$ and $M$ in row $i$ except coordinate $(i,j)$ are equal. By homogeneity, this means that we must also have $(L)_{i,j}=(M)_{i,j}$, a contradiction. Hence $(i,j)$ is not north of $(r,s)$, and by similar reasoning $(i,j)$ is not west of $(r,s)$. Therefore, we may assume that $L\prec M$ at a coordinate $(i,j)$ northwest of $(r,s)$.

There are two cases to consider. First, suppose $(i,j)$ is not a critical coordinate. In this case, $$(N_C)_{i,j}=(M)_{i,j}>(L)_{i,j},$$ and so we may proceed as above by applying Part 4 of Lemma~\ref{technicallemmacrit} to see that in order to have $(L_D)_{i,j}=(N_C)_{i,j}$, we would require an entry with coordinate $(i,j^\prime)$ with $j^\prime<j$ to satisfy \begin{align*} (L_D)_{i,j^\prime}&<(L)_{i,j^\prime}\\&=(M)_{i,j^\prime}\\&=(N_C)_{i,j^\prime}.\end{align*} Hence we cannot have $N_C=L_D$ in this case.

%


Next, suppose $(i,j)$ is critical. Let $(i,j_0)$ be the least critical coordinate in row $i$. Notice that no $(i,j^\prime)=(a_k,b_k)$ with $j^\prime<j_0$ has $k\in D$, for reasons similar to the previous paragraph. Now, consider $j^\prime$ where $j_0< j^\prime\leq s$.  By Part 3 of Claim 1 applied to $(i,j_0)$, we know that every entry of $M$ south of $(i,j^\prime)$ is equal to zero. Hence, the sum of the entries in column $j^\prime$ of $M$ is equal to $\sum_{i^\prime=1}^i (M)_{i^\prime,j^\prime}$. By homogeneity, this is equal to the sum of the entries in column $j^\prime$ of $L$. On the other hand, the entries north of $(i,j^\prime)$ in $L$ are equal to the corresponding entries in $M$. Since all entries of $L$ are nonnegative, we see that $$(L)_{i,j^\prime}\leq (M)_{i,j^\prime},$$ for every $j_0<j^\prime\leq s$. Also, since the entries of $L$ and $M$ are equal prior to $(i,j_0)$ and $L\prec M$, we must also have $(L)_{i,j_0}\leq (M)_{i,j_0}$. But, since we know that $(L)_{i,j}<(M)_{i,j}$, applying Part 3 of Lemma~\ref{technicallemmacrit} gives  \begin{align*}(L_D)_{i,s} &= (L)_{i,s}+ |\{k\in D\mid i_k=i\}| \\ &\leq (L)_{i,s} + \sum_{j^\prime=j_0}^s (L)_{i,j^\prime} \\ &< (M)_{i,s} + \sum_{j^\prime=j_0}^s (M)_{i,j^\prime}  \\ &= (N_C)_{i,s}.\end{align*}
Hence, we cannot have $L_D=N_C$ in this case either, and so this completes the proof of Claim~\ref{uniquenessclaim}.\\


\begin{claim2} \label{leadingtermclaim}
There exists an element of $K_{t-1}$ for which $\vect{y}^{N_C}$ is the leading term.
\end{claim2}

Note that Claims~\ref{notleadingtermclaim} and~\ref{leadingtermclaim} are incompatible, thus providing the required contradiction to the assumptions on the entries of $M$ and completing the proof of Theorem~\ref{genthm}. \\

\noindent \emph{Proof of Claim~\ref{leadingtermclaim}:} 
By Lemma~\ref{splittingup}, we may write $$b= \sum_{i=0}^\infty b_i y_{r,s}^i,$$ where finitely many $b_i\neq 0$ and each $b_i\in K_{t-1}$ with lexicographic expression using only generators with coordinates less than $(r,s)$.

By Claim~\ref{uniquenessclaim}, $\vect{y}^{N_C}$ is a lex term of $b$ and so, since $(N_C)_{r,s}=h-|C|$, it is a lex term of $$z_0 = b_{h-|C|}y_{r,s}^{h-|C|}.$$

Suppose, for a positive integer $k$, that we have constructed an element $z_{k-1} \in K_{t-1}$ in which $\vect{y}^{N_C}$ is a lex term. Moreover, suppose any lex term of $z_{k-1}$ that is greater than $\vect{y}^{N_C}$, also is a lex term of $z_0$. If $\lt(z_{k-1})=\vect{y}^{N_C}$, then we have found the required element of $K_{t-1}$. Otherwise, we construct below an element $z_k\in K_{t-1}$ with the same properties as $z_{k-1}$, but in which there are fewer lex terms greater than $\vect{y}^{N_C}$. Since there are only finitely many lex terms of $z_0$ that are greater than $\vect{y}^{N_C}$, this process must end after finitely many steps, resulting in an element of $K_{t-1}$ whose leading term is $\vect{y}^{N_C}$, as required.

Let $$\lt(z_{k-1})=\vect{y}^L\succ \vect{y}^{N_C},$$ so that for some $\gamma_L, \gamma_{N_C}\in \B{K}^*$ we may write $$z_{k-1} = \gamma_L \vect{y}^L + \gamma_{N_C}\vect{y}^{N_C} + z_{k-1}^\prime.$$ In particular, observe that in $z_{k-1}^\prime$, there are fewer lex terms greater than $\vect{y}^{N_C}$ than in $z_{k-1}$. Also, $\vect{y}^L\prec\vect{y}^{M}y_{r,s}^{h}$ since the latter term is the leading term of $b$ but $\vect{y}^L \in b_{h-|C|}y_{r,s}^{h-|C|}\neq b_hy_{r,s}^h$ since $|C|>0$. Finally, for $i\in[r-1]$, let $C_i$ denote the critical coordinates in row $i$. 

Let $i_0$ be the least index such that $C_{i}=C_{i_0}$ is non-empty. Let $(c_0,d_0)$ be the least coordinate in $C_{i_0}$. Since $\vect{y}^{N_C}\prec\vect{y}^L\prec\vect{y}^{M}y_{r,s}^{h}$ and the entries of $N_C$ and $M$ at coordinates prior to $(c_0,d_0)$ are equal, we have that the entries of $L$, $M$ and $N_C$ are equal prior to $(c_0,d_0)$ as well. 

Suppose $(c_0,d)\in C_{i_0}$ is such that $(L)_{c_0,d}>0$. In this case, we proceed as follows. Since $(c_0,d)$ is a critical coordinate, there is a critical minor $[\ImidJ]^{(t-1)}\in K_{t-1}$ with maximum coordinate $(c_0,d)$ whose leading term divides $\vect{y}^M$, and so divides $\vect{y}^L$ by the previous paragraph. By Lemma~\ref{strcor2}, we have \begin{align*} \vect{y}^L &= q^\alpha[\ImidJ]^{(t-1)}\vect{y}^{L-P_\textnormal{id}} + w,\end{align*} where $w\in A^{(t-1)}$ has the property that if $\lt(w)=\vect{y}^K$, then $K\prec L$ at an entry northwest of $(c_0,d)$. Since all entries of $L$ northwest of $(c_0,d)$ are equal to those of $N_C$ and $M$, we have that $\lt(w)\prec \vect{y}^{N_C}$ as well. 

Hence, \begin{align*} z_{k-1} &= \gamma_L \vect{y}^L + \gamma_{N_C}\vect{y}^{N_C} + z_{k-1}^\prime \\&= \gamma_L(q^\alpha[\ImidJ]^{(t-1)}\vect{y}^{L-P_\textnormal{id}} + w) + \gamma_{N_C}\vect{y}^{N_C} + z_{k-1}^\prime,\end{align*} so that if we define \begin{align*} z_k &= z_{k-1} - \gamma_Lq^\alpha[\ImidJ]^{(t-1)}\vect{y}^{L-P_\textnormal{id}} \\ &= \gamma_{N_C}\vect{y}^{N_C} + \gamma_Lw + z_{k-1}^\prime,\end{align*} then we have $z_k\in K_{t-1}$ satisfying the desired properties described above.

Now, suppose each coordinate $(c_0,d)\in C_{i_0}$ is such that $(L)_{c_0,d}=0$. Thus, $L$ and $N_C$ are equal in all entries prior to $(c_0,s)$. Also, since $\vect{y}^L$ is a lex term of $b$, there must be a lex term $\vect{x}^{L^\prime}$  of $a$ so that $\vect{y}^{L}$ is a lex term of $\overleftarrow{\vect{x}^{L^\prime}}y_{r,s}^{h}$. We also have $\vect{x}^{L^\prime}\preceq \vect{x}^M$, and it follows by Part 2 of Lemma~\ref{technicallemmacrit}, that the entries in $L^\prime$ and $M$ are equal prior to $(c_0,d_0)$.

Now, as in the proof of Claim 4, we may apply homogeneity to conclude that $(L^\prime)_{c_0,d}\leq (M)_{c_0,d}$ for each $(c_0,d)\in C_{i_0}$, and if any of these inequalities are strict, then $(L)_{i_0,s}<(N_C)_{i_0,s}$, contradicting the assumption that $N_C\prec L$.  Hence, $L^\prime$ and $M$ have equal entries prior to $(c_0,s)$.

Now, let $i_1$ be the second least index such that $C_{i_1}$ is nonempty, and consider coordinates from $(c_0,s)$ to $(c_1,d_1)^-$, where $(c_1,d_1)$ is the least coordinate in $C_{i-1}$. Since $\vect{y}^{N_C}\prec \vect{y}^L$, we know that if any entry in $L$ and $N_C$ in these coordinates differ, then the first differing entry is larger in $L$ than in $N_C$. On the other hand, the entries of $N_C$ and $M$ are equal in this range of coordinates. Thus, if the first differing entry is larger in $L$ than in $N_C$, then this entry in $L^\prime$ is larger than in $M$, yet every entry prior in $L^\prime$ is equal to that in $M$, implying that $\vect{y}^M\prec \vect{y}^{L^\prime}$, a contradiction. Hence, the entries in this range of coordinates are equal in $N_C, M, L$ and $L^\prime$. 

Since all entries north-west of a critical coordinate are equal in $M, N_C, L$ and $L^\prime$, we may now repeat the above arguments with the coordinates in $C_{i_1}$, and subsequent $C_i$'s if necessary. Eventually we must find a critical coordinate with a positive entry in $L$, as otherwise we would find that $N_C=L$, contradicting the assumption that $\vect{y}^{N_C}\prec \vect{y}^L$. Hence, we can always construct the required $z_k$ and, eventually, an element of $K_{t-1}$ with leading term $\vect{y}^{N_C}$. This completes the proof of Claim 5 and the theorem.

\end{proof}

\subsection{Conclusions}
The motivating goal of this work was to demonstrate the conjecture of Goodearl and Lenagan that when $q\in \B{K}^*$ is a non-root of unity, an $\C{H}$-prime of $\qmatrix$ is generated by the set of quantum minors it contains. That this is true is already immediate corollary of the $t=mn$ case of our Theorem~\ref{genthm}. However, the theorem actually implies a sharper result since we may consider a \emph{minimal} Gr\"obner basis for the $\C{H}$-prime. The idea here is simple: if $G$ is a Gr\"obner basis for an ideal and if $g_1,g_2\in G$ are such that $\lt(g_1)$ is divisible by $\lt(g_2)$, then $G\setminus g_1$ remains a Gr\"obner basis for the ideal. With respect to $\qmatrix$, this means the following. Suppose $[\ImidJ]^{(mn)}=[\ImidJ]$ is a minor with $I=\{i_1<i_2<\cdots <i_k\}$ and $J=\{j_1<j_2<\cdots j_k\}$. If $L\subsetneq [k]$, $I^\prime=I\cap\{i_\ell\mid \ell\in L\}$ and $J^\prime=J\cap\{j_\ell\mid \ell\in L\}$, then call $[I^\prime\,|\,J^\prime]$ a \emph{diagonal subminor} of $[\ImidJ]$. From the $t=mn$ case of Theorem~\ref{genthm} we find the following. 

\begin{cor} \label{maincor}
If $q\in \B{K}^*$ is a non-root of unity, then every $\C{H}$-prime $K$ of $\qmatrix$, is generated, as a right ideal, by those quantum minors in $K$ with no diagonal subminor in $K$. These quantum minors form a minimal Gr\"obner basis for $K$ with respect to the matrix lexicographic order. 
\end{cor}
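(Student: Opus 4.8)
The plan is to derive the corollary from the case $t=mn$ of Theorem~\ref{genthm}. For $t=mn$ the coordinate $(r,s)$ is the largest one, namely $(m,n)$, so there are no coordinates $(i,j)>(r,s)$ and every minor automatically has maximum coordinate at most $(r,s)$; thus $G_{mn}$ is exactly the set of \emph{all} quantum minors contained in $K=K_{mn}$, and Theorem~\ref{genthm} asserts that this set is a Gr\"obner basis for $K$ as a right ideal. What remains is to pare it down to the minors with no diagonal subminor in $K$ and to verify minimality.

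The combinatorial crux is to translate divisibility of leading terms into the diagonal-subminor relation. By Lemma~\ref{strcor2}(1), $\lt([\ImidJ]) = \vect{x}^{P_{\textnormal{id}}}$, where $P_{\textnormal{id}}$ is the $0/1$ matrix supported exactly on the diagonal coordinates $(i_1,j_1),\dots,(i_k,j_k)$ of $[\ImidJ]$. In particular the assignment $[\ImidJ]\mapsto\lt([\ImidJ])$ is injective on minors, since the support of $P_{\textnormal{id}}$ determines $I$ and $J$. I would then note that, given two minors, $\lt([I'\,|\,J'])$ divides $\lt([\ImidJ])$ precisely when the diagonal coordinates of $[I'\,|\,J']$ form a subset of those of $[\ImidJ]$; because a family of coordinates with strictly increasing rows and strictly increasing columns is indexed by an increasing subset $L\subseteq[k]$, this happens exactly when $[I'\,|\,J']$ either equals $[\ImidJ]$ or is a diagonal subminor of it in the sense of the preceding discussion. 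This is immediate from Definition~\ref{dividedef}.

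Next I would show directly that $G^\ast$, the set of quantum minors in $K$ with no diagonal subminor in $K$, is itself a Gr\"obner basis, which is cleaner than iterating the removal lemma recalled above. Fix a nonzero $a\in K$. By Theorem~\ref{genthm} some minor in $K$ has leading term dividing $\lt(a)$, and since there are only finitely many minors I may choose one, $[I_0\,|\,J_0]$, that is minimal for the diagonal-subminor order among all minors in $K$ whose leading term divides $\lt(a)$. If $[I_0\,|\,J_0]$ had a diagonal subminor $[I'\,|\,J']$ lying in $K$, then by the previous paragraph $\lt([I'\,|\,J'])$ would divide $\lt([I_0\,|\,J_0])$, hence $\lt(a)$, contradicting minimality; therefore $[I_0\,|\,J_0]\in G^\ast$ and $G^\ast$ satisfies Definition~\ref{grobnerdef}. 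Since a Gr\"obner basis generates its right ideal (as discussed after Definition~\ref{grobnerdef}), $G^\ast$ generates $K$.

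For minimality it suffices to check that no leading term of a member of $G^\ast$ divides that of another. Indeed, if $[I_1\,|\,J_1]\neq[I_2\,|\,J_2]$ were members of $G^\ast$ with $\lt([I_2\,|\,J_2])$ dividing $\lt([I_1\,|\,J_1])$, then $[I_2\,|\,J_2]$ would be a diagonal subminor of $[I_1\,|\,J_1]$ (it cannot equal $[I_1\,|\,J_1]$, by injectivity of $\lt$ on minors), contradicting $[I_1\,|\,J_1]\in G^\ast$. By the removal lemma this is equivalent to $G^\ast$ being a minimal Gr\"obner basis. No step here presents a genuine obstacle; the only point requiring attention is the bookkeeping around \emph{proper} versus improper diagonal subminors, which is handled by the injectivity remark above.
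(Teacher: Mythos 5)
Your proposal is correct and follows essentially the same route as the paper: specialize Theorem~\ref{genthm} to $t=mn$, identify divisibility of leading terms of quantum minors with the (proper or improper) diagonal-subminor relation via Lemma~\ref{strcor2}(1), and prune accordingly. Your direct minimal-element verification that $G^\ast$ is a Gr\"obner basis is just a tidier packaging of the paper's iterated-removal remark, and it correctly supplies the details the paper leaves implicit.
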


In the statement of Corollary~\ref{maincor}, ``right ideal'' can be replaced by ``left ideal'' after proving the left ideal version of Theorem~\ref{genthm}.

\begin{ex} 
Let $K$ be the $\C{H}$-prime of $\C{O}_q(\C{M}_{3,4}(\B{K}))$ corresponding to the Cauchon diagram in Figure~\ref{minimalGBexample}. By using Corollary~\ref{lindstromcor}, we find that the quantum minors in $K$ are $$\{[123|123],[123|124],[12|12], [13|12],[23|12],[23|13],[23|23]\}.$$ Theorem~\ref{genthm} says that these form a Gr\"obner basis for $K$. However, $[12|12]$ is a diagonal subminor of $[123|123]$ and $[123|124]$. Therefore, $$\{[12|12], [13|12],[23|12],[23|13],[23|23]\}$$ is a minimal (in fact reduced) Gr\"obner basis for $K$.

\begin{figure}[htbp]
\begin{center}
\begin{tikzpicture}[xscale=1.1, yscale=1.1]

\draw [fill=black] (0,1) rectangle (0.5,1.5);
\draw [fill=black] (0,0.5) rectangle (1,1);
\draw [fill=black] (0.5,0.5) rectangle (0.5,1);
\draw [color=gray] (0,0) rectangle (0.5,0.5);
\draw [color=gray] (1,0) rectangle (1.5,0.5);
\draw [color=gray] (1.5,0) rectangle (2,0.5);
\draw [color=gray] (1.5,0.5) rectangle (2,1);
\draw [color=gray] (0,1) rectangle (0.5,1.5);
\draw [color=gray] (1,1) rectangle (1.5,1.5);
\draw [color=gray] (0.5,0) rectangle (1,0.5);
\draw [color=gray] (0.5,0.5) rectangle (1,1);
\draw [color=gray] (0.5,1) rectangle (1,1.5);
\draw [color=gray] (1.5,1) rectangle (2,1.5);

\end{tikzpicture}
\caption{}
\label{minimalGBexample}
\end{center}
\end{figure}
\end{ex}

\section*{Acknowledgements}
Most of this work was completed while the author was a visiting assistant professor at the University of California, Santa Barbara. He remains grateful for the hospitality he received there. The author also thanks Ken Goodearl for many helpful discussions, and St\'ephane Launois and Milen Yakimov for thoughtful comments. The anonymous referee made a careful, and very much appreciated, reading of this manuscript, and their suggestions resulted in a highly improved paper. 

\section*{Appendix}

To assist in the reading of this paper, in particular the proof of Theorem~\ref{genthm}, we below provide an index of some terms and notation used throughout this paper. 

\begin{description}
\item[Coordinates] Beginning of Section~\ref{basicssection}.
\item[Lexicographic order] Definition~\ref{lexorderdef}.
\item[$\boldsymbol{(r,s)^-}$] Definition~\ref{lexorderdef}.\\

\item[Cauchon Diagram] Definitions~\ref{diagramdef} and~\ref{cauchondiagramdef}.
\item[$\boldsymbol{\fgraph}$] (Cauchon graph) Definition~\ref{FactorGraph}.
\item[$\boldsymbol{\Gamma_B^{(t)}(\ImidJ)}$] Definition~\ref{pathsystem}.
\item[$\boldsymbol{U(P,Q)}$] Definition~\ref{ULdef}.
\item[$\boldsymbol{L(P,Q)}$] Definition~\ref{ULdef}.
\item[$\boldsymbol{U(\C{P},\C{Q})}$ (Supremum)] Definition~\ref{supdef}.
\item[$\boldsymbol{L(\C{P},\C{Q})}$ (Infimum)] Definition~\ref{infdef}.\\

\item[$\boldsymbol{A^{(t)},A_B^{(t)}}$] Definition~\ref{ASt}.
\item[$\vect{x}^N$] Notation~\ref{monomialnotation}.
\item[Lexicographic expression] Definition~\ref{lexexpressiondef}.
\item[Lex term of] Definition~\ref{lexexpressiondef}. \\

\item[$\boldsymbol{\sigma_B^{(t)}}$] Definition~\ref{phidef}.
\item[$\boldsymbol{\overrightarrow{a}}$] Theorem~\ref{ddtheorem}.
\item[$\boldsymbol{\overleftarrow{a}}$] Theorem~\ref{ddtheorem}.
\item[$\boldsymbol{\stackrel{C}{\overleftarrow{x_{i,j}}}}$] Lemma~\ref{technicallemmacrit} and preceding paragraph.\\

\item[(Quantum) Minor $\boldsymbol{ \lbrack \ImidJ\rbrack^{(t)}_B,\lbrack \ImidJ\rbrack^{(t)}, \lbrack \ImidJ\rbrack }$] Definition~\ref{minordef}.
\item[Diagonal coordinate (of a minor)] Definition~\ref{coordinatesdef}.
\item[Maximum coordinate (of a minor)] Definition~\ref{coordinatesdef}.\\

\item[$\boldsymbol{\prec} $] Definition~\ref{revlex}.
\item[$\boldsymbol{\lt(a)}$ (leading term of $a\in A^{(t)}$)]Definition~\ref{leadingtermdef}.
\item[Gr\"obner Basis] Definition~\ref{grobnerdef}.\\

\item[$\boldsymbol{N_C}$] See Expression~(\ref{NCdefinition}) just prior to Claim~\ref{notleadingtermclaim} in proof of Theorem~\ref{genthm}.
\item[Critical Minor] A minor in $K_{t-1}$ whose leading term divides $\lt(b)=\lt(\overleftarrow{a}y_{r,s}^h)$.
\item[Critical Coordinate] A coordinate $(i,j)$ that is northwest of $(r,s)$ such that there exists a critical minor with $(i,j)$ as its maximum coordinate.

%

\end{description}
\bibliography{casteels1}
\bibliographystyle{amsplain}
\end{document}